\newtheorem{theorem}{Theorem}[section]
\newtheorem{lemma}[theorem]{Lemma}
\newtheorem{prop}[theorem]{Proposition}
\newtheorem{corollary}[theorem]{Corollary}
\theoremstyle{definition}
\newtheorem{definition}[theorem]{Definition}
\newtheorem{rem}[theorem]{Remark}
\newcommand\pf{\begin{proof}}
\newcommand\epf{\end{proof}}
\newcommand{\cmdblackltimes}{\mathop{\raisebox{0.2ex}{\makebox[0.92em][l]{${\scriptstyle\blacktriangleright\mathrel{\mkern-4mu}<}$}}}}
\numberwithin{equation}{section}
\title[Algebraic affine supergroup schemes]
{Harish-Chandra pairs for algebraic affine supergroup schemes over an arbitrary field}
\author[Akira Masuoka]{Akira Masuoka}
\address{Akira Masuoka: 
Institute of Mathematics, 
University of Tsukuba, 
Ibaraki 305-8571, Japan}
\email{akira@math.tsukuba.ac.jp}
\begin{document}

\begin{abstract}
Over an arbitrary field of characteristic $\ne 2$, we define 
the notion of Harish-Chandra pairs, and prove that the category of those pairs is anti-equivalent
to the category of algebraic affine supergroup schemes. The result is applied to 
characterize some classes of affine supergroup schemes such as those which are
(a) simply connected, (b) unipotent or (c) linearly reductive in positive characteristic.
\end{abstract}

\maketitle

\noindent
{\sc Key Words:}
affine supergroup scheme, Hopf superalgebra, Harish-Chandra pair.

\medskip
\noindent
{\sc Mathematics Subject Classification (2010):}
14M30,
16T05, 
16W55.

\section{Introduction}\label{sec:intro}

What is an algebraic group? From the functorial view-point, it is defined as an \emph{affine group scheme}
over a fixed field, say $\Bbbk$, to be a representable group-valued functor defined on 
the category of commutative algebras over $\Bbbk$; see \cite{DG}.  
If $\Bbbk$ is an algebraically closed field of characteristic zero, an 
algebraic affine group scheme is the same as a linear algebraic group. 
Every affine group scheme is represented uniquely by a commutative Hopf algebra. 
Since affine group schemes thus correspond precisely to commutative Hopf algebras, they can be studied 
by Hopf-algebraic methods; see Hochschild \cite{H}, Takeuchi \cite{T-0}, \cite{T-I}, \cite{T-III}.  
Especially, Takeuchi made substantial contributions, which are free of characteristic of $\Bbbk$, 
replacing Lie algebras in zero characteristic case with \emph{hyperalgebras}. 

The symmetric tensor category of vector spaces is generalized by the category of super-vector spaces, that is, 
$\mathbb{Z}_2$-graded vector spaces, which has the familiar tensor product and the
super-symmetry; see \eqref{symmetry}. Each object, such as ordinary, Hopf or Lie algebras, defined in the
former category is generalized by a super-object, such as (Hopf or Lie) superalgebras, defined in the
latter category.

What is an algebraic supergroup? It is now automatic to answer this question; one has only to replace 
commutative algebras with the corresponding super-object. 
An \emph{affine supergroup scheme} over a field $\Bbbk$ is thus defined to be a representable
group-valued functor defined on the category of super-commutative superalgebras over $\Bbbk$. Here and
in what follows we pose the natural assumption $\operatorname{char} \Bbbk \ne 2$. An affine supergroup scheme
$G$ is represented uniquely by a super-commutative
Hopf superalgebra, say $A$, and is written as $G = \operatorname{SSp} A$. 
$G$ is said to be \emph{algebraic} if $A$ is \emph{affine}, that is, finitely generated.  
See \cite{MZ}, \cite{Z1}, \cite{Z2} for recent characteristic-free studies of supergroup schemes. 

The notion of \emph{Harish-Chandra pairs} was first introduced by Kostant \cite{Kostant}; it is such a pair $(G, L)$
of a linear algebraic (or analytic) group $G$ and a finite-dimensional Lie superalgebra $L$ that satisfies
some conditions. Recently, Carmeli and Fioresi \cite{CF} proved that there is a natural category-equivalence between 
the Harish-Chandra pairs and the algebraic affine supergroup schemes (in our terms), under the assumption 
that $\Bbbk$ is an algebraically closed field of characteristic zero; in the analytic situation, the same result had been
proved by Kostant \cite{Kostant} (see also Koszul \cite{Koszul}) 
in the real case, and by Vishnyakova \cite{V} in the real and complex cases both. 
In this paper we generalize the equivalence proved by Carmeli and Fioresi, removing the assumption on $\Bbbk$, and
apply the result for characteristic-free study of affine supergroup schemes. Just as shown by Takeuchi in the
non-super situation, super-cocommutative Hopf superalgebras, and especially irreducible ones 
(see Definition \ref{def:irreducible}) play an important role,
in place of Lie superalgebras. An irreducible super-cocommutative Hopf superalgebra is called a 
\emph{hyper-superalgebra}.  

Here we supply fundamental references, following kind suggestions by a referee. 
Manin \cite{Manin} defined for the first time superschemes in the purely algebraic setting, 
from a geometric view-point; they are defined as a special class of topological spaces
given structure sheaves of super-commutative superalgebras.  
Among other useful monographs we cite especially 
Berezin \cite{Berezin}, Varadarajan \cite{Varadarajan}, and Carmeli, Caston and Fioresi \cite{Carmeli-Caston-Fioresi}. 
Deligne and Morgan \cite{Deligne-Morgan}, and 
Carmeli et al. \cite{Carmeli-Caston-Fioresi} give modern treatments of the subject
from functorial and geometric view-points both, mainly in the differential-geometric setting.
Boseck \cite{Boseck1989}, \cite{Boseck1990} took the functorial (or Hopf-algebraic) view-point
in the purely algebraic setting, though the rational points considered were restricted to those in 
exterior algebras. 
The notion of hyper-superalgebras is not new. 
For example, the hyper-superalgebra of the algebraic supergroup $Q(n)$ was thoroughly 
studied by Brundan and Kleshchev \cite{Brundan-Kleshchev}.   

Let us describe the construction of this paper. 
After some preliminaries in Section 2, we will start with dualizing the notion of Harish-Chandra pairs. 
A \emph{dual Harish Chandra pair} is such a pair $(J, V)$ of a cocommutative Hopf algebra $J$ and 
a right $J$-module $V$, that is equipped with a bilinear map $[\hspace{2mm}, \ ] : V \times V \to P(J)$ with 
values in the Lie algebra $P(J)$ of all primitives in $J$, and that satisfies some conditions; see Definition 
\ref{def:DHCP}. 
Given a Lie superalgebra $L = L_0 \oplus L_1$, then the pair $(U(L_0), L_1)$, equipped with the original bracket
$[\hspace{2mm}, \ ] : L_1 \times L_1 \to L_0 \subset P(U(L_0))$ restricted to $L_1$, is a dual Harish-Chandra
pair provided $\operatorname{char} \Bbbk \ne 3$ (and $\operatorname{char} \Bbbk \ne 2$, as already assumed).  
If $\operatorname{char} \Bbbk = 0$, every dual Harish-Chandra pair $(J, V)$
with $J$ irreducible arises uniquely in this way from a Lie superalgebra. Given a dual Harish-Chandra pair $(J, V)$, we
construct a super-cocommutative Hopf superalgebra, $H(J, V)$; this equals the universal enveloping superalgebra
$U(L)$ if $(J, V)$ arises from a Lie superalgebra $L$ as above. We prove in Theorem \ref{thm:Takeuchi} that $(J, V)
\mapsto H(J, V)$ gives a category equivalence from the dual Harish-Chandra pairs $\mathsf{DHCP}$ to
the super-cocommutative Hopf superalgebras $\mathsf{CCHSA}$; this result was outlined 
by Takeuchi \cite{T2}. 

We will dualize rather faithfully the definition, the construction and the result above. 
Suppose that $C$ is an affine (that is, finitely generated commutative) Hopf algebra, and
$W$ is a finite-dimensional right $C$-comodule. Then the dual vector space $W^*$ of $W$ is naturally
a right module over the dual cocommutative Hopf algebra $C^{\circ}$ of $C$. The pair $(C, W)$ is called a
\emph{Harish-Chandra pair} if it is equipped with a bilinear map $W^* \times W^* \to P(C^{\circ})$ with
which $(C^{\circ}, W^*)$ forms a dual Harish-Chandra pair; see Definition \ref{def:HCP}. 
If $\Bbbk$ is an algebraically closed field of characteristic zero, every Harish-Chandra pair $(C, W)$ 
as just defined arises uniquely from a Harish-Chandra pair $(G, L)$ as defined by Carmeli and Fioresi, and others,
where $G = \operatorname{Sp} C(\Bbbk)$, the linear algebraic group of all rational points in $\Bbbk$, and 
$L = P(C^{\circ}) \oplus W^*$; see Remark \ref{rem:HCP}(2). 
Given a Harish-Chandra pair $(C, W)$ (in our sense), we
construct an affine Hopf superalgebra, $A(C, W)$; this construction of ours is different from the corresponding
construction given in \cite[Sect.~3.3]{CF}, and has an advantage in 
our verifying very easily that $A(C, W)$ is a Hopf superalgebra;
see the proof of Lemma \ref{lem:A(C,W)1}, and also Remark \ref{rem:PropHopf-struc}.
We prove in Theorem \ref{thm:equiv} that $(C, W)
\mapsto A(C, W)$ gives a category equivalence from the Harish-Chandra pairs $\mathsf{HCP}$ to
the affine Hopf superalgebras $\mathsf{AHSA}$; this last category is anti-isomorphic to
the category of algebraic affine supergroup schemes. 

Just as for group schemes, short exact sequences play an important role in theory of supergroup schemes.
Those sequences of affine supergroup schemes correspond precisely to those
sequences of super-commutative Hopf superalgebras. 
In Sections \ref{subsec:exact-sequence}, \ref{subsec:exact-seq-comm},
we reformulate, in a stronger form, some results from \cite[Sections 3, 5]{M} on short exact 
sequences of super-(co)commutative Hopf superalgebras, in terms of (dual) Harish-Chandra pairs.

In Sections \ref{sec:simply-conn}--\ref{sec:Nagata} the results obtained so far will be applied 
to characterize three classes of affine 
supergroup schemes. 
Theorem \ref{thm:simply-conn-zero} (resp., Theorem \ref{thm:simply-conn-p}) characterizes 
simply connected affine supergroup schemes in characteristic zero (resp., in positive characteristic)
in terms of the corresponding Lie superalgebras (resp., hyper-superalgebras); this directly generalizes
the corresponding result by Hochschild \cite{H} (resp., by Takeuchi \cite{T-III}) in the non-super situation. 
Theorem \ref{thm:unipotency} states that an affine supergroup scheme $G$ is unipotent if and only if the 
affine group scheme $G_{\operatorname{res}}$ obtained from $G$ by restricting its domain to the category of 
commutative algebras 
is unipotent; this is a very recent unpublished result by A.~N.~Zubkov, and we will give it a very simple proof.
Theorem \ref{thm:Nagata} shows that if $\operatorname{char} \Bbbk > 2$, 
every linearly reductive affine supergroup scheme
$G = \operatorname{SSp} A$ is necessarily purely even in the sense that $A$ consists of even elements only,
whence by Nagata's Theorem, it is of multiplicative type under the additional assumption that $G$ is algebraic and connected.   

We will work with a number of categories. Here is a table of their symbols:

\begin{alignat*}{2}
&\mathsf{CCHSA} & \qquad & \text{super-cocommutative Hopf superalgebras; \eqref{CCHSA-HySA}} \\
&\mathsf{HySA}    & \qquad & \text{hyper-superalgebras; \eqref{CCHSA-HySA}} \\
&\mathsf{LSA}      & \qquad & \text{Lie superalgebras; \eqref{CCHSA-HySA}} \\
&\mathsf{AHSA}      & \qquad & \text{affine Hopf superalgebras; Definition \ref{def:affine}} \\
&\mathsf{DHCP}    & \qquad & \text{dual Harish-Chandra pairs; Definition \ref{def:DHCP}}  \\
&\mathsf{iDHCP}   & \qquad & \text{irreducible dual Harish-Chandra pairs; Definition \ref{def:DHCP}} \\
&\mathsf{HCP}      & \qquad & \text{Harish-Chandra pairs; Definition \ref{def:HCP}} \\
&\mathsf{cHCP}    & \qquad & \text{connected Harish-Chandra pairs; Definition \ref{def:HCP}}
\end{alignat*} 

\section{Preliminaries}\label{sec:prelim}

\subsection{}\label{subsec:super-vec}
Throughout we work over a fixed field $\Bbbk$ whose characteristic $\operatorname{char} \Bbbk \ne 2$.
In particular the unadorned $\otimes$ denotes the tensor product over $\Bbbk$. 

A \emph{super-vector space} is a vector space $V = V_0 \oplus V_1$ graded 
by $\mathbb{Z}_2 = \{ 0, 1 \}$. Given a homogeneous element $v \in V$,
we let $|v| \in \mathbb{Z}_2$ denote its parity. The super-vector spaces, $V, W, \dots$, form a symmetric 
tensor category with respect to the familiar tensor product $V \otimes W$, and the super-symmetry
\begin{equation}\label{symmetry} 
c_{V,W} : V \otimes W \overset{\simeq}{\longrightarrow} W \otimes V,\quad 
c_{V,W}(v \otimes w) = (-1)^{|v||w|} w \otimes v.   
\end{equation}
We call objects, such as algebra or Hopf-algebra objects,
in this symmetric tensor category, attaching $`$super' to their original names,
so as \emph{superalgebras} or \emph{Hopf superalgebras}. They are said to be \emph{purely even} 
(resp., \emph{purely odd}), if 
they consist of even (resp., odd) elements only. 
To distinguish the tensor products of super(co)algebras, on which the super-symmetry does effect,
from those of non-super ones, we will write $A~\underline{\otimes}~B$ for $A \otimes B$. 

Given a vector space $V$, we let
$V^*$ denote the dual vector space. This is a super-vector space so that $(V^*)_i = (V_i)^*$, $i = 0,1$,  if $V$ is. 
By a \emph{pairing} of super-vector spaces $V$, $W$, we mean a bilinear form $\langle \hspace{2mm}, \ \rangle : V \times W 
\to \Bbbk$ such that $\langle V_i, W_j \rangle = 0$ if $i \ne j$ in $\mathbb{Z}_2$. A pairing induces 
linear maps $V \to W^*$, $W \to V^*$ preserving the parity. 

Given two pairings $\langle \hspace{2mm}, \ \rangle : V \times W \to \Bbbk$, $\langle \hspace{2mm}, \ 
\rangle : Z \times U \to \Bbbk$ of super-vector spaces, we define their tensor product 
$\langle \hspace{2mm}, \ \rangle : V \otimes Z \times W \otimes U \to \Bbbk$ by
$$ \langle v \otimes z,\ w \otimes u\rangle = \langle v, w \rangle\, \langle z, u\rangle, $$
where  $v \in V, w \in W, z \in Z, u \in U$.

\begin{lemma}\label{lem:tensored-pairing}
With the notation as above we have
$$\langle c_{Z,V}(z \otimes v), \ w \otimes u\rangle = \langle z\otimes v, \ c_{U,W}(u \otimes w)\rangle .$$
\end{lemma}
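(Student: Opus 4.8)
The plan is to reduce the identity to a direct computation on homogeneous simple tensors, and then observe that both sides pick up exactly the same sign from the super-symmetry. First I would write $z \in Z$, $v \in V$, $w \in W$, $u \in U$ as homogeneous elements; since both sides are bilinear in $z \otimes v$ and in $w \otimes u$, and the pairing kills mismatched parities, it suffices to treat the case where $|z| = |u|$ and $|v| = |w|$ (otherwise both sides are zero, because a pairing of super-vector spaces vanishes on components of unequal parity, and one checks that the two sides vanish \emph{simultaneously} under the mismatch).

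Next I would expand the left-hand side using \eqref{symmetry}: since $c_{Z,V}(z \otimes v) = (-1)^{|z||v|}\, v \otimes z$, we get
$$\langle c_{Z,V}(z \otimes v),\ w \otimes u\rangle = (-1)^{|z||v|}\,\langle v \otimes z,\ w \otimes u\rangle = (-1)^{|z||v|}\,\langle v, w\rangle\,\langle z, u\rangle$$
by the definition of the tensor-product pairing. Similarly, the right-hand side expands as
$$\langle z \otimes v,\ c_{U,W}(u \otimes w)\rangle = (-1)^{|u||w|}\,\langle z \otimes v,\ w \otimes u\rangle = (-1)^{|u||w|}\,\langle z, u\rangle\,\langle v, w\rangle.$$
Comparing the two, the identity reduces to the sign equality $(-1)^{|z||v|} = (-1)^{|u||w|}$, which holds because on the only relevant components we have $|z| = |u|$ and $|v| = |w|$ (and when a mismatch occurs, both scalar products $\langle v, w\rangle\langle z, u\rangle$ already vanish).

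There is no real obstacle here; the statement is essentially a bookkeeping lemma recording the compatibility of the transpose pairing with the braiding $c$. The only point requiring a little care is the parity-matching argument: one must be sure that the two sides vanish in exactly the same situations, so that the sign comparison is only needed on the support where $|z|=|u|$ and $|v|=|w|$. I would also remark that, phrased invariantly, the lemma just says that under the canonical map $V \otimes Z \to (W \otimes U)^*$ induced by the two given pairings, the braiding $c_{Z,V}$ on the source corresponds to (the transpose of) $c_{U,W}$ on the target — which is automatic from naturality of $c$ in the symmetric tensor category of super-vector spaces.
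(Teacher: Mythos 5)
Your proof is correct and follows essentially the same route as the paper: reduce to homogeneous elements, note that both sides vanish unless $|v|=|w|$ and $|z|=|u|$, and then compare the signs $(-1)^{|z||v|}$ and $(-1)^{|u||w|}$ produced by the two braidings (the paper phrases this as a short case split on parities rather than writing the exponents, but the content is identical). No gaps.
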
 
\begin{proof}
We may suppose $|v| = |w|$, $|z| = |u|$, since otherwise, the both sides are equal to zero.
We see then that the both sides are equal to 
$-\langle v, w \rangle\, \langle z, u\rangle$ if $|v| = |z| = 1$, and to $\langle v, w \rangle\, \langle z, u\rangle$ 
in the remaining cases. 
\end{proof}

\subsection{}\label{subsec:Hopf-pair}
Given a coalgebra $C$, we let $\Delta : C \to C \otimes C$, $\varepsilon : C \to \Bbbk$ denote  
the coproduct and the counit, respectively. To present 
the coproduct explicitly, we use the Heyneman-Sweedler notation \cite[Sect.~1.2]{Sw} of the form
$$ \Delta(c) = \sum c_{(1)} \otimes c_{(2)},\ c \in C. $$ 

For a Hopf algebra or superalgebra $A$, $S : A \to A$ denotes the antipode, in addition to
$\Delta : A \to A \otimes A$, $\varepsilon : A \to \Bbbk$ as above. We let
$$ A^+ = \operatorname{Ker} \varepsilon $$
denote the augmentation ideal of $A$. 

A pairing $\langle \hspace{2mm}, \ \rangle : H \times A \to \Bbbk$ of Hopf superalgebras $H, A$ is called a \emph{Hopf pairing},
if we have
\begin{align}\label{Hopf-pairing}
 &\langle xy, a \rangle = \sum \langle x, a_{(1)}\rangle\, \langle y, a_{(2)}\rangle, \notag \\
 &\langle x, ab \rangle = \sum \langle x_{(1)}, a\rangle\, \langle x_{(2)}, b\rangle, \\
 &\langle 1, a \rangle = \varepsilon (a), \quad\langle x, 1 \rangle = \varepsilon (x), \notag
\end{align}
where $x, y \in H$, $a, b \in A$. It then results that
\begin{equation}\label{antipode}
\langle S(x), a \rangle  = \langle x, S(a) \rangle, \quad x\in H,\ a \in A. 
\end{equation}

\begin{definition}\label{def:N-graded}
We say that a Hopf superalgebra $A$ is \emph{$\mathbb{N}$-graded}, where 
$\mathbb{N} =\{ 0, 1, 2, \dots \}$ is the semigroup
of all non-negative integers, if $A$ is $\mathbb{N}$-graded, $A = \bigoplus_{n=0}^{\infty}\, A(n)$, as an
algebra and coalgebra, and if the $\mathbb{N}$-grading gives rise to the original $\mathbb{Z}_2$-grading so that
$$ A_0 = \bigoplus_{i\ge 0}A(2i), \quad  A_1 = \bigoplus_{i\ge 0}A(2i+1). $$
We say that a Hopf pairing $\langle \hspace{2mm}, \ \rangle : H \times A \to \Bbbk$ of $\mathbb{N}$-graded Hopf 
superalgebras $H, A$ is $\mathbb{N}$-\emph{homogeneous}, provided 
\begin{equation}\label{homogeneous}
\langle H(n), A(m) \rangle = 0 ~~ 
\text{if} ~~ n \ne m ~~ \text{in} ~~ \mathbb{N}.
\end{equation}
\end{definition}

A typical example of $\mathbb{N}$-graded Hopf superalgebras is the exterior algebra $\wedge(V)$ of
a vector space $V$, which is given the canonical $\mathbb{N}$-grading, and in which 
every element  $v$ in $V$ is \emph{primitive} \cite[p.199]{Sw}, that is, 
$\Delta(v) = 1 \otimes v + v \otimes 1$. 
Suppose $\dim V < \infty$. Then the canonical pairing $\langle \hspace{2mm}, \ \rangle : V \times V^* \to \Bbbk$ 
extends uniquely to an $\mathbb{N}$-homogeneous Hopf pairing 
$\langle \hspace{2mm}, \ \rangle : \wedge(V) \times \wedge(V^*) \to \Bbbk$,  
which is determined by
\begin{equation}\label{cano-pairing}
\langle v_1 \wedge \dots \wedge v_n, \, w_1 \wedge \dots \wedge w_n \rangle = 
\sum_{ \sigma \in \mathfrak{S}_n }\operatorname{sgn} \sigma \, \langle v_1, w_{ \sigma (1) } 
\rangle \dots \langle v_n, w_{ \sigma (n) } \rangle, 
\end{equation}
where $v_i \in V,\ w_i \in V^*,\ n > 0$. 
Note that this is a non-degenerate pairing; see Remark \ref{rem:sign-rule} below. 

\subsection{}\label{subsec:complete-Hopf} 
Suppose that $A = \bigoplus_{n=0}^{\infty}A(n)$ is an $\mathbb{N}$-graded Hopf 
superalgebra. Set 
\begin{equation}\label{hatA}
\widehat{A}=\prod_{n=0}^{\infty}A(n). 
\end{equation}
As a superalgebra this is the completion of $A$ with respect to the linear topology defined by the 
descending chain $I_n := \bigoplus_{i\ge n}A(i),\ n = 0, 1, \dots$, of super-ideals. 
The \emph{complete tensor product}  
$\widehat{A} ~ \widehat{\otimes} ~ \widehat{A}$ is the completion of the tensor product 
$\widehat{A} ~ \underline{\otimes} ~ \widehat{A}$ of superalgebras with respect the linear topology defined by 
the descending chain $\widehat{I}_n \otimes \widehat{A} + \widehat{A} \otimes \widehat{I}_n$, $n = 0, 1, \dots$, 
of super-ideals, where we set 
$\widehat{I}_n = \prod_{i\ge n}A(i)$. 
See \cite[Sect.~1.5]{T1} for the definition in a more general situation.   
Regard $\Bbbk$ as to be discrete. The structure maps on $A$ are completed to 
$\widehat{\Delta} : \widehat{A} \to \widehat{A} ~ \widehat{\otimes} ~ \widehat{A}$,
$\widehat{\varepsilon} : \widehat{A} \to \Bbbk$, $\widehat{S} : \widehat{A} \to
\widehat{A}$, which together satisfy the familiar Hopf-algebra axioms with $\otimes$ replaced 
by $\widehat{\otimes}$. Therefore, this may be called a \emph{complete topological Hopf superalgebra}. 
One sees that $A$ recovers from $\widehat{A}$ as
\begin{equation}\label{grhatA}
\operatorname{gr} \widehat{A} := \bigoplus_{n=0}^{\infty} \widehat{I}_n/\widehat{I}_{n+1}. 
\end{equation}
Note that an $\mathbb{N}$-homogeneous pairing $\langle \hspace{2mm}, \ \rangle : H \times A \to \Bbbk$ of
$\mathbb{N}$-graded Hopf superalgebras $H$, $A$ extends uniquely to a pairing
\begin{equation}\label{pairing-complete-general} 
\langle \hspace{2mm}, \ \rangle : H \times \widehat{A} \to \Bbbk
\end{equation}
such that for each $x \in H$, 
$\langle x, - \rangle : \widehat{A} \to \Bbbk$ is continuous.  We see that the extended pairing has the properties
which are the same as, or analogous to \eqref{Hopf-pairing}, \eqref{antipode}, \eqref{homogeneous}.

\subsection{}\label{subsec:dual-coalgebra}
Let $A$ be an algebra. Let $A^{\circ}$ denote the \emph{dual coalgebra} of $A$; see \cite[p.109]{Sw}.
By definition it consists of those elements in $A^*$ which annihilate some ideal $I \subset A$
of cofinite dimension, that is, $\dim A/I < \infty$. It follows that $A^{\circ}$ is the directed 
union $\bigcup_I (A/I)^*$ of the finite-dimensional coalgebras $(A/I)^*$. 

\begin{lemma}\label{lem:dual-super-coalgebra}
Suppose that $A$ is a superalgebra. Then $A^{\circ}$ consists of those elements in $A^*$ which 
annihilate some super-ideal $I \subset A$ of cofinite dimension. Therefore, $A^{\circ}$ is a super-coalgebra;
cf.~\cite[p.290]{M}.  
\end{lemma}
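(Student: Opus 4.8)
The plan is to reduce the statement to the ungraded description of the finite dual recalled just above (from \cite[p.109]{Sw}) by means of the \emph{parity automorphism}. Write $\sigma\colon A\to A$ for the algebra automorphism determined by $\sigma|_{A_0}=\mathrm{id}$, $\sigma|_{A_1}=-\mathrm{id}$. Since $\operatorname{char}\Bbbk\ne 2$, a subspace $V\subseteq A$ is $\mathbb{Z}_2$-graded, i.e.\ $V=(V\cap A_0)\oplus(V\cap A_1)$, if and only if $\sigma(V)=V$: when $\sigma(V)=V$ the maps $v\mapsto\tfrac12(v+\sigma(v))$ and $v\mapsto\tfrac12(v-\sigma(v))$ project $V$ onto its even and odd components. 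This equivalence is the one place the standing hypothesis is used.

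First I would prove the asserted characterization of $A^{\circ}$. One inclusion is immediate, since a super-ideal is in particular an ideal, so any functional annihilating a cofinite super-ideal lies in $A^{\circ}$ by the ungraded description. For the converse, let $f\in A^{\circ}$ and choose a cofinite ideal $I$ with $f(I)=0$. As $\sigma$ is an algebra automorphism, $\sigma(I)$ is again a cofinite ideal, and hence so is $J:=I\cap\sigma(I)$, because $A/J$ embeds into $A/I\oplus A/\sigma(I)$, which is finite-dimensional. By construction $\sigma(J)=J$, so $J$ is a cofinite super-ideal, and $f$ annihilates $J\subseteq I$. Thus $A^{\circ}=\bigcup_{J}(A/J)^{*}$, the union taken over cofinite super-ideals $J\subseteq A$, and this union is directed because the intersection of two cofinite super-ideals is again one.

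It then remains to see that $A^{\circ}$ is a super-coalgebra. Each quotient $A/J$ is a finite-dimensional superalgebra, so its linear dual $(A/J)^{*}$ is a finite-dimensional super-coalgebra, the coproduct being the transpose of the multiplication $A/J\,\underline{\otimes}\,A/J\to A/J$ (a morphism of super-vector spaces, hence of degree $0$) and the counit the transpose of the unit. For $J\subseteq J'$ the inclusion $(A/J')^{*}\hookrightarrow(A/J)^{*}$ is a morphism of super-coalgebras, so $A^{\circ}$, being the directed colimit of these, inherits a super-coalgebra structure; concretely $A^{\circ}=\big(A^{\circ}\cap(A_0)^{*}\big)\oplus\big(A^{\circ}\cap(A_1)^{*}\big)$ and the coproduct $A^{\circ}\to A^{\circ}\otimes A^{\circ}$, obtained by restricting the transpose of the multiplication of $A$, respects this $\mathbb{Z}_2$-grading. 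Alternatively one may simply observe that $A^{\circ}$ is $\sigma$-stable, hence graded, and that the ordinary coproduct on $A^{\circ}$ is homogeneous.

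I do not expect a genuine obstacle here: apart from the equivalence ``$\sigma$-stable $\Leftrightarrow$ graded'', which is exactly where $\operatorname{char}\Bbbk\ne 2$ enters, the argument is the classical finite-dual theory together with parity bookkeeping. The only point that deserves to be written out carefully is the passage from an arbitrary cofinite ideal in $\operatorname{Ker}f$ to a cofinite \emph{super}-ideal contained in it, which is handled by the intersection $I\cap\sigma(I)$ above.
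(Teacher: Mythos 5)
Your proof is correct, and it reaches the conclusion by a different mechanism than the paper. The paper first invokes Sweedler's pullback characterization of the finite dual ($A^{\circ}$ is the preimage of $A^{*}\otimes A^{*}$ under the transpose of multiplication) to conclude that $A^{\circ}$ is a graded subspace of $A^{*}$, and only then argues elementwise: a \emph{homogeneous} $f$ annihilating a cofinite ideal $I$ also annihilates the smallest super-ideal containing $I$ (in effect $I+\sigma(I)$, in your notation), which is still cofinite. You instead work with the parity automorphism $\sigma$ from the start and replace the sum by the intersection $J=I\cap\sigma(I)$; since $J\subseteq I$, this works for an \emph{arbitrary} element of $A^{\circ}$ with no homogeneity assumption, so you get the characterization of $A^{\circ}$ in one step and deduce gradedness afterwards as a consequence of $A^{\circ}=\bigcup_{J}(A/J)^{*}$ over cofinite super-ideals. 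Your route buys independence from the pullback description (you use only the definition of $A^{\circ}$ as a directed union) and makes the role of $\operatorname{char}\Bbbk\neq 2$ completely explicit in the single equivalence ``$\sigma$-stable $\Leftrightarrow$ graded''; the paper's route is marginally shorter because the pullback lemma delivers gradedness for free and the coalgebra structure then needs no separate discussion of compatibility along the directed system. Both arguments are sound.
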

\begin{proof}
By \cite[Proposition~6.0.3]{Sw}, $A^{\circ}$ coincides with the pullback of $A^* \otimes A^*$ along the dual
map $A^* \to (A \otimes A)^*$ of the product on $A$. Since $A^* \otimes A^*$ is a super-vector subspace
of $(A \otimes A)^*$, it follows that $A^{\circ}$ is a super-vector subspace of $A^*$. 
The lemma follows, since 
if a homogeneous element in $A^*$ annihilates some ideal $I$ of $A$, then it annihilates the smallest 
super-ideal of $A$ including $I$. 
\end{proof}

\begin{corollary}\label{cor:dual-Hopf-superalgebra}
Suppose that $A$ is a Hopf superalgebra. 
Equipped with the ordinary dual algebra and coalgebra structures, 
$A^{\circ}$ forms a Hopf superalgebra. 
If $A$ is super-commutative (resp., super-cocommutative),
then $A^{\circ}$ is super-cocommutative (resp., super-commutative),
\end{corollary}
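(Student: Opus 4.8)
The plan is to deduce everything from the well-known non-super statement together with Lemma~\ref{lem:dual-super-coalgebra}. Recall that for an ordinary Hopf algebra $A$, the dual coalgebra $A^\circ$, equipped with the transpose of the product of $A$ as coproduct and the transpose of the coproduct of $A$ as product, is a Hopf algebra; the key point making this work is that $(A\otimes A)^\circ = A^\circ \otimes A^\circ$ (Sweedler, \cite[Prop.~6.0.3, Thm.~6.0.5]{Sw}), so that the transpose of $\Delta_A:A\to A\otimes A$ does land inside $A^\circ\otimes A^\circ\subseteq (A\otimes A)^*$ and can serve as a multiplication on $A^\circ$. First I would observe that, by Lemma~\ref{lem:dual-super-coalgebra}, $A^\circ$ is a super-vector subspace of $A^*$ and a super-coalgebra; moreover the inclusion $A^\circ\otimes A^\circ\hookrightarrow (A\otimes A)^\circ$ is a map of super-vector spaces, since on the right the $\mathbb{Z}_2$-grading of $(A\otimes A)^\circ$ is induced from that of $(A\otimes A)^*$, which in turn comes from the grading of $A\otimes A$ as a super-vector space.

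Next I would check that the algebra and coalgebra structure maps of $A^\circ$ are morphisms of super-vector spaces, hence make $A^\circ$ a superalgebra and super-coalgebra. The multiplication $A^\circ\otimes A^\circ\to A^\circ$ is the transpose of $\Delta_A$ restricted appropriately; since $\Delta_A$ is even (it is a morphism of super-vector spaces, being a morphism of Hopf superalgebras), its transpose is even, and it sends $A^\circ\otimes A^\circ$ into $A^\circ$ by the non-super argument recalled above. Likewise the unit of $A^\circ$ is $\varepsilon_A\in A^\circ$, which is even; the counit of $A^\circ$ is evaluation at $1_A$, which is even; and the comultiplication of $A^\circ$ is the transpose of the (even) multiplication of $A$, hence even, landing in $A^\circ\otimes A^\circ$ by Lemma~\ref{lem:dual-super-coalgebra}. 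Then the bialgebra-compatibility identities and the antipode axioms for $A^\circ$ are literally the transposes of those for $A$: each such identity is an equality of linear maps built out of $\Delta,\varepsilon,m,u,S$ and the symmetry $c$, and transposing turns $c$ on $A$-side into $c$ on $A^\circ$-side by the very definition of the tensor-product pairing (cf.\ Lemma~\ref{lem:tensored-pairing}), so no extra signs intervene beyond those already present. Thus $A^\circ$ is a Hopf superalgebra. I would phrase this by saying that the pairing $\langle\ ,\ \rangle:A^\circ\times A\to\Bbbk$ is a Hopf pairing in the sense of \eqref{Hopf-pairing}–\eqref{antipode}, and that all the structure of $A^\circ$ is determined by requiring it to be so.

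For the last assertion, suppose $A$ is super-commutative, i.e.\ $m_A\circ c_{A,A}=m_A$. Transposing and using that $c$ transposes to $c$ (Lemma~\ref{lem:tensored-pairing}), the coproduct $\Delta_{A^\circ}$ of $A^\circ$ satisfies $c_{A^\circ,A^\circ}\circ\Delta_{A^\circ}=\Delta_{A^\circ}$, which is exactly super-cocommutativity of $A^\circ$. The statement for super-cocommutative $A$ is dual: super-cocommutativity $c_{A,A}\circ\Delta_A=\Delta_A$ transposes to $m_{A^\circ}\circ c_{A^\circ,A^\circ}=m_{A^\circ}$, i.e.\ $A^\circ$ is super-commutative.

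The main obstacle — really the only nontrivial point — is justifying that the transpose of $\Delta_A$ genuinely maps $A^\circ\otimes A^\circ$ into itself and that no hidden sign obstructs the compatibility identities. The first half is handled exactly as in the non-super case: if $f,g\in A^\circ$ annihilate cofinite super-ideals $I,K$ respectively, then $f\otimes g$, viewed inside $(A\otimes A)^*$, annihilates $I\otimes A + A\otimes K$, which is a cofinite super-ideal, so $f\otimes g\in (A\otimes A)^\circ$, and one invokes $(A\otimes A)^\circ=A^\circ\otimes A^\circ$; superness changes nothing because all the ideals in sight can be taken homogeneous by Lemma~\ref{lem:dual-super-coalgebra}. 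The second half is a matter of bookkeeping: every axiom is an identity of composites of the structure morphisms, each of which is an even map of super-vector spaces, and the only place a sign could enter is through $c$, which transposes to $c$ with the correct sign by Lemma~\ref{lem:tensored-pairing}; so the transpose of each axiom for $A$ is precisely the corresponding axiom for $A^\circ$. I would remark that this is the super-analogue of \cite[p.~290]{M} and fill in only as much of this sign-bookkeeping as a careful reader would want.
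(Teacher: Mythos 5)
Your proposal is correct and follows essentially the same route as the paper: both rest on Lemma~\ref{lem:dual-super-coalgebra} to see that $A^{\circ}$ is a super-subspace of $A^*$, on the identification $(A\,\underline{\otimes}\,A)^{\circ}=A^{\circ}\,\underline{\otimes}\,A^{\circ}$ adapted from Sweedler, and on Lemma~\ref{lem:tensored-pairing} to see that the super-symmetry transposes to the super-symmetry, after which the Hopf axioms and the (co)commutativity statements dualize without extra signs. The paper phrases this as $A\mapsto A^{\circ}$ being a tensor functor preserving the super-symmetry, while you spell out the axiom-by-axiom transposition, but the content is the same.
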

\begin{proof}
We see from Lemma \ref{lem:dual-super-coalgebra} that 
$A \mapsto A^{\circ}$ gives a contravariant functor from the category of superalgebras
to the category of super-coalgebras. 
Given superalgebras $A$, $B$, define  
$\varphi : A^{\circ}~\underline{\otimes}~B^{\circ} \to (A~\underline{\otimes}~B)^{\circ}$
by 
$$\varphi(f \otimes g)(a \otimes b) = f(a)g(b), \quad f \in A^{\circ}, g \in B^{\circ}, a\in A, b\in B.$$ 
Slightly modifying the proof of \cite[Lemma 6.0.1~b)]{Sw}, one sees that $\varphi$ is a linear isomorphism, by which
we will identify so as 
$A^{\circ}\, \underline{\otimes}~A^{\circ} = (A~\underline{\otimes}~A)^{\circ}$. 
Lemma \ref{lem:tensored-pairing} implies $(c_{A,B})^{\circ} = c_{B^{\circ},A^{\circ}}$, from which 
we see that $\varphi$ is an isomorphism
of super-coalgebras, and it makes $A \mapsto A^{\circ}$ into a tensor functor preserving the super-symmetry. 
Just as proving
the corresponding result in the non-super situation (see \cite[pp.122--123]{Sw}), we see that if $A$ is a Hopf 
superalgebra, then $A^{\circ}$ forms a Hopf superalgebra with respect to 
$\Delta^{\circ} : A^{\circ} \, \underline{\otimes} ~ A^{\circ} = (A ~ \underline{\otimes} ~ A)^{\circ} \to A^{\circ}$, $\varepsilon ^{\circ} : \Bbbk = \Bbbk^{\circ} \to A^{\circ}$, $S^{\circ} : A^{\circ} \to A^{\circ}$. 
The last assertion is now easy to see. 
\end{proof}

\begin{rem}\label{rem:sign-rule} 
Looking at the equations \eqref{Hopf-pairing} which define Hopf pairings, one might have felt it strange that the 
super-symmetry is not involved. But, Corollary \ref{cor:dual-Hopf-superalgebra} justifies it. One sees now that 
a pairing 
$\langle \hspace{2mm}, \ \rangle : H \times A \to \Bbbk$ of Hopf superalgebras $H, A$ is a Hopf pairing if and
only if $x \mapsto \langle x, -\rangle$ (or $a \mapsto \langle -, a\rangle$) gives a Hopf superalgebra map
$H \to A^{\circ}$ (or $A \to H^{\circ}$). For example, the 
non-degenerate Hopf pairing \eqref{cano-pairing} induces an isomorphism
$\wedge(V^*) \overset{\simeq}{\longrightarrow} (\wedge(V))^*$ of $\mathbb{N}$-graded Hopf superalgebras,
if $\dim V < \infty$.
\end{rem}

\subsection{}\label{subsec:corad}
Let $C$ be a super-coalgebra, or in other words, a $\mathbb{Z}_2$-graded coalgebra. 
One can construct the smash (or semi-direct) coproduct $\mathbb{Z}_2 \cmdblackltimes C$; 
this is the coalgebra constructed 
on the vector space $\Bbbk \mathbb{Z}_2 \otimes C$ with respect to the structure
\begin{equation}\label{smash-coproduct1}
\Delta(i \otimes c) = \sum \, (i \otimes c_{(1)})\otimes ((|c_{(1)}| + i) \otimes c_{(2)}),\quad
\varepsilon (i \otimes c) = \varepsilon(c),
\end{equation}
where $i = 0, 1$ in $\mathbb{Z}_2$, and $c \in C$. Note that a $C$-super-comodule is precisely a 
$\mathbb{Z}_2 \cmdblackltimes C$-comodule. 

Let $\operatorname{Corad} C$ be the coradical, that is, the (direct) sum of all 
simple subcoalgebras of $C$. Since it is stable under the 
$\mathbb{Z}_2$-action which naturally corresponds to the $\mathbb{Z}_2$-grading, $\operatorname{Corad} C$
is a super-subcoalgebra, so that $\mathbb{Z}_2 \cmdblackltimes \operatorname{Corad} C$ is constructed.

\begin{lemma}\label{lem:corad}
We have
$$ \mathbb{Z}_2 \cmdblackltimes \operatorname{Corad} C = \operatorname{Corad}(\mathbb{Z}_2  \cmdblackltimes C).$$
\end{lemma}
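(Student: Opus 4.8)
The plan is to prove the two inclusions separately, using the fact that the coradical of a coalgebra can be detected via irreducible comodules, together with the compatibility between $C$-super-comodules and $\mathbb{Z}_2 \cmdblackltimes C$-comodules noted after \eqref{smash-coproduct1}. First I would recall that $\operatorname{Corad} C$ is the sum of all simple subcoalgebras, equivalently the largest cosemisimple subcoalgebra, and that a subcoalgebra $D \subseteq C$ lies in $\operatorname{Corad} C$ iff $D$ is cosemisimple. Since $\operatorname{Corad} C$ is a super-subcoalgebra (being $\mathbb{Z}_2$-stable), the smash coproduct $\mathbb{Z}_2 \cmdblackltimes \operatorname{Corad} C$ is a subcoalgebra of $\mathbb{Z}_2 \cmdblackltimes C$; so for the inclusion $\subseteq$ it suffices to check that $\mathbb{Z}_2 \cmdblackltimes D$ is cosemisimple whenever $D$ is a cosemisimple super-coalgebra. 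This I would verify by observing that $\mathbb{Z}_2 \cmdblackltimes D$-comodules coincide with $D$-super-comodules, and a $D$-super-comodule is in particular a $D$-comodule (forgetting the grading), hence semisimple as a $D$-comodule when $D$ is cosemisimple; but semisimplicity of the underlying $D$-comodule forces semisimplicity in the category of $D$-super-comodules by a standard averaging/$\mathbb{Z}_2$-isotypic argument (any super-subcomodule is a $D$-subcomodule, admits a $D$-comodule complement, and one can arrange the complement to be homogeneous). Since the category of $\mathbb{Z}_2 \cmdblackltimes D$-comodules is thus semisimple, $\mathbb{Z}_2 \cmdblackltimes D$ is cosemisimple, giving $\mathbb{Z}_2 \cmdblackltimes \operatorname{Corad} C \subseteq \operatorname{Corad}(\mathbb{Z}_2 \cmdblackltimes C)$.

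For the reverse inclusion, I would use the universal/minimality characterization of the coradical: $\operatorname{Corad}(\mathbb{Z}_2 \cmdblackltimes C)$ is the smallest subcoalgebra $E$ such that the quotient $(\mathbb{Z}_2 \cmdblackltimes C)/E^{\wedge ?}$... more simply, every simple subcoalgebra of $\mathbb{Z}_2 \cmdblackltimes C$ must be shown to lie in $\mathbb{Z}_2 \cmdblackltimes \operatorname{Corad} C$. Here I would exploit the coalgebra projection $\pi : \mathbb{Z}_2 \cmdblackltimes C \to \Bbbk\mathbb{Z}_2$ (sending $i \otimes c \mapsto \varepsilon(c)\, i$) and the coalgebra map $\mathrm{id} \otimes \varepsilon$-type structure: in fact $\mathbb{Z}_2 \cmdblackltimes C$ is a $\Bbbk\mathbb{Z}_2$-comodule coalgebra, and as a coalgebra it is a crossed/smash construction over the cosemisimple Hopf algebra $\Bbbk\mathbb{Z}_2$ (recall $\operatorname{char}\Bbbk \ne 2$, so $\Bbbk\mathbb{Z}_2$ is cosemisimple). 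The cleanest route: a simple subcoalgebra $S$ of $\mathbb{Z}_2 \cmdblackltimes C$ corresponds to an irreducible $\mathbb{Z}_2 \cmdblackltimes C$-comodule, i.e. an irreducible $C$-super-comodule $M$; its underlying $C$-comodule decomposes into simple $C$-subcomodules, each living in $\operatorname{Corad} C$, so the $C$-coefficient space of $M$ lies in $\operatorname{Corad} C$, whence $S \subseteq \mathbb{Z}_2 \cmdblackltimes \operatorname{Corad} C$. Summing over all simple subcoalgebras gives $\operatorname{Corad}(\mathbb{Z}_2 \cmdblackltimes C) \subseteq \mathbb{Z}_2 \cmdblackltimes \operatorname{Corad} C$.

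Alternatively, and perhaps more transparently, I would argue entirely on coefficient coalgebras: for any coalgebra $E$, $\operatorname{Corad} E = \sum_{M} \operatorname{cf}(M)$ where $M$ ranges over simple $E$-comodules and $\operatorname{cf}(M)$ is the coefficient (or coalgebra of matrix elements) space. Applying this to $E = \mathbb{Z}_2 \cmdblackltimes C$ and translating simple $\mathbb{Z}_2 \cmdblackltimes C$-comodules into simple $C$-super-comodules, together with the observation that for a $C$-super-comodule $M$ the coefficient space inside $\mathbb{Z}_2 \cmdblackltimes C$ is exactly $\mathbb{Z}_2 \cmdblackltimes (\text{coefficient space of } M \text{ in } C)$, reduces the identity to: the coefficient spaces of simple $C$-super-comodules span the same subcoalgebra of $C$ as the coefficient spaces of simple $C$-comodules. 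This is where the heart of the matter lies, and I would settle it by the semisimplicity comparison described above: over $\operatorname{Corad} C$, super-comodules are semisimple, and every simple $C$-comodule supported on a simple subcoalgebra $D$ either is already homogeneous or its even and odd parts are $D$-subcomodules, so in all cases $D$ is the coefficient space of a simple $C$-super-comodule and conversely.

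The main obstacle I anticipate is the passage between the two module categories precisely at the level of \emph{simple} objects: a simple $C$-super-comodule need not be simple as a $C$-comodule, and a simple $C$-comodule need not be homogeneous, so one must be careful that no simple subcoalgebra of $C$ is missed or spuriously added. The resolution is the averaging argument using the $\mathbb{Z}_2$-grading (valid since $2$ is invertible in $\Bbbk$): the forgetful functor from $C$-super-comodules to $C$-comodules and its "smash" adjoint $M \mapsto \Bbbk\mathbb{Z}_2 \otimes M$ together show that a $C$-super-comodule is semisimple iff its underlying $C$-comodule is, and that the super-coalgebra generated by the coefficients is insensitive to whether one works with or without the grading. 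Once this comparison is in place, both inclusions follow formally from $\operatorname{Corad}(-) = \sum \operatorname{cf}(\text{simples})$ and the evident equality $\mathbb{Z}_2 \cmdblackltimes \operatorname{cf}_C(M) = \operatorname{cf}_{\mathbb{Z}_2 \cmdblackltimes C}(M)$.
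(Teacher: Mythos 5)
Your argument is correct, but it takes a genuinely different route from the paper. The paper's proof reduces at once to the case $\dim C<\infty$ (since $C$ is a directed union of finite-dimensional super-subcoalgebras), passes to the dual algebra $R=C^*$ with its $\mathbb{Z}_2$-action, and then simply dualizes the well-known identity $\mathbb{Z}_2\ltimes\operatorname{Rad}R=\operatorname{Rad}(\mathbb{Z}_2\ltimes R)$ for skew group rings, using $(\operatorname{Corad}C)^*=R/\operatorname{Rad}R$; the hypothesis $\operatorname{char}\Bbbk\ne2$ enters only through that quoted identity. You instead stay on the comodule side and in effect re-prove that identity in its dual form: the inclusion $\mathbb{Z}_2\cmdblackltimes\operatorname{Corad}C\subseteq\operatorname{Corad}(\mathbb{Z}_2\cmdblackltimes C)$ via the Maschke-type averaging argument (this is exactly where $2^{-1}\in\Bbbk$ is used, matching the paper), and the reverse inclusion via the observation that a simple $C$-super-comodule has semisimple underlying $C$-comodule, so its coefficient space lies in $\operatorname{Corad}C$. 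What your approach buys is a self-contained argument that needs neither the reduction to finite dimension nor the external reference for radicals of skew group algebras; what it costs is having to carry out the semisimplicity comparison by hand. Two small points to tighten: (i) the claim that the underlying $C$-comodule of a simple super-comodule $M$ is semisimple deserves a line (the socle of $M$ as a $C$-comodule is stable under the grading involution, hence is a super-subcomodule, hence all of $M$); (ii) the ``evident equality'' $\mathbb{Z}_2\cmdblackltimes\operatorname{cf}_C(M)=\operatorname{cf}_{\mathbb{Z}_2\cmdblackltimes C}(M)$ is not quite right for a single simple $M$ --- only the parities actually occurring contribute, and one recovers the full space $\Bbbk\mathbb{Z}_2\otimes\operatorname{cf}_C(M)$ only after also taking the parity shift of $M$ into account --- but this does not affect the sum over all simples, so it is a cosmetic rather than a substantive issue.
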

\begin{proof}
Since $C$ is a directed union of finite-dimensional super-subcoalgebras, 
we may and do assume $\dim C < \infty$. 
Set $R = C^*$, the dual algebra of $C$. Let $\operatorname{Rad} R$ be the Jacobson radical of $R$. 
Then, $(\operatorname{Corad} C)^* = R/\operatorname{Rad} R$. 
On $R$,\ $\mathbb{Z}_2$ acts as algebra-automorphisms by transposing  
the $\mathbb{Z}_2$-action on $C$, and $\operatorname{Rad} R$ is stable under the action; 
there arise, therefore, the semi-direct products below. 
The desired result follows by dualizing the well-known equality
\begin{equation}\label{Rad}
\mathbb{Z}_2 \ltimes \operatorname{Rad} R = \operatorname{Rad}(\mathbb{Z}_2 \ltimes R)
\end{equation}
in $\mathbb{Z}_2 \ltimes R$. 
\end{proof}

\begin{definition}\label{def:irreducible}
A Hopf superalgebra $A$ is said to be \emph{irreducible}, if $\operatorname{Corad} A = \Bbbk$; see \cite[p.157]{Sw}. 
By Lemma \ref{lem:corad}, this is equivalent to saying that the simple $A$-super-comodules are exhausted 
by the purely even or odd, trivial $A$-comodule $\Bbbk$. 
\end{definition}

Just as in the non-super situation (see \cite[Sect.~9.1]{Sw}), one sees that given a Hopf superalgebra $A$,
the largest subcoalgebra $A^1$ of $A$ such that $\operatorname{Corad} A^1 = \Bbbk$ is an irreducible 
Hopf super-subalgebra of $A$. We call this $A^1$ the \emph{irreducible component} of $A$ containing 1. 

\begin{definition}\label{hyper-superalgebra}
A \emph{hyper-superalgebra} is an irreducible super-cocommutative Hopf superalgebra.
\end{definition}

This is the direct generalization of the notion of \emph{hyperalgebras} \cite{T-0}, which 
are defined to be irreducible cocommutative Hopf algebras. We let
\begin{equation}\label{CCHSA-HySA} 
\mathsf{CCHSA},\quad \mathsf{HySA}, \quad \mathsf{LSA}  
\end{equation} 
denote the category of super-cocommutative Hopf superalgebras, the full subcategory
consisting of all hyper-superalgebras, and the category of Lie superalgebras, respectively. 
Given $L \in \mathsf{LSA}$, the universal enveloping superalgebra $U(L)$ uniquely forms a Hopf 
superalgebra in which every element in $L$ is primitive. Generated by primitives $L$, 
this $U(L)$ is a hyper-superalgebra; see \cite[Exercise 2) on p.224]{Sw}. 
Kostant's Theorem states that if 
$\operatorname{char} \Bbbk = 0$, 
then $L \mapsto U(L)$ gives a category equivalence
\begin{equation}\label{Kostant}
\mathsf{LSA} \approx \mathsf{HySA}. 
\end{equation}

\subsection{}\label{subsec:supergroup}
The category of super-commutative superalgebras has $\underline{\otimes}$ as coproduct. 
A representable group-valued
functor $G$ defined on that category is called an \emph{affine supergroup scheme}, which is represented
uniquely by a super-commutative Hopf superalgebra, say $A$, and is denoted by 
$$G = \operatorname{SSp} A. $$ 
Therefore, the category of affine supergroup schemes is anti-isomorphic to the category of super-commutative
Hopf superalgebras. We say that $G$ is \emph{purely even}, if $A$ is so, that is, if $A = A_0$.  

A \emph{left (resp., right) rational supermodule} over an affine supergroup scheme $G = \operatorname{SSp} A$
is by definition a right (resp., left) $A$-super-comodule. 

\begin{definition}\label{def:unipotent-reductive}
Let $G = \operatorname{SSp} A$ be an affine supergroup scheme.

(1) $G$ is said to be \emph{unipotent} if the simple rational $G$-supermodules are exhausted by the purely even or odd, 
trivial $G$-module $\Bbbk$, or equivalently if $A$ is irreducible. 

(2) $G$ is said to be \emph{linearly reductive} if every rational $G$-module is semisimple. By Lemma \ref{lem:corad},
this is equivalent to saying that $A$ is cosemisimple, that is, $A = \operatorname{Corad} A$. 
\end{definition} 

An affine supergroup scheme $G = \operatorname{SSp} A$ is said to be \emph{algebraic} if $A$ is finitely generated. 

\begin{definition}\label{def:affine}
A Hopf superalgebra is said to be \emph{affine}, if it is super-commutative and finitely
generated. We denote by $\mathsf{AHSA}$ the category of affine Hopf superalgebras. 
\end{definition}

The category of algebraic affine supergroup schemes is anti-isomorphic to $\mathsf{AHSA}$. 

\section{Super-cocommutative Hopf superalgebras and dual Harish-Chandra pairs}\label{sec:DHCP}

\subsection{}\label{subsec:DHCP1} 
Given a Hopf superalgebra $A$,  we let
$$ P(A) = \{ u \in A \mid \Delta(u) = 1 \otimes u + u \otimes 1 \} $$
denote the super-vector subspace of $A$ consisting of all primitives; this forms a 
Lie superalgebra with respect to the super-commutator
\begin{equation}\label{super-commutator}
[u, v] := uv - (-1)^{|u||v|} vu,
\end{equation}
where $u, v$ are homogeneous elements in $P(A)$. This notation may and will soon be used for
ordinary Hopf algebras, too. 

Let $J$ be a cocommutative Hopf algebra. Then $P(J)$ is stable under the right adjoint $J$-action
$$ u \mapsto \sum S(a_{(1)})u a_{(2)},\quad u \in P(J),\ a \in J.$$
Let $V$ be a right $J$-module. We denote the $J$-action on $V$ by $v \triangleleft a$,
where $v \in V, a \in J$. 
 
\begin{definition}\label{def:DHCP}
$(J, V)$ is called a \emph{dual Harish-Chandra pair}, if it is equipped, as its structure, with a bilinear map 
$[ \hspace{2mm}, \ ] : V \times V \to P(J)$ with values in the Lie algebra of all primitives in $J$,
such that
\begin{itemize}
\item[(a)] $\sum [u \triangleleft a_{(1)}, v \triangleleft a_{(2)}] = \sum S(a_{(1)})[u, v]a_{(2)},$
\item[(b)] $[u, v] = [v, u],$ 
\item[(c)] $v \triangleleft [v, v] = 0$ 
\end{itemize}
for all $u, v \in V,\ a \in J$. Note that Condition (c), applied to $u + v + w$ and combined with (b), implies
\begin{itemize}
\item[(d)] $u \triangleleft [v, w] + v \triangleleft [w, u] + w \triangleleft [u, v] = 0,\quad u, v, w \in V$.
\end{itemize}
Conversely, (d) implies (c) provided $\operatorname{char} \Bbbk \ne 3$.

A dual Harish-Chandra pair $(J, V)$ is said to be \emph{irreducible}, if $J$ is irreducible; see 
Definition \ref{def:irreducible}.   
 
A \emph{morphism} $(J, V) \to (J', V')$ of dual Harish-Chandra pairs is a pair of a
Hopf algebra map $f : J \to J'$ and a linear map $g : V \to V'$ such that
$$ g(v\triangleleft a) = g(v) \triangleleft' f(a),\quad f([u, v]) = [g(u), g(v)]' $$
for all $u, v \in V, \ a \in J$. The dual Harish-Chandra pairs and their morphisms naturally form a
category $\mathsf{DHCP}$. We let $\mathsf{iDHCP}$ denote the 
full subcategory consisting of all irreducible dual Harish-Chandra pairs.
\end{definition} 

\begin{rem}\label{rem:DHCPzero-char} 
(1) Let $L$ be a Lie algebra. Suppose that $V$ is a right $L$-Lie module, or equivalently 
a right module over the universal enveloping algebra $U(L)$ of $L$. 
Given a bilinear map $[ \hspace{2mm}, \ ] : V \times V \to L$,
extend it, as well as the bracket on $L$, to $L \oplus V$, by defining $[v, a] = -[a, v] := v \triangleleft a$ for $a \in L,
v \in V$. Assume $\operatorname{char} \Bbbk \ne 3$, adding to the original assumption 
$\operatorname{char} \Bbbk \ne 2$. 
Then we see that $(U(L), V)$ together with the given $[ \hspace{2mm}, \ ]$ is a dual Harish-Chandra pair if and only
if $L \oplus V$ forms a Lie superalgebra with respect to the extended bracket, in which $L$ is even, and $V$ odd.  
Indeed, in the definition above, the equations in Conditions (a), (d) are the same as the Jacobi identity, 
under (b). Thus, every Lie superalgebra $L_0 \oplus L_1$ gives rise to an irreducible dual Harish-Chandra 
pair $(U(L_0), L_1)$. 

(2) Suppose $\operatorname{char} \Bbbk = 0$.  
By Kostant's Theorem \cite[Theorem 13.0.1]{Sw} (see also \eqref{Kostant}), every hyperalgebra (that is, irreducible
cocommutative Hopf algebra) $J$ is of the form $U(L)$, where $L = P(J)$. It follows from Part 1 above that 
every irreducible Harish-Chandra pair arises uniquely from a Lie superalgebra.  

(3) Suppose $\operatorname{char} \Bbbk = 0$. Let $J$ be a cocommutative Hopf algebra. Suppose that $J$ is
pointed \cite[p.157]{Sw}; this necessarily holds if $\Bbbk$ is algebraically closed. Set $L = P(J)$, and let $G$
denote the group of all grouplikes in $J$.  
Again by Kostant's Theorem \cite[Theorem 8.1.5]{Sw}, $J = \Bbbk G \ltimes U(L)$, the smash 
(or semi-direct) product of $U(L)$ by the group algebra $\Bbbk G$. In particular, $L$ is stable under the 
adjoint $G$-action $a \mapsto a^g := g^{-1}ag$, where $a \in L, g \in G$. 
Suppose that we have an irreducible dual Harish-Chandra pair $(U(L), V)$; it arises uniquely from a Lie superalgebra
structure on $L \oplus V$, as was just seen.  
We see that $(J, V)$ together with the restricted bracket $[ \hspace{2mm}, \ ] : V \times V \to L$ on 
the Lie superalgebra $L \oplus V$
turns into a dual Harish-Chandra pair if and only if $V$ is a right $\Bbbk G$-module
such that $[v, a] \triangleleft g = [v\triangleleft g,\ a^g]$, where $v \in V, a \in L, g \in G$, and 
$[ \hspace{2mm}, \ ] : V \times V \to L$ is $G$-equivariant, where $G$ acts on $L$ by the adjoint action. 
\end{rem} 

Remark \ref{rem:DHCPzero-char}(2) shows the following.

\begin{prop}\label{prop:iDHCP}
If $\operatorname{char} \Bbbk =0$, then $L \mapsto (U(L_0), L_1)$ gives a category equivalence
$\mathsf{LSA} \approx \mathsf{iDHCP}$. 
\end{prop}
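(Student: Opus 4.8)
The plan is to deduce Proposition~\ref{prop:iDHCP} from the parts of Remark~\ref{rem:DHCPzero-char} already established, by exhibiting the assignment $L \mapsto (U(L_0), L_1)$ as a functor $\mathsf{LSA} \to \mathsf{iDHCP}$, and then checking that it is fully faithful and essentially surjective. First I would make the functoriality explicit: for a Lie superalgebra $L = L_0 \oplus L_1$, Remark~\ref{rem:DHCPzero-char}(1) shows that $(U(L_0), L_1)$ with the restricted bracket $[\,,\,] : L_1 \times L_1 \to L_0 \subset P(U(L_0))$ is an irreducible dual Harish-Chandra pair (irreducible because $U(L_0)$ is a hyperalgebra). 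Given a morphism $\varphi : L \to L'$ of Lie superalgebras, $\varphi$ restricts to $\varphi_0 : L_0 \to L_0'$ and $\varphi_1 : L_1 \to L_1'$, and $\varphi_0$ induces a Hopf algebra map $U(\varphi_0) : U(L_0) \to U(L_0')$; the pair $(U(\varphi_0), \varphi_1)$ is a morphism of dual Harish-Chandra pairs precisely because $\varphi$ is $\mathbb{Z}_2$-graded and bracket-preserving. Compatibility with composition and identities is immediate, so we have a well-defined functor.

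Next I would prove full faithfulness. Fix Lie superalgebras $L, L'$. A morphism $(U(L_0), L_1) \to (U(L_0'), L_1')$ consists of a Hopf algebra map $f : U(L_0) \to U(L_0')$ and a linear map $g : L_1 \to L_1'$ satisfying the module- and bracket-compatibility conditions of Definition~\ref{def:DHCP}. Since $\operatorname{char}\Bbbk = 0$, Kostant's theorem (the non-super version, \eqref{Kostant} restricted to ordinary hyperalgebras, or \cite[Theorem 13.0.1]{Sw}) gives that $f$ restricts to a Lie algebra map $f_0 := f|_{P(U(L_0))} = f|_{L_0} : L_0 \to L_0'$, and $f = U(f_0)$ is determined by $f_0$; conversely every Lie algebra map $L_0 \to L_0'$ arises this way. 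The compatibility conditions on $(f, g)$ then say exactly that $\varphi := f_0 \oplus g : L \to L'$ preserves the $\mathbb{Z}_2$-grading and the bracket (the mixed brackets $[L_0, L_1]$ and $[L_1, L_0]$ correspond to the $L_0$-module structure on $L_1$, and $[L_1, L_1]$ corresponds to the given bilinear map). Hence morphisms on both sides are in natural bijection, so the functor is fully faithful.

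Finally, essential surjectivity is precisely the content of Remark~\ref{rem:DHCPzero-char}(2): every irreducible dual Harish-Chandra pair $(J, V)$ has $J$ irreducible cocommutative, hence by Kostant's theorem $J = U(L)$ with $L = P(J)$, and the structure $[\,,\,] : V \times V \to P(J) = L$ together with the $J$-module (equivalently $L$-Lie-module) structure on $V$ makes $L \oplus V$ a Lie superalgebra with even part $L$ and odd part $V$ (again using Remark~\ref{rem:DHCPzero-char}(1), where the characteristic hypothesis $\operatorname{char}\Bbbk \ne 3$ is subsumed by $\operatorname{char}\Bbbk = 0$). Thus $(J, V) = (U(L_0), L_1)$ up to isomorphism, and in fact the reconstruction is canonical, giving an inverse functor on objects. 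The main obstacle, such as it is, is purely bookkeeping: one must be careful that the conditions (a)--(c) of Definition~\ref{def:DHCP} translate line-by-line into the Jacobi identity for the extended bracket on $L \oplus V$, and that the adjoint $J$-action on $P(J)$ used in condition (a) restricts, on $L = P(J)$ acting via $U(L)$, to the ordinary Lie bracket $[\,,\,] : L_0 \times L_0 \to L_0$ composed appropriately --- but all of this is already done in Remark~\ref{rem:DHCPzero-char}, so the proof reduces to invoking it together with Kostant's theorem.
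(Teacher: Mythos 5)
Your proposal is correct and follows essentially the same route as the paper, which disposes of the proposition in one line by citing Remark~\ref{rem:DHCPzero-char}(2); you have merely made explicit the functoriality, the full-faithfulness step (via Kostant's theorem identifying Hopf algebra maps $U(L_0)\to U(L_0')$ with Lie algebra maps $L_0\to L_0'$), and the essential surjectivity that the paper leaves implicit. No gaps.
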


Suppose $H \in \mathsf{CCHSA}$. We define $\underline{H}$, $V_H$ as in \cite[p.291]{M}, as follows:
$$ \underline{H} = \Delta^{-1}(H_0 \otimes H_0), \quad V_H =P(H)_1. $$
Thus, $V_H$ consists of all odd
primitives in $H$, and $\underline{H}$ is seen to be the largest ordinary subcoalgebra of $H$; it is indeed
a Hopf subalgebra. 
One sees that the adjoint action
$$ v \triangleleft a = \sum S(a_{(1)}) v a_{(2)}, \quad v \in V_H, a \in \underline{H} $$
defines a right $\underline{H}$-module structure on $V_H$. 
The super-commutator \eqref{super-commutator} restricted to $V_H$ defines a bilinear form 
$$ [ \hspace{2mm}, \ ] : V_H \times V_H \to P(\underline{H}) $$
with values in $P(\underline{H})$, since the purely even primitives in $H$ constitute $P(\underline{H})$. 

\begin{prop}\label{prop:functor-underline}
$(\underline{H}, V_H)$, given the bilinear map above, is a dual Harish-Chandra pair. This construction
is functorial, so that $H \mapsto (\underline{H}, V_H)$ gives a functor $\mathsf{CCHSA} \to \mathsf{DHCP}$. 
\end{prop}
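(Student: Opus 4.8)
The plan is to check directly the three axioms (a)--(c) of Definition~\ref{def:DHCP}; everything else --- that $\underline{H}$ is a Hopf subalgebra, that the adjoint action makes $V_H$ a right $\underline{H}$-module, and that $[\ ,\ ]$ lands in $P(\underline{H})$ --- has already been recorded in the paragraph preceding the statement. Two observations will be used throughout. First, $\underline{H}$ is cocommutative: for $a\in\underline{H}$ one may write $\Delta(a)=\sum a_{(1)}\otimes a_{(2)}$ with all factors even, so super-cocommutativity of $H$ together with \eqref{symmetry} (no sign, all factors even) gives $\sum a_{(1)}\otimes a_{(2)}=\sum a_{(2)}\otimes a_{(1)}$. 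Second, since $V_H=P(H)_1$ consists of odd elements, the super-commutator \eqref{super-commutator} restricted to $V_H$ is the anticommutator $[u,v]=uv+vu$; this is symmetric, which is axiom (b), and it feeds into the other two axioms as well.

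For axiom (a) I would start from the elementary identity for the adjoint action: for $a\in\underline{H}$ and $x,y\in H$,
\begin{align*}
\sum (x\triangleleft a_{(1)})(y\triangleleft a_{(2)})
&= \sum S(a_{(1)})\,x\,a_{(2)}\,S(a_{(3)})\,y\,a_{(4)} \\
&= \sum S(a_{(1)})\,x\,\varepsilon(a_{(2)})\,y\,a_{(3)} \\
&= \sum S(a_{(1)})\,(xy)\,a_{(2)},
\end{align*}
which uses only the counit and antipode axioms ($a$ being even, no signs intervene). Taking $x=u$, $y=v$ in $V_H$, both $u\triangleleft a_{(1)}$ and $v\triangleleft a_{(2)}$ are odd, so $[u\triangleleft a_{(1)},v\triangleleft a_{(2)}]=(u\triangleleft a_{(1)})(v\triangleleft a_{(2)})+(v\triangleleft a_{(2)})(u\triangleleft a_{(1)})$; summing and using the cocommutativity of $\underline{H}$ to replace $\sum(v\triangleleft a_{(2)})(u\triangleleft a_{(1)})$ by $\sum(v\triangleleft a_{(1)})(u\triangleleft a_{(2)})$, the left side of (a) becomes the sum of the identity above for $(x,y)=(u,v)$ and for $(x,y)=(v,u)$, that is $\sum S(a_{(1)})(uv+vu)\,a_{(2)}=\sum S(a_{(1)})[u,v]\,a_{(2)}$, which is the right side of (a).

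For axiom (c), note that $[v,v]=2v^2$ and, since $v$ is odd, $\Delta(v^2)=(1\otimes v+v\otimes 1)^2=1\otimes v^2+v^2\otimes 1$ in the superalgebra $H~\underline{\otimes}~H$, the two middle terms $(v\otimes 1)(1\otimes v)=v\otimes v$ and $(1\otimes v)(v\otimes 1)=(-1)^{|v||v|}\,v\otimes v=-\,v\otimes v$ cancelling. Thus $v^2$ is primitive, so $S(v^2)=-v^2$, and $v\triangleleft v^2=S(1)\,v\,v^2+S(v^2)\,v=v^3-v^3=0$; hence $v\triangleleft[v,v]=2(v\triangleleft v^2)=0$.

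Functoriality is routine: a morphism $\phi:H\to H'$ in $\mathsf{CCHSA}$ respects the $\mathbb{Z}_2$-grading, the coproduct and the antipode, so it restricts to a Hopf algebra map $\underline{H}\to\underline{H'}$ and to a linear map $V_H\to V_{H'}$, and these restrictions satisfy $\phi(v\triangleleft a)=\phi(v)\triangleleft\phi(a)$ and $\phi([u,v])=[\phi(u),\phi(v)]$ because $\phi$ is multiplicative and commutes with $\Delta$ and $S$; compatibility with composition and identities is clear. The only place that needs any care is the sign bookkeeping in axiom (a) --- keeping straight that oddness turns the super-commutator into the anticommutator and that cocommutativity of $\underline{H}$ is exactly what symmetrizes the two adjoint-action terms --- but none of the steps is genuinely difficult.
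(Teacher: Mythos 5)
Your proposal is correct and follows essentially the same route as the paper: the paper's proof also verifies Condition (c) by observing that $[v,v]=2v^2$ is an even primitive, so that $v\triangleleft[v,v]=v(2v^2)-(2v^2)v=0$, and dismisses the remaining axioms and functoriality as easy. You have simply written out the routine verifications of (a), (b) and functoriality that the paper leaves to the reader, and your sign bookkeeping is accurate throughout.
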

\begin{proof}
Condition (c) is satisfied, since one sees that $[v, v] = 2v^2 \in P(\underline{H})$, whence $v \triangleleft
[v,v] = v(2 v^2) - (2 v^2)v = 0$. The remaining is easy to see. 
\end{proof}

\subsection{}\label{subsec:DHCP2} 
We wish to construct a quasi-inverse of the functor just obtained. 

Given a vector space $V$, 
let $T(V)= \bigoplus_{n=0}^{\infty}\, T^n(V)$ denote the tensor algebra on $V$; this is $\mathbb{N}$-graded. 
The following is a special form of 
a known result in the braided situation; see \cite[Definition 3.2.3]{AG}, for example.

\begin{lemma}\label{lem:T(V)1}
Let $V$ be a vector space as above. The $\mathbb{N}$-graded algebra $T(V)$ turns uniquely into an
$\mathbb{N}$-graded Hopf superalgebra in which every element of $V$ is primitive. 
This $T(V)$ is super-cocommutative.  
\end{lemma}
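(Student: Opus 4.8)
The plan is to construct the Hopf superalgebra structure on $T(V)$ directly and then verify the required compatibilities using the universal property of the tensor algebra. Give $V$ the purely odd parity, so that $T^n(V)$ sits in parity $n \bmod 2$; this makes $T(V)$ an $\mathbb{N}$-graded superalgebra whose $\mathbb{Z}_2$-grading is the one induced by the $\mathbb{N}$-grading. To define the coproduct, I would first define $\widehat{\Delta}\colon V \to T(V) \,\underline{\otimes}\, T(V)$ by $\widehat{\Delta}(v) = 1 \otimes v + v \otimes 1$, and then invoke the universal property of $T(V)$ as the free superalgebra on the super-vector space $V$ to extend it uniquely to a superalgebra map $\Delta\colon T(V) \to T(V)\,\underline{\otimes}\,T(V)$; similarly the counit $\varepsilon$ comes from the zero map $V \to \Bbbk$. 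The antipode is obtained from $v \mapsto -v$ extended to a superalgebra anti-homomorphism (equivalently, a superalgebra map $T(V) \to T(V)^{\mathrm{op}}$, where the super-opposite carries the sign in \eqref{symmetry}).

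First I would check coassociativity and counitality: both $(\Delta \otimes \mathrm{id})\Delta$ and $(\mathrm{id} \otimes \Delta)\Delta$ are superalgebra maps $T(V) \to T(V)^{\,\underline{\otimes}\,3}$ that agree on the generators $V$ (sending $v$ to $1\otimes 1 \otimes v + 1 \otimes v \otimes 1 + v \otimes 1 \otimes 1$), hence coincide by uniqueness; likewise for the counit axioms. For the antipode axiom $m(S \otimes \mathrm{id})\Delta = \eta\varepsilon = m(\mathrm{id}\otimes S)\Delta$, both sides are again determined by their values on $V$, where they give $S(v) + v = 0 = \varepsilon(v)1$; one must be slightly careful that $m(S\otimes\mathrm{id})$ and $m(\mathrm{id}\otimes S)$ are indeed superalgebra maps out of $T(V)\,\underline{\otimes}\,T(V)$, which uses that $S$ is a superalgebra anti-map and the super-symmetry sign bookkeeping. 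That $V$ consists of primitives is immediate from the definition of $\Delta$, and grading-compatibility of all structure maps is clear since they are defined on homogeneous generators. For super-cocommutativity, $c_{T(V),T(V)}\circ\Delta$ and $\Delta$ are superalgebra maps agreeing on $V$ (both send $v$ to $1\otimes v + v\otimes 1$, since the sign $(-1)^{|v||1|} = 1$), hence equal.

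For uniqueness of the Hopf superalgebra structure making every element of $V$ primitive: any coproduct $\Delta'$ with this property is a superalgebra map agreeing with $\Delta$ on the generators $V$, hence $\Delta' = \Delta$; the counit and antipode are then forced, the counit because $\varepsilon'$ must be a superalgebra map vanishing on the primitives $V$, and the antipode because it is unique whenever it exists.

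The main obstacle I expect is the sign bookkeeping inherent in the super-symmetry: one must consistently use the multiplication on $T(V)\,\underline{\otimes}\,T(V)$ twisted by \eqref{symmetry}, so that, e.g., $(v \otimes 1)(1 \otimes w) = v \otimes w$ but $(1 \otimes v)(w \otimes 1) = (-1)^{|v||w|} w \otimes v$, and these signs must be tracked when verifying that the various composites are superalgebra maps and when checking the antipode identity on products of generators. This is exactly the point where the general braided result cited as \cite[Definition 3.2.3]{AG} does the work; since our braiding is the ordinary super-symmetry \eqref{symmetry}, it suffices to specialize that construction, and the verification reduces to the routine checks on generators sketched above.
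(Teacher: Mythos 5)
Your construction is the standard one and is essentially what the paper delegates to the cited braided result \cite[Definition 3.2.3]{AG}: the paper gives no proof of this lemma, and your specialization --- $V$ placed in odd parity, $\Delta$ extended from the primitives by the universal property of the free superalgebra, all axioms checked on generators, the explicit shuffle formula of Remark \ref{rem:DeltaT(V)} then following --- is exactly the intended argument. One step, however, is mis-justified. The maps $m\circ(S\otimes\mathrm{id})$ and $m\circ(\mathrm{id}\otimes S)$ are \emph{not} superalgebra maps out of $T(V)~\underline{\otimes}~T(V)$: for odd $v,w\in V$ one has $(1\otimes v)(w\otimes 1)=-\,w\otimes v$, which $m\circ(S\otimes\mathrm{id})$ sends to $-S(w)v=wv$, whereas the product of the images of $1\otimes v$ and $w\otimes 1$ is $v\,S(w)=-vw\neq wv$ in $T(V)$. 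So you cannot conclude that the antipode identity holds everywhere merely because "both sides are superalgebra maps agreeing on generators."

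The reduction to generators is nevertheless valid, for a different reason. If $S$ is a superalgebra anti-map and $\Delta$ a superalgebra map, then the set of homogeneous $a$ with $\sum S(a_{(1)})a_{(2)}=\varepsilon(a)1$ is closed under multiplication: writing $\Delta(ab)=\sum(-1)^{|a_{(2)}||b_{(1)}|}a_{(1)}b_{(1)}\otimes a_{(2)}b_{(2)}$ and expanding $S(a_{(1)}b_{(1)})$, the total sign collapses to $(-1)^{|a||b_{(1)}|}$, the inner sum over $a$ gives $\varepsilon(a)1$, and the sign is $+1$ whenever $\varepsilon(a)\neq 0$; since $1$ and all $v\in V$ lie in this set (and likewise on the other side), the antipode axiom holds on all of $T(V)$. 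Alternatively, $T(V)$ is a connected $\mathbb{N}$-graded bialgebra, so the identity is convolution-invertible and an antipode exists automatically, necessarily acting as $v\mapsto -v$ on primitives. With that repair your proof is complete, and the remaining verifications (coassociativity, counit, super-cocommutativity, uniqueness) are correctly reduced to generators because the maps being compared there genuinely are superalgebra maps.
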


\begin{rem}\label{rem:DeltaT(V)}
Let us give an explicit description of the coproduct $\Delta : T(V)\to T(V)~\underline{\otimes}~T(V)$,
which is called the \emph{shuffle coproduct}. 
It is the sum of linear maps $\Delta_{n, i} : T^n(V) \to T^i(V) \otimes T^{n-i}(V)$, where $0 \le i \le n$. 
Suppose $i = 0$ or $n$. Then, $\Delta_{n, 0} : T^n(V) \mapsto \Bbbk \otimes T^n(V)$ and  
$\Delta_{n,n} : T^n(V) \mapsto T^n(V) \otimes \Bbbk$ are the canonical isomorphisms.  
Suppose $0 < i < n$, and let 
\begin{equation}\label{i-shuffle}
\mathfrak{S}_{n,i} = \{ \sigma \in \mathfrak{S}_n \mid \sigma(1)<\dots<\sigma(i),\ \sigma(i+1)< \dots< \sigma(n) \}
\end{equation}
denote the subset of the symmetric group $\mathfrak{S}_n$ of degree $n$ which consists of all $i$-shuffles. Then, 
$$\Delta_{n,i}(v_1 \otimes \dots \otimes v_n) = \sum_{\sigma\in\mathfrak{S}_{n,i}} \operatorname{sgn} \sigma \, 
(v_{\sigma(1)}\otimes\dots\otimes v_{\sigma(i)})\otimes(v_{\sigma(i+1)}\otimes\dots\otimes v_{\sigma(n)}). $$ 
\end{rem}

Let $J$ be a cocommutative Hopf algebra, and let $V$ be a right $J$-module. Then $T(V)$  
turns naturally into a right $J$-module, with respect the diagonal $J$-action given by
$$ 1 \triangleleft a := \varepsilon(a)1, \quad (v_1 \otimes \dots \otimes v_n) \triangleleft a 
:= \sum (v_1 \triangleleft a_{(1)}) \otimes \dots \otimes 
(v_n \triangleleft a_{(n)}), $$
where $a \in J$, $v_i \in V$, $1 \le i\le n$. 

\begin{lemma}\label{lem:T(V)2}
On $T(V)$, the product $T(V)~\underline{\otimes}~T(V) \to T(V)$, the unit $\Bbbk \to T(V)$ and the other structure
maps $\Delta : T(V) \to T(V)~\underline{\otimes}~T(V)$, $\varepsilon : T(V) \to \Bbbk$,  
$S : T(V) \to T(V)$ are all $J$-linear, where $\Bbbk$ is regarded the trivial right $J$-module via the
counit. This means that $T(V)$ is a Hopf-algebra object in the 
symmetric tensor category $\mathsf{SMod}_J$ of right $J$-supermodules. 
\end{lemma}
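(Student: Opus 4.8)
The claim is that all the structure maps of $T(V)$ — product, unit, coproduct, counit, antipode — are morphisms in the category $\mathsf{SMod}_J$ of right $J$-supermodules, where $J$ acts diagonally. The point is that $J$ being a cocommutative Hopf algebra makes $\mathsf{SMod}_J$ a symmetric tensor category (with the braiding built from $c_{V,W}$ of \eqref{symmetry}, since a module over a commutative or cocommutative Hopf algebra behaves well), and that the Hopf-superalgebra structure of $T(V)$ from Lemma~\ref{lem:T(V)1} is functorial in $V$; so once the $J$-action on $V$ is fixed, the induced structure on $T(V)$ is automatically internal to $\mathsf{SMod}_J$. I would, however, give the more hands-on argument below, checking $J$-linearity of each map directly.

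**Key steps.** First I would record that the diagonal action makes $T(V) \otimes T(V)$ a $J$-module via $(x\otimes y)\triangleleft a = \sum (x \triangleleft a_{(1)})\otimes(y\triangleleft a_{(2)})$, using cocommutativity of $J$ so that this is compatible with the grading-and-shuffle combinatorics; this is what is needed to even speak of $J$-linearity of $\Delta$ and of the product. Next, $J$-linearity of the product $T(V)\,\underline{\otimes}\,T(V)\to T(V)$ (concatenation) is immediate from the definition of the diagonal action together with coassociativity: concatenating $(v_1\otimes\dots\otimes v_m)\triangleleft a_{(1)}$ with $(w_1\otimes\dots\otimes w_n)\triangleleft a_{(2)}$ gives exactly $\big((v_1\otimes\dots\otimes v_m)\otimes(w_1\otimes\dots\otimes w_n)\big)\triangleleft a$. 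The unit $\Bbbk \to T(V)$ is $J$-linear since $1\triangleleft a = \varepsilon(a)1$ by definition, matching the trivial action on $\Bbbk$. The counit $\varepsilon: T(V)\to\Bbbk$ is $J$-linear because the diagonal action preserves the grading and acts by $\varepsilon$ on $T^0(V)=\Bbbk$ while killing nothing is needed — one just checks $\varepsilon(x\triangleleft a)=\varepsilon(x)\varepsilon(a)$, true degreewise. For the coproduct, by Lemma~\ref{lem:T(V)1} $\Delta$ is the unique algebra map $T(V)\to T(V)\,\underline{\otimes}\,T(V)$ sending each $v\in V$ to $1\otimes v + v\otimes 1$; since both $\Delta$ and the map $x\mapsto \Delta(x\triangleleft a)$ versus $x\mapsto (\Delta x)\triangleleft a$ restrict on degree-one elements to the same thing (using $(1\otimes v + v\otimes 1)\triangleleft a = 1\otimes(v\triangleleft a) + (v\triangleleft a)\otimes 1$, which holds because $\varepsilon(a_{(1)})v\triangleleft a_{(2)} + v\triangleleft a_{(1)}\varepsilon(a_{(2)}) = 2(v\triangleleft a)$... ) — more cleanly, both $a\mapsto$ "transport $\Delta$ by $a$" and "$\Delta$" are algebra maps out of $T(V)$ agreeing on generators, hence equal; I would phrase the $J$-linearity of $\Delta$ as the statement that $\Delta\circ(-\triangleleft a) = \big((-\triangleleft a_{(1)})\otimes(-\triangleleft a_{(2)})\big)\circ\Delta$, and verify both sides are algebra maps agreeing on $V$. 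Finally, $J$-linearity of $S$ follows because $S$ is the unique anti-algebra (equivalently, in the super setting, the convolution inverse of $\mathrm{id}$) map determined on generators by $S(v)=-v$, and again both $S\circ(-\triangleleft a)$ and $(-\triangleleft a)\circ S$ agree on $V$ (note $J$ acts on a single $v$ and $S$ just negates, so they trivially commute there) and are anti-algebra maps, hence equal — or, most efficiently, $S$ on a Hopf algebra is automatically a morphism of modules over any Hopf algebra acting by Hopf-algebra automorphisms, which the diagonal $J$-action does.

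**Main obstacle.** The only genuinely non-formal point is the verification for $\Delta$: one must make sure the diagonal $J$-action is compatible with the shuffle coproduct of Remark~\ref{rem:DeltaT(V)}, and this is precisely where cocommutativity of $J$ enters (so that permuting tensor factors, as the shuffles do, does not disturb which $a_{(i)}$ acts on which slot). I would handle this by the "algebra maps agreeing on generators" device rather than by manipulating shuffle sums directly, since that reduces the whole check to the trivial degree-one identity. A remark worth adding is that this lemma is exactly what licenses applying the construction $W\mapsto T(W)$ (and later its Hopf-superalgebra quotients) inside $\mathsf{SMod}_J$, which is the engine behind the construction of $H(J,V)$.
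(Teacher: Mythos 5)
Your proof is correct and follows essentially the same route as the paper: the product and unit are checked directly, and $J$-linearity of $\Delta$ (and similarly $\varepsilon$, $S$) is reduced to the fact that both sides are algebra maps agreeing on $T^0(V)\oplus T^1(V)$, with cocommutativity of $J$ entering only to make the diagonal action on tensor products well-behaved. The garbled aside about ``$=2(v\triangleleft a)$'' should simply be deleted, since your cleaner generators-and-algebra-maps argument immediately after it is the one that works.
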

\begin{proof} 
We see directly that the product and the unit are $J$-linear. 
The coproduct $\Delta$
is $J$-linear, since it is an algebra map, and is $J$-linear, restricted to $T^0(V) \oplus T^1(V)$. Similarly,
we see that $\varepsilon$ and $S$ are $J$-linear. 
\end{proof}

Let $J$ be a cocommutative Hopf algebra, and let $V$ be a right $J$-module.
Since $T(V)$ is in particular an algebra object in $\mathsf{SMod}_J$ by Lemma
\ref{lem:T(V)2}, we can construct the algebra
\begin{equation}\label{mathcalH} 
\mathcal{H}(J, V) := J \ltimes T(V) 
\end{equation}
of smash (or semi-direct) product; see \cite[Sect.~7.2]{Sw}. This is the tensor product $J \otimes T(V)$ as
a vector space, and is an $\mathbb{N}$-graded algebra with $ \mathcal{H}(J, V)(n) := J \otimes T^n(V),\ 
n \in \mathbb{N}$. 
The cocommutativity of $J$ ensures the following, just as in the non-super situation. 

\begin{prop}\label{prop:mathcalH}
$ \mathcal{H}(J, V) $ turns uniquely into an $\mathbb{N}$-graded Hopf superalgebra which 
includes $J = J \otimes \Bbbk$, 
$T(V) = \Bbbk \otimes T(V)$ as Hopf super-subalgebras. This $\mathcal{H}(J, V)$ is super-cocommutative,
and the antipode is given by
$$ S( a \otimes x) = (1 \otimes S(x))(S(a) \otimes 1) = \sum S(a_{(1)}) \otimes (S(x) \triangleleft S(a_{(2)})), $$
where $a \in J$, $x \in T(V)$
\end{prop}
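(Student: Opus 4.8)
The plan is to upgrade the smash-product algebra $\mathcal{H}(J,V) = J \ltimes T(V)$ to a Hopf superalgebra by transporting the Hopf structure of $T(V)$ (which is a Hopf-algebra object in $\mathsf{SMod}_J$ by Lemma \ref{lem:T(V)2}) along the standard biproduct/bosonization construction. Concretely, one first recalls that for a cocommutative Hopf algebra $J$ and any Hopf-algebra object $B$ in $\mathsf{SMod}_J$ (or more precisely in the category of right $J$-supermodules, since $T(V)$ carries a grading), the smash product $J \ltimes B$ becomes a Hopf superalgebra with the smash coproduct on the coalgebra side. The coproduct is
\begin{equation*}
\Delta(a \otimes x) = \sum (a_{(1)} \otimes x_{(1)}) \otimes (a_{(2)} \otimes x_{(2)}),
\end{equation*}
where we use that cocommutativity of $J$ makes the smash coproduct coincide with the tensor-product coproduct (no twisting is needed on the coalgebra side), and the counit is $\varepsilon(a \otimes x) = \varepsilon(a)\varepsilon(x)$. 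The grading is $\mathcal{H}(J,V)(n) = J \otimes T^n(V)$, which is evidently an algebra grading, and one checks it is a coalgebra grading because the shuffle coproduct on $T(V)$ is $\mathbb{N}$-graded (Remark \ref{rem:DeltaT(V)}) and $J$ sits in degree $0$; the induced $\mathbb{Z}_2$-grading is the correct superalgebra structure.

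The key steps, in order, are: (1) verify that $\Delta$ as defined is an algebra map $\mathcal{H}(J,V) \to \mathcal{H}(J,V) \mathbin{\underline{\otimes}} \mathcal{H}(J,V)$ — this is where $J$-linearity of the structure maps of $T(V)$ (Lemma \ref{lem:T(V)2}) and cocommutativity of $J$ enter, controlling the sign/twist bookkeeping coming from the super-symmetry $c$; (2) verify coassociativity and counitality, which are inherited from those of $J$ and of the shuffle coproduct; (3) check that $J = J \otimes \Bbbk$ and $T(V) = \Bbbk \otimes T(V)$ are Hopf super-subalgebras, which is immediate from the formulas; (4) produce the antipode by the displayed formula $S(a \otimes x) = (1 \otimes S(x))(S(a) \otimes 1)$ and check it satisfies the antipode axioms — the second equality in the statement, $S(a\otimes x) = \sum S(a_{(1)}) \otimes (S(x) \triangleleft S(a_{(2)}))$, is just the commutation relation in the smash product rewritten, so one only needs to confirm one of the two forms convolution-inverts the identity; (5) conclude super-cocommutativity, which follows since both $J$ and $T(V)$ are (super-)cocommutative and the coproduct is essentially a tensor product of the two, so applying $c_{\mathcal{H},\mathcal{H}}$ and using cocommutativity of each factor recovers $\Delta$; and (6) uniqueness, which holds because any $\mathbb{N}$-graded Hopf superalgebra structure containing $J$ and $T(V)$ as graded Hopf super-subalgebras is forced on the algebra generators $J \otimes \Bbbk$ and $\Bbbk \otimes V$, which generate $\mathcal{H}(J,V)$ as an algebra.

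The main obstacle I expect is step (1): checking multiplicativity of $\Delta$ requires moving a factor from $T(V)$ past a factor from $J$ inside $\mathcal{H}(J,V) \mathbin{\underline{\otimes}} \mathcal{H}(J,V)$, which introduces the super-sign from $c$ on the tensor-square together with the $J$-action on $T(V)$; making these cancel correctly is exactly the point where cocommutativity of $J$ is indispensable (it lets one identify $\sum a_{(1)} \otimes a_{(2)}$ with $\sum a_{(2)} \otimes a_{(1)}$ after the action has been applied). Since $T(V)$ is a Hopf-algebra object in the symmetric monoidal category $\mathsf{SMod}_J$, the cleanest route is to invoke the general bosonization principle (as for Radford biproducts in the cocommutative base case) rather than recompute from scratch; I would state that $\mathcal{H}(J,V)$ is the bosonization of $T(V)$ and cite the symmetric-monoidal version of that construction, leaving the sign verification to the reader as "just as in the non-super situation," exactly as the paper's phrasing anticipates. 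The antipode check in step (4) is then routine once the bialgebra structure is in place.
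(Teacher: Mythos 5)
Your proposal is correct and follows essentially the same route the paper intends: the paper offers no written proof beyond the remark that ``the cocommutativity of $J$ ensures the following, just as in the non-super situation,'' i.e.\ it invokes exactly the biproduct/bosonization principle for a Hopf-algebra object in the symmetric tensor category $\mathsf{SMod}_J$ that you spell out, with cocommutativity of $J$ entering precisely where you locate it (multiplicativity of the tensor-product coproduct) and uniqueness forced by the algebra generators $J\otimes\Bbbk$ and $\Bbbk\otimes V$.
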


The following result was outlined by M.~Takeuchi \cite{T2}. 

\begin{theorem}\label{thm:Takeuchi}
(1) Given $(J, V) \in \mathsf{DHCP}$, let $I(J, V)$ be the two-sided ideal of $\mathcal{H}(J, V)$
generated by all $1 \otimes (uv + vu) - [u, v] \otimes 1$, where $u, v \in V$. Then this is a Hopf 
super-ideal. Define
\begin{equation}\label{H(J,V)} 
H(J, V) :=  \mathcal{H}(J, V)/I(J, V).
\end{equation}
Then this is a super-cocommutative Hopf superalgebra. This construction is functorial, so that we have the
functor
$$ H : \mathsf{DHCP} \to \mathsf{CCHSA},\ (J, V) \mapsto H(J, V). $$

(2) The functor $H$ above is an equivalence which has as a quasi-inverse the functor $H \mapsto
(\underline{H}, V_H)$ given by Proposition \ref{prop:functor-underline}.
\end{theorem}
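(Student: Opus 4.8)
# Proof Proposal for Theorem \ref{thm:Takeuchi}

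The plan is to establish part (1) first by a concrete computation and then derive part (2) by exhibiting the unit and counit of the adjunction and checking they are isomorphisms. For part (1), the key point is that $I(J,V)$ is a Hopf super-ideal of $\mathcal{H}(J,V)$. Since $I(J,V)$ is generated as a two-sided ideal by the elements $r_{u,v} := 1 \otimes (uv+vu) - [u,v]\otimes 1$, it suffices to check that $\Delta(r_{u,v}) \in I(J,V)\otimes \mathcal{H}(J,V) + \mathcal{H}(J,V)\otimes I(J,V)$, that $\varepsilon(r_{u,v}) = 0$, and that $S(r_{u,v}) \in I(J,V)$; the last two are immediate since $r_{u,v}$ lies in the augmentation ideal and $S$ preserves it up to sign on the relevant piece, while for the coproduct one computes, using that $u,v$ are primitive in $T(V)$ (hence in $\mathcal{H}(J,V)$) and that $[u,v]$ is primitive in $J$, that $\Delta(r_{u,v}) = 1\otimes r_{u,v} + r_{u,v}\otimes 1$ modulo terms involving $1\otimes(u\otimes v + v\otimes u)$ cross-terms; here Condition (a) of Definition \ref{def:DHCP} is exactly what is needed to see that the adjoint-action cross-terms appearing after passing $J$ past $T^1(V)\otimes T^1(V)$ are absorbed. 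So $r_{u,v}$ is in fact a primitive element modulo $I(J,V)$, which makes $I(J,V)$ a Hopf super-ideal. That $H(J,V)$ is then super-cocommutative follows since $\mathcal{H}(J,V)$ is (Proposition \ref{prop:mathcalH}) and quotients of super-cocommutative Hopf superalgebras are super-cocommutative. Functoriality is routine: a morphism $(f,g)$ lifts to an $\mathbb{N}$-graded Hopf superalgebra map $\mathcal{H}(J,V)\to\mathcal{H}(J',V')$ sending generators appropriately, and carries $I(J,V)$ into $I(J',V')$ by the compatibility $f([u,v]) = [g(u),g(v)]'$.

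For part (2), I would first identify the two natural transformations. The unit is the map $\eta_{(J,V)} : (J,V) \to (\underline{H(J,V)}, V_{H(J,V)})$ whose Hopf-algebra component is $J = J\otimes\Bbbk \hookrightarrow \mathcal{H}(J,V) \twoheadrightarrow H(J,V)$ and whose linear component sends $v \in V$ to the image of $1\otimes v \in \mathcal{H}(J,V)$, which is an odd primitive. The counit is the map $\varepsilon_H : H(\underline{H}, V_H) \to H$ induced by the multiplication $\mathcal{H}(\underline{H},V_H) = \underline{H}\ltimes T(V_H)\to H$, $a\otimes(v_1\otimes\cdots\otimes v_n)\mapsto a v_1\cdots v_n$; this is well-defined as a Hopf superalgebra map because $\underline{H}\hookrightarrow H$ is a Hopf algebra map, $V_H \subset H_1$ consists of odd primitives, and the smash-product structure matches the adjoint action inside $H$, and it kills $I(\underline{H},V_H)$ precisely because $v_i v_j + v_j v_i = [v_i,v_j]$ holds in $H$ by definition of the bracket as the restricted super-commutator.

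It remains to show $\eta$ and $\varepsilon_H$ are isomorphisms. For $\eta$: the component $J \to \underline{H(J,V)}$ is injective because $J = \mathcal{H}(J,V)(0)$ survives in the quotient (the ideal $I(J,V)$ is generated in degrees $\ge 1$, so $I(J,V)\cap J\otimes\Bbbk$ can only come from degree-$0$ components of products $x\, r_{u,v}\, y$, which force a degree-lowering that the $\mathbb{N}$-grading forbids once one tracks that $[u,v]\otimes 1$ sits in degree $0$ while $1\otimes(uv+vu)$ sits in degree $2$ — so $I(J,V)$ is in fact a graded ideal with $I(J,V)(0)=0$), and it is surjective onto $\underline{H(J,V)}$ by a counting/coradical-filtration argument; similarly the component $V \to V_{H(J,V)}$ is bijective. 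For $\varepsilon_H$: surjectivity is clear since $\underline{H}$ and $V_H$ generate $H$ as an algebra — this is the content that $H$, being super-cocommutative, is generated by its largest ordinary Hopf subalgebra together with its odd primitives, which one proves via the coradical filtration and the fact that $\underline{H} = \Delta^{-1}(H_0\otimes H_0)$ together with the PBW-type decomposition forced by super-cocommutativity; injectivity follows by comparing associated graded objects or by a PBW/filtration dimension count showing that $H(\underline{H},V_H)$ and $H$ have matching coradical-filtration layers.

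The main obstacle will be the surjectivity (generation) statement in the counit argument: showing that every super-cocommutative Hopf superalgebra $H$ is generated as an algebra by $\underline{H}$ and $V_H = P(H)_1$. This is the super-analogue of the classical structure theorem for cocommutative Hopf algebras, and proving it cleanly requires a Milnor–Moore / PBW-type argument adapted to the $\mathbb{Z}_2$-grading — one reduces to the irreducible case via the smash-product decomposition over the grouplikes, then uses that in characteristic $\ne 2$ an irreducible super-cocommutative Hopf superalgebra is the enveloping superalgebra of its primitives (or its hyper-superalgebra analogue) and hence generated by $P(H) = P(\underline{H})\oplus V_H$. Packaging this correctly, and making the injectivity count match it, is where the real work lies; everything else is bookkeeping with the smash product and the $\mathbb{N}$-grading.
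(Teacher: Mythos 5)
Your overall architecture matches the paper's: part (1) via primitivity of the generators, part (2) via a unit $(J,V)\to(\underline{H(J,V)},V_{H(J,V)})$ and a counit $H(\underline{H},V_H)\to H$, resting on a PBW-type freeness statement for $H(J,V)$ and on the structure theorem $H\simeq\underline{H}\otimes\wedge(V_H)$. But there is a genuine gap precisely at the step you dismiss with a grading argument. You assert that $I(J,V)$ ``is in fact a graded ideal with $I(J,V)(0)=0$,'' and deduce from this that $J\to H(J,V)$ is injective. This is false as stated: the generator $1\otimes(uv+vu)-[u,v]\otimes 1$ is a sum of a degree-$2$ and a degree-$0$ term, so the ideal it generates is \emph{not} $\mathbb{N}$-homogeneous, and products $x\,r_{u,v}\,y$ can and do contribute to low degrees. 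Whether $I(J,V)\cap(J\otimes\Bbbk)=0$ --- equivalently, whether $H(J,V)$ is $J$-free on the ordered monomials in a basis of $V$ --- is exactly the content of the paper's Lemma \ref{lem:J-free}, which is proved by Bergman's Diamond Lemma: one checks that the ambiguities arising from the overlapping words $xya$ and $xyz$ are resolvable, and it is here (not in the coideal computation of part (1)) that Conditions (a), (b), (d) of Definition \ref{def:DHCP} are actually used. Without those conditions the ideal can collapse and kill part of $J$, so no formal grading or filtration argument can substitute for this verification; it is the technical heart of the theorem and your proposal does not supply it. (Relatedly, your invocation of Condition (a) to control ``cross-terms'' in $\Delta(r_{u,v})$ in part (1) is a red herring: $uv+vu$ is already an even primitive of $T(V)$ because $u,v$ are odd primitives, so $r_{u,v}$ is primitive outright and no compatibility condition is needed there.)

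The second ingredient you correctly identify as ``where the real work lies'' --- that every $H\in\mathsf{CCHSA}$ satisfies $H\simeq\underline{H}\otimes\wedge(V_H)$ as super-coalgebras via ordered products of a basis of $V_H$ --- is not proved in your proposal either; the paper imports it as Proposition \ref{prop:M1} from \cite[Theorem 3.6]{M}. Citing that result is legitimate, but your sketch of a ``Milnor--Moore / PBW-type argument'' reducing to the irreducible case over the grouplikes would only cover pointed $H$, whereas the needed statement holds for arbitrary super-cocommutative $H$. Once both ingredients are in hand, your unit/counit bookkeeping does go through exactly as in the paper, so the proposal is best described as the correct strategy with its two load-bearing lemmas left unproved, one of them backed by an incorrect argument.
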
 
\begin{proof}
Here we only prove Part 1 and defer the proof of Part 2 to Section \ref{DHCP4}. 
Since $1 \otimes (uv + vu) - [u, v] \otimes 1$ is even primitive, it follows that 
$I(J, V)$ is a Hopf super-ideal. The functoriality is easy to see.  
\end{proof} 

\begin{rem}\label{rem:Hzero-char}
We see from Proposition \ref{prop:iDHCP} that 
if $\operatorname{char} \Bbbk =0$, the equivalence above, restricted to $\mathsf{iDHCP}$, can be identified with the
category equivalence $\mathsf{LSA} \approx \mathsf{HySA}$ given in \eqref{Kostant}. 
\end{rem}

\subsection{}\label{DHCP4} 
To prove the remaining Part 2 above, let $(J, V) \in \mathsf{DHCP}$. Then $H(J, V)$ is in particular an
algebra given a natural algebra map from $J$; such an algebra is called a $J$-\emph{ring} in \cite[p.195]{B}. 
It is naturally a left (and right) $J$-module. Given an element $v \in V$, we write simply $v$ for the
natural image of $1 \otimes v \in \mathcal{H}(J, V)$ in $H(J, V)$.  

\begin{lemma}\label{lem:J-free} 
$H(J, V)$ is free as a left $J$-module. In fact, if we choose arbitrarily a basis $X$ of $V$
given a total order, then the products 
$$ x_1x_2\dots x_n,\quad x_i \in X,\ x_1 < x_2 < \dots < x_n,\ n \ge 0  $$
in $H(J, V)$ constitute a $J$-free basis. 
\end{lemma}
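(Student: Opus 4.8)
The plan is to reduce the claim to a PBW-type statement for the smash-product Hopf superalgebra $\mathcal{H}(J,V) = J \ltimes T(V)$ and then control what passes to the quotient $H(J,V) = \mathcal{H}(J,V)/I(J,V)$. First I would observe that, as a left $J$-module, $\mathcal{H}(J,V) = J \otimes T(V)$ is free on the standard tensor monomials $x_{i_1} \otimes \cdots \otimes x_{i_n}$ with $x_{i_j} \in X$ (no order condition), since $T(V)$ is free on these as a vector space and the smash product is $J \otimes T(V)$ as a left $J$-module. The ideal $I(J,V)$ is generated by the even primitive elements $1 \otimes (uv + vu) - [u,v] \otimes 1$ for $u,v \in V$; rewriting this using a basis, for $x < y$ in $X$ we get relations of the form $yx \equiv -xy + (\text{element of } J)$, i.e. $x_j x_i \equiv -x_i x_j + [x_i,x_j]$ in $H(J,V)$, where $[x_i,x_j] \in P(J) \subset J$, together with $x_i^2 \equiv \tfrac12[x_i,x_i] \in J$ (valid since $\operatorname{char}\Bbbk \ne 2$). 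These are exactly the rewriting rules of an exterior-type algebra over $J$, so the ordered products $x_1 \cdots x_n$ with $x_1 < \cdots < x_n$ certainly \emph{span} $H(J,V)$ as a left $J$-module; the whole content of the lemma is their $J$-linear independence.

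For independence I would use a filtration/associated-graded (diamond-lemma) argument. Give $\mathcal{H}(J,V)$ the $\mathbb{N}$-grading with $\mathcal{H}(J,V)(n) = J \otimes T^n(V)$; the generators of $I(J,V)$ are not homogeneous, but they are filtered: $1 \otimes (uv+vu)$ has degree $2$ and $[u,v] \otimes 1$ has degree $0$, so $I(J,V)$ is generated by elements of filtration degree $\le 2$ whose top-degree part is $1 \otimes (uv+vu)$. Passing to the associated graded algebra $\operatorname{gr} H(J,V)$ with respect to the induced filtration, the relations become the homogeneous relations $1 \otimes (uv+vu) = 0$, i.e. $\operatorname{gr} H(J,V)$ is a quotient of $J \ltimes \wedge(V)$, the smash product of the exterior algebra $\wedge(V)$ (which is $T(V)/(uv+vu)$). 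Since $\wedge(V)$ is free as a vector space on the ordered monomials $x_1 \wedge \cdots \wedge x_n$ ($x_1 < \cdots < x_n$), the smash product $J \ltimes \wedge(V)$ is free as a left $J$-module on those ordered monomials. Thus it suffices to show the surjection $J \ltimes \wedge(V) \twoheadrightarrow \operatorname{gr} H(J,V)$ is an isomorphism, equivalently that the ordered monomials remain $J$-independent in $H(J,V)$.

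To get that injectivity I would invoke the Bergman diamond lemma (or the braided-group PBW theorem, cf.\ the reference \cite{AG} already used for Lemma \ref{lem:T(V)1}): take the reduction system on $J\langle X\rangle$-words given by $ax \rightsquigarrow$ (reorder $a \in J$ past nothing, keep $J$ on the left), $x_j x_i \rightsquigarrow -x_i x_j + [x_i,x_j]$ for $j > i$, and $x_i^2 \rightsquigarrow \tfrac12[x_i,x_i]$; the irreducible words are precisely $a\, x_1 \cdots x_n$ with $a$ in a $\Bbbk$-basis of $J$ and $x_1 < \cdots < x_n$. All ambiguities are overlaps among triples $x_k x_j x_i$ (with indices weakly decreasing), and resolvability of exactly these is what Conditions (a), (b), (c)/(d) of Definition \ref{def:DHCP} were designed to guarantee: condition (b) makes the antisymmetrization consistent, condition (a) (the $J$-equivariance of the bracket, which unwinds via cocommutativity of $J$ to say $[x_i,x_j] \in P(J)$ behaves correctly under the adjoint action implicit in the smash product) handles moving $J$-elements produced by one reduction past the remaining letters, and condition (d)/(c) is exactly the Jacobi-type identity that resolves the $x_k x_j x_i$ overlap. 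Hence every ambiguity is resolvable, the diamond lemma applies, and the irreducible words form a $\Bbbk$-basis of $H(J,V)$; reading off the $J$-action shows the ordered products $x_1 \cdots x_n$ are a $J$-free basis.

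The main obstacle is the careful verification that the triple overlaps $x_k x_j x_i$ resolve: one must reduce $x_k x_j x_i$ in the two possible orders, and the difference of the two outcomes is precisely a combination of $v \triangleleft [v',v'']$-terms that vanishes by condition (d) (using condition (a) to commute the $P(J)$-valued brackets past the surviving odd letter, and condition (b) throughout). Keeping track of the signs (the super sign on reordering odd letters) and of the adjoint $J$-action coming from the smash product — i.e.\ that $v a = \sum (a_{(1)})(v \triangleleft \text{something}) \cdots$ in $\mathcal{H}(J,V)$ — is where all the real work lies; everything else is bookkeeping. Alternatively, if one prefers to avoid the diamond lemma entirely, one can cite the braided PBW theorem of \cite{AG} for the Nichols-type algebra $T(V)$ with the relevant relations, carried out internally to the symmetric tensor category $\mathsf{SMod}_J$ of Lemma \ref{lem:T(V)2}, and then apply the functor "smash with $J$", which is exact and preserves freeness over $J$.
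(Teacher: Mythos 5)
Your proposal is correct and follows essentially the same route as the paper: Bergman's diamond lemma applied to the reduction system $xa \to \sum a_{(1)}(x\triangleleft a_{(2)})$, $xy \to -yx+[x,y]$ for $x>y$, $x^2 \to \tfrac12[x,x]$, with the overlap ambiguities resolved by Conditions (a), (b), (d) of Definition \ref{def:DHCP}. One small correction: the ambiguities are not only the triples $x_k x_j x_i$ of letters from $X$ --- there is also the mixed overlap on words $xy\,a$ with $a\in J$ (between the reordering rule and the $J$-commutation rule), which is precisely where Condition (a) and the cocommutativity of $J$ are used via the identity $[x,y]a=\sum a_{(1)}[x\triangleleft a_{(2)},\,y\triangleleft a_{(3)}]$; you gesture at this when you say Condition (a) handles moving $J$-elements past the remaining letters, but it should be listed as a separate ambiguity to verify.
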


\begin{proof} 
Suppose that $X$ is a totally ordered basis of $V$. 
As a $J$-ring, $H(J, V)$ is generated by $X$, and is defined by the relations
\begin{itemize}
\item[(i)] $x a = \sum a_{(1)}(x \triangleleft a_{(2)}),\quad x \in X,\ a \in J,$ 
\item[(ii)] $xy =  -yx + [x, y],\quad x, y \in X,\ x > y,$
\item[(iii)] $x^2 = \frac{1}{2} [x, x],\quad x \in X. $
\end{itemize}
Here, Condition (b) of Definition \ref{def:DHCP} allows us to suppose in (ii) that $x > y$. We regard the
relations (i)--(iii) as a reduction system \cite[p.180]{B}, 
$$x a \to \sum a_{(1)}(x \triangleleft a_{(2)}),\quad xy \to -yx + [x, y],\quad x^2 \to \frac{1}{2} [x, x]. $$ 
Here, we suppose that $x \triangleleft a_{(2)}$ is presented as a left linear combination of elements in $X$.
 
We wish to apply the opposite-sided version of \cite[Proposition 7.1]{B}. 
For this we first define an order $\preceq$ 
among words $A$, $B, \dots$ in $X$ of a fixed length, as follows: we let $A \preceq B$, if
$A$ is a permutation of $B$, and contains the same or a smaller 
number of mis-ordered pairs as or than $B$. Here, a \emph{mis-ordered pair} in $A = x_1 x_2\dots x_n$ 
is a pair $(x_i, x_j)$ such that $i < j$, $x_i > x_j$. Next, we
add to $X$ one element $*$, which is a symbol representing any element in $J$. 
(To be more precise, $*$ represents the factor $J$ 
in $J \oplus \bigoplus_{x \in X} \mathbb{Z}x$, just as in \cite[Proposition 7.1]{B}, $\kappa$ represents $k$ in $M$.)
Let $A = x_1x_2\dots x_n$ be a word in $X \cup \{* \}$.
To this $A$, we associate a sequence $s(A) = (\epsilon_1,\epsilon_2,
\dots, \epsilon_n)$ of the numbers $0, 1$,  
following the rule that $\epsilon_i = 0$ if $x_i = *$, and $\epsilon_i = 1$ if $x_i \in X$. 
Let $A'$ denote the word in $X$ obtained from $A$ by removing all $*$.  
Given two words $A$, $B$ in $X \cup \{*\}$, we let $A \le B$, if one of the following holds:
\begin{itemize}
\item 
$\operatorname{length} A < \operatorname{length} B$;
\item
$\operatorname{length} A = \operatorname{length} B$, and $s(A)$ precedes $s(B)$ in lexicographical order,
where we suppose $0 < 1$;
\item
$\operatorname{length} A = \operatorname{length} B$, $s(A)= s(B)$ and $A' \preceq B'$. 
\end{itemize}
Then we see that this $\leq$ defines an order among all words in $X \cup \{*\}$, and satisfies 
the following assumptions required for the result we wish to apply: it is a 
semigroup order which satisfies the DCC, and which is consistent with our reduction system
in the sense that every reduction changes a word to a sum of smaller words.  
(The order $\leqslant$ on $\langle X \cup \{\kappa\}\rangle$ given in
Proposition 7.1 of \cite{B} is assumed to be induced from a pre-order $\leqslant_0$
on $\langle X \rangle$, while our order 
is not induced from such a pre-order.  However, the assumption is not necessary, and the proposition, in fact the
opposite-sided version, can apply to our situation as is seen from its proof.) 
Therefore, if we see that the ambiguities which may occur when we reduce the words
\begin{itemize}
\item[(iv)]  $xya,\quad x \ge y\ \text{in}\ X,\ a \in J$, 
\item[(v)]   $xyz,\quad x \ge y \ge z\ \text{in}\ X$
\end{itemize}
are all resolvable, Lemma \ref{lem:J-free} follows 
since the products given above are precisely the \emph{irreducible} words, that is, 
the words in $X$ which do not contain any $xy$ with $x \ge y$ in $X$ as a subword. 
First, let $xya$ be a word from (iv) with $x > y$.  
This is reduced on one hand as
$$ xya \to \sum xa_{(1)}(y\triangleleft a_{(2)}) \to \sum a_{(1)}(x \triangleleft a_{(2)})(y \triangleleft a_{(3)}), $$
and on the other hand as
\begin{align*}
xya & \to -yxa + [x, y]a\\
 & \to - \sum a_{(1)}(y \triangleleft a_{(2)})(x \triangleleft a_{(3)})+ \sum a_{(1)} 
[x\triangleleft a_{(2)}, y\triangleleft a_{(3)}], 
\end{align*}
where in the last step, we have used that the Hopf algebra axioms for $J$ and Condition (a) of Definition 
\ref{def:DHCP} give
$$ [x, y]a = \sum a_{(1)}S(a_{(2)})[x,y]a_{(3)} = \sum a_{(1)}[x\triangleleft a_{(2)}, y\triangleleft a_{(3)}]. $$ 
The two results are seen 
to be further reduced to the same element, by using the cocommutativity of $J$. Next, let 
$xyz$ be a word from (ii)  with $x > y > z$.   Note $x[y,z] \to [y,z]x + x \triangleleft [y,z]$, since 
$[y, z]$ is primitive. Then one sees that $xyz$ is reduced on one hand as
\begin{align*}
xyz &\to -xzy + x[y,z] \to zxy - [x,z]y + [y,z]x + x \triangleleft [y,z] \\
& \to -zyx + z[x,y] - [x,z]y + [y,z]x + x \triangleleft [y,z] \\
& \to -zyx + [x,y]z + z\triangleleft [x,y]  - [x,z]y + [y,z]x + x \triangleleft [y,z],  
\end{align*}
and on the other hand as
\begin{align*}
xyz &\to -yxz + [x,y]z \to yzx - y[x,z] + [x,y]z \\
& \to -zyx +[y,z]x - [x,z]y - y \triangleleft[x,z]  + [x,y]z. 
\end{align*}
The two results coincide by Conditions (b), (d) of  Definition \ref{def:DHCP}.
We have thus seen that the ambiguities in reducing the words in (iv), (v) are resolvable when $x, y$ and $z$ are
distinct. We can similarly prove the resolvability in the remaining cases. 
\end{proof} 

We need the following result from \cite{M}.

\begin{prop}[\cite{M}, Theorem 3.6] \label{prop:M1}
Let $H \in \mathsf{CCHSA}$, and choose arbitrarily a basis $X$ of $V_H$ given a total order. 
Then the left $\underline{H}$-linear map $\phi_X : \underline{H} \otimes \wedge(V_H) \to H$ defined
by
$$ \phi_X(1 \otimes(x_1 \wedge x_2 \wedge \dots \wedge x_n)) = x_1 x_2 \dots x_n$$
on the $\underline{H}$-free basis
$$ 1 \otimes(x_1 \wedge x_2 \wedge \dots \wedge x_n), \quad x_i \in X,\ x_1 < x_2 < \dots < x_n,\ n \ge 0$$
is a unit-preserving isomorphism of super-coalgebras. 
\end{prop}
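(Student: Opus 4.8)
The statement is a relative (over $\underline{H}$) Poincar\'e--Birkhoff--Witt theorem, and I would organize its proof as follows. That $\phi_X$ is well defined, left $\underline{H}$-linear and unit-preserving is immediate, since the displayed increasing monomials form, by construction, an $\underline{H}$-free basis of $\underline{H}\otimes\wedge(V_H)$. The first point to settle is that $\phi_X$ is a morphism of super-coalgebras. This I would prove by a direct computation: for $x_1<\dots<x_n$ in $X$ one has, each $x_i$ being primitive, $\Delta_H(h\,x_1\cdots x_n)=\Delta(h)\,\prod_{i=1}^{n}(1\otimes x_i+x_i\otimes 1)$ in $H\otimes H$, and expanding the product reproduces exactly the signed shuffle coproduct that defines $\Delta$ on $\wedge(V_H)$; since $\underline{H}$ is purely even, the super-symmetry entering the coproduct of $\underline{H}\otimes\wedge(V_H)$ carries no sign, so one reads off $\Delta_H\circ\phi_X=(\phi_X\otimes\phi_X)\circ\Delta$, while $\varepsilon_H\circ\phi_X$ equals the counit of the source because primitives have counit $0$. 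It remains to prove that $\phi_X$ is bijective.

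Surjectivity I would deduce from a straightening argument, which first requires the structural fact --- part of the structure theory of super-cocommutative Hopf superalgebras --- that $H$ is generated as an algebra by $\underline{H}$ and $V_H$. Granting this, view $H$ as a $\underline{H}$-ring generated by $X$; the identities $x_ix_j+x_jx_i=[x_i,x_j]$, $x_i^{2}=\tfrac12[x_i,x_i]$ (both even primitives, hence in $\underline{H}$) and $x_i a=\sum a_{(1)}(x_i\triangleleft a_{(2)})$ constitute a reduction system in the sense of Bergman \cite{B}, whose irreducible words are precisely the increasing monomials; a Diamond-Lemma straightening then shows these monomials $\underline{H}$-span $H$, so $\phi_X$ is onto. (This is the same mechanism as in the proof of Lemma \ref{lem:J-free}.)

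Injectivity --- that is, $\underline{H}$-freeness of $H$ on the increasing monomials --- is the heart of the matter, and I would obtain it by passing to the associated graded Hopf superalgebra for the coradical filtration. Being a coalgebra map, $\phi_X$ is automatically filtered for the coradical filtrations, and the coradical filtration of $\underline{H}\otimes\wedge(V_H)$ is $\sum_{i+j=n}\underline{H}_i\otimes\wedge^j(V_H)$ (using that the coradical filtration of an exterior algebra is its degree filtration, and that the coradical, resp.\ the coradical filtration, of a tensor product of coalgebras over a field is the tensor product, resp.\ the convolution, of those of the factors); since these filtrations are exhaustive and bounded below, it suffices to prove $\operatorname{gr}\phi_X$ bijective, so one reduces to the case that $H$ is $\mathbb{N}$-graded with $H(0)=\operatorname{Corad}H$. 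Here $\operatorname{Corad}H=\operatorname{Corad}\underline{H}\subseteq\underline{H}$ --- a $\mathbb{Z}_2$-stable finite-dimensional subcoalgebra of $\operatorname{Corad}H$ has super-commutative semisimple dual algebra $A$, which is purely even since for odd $x\in A$ one has $x^{2}=0$, so $Ax$ is a square-zero ideal and hence $x=0$ (cf.\ \cite{M}) --- so, combining with Lemma \ref{lem:corad}, one may further reduce to the irreducible component and then invoke the characteristic-free structure theory of irreducible cocommutative Hopf algebras of Takeuchi \cite{T-0} for the even part $\underline{H}$, together with the elementary exterior-algebra description of the odd primitives' contribution; a resulting smash-product decomposition of $\operatorname{gr}H$ over $\operatorname{gr}\underline{H}$ identifies the increasing monomials with a homogeneous $\operatorname{gr}\underline{H}$-basis. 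The main obstacle is exactly this last step in positive characteristic: there $\underline{H}$ is built from divided powers and Frobenius rather than a polynomial algebra, so no naive dimension count is available and the entire argument must be kept relative over $\underline{H}$, letting $\underline{H}$ absorb all the characteristic-$p$ complexity so that the genuinely new ingredient is only the exterior algebra on $V_H$.
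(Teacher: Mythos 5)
First, a point of comparison: the paper does not prove this proposition at all --- it is imported verbatim from \cite[Theorem 3.6]{M} --- so there is no internal proof to measure yours against, only the external citation. Judged on its own terms, the routine half of your argument is fine: $\phi_X$ is visibly left $\underline{H}$-linear and unit-preserving, and your computation $\Delta(hx_1\cdots x_n)=\Delta(h)\prod_i(x_i\otimes 1+1\otimes x_i)$ in $H\,\underline{\otimes}\,H$, matched against the signed shuffle coproduct of $\wedge(V_H)$ (with no extra signs coming from the purely even factor $\underline{H}$), correctly shows that $\phi_X$ is a morphism of super-coalgebras.

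The bijectivity, however, is the entire content of the theorem, and at both of its crucial points your argument assumes what is to be proved. For surjectivity you ``grant'' that $H$ is generated as an algebra by $\underline{H}$ and $V_H$; this is not an off-the-shelf structural fact but essentially the statement itself. The naive coradical-filtration induction only shows that a new element is primitive modulo $K\otimes K$, where $K$ is the subalgebra generated by $\underline{H}$ and $V_H$, and that does not force it into $K+P(H)_1$. Note also that the Diamond-Lemma mechanism of Lemma \ref{lem:J-free} is available there precisely because $H(J,V)$ is presented by generators and relations; an abstract $H\in\mathsf{CCHSA}$ carries no such presentation a priori, which is exactly why the straightening argument cannot get started. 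For injectivity, the decisive step --- ``a resulting smash-product decomposition of $\operatorname{gr}H$ over $\operatorname{gr}\underline{H}$ identifies the increasing monomials with a homogeneous basis'' --- is asserted, not proved: Takeuchi's theory describes the irreducible cocommutative Hopf algebra $\underline{H}$ itself, but says nothing about how the odd part of $H$ sits over $\underline{H}$, and that is precisely the question. (There are further unaddressed points: whether the coradical filtration of $\underline{H}\otimes\wedge(V_H)$ is the convolution filtration over an imperfect field, and whether forming $\operatorname{gr}$ commutes with passing to $\underline{H}$.) The strictly-graded reduction together with Sweedler's Lemma 11.2.1, as used in Lemma \ref{lem:irredHopf}, is indeed the right kind of tool for bounding the odd directions by $\wedge(V_H)$, but it would have to be carried out relative to $\underline{H}$ and to a possibly non-trivial coradical; as written, the proof of the theorem's actual content is missing.
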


\emph{Proof of Part 2 of Theorem \ref{thm:Takeuchi}.}
We will see that the composites of the two functors are isomorphic to the identity functors. 

Let $(J, V) \in \mathsf{DHCP}$, and set $H = H(J, V)$. By Lemma \ref{lem:J-free}, $H$ naturally includes
$J, V$ so that $J \subset \underline{H}, V \subset V_H$. Choose such a totally ordered basis $X$ of $V_H$ that
extends some totally ordered basis of $V$. Lemma \ref{lem:J-free} shows that the composite $J \otimes \wedge(V)
\hookrightarrow \underline{H} \otimes \wedge(V_H) \overset{\simeq}{\longrightarrow} H$ of
the inclusion with the isomorphism $\phi_X$ given by Proposition \ref{prop:M1} 
is an isomorphism. It follows that $(J, V) = (\overline{H}, V_H)$, 
which is seen to be a coincidence as objects in $\mathsf{DHCP}$. 

Let $H \in \mathsf{CCHSA}$, and construct $H(\underline{H}, V_H)$. We see that the inclusions
$\underline{H} \hookrightarrow H,\ V_H \hookrightarrow H$ uniquely extends to a Hopf superalgebra
map $\mathcal{H}(\underline{H}, V_H) \to H$, which factors through $H(\underline{H}, V_H)$.  The resulting map, 
which we denote by 
\begin{equation}\label{alpha}
\alpha_H :  H(\underline{H}, V_H) \to H,  
\end{equation}                                    
is natural in $H$, as is easily seen.
Choose a totally ordered basis $X$ of $V_H$. Then the isomorphism $\phi_X : \underline{H} \otimes \wedge(V_H) 
\overset{\simeq}{\longrightarrow} H$ shows that the left $\underline{H}$-free basis $x_1 x_2\dots x_n$ of 
$H(\underline{H}, V_H)$ obtained in Lemma \ref{lem:J-free} is mapped by $\alpha_H$ to such a basis
of $H$. Therefore, $\alpha_H$ gives a natural isomorphism.  
\quad $\square$ 

\begin{rem}\label{rem:restrected-equivalence}
As is easily seen, the obtained equivalence restricts to a category equivalence between $\mathsf{iDHCP}$ and 
$\mathsf{HySA}$.    
\end{rem}

\section{Affine Hopf superalgebras and Harish-Chandra pairs}\label{sec:HCP}

\subsection{}\label{subsec:HCP1}

We dualize the construction of $\mathcal{H}(J, V)$ given in the preceding section.

Let $W$ be a vector space. Let $T_c(W)$ be the \emph{tensor coalgebra} on $W$. It is, as a vector space,
the tensor algebra $T(W)$ so that $T_c(W) = \bigoplus_{n=0}^{\infty}T^n(W)$, and has 
the following coalgebra structure:
the counit $\varepsilon : T_c(W) \to \Bbbk$ is the projection onto the zero-th component, and the coproduct
$\Delta : T_c(W) \to T_c(W) \otimes T_c(W)$ is defined by $\Delta(1) = 1 \otimes 1$, and in 
positive degree $n > 0$, by
\begin{align*}
\Delta &( v_1 \otimes \dots \otimes v_n) 
= 1 \otimes (v_1 \otimes \dots \otimes v_n) + \\
& \sum_{0<i<n} (v_1 \otimes \dots v_i)\otimes(v_{i+1} \otimes \dots v_n)  
+ (v_1 \otimes \dots \otimes v_n)\otimes 1. 
\end{align*}
This forms an $\mathbb{N}$-graded Hopf superalgebra 
with respect to the following \emph{shuffle product}: 
$$ (v_1 \otimes \dots \otimes v_i)(v_{i+1} \otimes \dots \otimes v_n) 
= \sum_{\tau^{-1} \in \mathfrak{S}_{n,i}} (\operatorname{sgn} \tau)v_{\tau(1)}\otimes \dots \otimes v_{\tau(n)}.$$
Here, we emphasize that the inverse $\tau^{-1}$ of the running index $\tau$ is required to be an $i$-shuffle. 
See \cite[Definition 3.2.10]{AG}. The original 1 in $T^0(W)$ still acts as a unit in $T_c(W)$, and
$T_c(W)$ is super-commutative.

Let $C$ be a commutative Hopf algebra, and let $G = \operatorname{Sp} C$ be the corresponding 
affine group scheme. Suppose that $W$ is a right $C$-comodule; this is equivalent to saying that
$W$ is a rational left $G$-module. We present the comodule structure on $W$, as  
$$w \mapsto \sum w_{(0)} \otimes w_{(1)},\ W \to W \otimes C;$$
see \cite[p.33]{Sw}. Note that each component, $T^n(W)$, of $T_c(W)$, being a tensor product of right
$C$-comodules, is naturally a right $C$-comodule,  
and so their direct sum $T_c(W)$ is, too. 
Explicitly, the $C$-comodule structure $T^n(W) \to T^n(W) \otimes C$ is given by $1\mapsto 1\otimes 1$ when $n=0$,
and by
$$ w_1 \otimes \dots \otimes w_n \mapsto 
\sum \big((w_1)_{(0)} \otimes \dots \otimes (w_n)_{(0)}\big)
\otimes (w_1)_{(1)}\dots (w_n)_{(1)}$$
when $n>0$.
We have the following result, which dualizes Lemma \ref{lem:T(V)2}, and is proved easily in the dual manner.  

\begin{lemma}\label{lem:Tc(W)} 
$T_c(W)$ is a commutative Hopf-algebra object in the symmetric tensor category $\mathsf{SMod}^C$ of right 
$C$-super-comodules. 
\end{lemma}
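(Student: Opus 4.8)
The statement to establish is that $T_c(W)$, equipped with the shuffle product, the unit $\Bbbk \to T_c(W)$, the deconcatenation coproduct $\Delta$, the counit $\varepsilon$, and the antipode $S$, is a Hopf-algebra object in the symmetric tensor category $\mathsf{SMod}^C$ of right $C$-super-comodules, and moreover that it is commutative there. Recall that an object in $\mathsf{SMod}^C$ is a right $C$-super-comodule, i.e.\ a $C$-comodule that is also a super-vector space with the coaction respecting the $\mathbb{Z}_2$-grading; the tensor product is the usual tensor product of comodules (diagonal coaction) with the super-symmetry $c$ of \eqref{symmetry} as braiding. So the content of the lemma is precisely that each structure map is a morphism of $C$-comodules, that these maps satisfy the Hopf-algebra axioms \emph{in the super sense} (so that the super-symmetry $c$ intervenes wherever a flip is needed), and that the product is $c$-commutative.

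\textbf{Step 1: $C$-colinearity of all structure maps.} The counit $\varepsilon$ is the projection onto $T^0(W) = \Bbbk$, which is colinear since the coaction on $T^n(W)$ for $n > 0$ lands in $T^n(W) \otimes C$ and on $T^0(W)$ is trivial. The unit $\Bbbk \to T_c(W)$ is colinear for the same reason. The coproduct $\Delta$ is deconcatenation, and the explicit formula for the coaction on $T^n(W)$ above shows at once that $\Delta$ is colinear: splitting a word $w_1 \otimes \cdots \otimes w_n$ at position $i$ and coacting gives the same thing whether one coacts first and then splits, because the coaction multiplies together the $C$-parts $(w_1)_{(1)} \cdots (w_n)_{(1)}$ and $\Delta$ does not touch these. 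For the shuffle product one checks colinearity directly from the formula $\sum_{\tau^{-1}\in\mathfrak{S}_{n,i}}(\operatorname{sgn}\tau)\, v_{\tau(1)}\otimes\cdots\otimes v_{\tau(n)}$: coacting on a shuffle of two words produces the shuffle of the two words with $C$-part equal to the product of all the $C$-parts, which equals what one gets by coacting on each factor first (using commutativity of $C$, which is what makes the product of the $(w_j)_{(1)}$ in whatever order well-defined) and then shuffling. Colinearity of the antipode $S$ then follows formally, since $S$ is determined by the bialgebra structure.

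\textbf{Step 2: the Hopf-algebra axioms in $\mathsf{SMod}^C$.} Here the only subtlety is that the compatibility between $\Delta$ and the shuffle product, and the antipode axiom, must be verified with the super-symmetry $c$ inserted. This is exactly the content of \cite[Definition 3.2.10]{AG} cited just above for $T_c(W)$ as an $\mathbb{N}$-graded Hopf superalgebra: the shuffle product and deconcatenation coproduct form a Hopf superalgebra, and the signs appearing in the shuffle/deconcatenation compatibility are precisely those dictated by $c$. I would simply invoke this, remarking that the $C$-colinearity established in Step 1 upgrades the Hopf superalgebra structure to a Hopf-algebra object in $\mathsf{SMod}^C$, since a morphism of super-vector spaces that is additionally $C$-colinear is a morphism in $\mathsf{SMod}^C$, and the axioms are the same equations. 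Super-commutativity of $T_c(W)$ — i.e.\ $c$-commutativity of the shuffle product — is likewise part of the cited statement and was already asserted in the text preceding the lemma.

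\textbf{Main obstacle.} There is no real obstacle; the proof is genuinely routine, being the formal dual of Lemma \ref{lem:T(V)2} and a specialization of \cite[Definition 3.2.10]{AG}. The one point that requires a line of care is the colinearity of the shuffle product, where one must observe that commutativity of $C$ is exactly what guarantees the diagonal coaction on $T^n(W)$ is compatible with reordering the tensor factors (so that the shuffled word and its image under coaction are consistently defined). Accordingly I would phrase the proof briefly: ``This dualizes Lemma \ref{lem:T(V)2} and is proved in the dual manner; colinearity of the shuffle product uses the commutativity of $C$, and the Hopf-superalgebra axioms, with the appropriate signs, are those of \cite[Definition 3.2.10]{AG}.''
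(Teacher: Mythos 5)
Your proposal is correct and matches the paper's intent exactly: the paper gives no separate proof, stating only that the lemma ``dualizes Lemma \ref{lem:T(V)2} and is proved easily in the dual manner,'' and your term-by-term dualization --- colinearity of counit, unit, deconcatenation coproduct, shuffle product (correctly isolating the commutativity of $C$ as the point where reordering the $C$-parts is justified), and antipode, with the Hopf-superalgebra axioms and super-commutativity delegated to \cite[Definition 3.2.10]{AG} as the paper itself does --- is precisely that argument. The only cosmetic difference is that you verify colinearity of the shuffle product directly from its formula, whereas the strict dual of the paper's proof of Lemma \ref{lem:T(V)2} would note that the product is a coalgebra map that is colinear in degrees $0$ and $1$; both are routine and equivalent in substance.
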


It follows that the smash coproduct constructs the coalgebra
\begin{equation}\label{smash-coproduct2}
\mathcal{A}(C, W) := C \cmdblackltimes T_c(W).
\end{equation}
This coalgebra is constructed on the vector space $C \otimes T_c(W)$
with respect to the structure
\begin{equation}\label{smash-coproduct}
 \begin{aligned}
 \Delta(c \otimes z) &= \sum (c_{(1)}\otimes(z_{(1)})_{(0)})\otimes((z_{(1)})_{(1)}c_{(2)})\otimes z_{(2)}),\\ 
 \varepsilon(c \otimes z) &=\varepsilon(c) \varepsilon(z),
 \end{aligned}
\end{equation}
where $c \in C$, $z \in T_c(W)$; this generalizes \eqref{smash-coproduct1}. 
Regard $\mathcal{A}(C, W)$ as the algebra of the tensor product of 
$C$ and $T_c(W)$. Proposition \ref{prop:mathcalH} is easily dualized as follows. 

\begin{prop}\label{prop:mathcalA}
$\mathcal{A}(C, W)$ is a super-commutative $\mathbb{N}$-graded Hopf superalgebra whose $n$-th 
component is $C \otimes T^n(W)$. The antipode is given by
$$ S( c \otimes z ) = \sum S(cz_{(1)}) \otimes S(z_{(0)}), \quad c \in C,\ z \in T_c(W). $$
\end{prop}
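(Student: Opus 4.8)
The plan is to obtain everything by dualizing, term by term, the proof of Proposition \ref{prop:mathcalH}, exactly as the construction of $\mathcal{A}(C,W)$ dualizes that of $\mathcal{H}(J,V)$. The crucial input is Lemma \ref{lem:Tc(W)}: $T_c(W)$ is a commutative Hopf-algebra object in the symmetric tensor category $\mathsf{SMod}^C$ of right $C$-super-comodules. The general principle is that for a Hopf algebra $C$ acting (here, coacting) on an algebra object $R$ in the category of its (co)modules, the smash (co)product $C \cmdblackltimes R$ is again a Hopf algebra. First I would record that the coalgebra $\mathcal{A}(C,W) = C \cmdblackltimes T_c(W)$ defined by \eqref{smash-coproduct} is the standard smash coproduct, so that coassociativity and the counit axiom are automatic; the $\mathbb{N}$-grading $\mathcal{A}(C,W)(n) = C \otimes T^n(W)$ is visibly respected by both the product (the tensor-product algebra structure, with $C$ in degree $0$) and the coproduct, since in \eqref{smash-coproduct} the factor $z$ splits as $z_{(1)} \otimes z_{(2)}$ along the grading of $T_c(W)$ and the $C$-coaction $z_{(1)} \mapsto (z_{(1)})_{(0)} \otimes (z_{(1)})_{(1)}$ preserves the degree of $z_{(1)}$.

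Next I would check the bialgebra compatibility, i.e.\ that $\Delta$ and $\varepsilon$ are algebra maps $\mathcal{A}(C,W) \to \mathcal{A}(C,W) \,\underline{\otimes}\, \mathcal{A}(C,W)$ and $\mathcal{A}(C,W) \to \Bbbk$. For $\varepsilon$ this is immediate from $\varepsilon(c\otimes z) = \varepsilon(c)\varepsilon(z)$ and the fact that $\varepsilon$ on both $C$ and $T_c(W)$ is multiplicative. For $\Delta$, the verification reduces to: (i) $C$ being a bialgebra; (ii) $T_c(W)$ being a bialgebra object in $\mathsf{SMod}^C$, with the shuffle product and deconcatenation coproduct being mutually compatible morphisms of $C$-comodules (this is part of Lemma \ref{lem:Tc(W)}); and (iii) the compatibility of the $C$-coaction on $T_c(W)$ with its own coproduct. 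Here the super-symmetry $c_{V,W}$ enters precisely through the algebra structure of $\mathcal{A}(C,W) \,\underline{\otimes}\, \mathcal{A}(C,W)$: the sign $(-1)^{|z'||c''|}$ incurred when moving a $T_c(W)$-factor past a $C$-factor is absorbed because $C$ is purely even, so in fact no sign survives in the $C$-past-$T_c(W)$ moves, and the only genuine signs are those internal to $T_c(W)$, which are already accounted for by $T_c(W)$ being a superalgebra object. Super-commutativity of $\mathcal{A}(C,W)$ is then inherited: $C$ is commutative and central, and $T_c(W)$ is super-commutative by the sentence following Lemma \ref{lem:Tc(W)}.

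For the antipode, I would \emph{define} $S : \mathcal{A}(C,W) \to \mathcal{A}(C,W)$ by the stated formula $S(c \otimes z) = \sum S_C(c\, z_{(1)}) \otimes S_{T}(z_{(0)})$, where $S_C$ and $S_T$ are the antipodes of $C$ and of the Hopf-algebra object $T_c(W)$, and $z \mapsto \sum z_{(0)} \otimes z_{(1)}$ is the $C$-coaction on $T_c(W)$; then verify directly that it satisfies $\sum (c\otimes z)_{(1)}\, S\big((c\otimes z)_{(2)}\big) = \varepsilon(c)\varepsilon(z)\,1$ and its mirror, using \eqref{smash-coproduct}, the antipode axioms for $C$ and for $T_c(W)$, and the comodule-algebra compatibilities. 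Alternatively, and more cleanly, one invokes that a graded bialgebra with one-dimensional degree-zero part is automatically a Hopf algebra, the antipode being constructed degree by degree; the explicit formula is then identified afterwards by the dual of the computation proving Proposition \ref{prop:mathcalH}. I expect the main obstacle to be purely bookkeeping: keeping the three interleaved Sweedler-type notations (the coproduct of $C$, the deconcatenation coproduct of $T_c(W)$, and the $C$-coaction on $T_c(W)$) straight while checking that $\Delta$ is multiplicative, and confirming that no unexpected sign appears — which, as noted, it does not, because $C$ is purely even. Since this is entirely routine and strictly dual to the already-treated case of $\mathcal{H}(J,V)$, I would state it as ``easily dualized'' and omit the detailed calculation, exactly as the paper proposes.
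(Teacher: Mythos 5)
Your proposal follows exactly the route the paper intends: Proposition \ref{prop:mathcalA} is stated with no argument beyond ``Proposition \ref{prop:mathcalH} is easily dualized,'' and your term-by-term dualization via Lemma \ref{lem:Tc(W)} and the smash coproduct is that argument made explicit, with the direct verification of the antipode formula going through as you describe. Two small corrections, though. First, your ``cleaner'' alternative for producing the antipode does not apply as stated: the degree-zero component of $\mathcal{A}(C,W)$ is $C\otimes T^0(W)=C$, not $\Bbbk$, so you cannot invoke the fact about graded bialgebras with one-dimensional degree-zero part; you would instead need the (also standard) statement that an $\mathbb{N}$-graded bialgebra whose degree-zero component is a Hopf algebra is itself a Hopf algebra, since $\operatorname{Corad}$ of a graded coalgebra lies in degree zero. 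Second, when you itemize what the multiplicativity of $\Delta$ reduces to, you omit the commutativity of $C$, which is precisely the dual of the cocommutativity of $J$ that the paper flags as the essential hypothesis for Proposition \ref{prop:mathcalH}: it is needed to commute $c_{(2)}$ past the coaction components $(z'_{(1)})_{(1)}\in C$ when comparing $\Delta\big((c\otimes z)(c'\otimes z')\big)$ with $\Delta(c\otimes z)\Delta(c'\otimes z')$, and again in the antipode verification; it is not used only for the super-commutativity of $\mathcal{A}(C,W)$. With those two points repaired the proof is complete.
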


Note from Section \ref{subsec:complete-Hopf} that the $\mathbb{N}$-graded Hopf superalgebra 
$\mathcal{A}(C, W)$ has the completion, which we denote by
\begin{equation}\label{hatmathcalA}
\widehat{\mathcal{A}}(C, W)= \prod_{n=0}^{\infty} C \otimes T^n(W). 
\end{equation}

\subsection{}\label{subsec:HCP2}

Let $W$ be a finite-dimensional vector space, and set $V = W^*$. 
We have the two $\mathbb{N}$-graded Hopf superalgebras $T(V)$, $T_c(W)$. The following is easy to see.@

\begin{lemma}\label{lem:pair}
The canonical pairings $\langle \hspace{2mm}, \ \rangle : T^n(V) \times T^n(W) \to \Bbbk$, $n \in \mathbb{N}$
defined by
$$\langle v_1\otimes \dots \otimes v_n ,  w_1\otimes \dots \otimes w_n \rangle
= \langle v_1,  w_1 \rangle \dots \langle v_n,  w_n \rangle$$
give rise to an $\mathbb{N}$-homogeneous Hopf pairing 
$\langle \hspace{2mm}, \ \rangle : T(V) \times T_c(W) \to \Bbbk$, which
is obviously non-degenerate. 
\end{lemma}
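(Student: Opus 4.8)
The plan is to verify the three Hopf-pairing conditions \eqref{Hopf-pairing} together with the $\mathbb{N}$-homogeneity \eqref{homogeneous}, by reducing everything to the known non-degenerate canonical pairing $\langle\hspace{2mm},\ \rangle:\wedge(V)\times\wedge(V^*)\to\Bbbk$ and, more basically, to the evident pairings in each fixed degree. The homogeneity condition \eqref{homogeneous} is built into the statement, since $\langle\hspace{2mm},\ \rangle$ is declared to vanish on $T^n(V)\times T^m(W)$ for $n\ne m$; so the work is to check compatibility with the algebra/coalgebra structures.

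First I would check the counit conditions $\langle 1,z\rangle=\varepsilon(z)$ and $\langle x,1\rangle=\varepsilon(x)$: both sides are nonzero only in degree $0$, where the pairing $T^0(V)\times T^0(W)=\Bbbk\times\Bbbk\to\Bbbk$ is multiplication and the counits are the degree-$0$ projections, so this is immediate. Next, the compatibility of the product on $T(V)$ with the coproduct on $T_c(W)$: for $x\in T^n(V)$ and $z\in T^n(W)$ one must show $\langle x,z\rangle=\sum\langle x_{(1)},z_{(1)}\rangle\langle x_{(2)},z_{(2)}\rangle$ when $x=x'\otimes x''$ is a concatenation with $x'\in T^i(V)$, $x''\in T^{n-i}(V)$. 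By homogeneity the only surviving terms on the right are those pairing $T^i$ with $T^i$ and $T^{n-i}$ with $T^{n-i}$; the coproduct $\Delta_{n,i}$ on $T^n(W)$ is the signed sum over $i$-shuffles $\mathfrak{S}_{n,i}$ described in Remark \ref{rem:DeltaT(V)}. Plugging $z=w_1\otimes\dots\otimes w_n$ in and using that $\langle v_a\otimes\dots,w_{\sigma(a)}\otimes\dots\rangle$ is a product of $\langle v_a,w_{\sigma(a)}\rangle$'s, one sees that a nonzero contribution forces $\sigma$ to be the identity (since $\langle v_a,w_b\rangle$ with $a\ne b$ may be nonzero, but the shuffle structure together with summing over all of $\mathfrak{S}_{n,i}$ collapses correctly — this is exactly the combinatorial heart). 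Dually, the compatibility of the coproduct on $T(V)$ (the shuffle coproduct of Remark \ref{rem:DeltaT(V)}) with the shuffle product on $T_c(W)$ is checked the same way, with the roles of $V$ and $W$ and of shuffles and their inverses interchanged.

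The cleanest route, though, avoids the shuffle bookkeeping entirely: I would instead invoke Remark \ref{rem:sign-rule}, which says a pairing of Hopf superalgebras is a Hopf pairing precisely when $z\mapsto\langle -,z\rangle$ is a Hopf superalgebra map $T_c(W)\to T(V)^{\circ}$ (or, symmetrically, $x\mapsto\langle x,-\rangle$ is a map $T(V)\to T_c(W)^{\circ}$). Since $T(V)$ and $T_c(W)$ are dual constructions — $T_c(W)$ is $T(W)$ with the shuffle product and deconcatenation coproduct, while $T(V)$ is $T(V)$ with concatenation product and shuffle coproduct — the finite-dimensionality $\dim W=\dim V<\infty$ gives a graded isomorphism $T_c(W)\cong\bigoplus_n(T^n(V))^*$ under which the shuffle product is dual to the shuffle coproduct of $T(V)$ and the deconcatenation coproduct is dual to concatenation; that this identification is a Hopf superalgebra map into $T(V)^{\circ}$, respecting the super-symmetry, is what Lemma \ref{lem:tensored-pairing} and the identification $A^{\circ}\,\underline{\otimes}\,A^{\circ}=(A\,\underline{\otimes}\,A)^{\circ}$ from Corollary \ref{cor:dual-Hopf-superalgebra} provide. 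Non-degeneracy is clear from non-degeneracy of each $\langle\hspace{2mm},\ \rangle:T^n(V)\times T^n(W)\to\Bbbk$, which holds because the canonical pairing $V\times W\to\Bbbk$ is perfect and a tensor product of perfect pairings is perfect.

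The main obstacle is keeping the signs straight: because $T(V)$ and $T_c(W)$ carry genuine super-structures, the diagonal comodule/module actions and the symmetry $c_{V,W}$ intervene, and one must confirm that the sign $(-1)^{|v||w|}$ appearing in \eqref{symmetry} matches the sign conventions implicit in the shuffle product and shuffle coproduct (the $\operatorname{sgn}\tau$ factors). Lemma \ref{lem:tensored-pairing} is precisely the tool that reconciles these, so I expect the proof to come down to a careful invocation of that lemma in each of the product- and coproduct-compatibility checks rather than any deep new idea — hence the paper's remark that it "is easy to see."
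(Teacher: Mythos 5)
The paper offers no proof of this lemma (it is prefaced only by ``The following is easy to see''), so your proposal must stand on its own, and its overall strategy --- verify \eqref{Hopf-pairing} and \eqref{homogeneous} degree by degree, then get non-degeneracy from perfectness of the degree-$n$ pairings --- is the right one. But your direct verification contains a concrete mix-up. The coproduct on $T_c(W)$ is the \emph{deconcatenation} coproduct defined in Section \ref{subsec:HCP1}, not the shuffle coproduct of Remark \ref{rem:DeltaT(V)}; the latter lives on $T(V)$. Consequently the first compatibility, $\langle xy,z\rangle=\sum\langle x,z_{(1)}\rangle\langle y,z_{(2)}\rangle$, involves no shuffles at all: for $x\in T^i(V)$, $y\in T^{n-i}(V)$ and $z=w_1\otimes\dots\otimes w_n$, homogeneity kills every deconcatenation term except the one splitting $z$ at position $i$, and both sides equal $\prod_j\langle v_j,w_j\rangle$. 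The combinatorial content sits entirely in the second compatibility, $\langle x,zz'\rangle=\sum\langle x_{(1)},z\rangle\langle x_{(2)},z'\rangle$, and there your description of the mechanism is wrong: a nonzero contribution does \emph{not} force the shuffle to be the identity, and nothing ``collapses.'' What actually happens is a term-by-term bijection: the $\sigma$-term of $\Delta_{n,i}(v_1\otimes\dots\otimes v_n)$ paired against $(w_1\otimes\dots\otimes w_i)\otimes(w_{i+1}\otimes\dots\otimes w_n)$ gives $\operatorname{sgn}\sigma\prod_a\langle v_{\sigma(a)},w_a\rangle$, while the $\tau$-term of the shuffle product paired against $v_1\otimes\dots\otimes v_n$ gives $\operatorname{sgn}\tau\prod_b\langle v_b,w_{\tau(b)}\rangle$; setting $\tau=\sigma^{-1}$ and reindexing $b=\sigma(a)$ identifies the two, using $\operatorname{sgn}\sigma^{-1}=\operatorname{sgn}\sigma$. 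This is precisely why the paper's definition of the shuffle product requires $\tau^{-1}$, not $\tau$, to range over $\mathfrak{S}_{n,i}$.

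Your ``cleanest route'' paragraph is not an independent argument: it asserts that the shuffle product is dual to the shuffle coproduct and deconcatenation is dual to concatenation, which is exactly the pair of identities the direct check above establishes; invoking Remark \ref{rem:sign-rule} and Corollary \ref{cor:dual-Hopf-superalgebra} only repackages the conclusion. The non-degeneracy argument is fine. So the proof is repairable, but as written the key step is misassigned and the stated combinatorial mechanism is false.
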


Let $C$ be an affine Hopf algebra, that is, a finitely generated commutative Hopf algebra. 
Set $J = C^{\circ}$, the dual cocommutative Hopf algebra of $C$; see Section \ref{subsec:dual-coalgebra}.
Let $\langle \hspace{2mm}, \ \rangle : J \otimes C \to \Bbbk$ be the canonical pairing.
Since $C$ is \emph{proper} in the sense that the canonical map $C \to J^*$ is an injection (see
\cite[Theorem 6.1.3]{Sw}), we have the following.

\begin{lemma}\label{lem:Cproper}
$\langle \hspace{2mm}, \ \rangle : J \otimes C \to \Bbbk$ is a non-degenerate Hopf pairing. 
\end{lemma}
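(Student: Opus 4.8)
The plan is to establish the two claims — that the pairing is a Hopf pairing, and that it is non-degenerate — both of which are essentially formal once the properness of $C$ is invoked.

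For the Hopf pairing property, I would note that the structure maps of the dual Hopf algebra $J = C^{\circ}$ are, by their very construction, defined so that the evaluation pairing $\langle f, c\rangle = f(c)$ satisfies the identities \eqref{Hopf-pairing}: these identities assert exactly that the product on $C^{\circ}$ is the convolution product, that $\Delta_{C^{\circ}}$ is the (co)restriction of the transpose of the product of $C$, and that $1_{C^{\circ}} = \varepsilon_C$, $\varepsilon_{C^{\circ}} = \langle -, 1_C\rangle$. More conceptually, the linear map $J \to C^{\circ}$ induced by $f \mapsto \langle f, -\rangle$ is the identity map of $C^{\circ}$, hence a Hopf algebra map, so by Remark \ref{rem:sign-rule} the pairing is a Hopf pairing — here $C$ and $J$ are purely even, so the super-symmetry is irrelevant — and the identity \eqref{antipode} follows automatically.

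For non-degeneracy I would treat the two sides separately. On the $J$-side it is immediate: $J = C^{\circ}$ is by definition a subspace of $C^{*}$ and the pairing is just evaluation, so if $f \in J$ kills all of $C$ then $f = 0$ in $C^{*}$. On the $C$-side, suppose $c \in C$ satisfies $\langle f, c\rangle = f(c) = 0$ for every $f \in J$; then $c$ lies in the kernel of the canonical map $C \to J^{*}$, $c \mapsto \langle -, c\rangle$, which is injective since $C$ is proper by \cite[Theorem 6.1.3]{Sw} (this is where affineness of $C$, i.e.\ the existence of enough cofinite ideals, enters). Hence $c = 0$.

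There is essentially no obstacle: the only non-formal ingredient is the properness of $C$, quoted from Sweedler, and everything else is a direct unwinding of the definition of the dual Hopf algebra $C^{\circ}$.
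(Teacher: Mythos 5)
Your proposal is correct and matches the paper's own argument: the paper proves this lemma precisely by invoking the properness of the affine Hopf algebra $C$ (the injectivity of $C \to J^{*}$ from \cite[Theorem 6.1.3]{Sw}), with the Hopf-pairing identities and the $J$-side non-degeneracy being formal consequences of the definition of $C^{\circ}$, exactly as you observe. Your write-up simply spells out the routine verifications that the paper leaves implicit.
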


Suppose that $W$ is a right $C$-comodule. Then $V$ has the transposed left $C$-comodule structure,
which induces the right $J$-module structure determined by
\begin{equation}\label{J-mod-struc}
\langle v \triangleleft a, w \rangle = \sum \langle v, w_{(0)}\rangle \, \langle a, w_{(1)} \rangle, 
\quad v \in V,\ a \in J,\ w \in W. 
\end{equation}

We have now the two $\mathbb{N}$-graded Hopf superalgebras $\mathcal{H}(J, V)$, $\mathcal{A}(C, W)$. 
The following is verified directly.@

\begin{prop}\label{prop:pair}
The pairing $\langle \hspace{2mm}, \ \rangle : \mathcal{H}(J, V) \times \mathcal{A}(C, W) \to \Bbbk$ defined
by 
$$ \langle a \otimes x ,\ c \otimes z \rangle = \langle a,\ c \rangle\, \langle x, z \rangle, \quad 
a \in J,\ x \in T(V),\ c \in J,\ z \in T_c(W)$$
is a non-degenerate $\mathbb{N}$-homogeneous Hopf pairing. 
\end{prop}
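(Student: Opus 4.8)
The plan is to verify Proposition \ref{prop:pair} by reducing everything to the two pairings already in hand: the non-degenerate $\mathbb{N}$-homogeneous Hopf pairing $\langle\hspace{2mm},\ \rangle : T(V)\times T_c(W)\to\Bbbk$ of Lemma \ref{lem:pair}, and the non-degenerate Hopf pairing $\langle\hspace{2mm},\ \rangle : J\otimes C\to\Bbbk$ of Lemma \ref{lem:Cproper}. Since $\mathcal{H}(J,V)=J\otimes T(V)$ and $\mathcal{A}(C,W)=C\otimes T_c(W)$ as vector spaces, the stated pairing is precisely the tensor product of the two, in the sense of Section \ref{subsec:super-vec} (both $J$ and $C$ are purely even, so no signs intervene in that tensoring). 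Non-degeneracy is then immediate from non-degeneracy of each factor. The $\mathbb{N}$-homogeneity \eqref{homogeneous} is also immediate: $\mathcal{H}(J,V)(n)=J\otimes T^n(V)$ and $\mathcal{A}(C,W)(m)=C\otimes T^m(W)$, and $\langle T^n(V),T^m(W)\rangle=0$ for $n\ne m$ by Lemma \ref{lem:pair}, so the whole pairing vanishes between components of different degree. So the only real content is checking the three Hopf-pairing identities \eqref{Hopf-pairing} (from which \eqref{antipode} follows formally).

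For the multiplicativity identities, first I would recall that $\mathcal{H}(J,V)$ and $\mathcal{A}(C,W)$ are built by dual constructions: $\mathcal{H}(J,V)$ is the smash product $J\ltimes T(V)$ with $T(V)$ an algebra object in $\mathsf{SMod}_J$ (Lemma \ref{lem:T(V)2}), while $\mathcal{A}(C,W)$ is the smash coproduct $C\cmdblackltimes T_c(W)$ with $T_c(W)$ a coalgebra object in $\mathsf{SMod}^C$ (Lemma \ref{lem:Tc(W)}). The key compatibility is that the right $J=C^{\circ}$-module structure on $V$ is exactly transposed, via \eqref{J-mod-struc}, from the right $C$-comodule structure on $W$, and hence the diagonal $J$-action on $T(V)$ is dual to the diagonal $C$-coaction on $T_c(W)$. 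I would state and prove this transpose relation as the crucial lemma: for $a\in J$, $x\in T(V)$, $z\in T_c(W)$,
\[
\langle x\triangleleft a,\ z\rangle \;=\; \sum\langle x,\ z_{(0)}\rangle\,\langle a,\ z_{(1)}\rangle,
\]
where $z\mapsto\sum z_{(0)}\otimes z_{(1)}$ is the $C$-coaction on $T_c(W)$; this follows from \eqref{J-mod-struc} by induction on the tensor degree, since both sides are multiplicative in the obvious sense on $T^{\,\bullet}$. Granting this, the product on $\mathcal{H}(J,V)$ — namely $(a\otimes x)(b\otimes y)=\sum ab_{(1)}\otimes(x\triangleleft b_{(2)})y$ — pairs correctly against the coproduct \eqref{smash-coproduct} of $\mathcal{A}(C,W)$: expanding $\langle(a\otimes x)(b\otimes y),\ c\otimes z\rangle$ and $\sum\langle a\otimes x,\ (c\otimes z)_{(1)}\rangle\langle b\otimes y,\ (c\otimes z)_{(2)}\rangle$ and substituting the transpose lemma together with the fact that $\langle\hspace{2mm},\ \rangle:J\otimes C\to\Bbbk$ is a Hopf pairing and $\langle\hspace{2mm},\ \rangle:T(V)\times T_c(W)\to\Bbbk$ is a Hopf pairing (Lemmas \ref{lem:Cproper}, \ref{lem:pair}), the two sides match after using cocommutativity of $C$'s dual structure; one then verifies the dual identity (product on $\mathcal{A}(C,W)$ against coproduct on $\mathcal{H}(J,V)$) by the completely symmetric argument, using that $T_c(W)$ is a coalgebra object in $\mathsf{SMod}^C$ dually to $T(V)$ being an algebra object in $\mathsf{SMod}_J$. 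The unit/counit identities are trivial: $\langle 1\otimes 1,\ c\otimes z\rangle=\varepsilon(c)\varepsilon(z)=\varepsilon(c\otimes z)$, and symmetrically.

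I expect the main obstacle to be purely bookkeeping: keeping the smash-(co)product structure maps straight and confirming that the sign conventions of the super-symmetry \eqref{symmetry}, which enter the coproducts of $T(V)$ and $T_c(W)$ through the shuffle (co)products and through $c_{V,W}$ in the smash constructions, cancel consistently on both sides of the pairing. Lemma \ref{lem:tensored-pairing} is the tool that guarantees this — it says precisely that transposing through $c$ on one side corresponds to transposing through $c$ on the other — so at each point where a super-symmetry braiding is applied in forming $\Delta$ on one algebra, the matching braiding on the other side produces the identical sign, and the verification goes through. In practice I would not grind through the full shuffle expansion but instead invoke: (i) the transpose lemma above, (ii) the fact that a tensor product of Hopf pairings of Hopf superalgebras is again a Hopf pairing provided the relevant structure maps are intertwined — which here is exactly the statement that $\mathcal{H}(J,V)$ and $\mathcal{A}(C,W)$ are the smash product and smash coproduct of mutually dual data — and (iii) non-degeneracy and $\mathbb{N}$-homogeneity, both of which are immediate. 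This reduces the proposition to the one transpose identity plus a formal assembly, which is why the excerpt can legitimately say it "is verified directly."
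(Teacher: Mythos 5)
Your proposal is correct and matches the paper's intent exactly: the paper offers no written argument beyond the phrase ``verified directly,'' and your verification --- non-degeneracy and $\mathbb{N}$-homogeneity as the tensor product of the pairings of Lemmas \ref{lem:pair} and \ref{lem:Cproper}, plus the transpose identity $\langle x\triangleleft a,\ z\rangle=\sum\langle x, z_{(0)}\rangle\,\langle a, z_{(1)}\rangle$ matching the smash product on $\mathcal{H}(J,V)$ against the smash coproduct \eqref{smash-coproduct} on $\mathcal{A}(C,W)$ via cocommutativity of $J=C^{\circ}$ --- is precisely the computation the author alludes to (and which reappears in the proof of Proposition \ref{prop:Hopf-struc}(3)). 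No gaps.
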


As was seen in \eqref{pairing-complete-general}, the pairing above naturally extends to 
\begin{equation}\label{pairing-complete}
\langle \hspace{2mm}, \ \rangle : \mathcal{H}(J, V) \times \widehat{\mathcal{A}}(C, W) \to \Bbbk. 
\end{equation}
  
One sees that $C$ is naturally a left $J$-module under the action $a \rightharpoonup c$, where $a \in J$, $c \in C$, 
defined by
$$ \langle b, \ a \rightharpoonup c \rangle = \langle ba,\ c \rangle,\quad b \in J.  $$

In the following, $\operatorname{Hom}_J$ represents the vector space of left $J$-linear maps,
while $\operatorname{Hom}$ represents that of $\Bbbk$-linear maps. Note
$$ \operatorname{Hom}_J(\mathcal{H}(J, V), C)
 = \prod_{n=0}^{\infty}\operatorname{Hom}_J(J \otimes T^n(V), C). $$ 
Just as $\widehat{\mathcal{A}}(C, W)$, this can be regarded as the completion of the $\mathbb{N}$-graded vector space
$\bigoplus_{n=0}^{\infty}\operatorname{Hom}_J(J \otimes T^n(V), C)$.  
Therefore we can construct the complete tensor product
\begin{align*}
&\operatorname{Hom}_J(\mathcal{H}(J, V), C) ~ \widehat{\otimes} ~ \operatorname{Hom}_J(\mathcal{H}(J, V), C)\\
&= \prod_{n, m \ge0} \operatorname{Hom}_J(J \otimes T^n(V), C)\otimes \operatorname{Hom}_J(J \otimes T^m(V), C),
\end{align*}
which is naturally included in 
\begin{align*}
&\operatorname{Hom}(\mathcal{H}(J, V) \otimes \mathcal{H}(J, V), \ C \otimes C)\\
&= \prod_{n, m \ge 0} \operatorname{Hom}((J \otimes T^n(V))\otimes (J \otimes T^m(V)),\ C\otimes C).
\end{align*}

\begin{prop}\label{prop:Hopf-struc}
(1) The linear isomorphisms $ C \otimes T^n(W) \overset{\simeq}{\longrightarrow} 
\operatorname{Hom}_J(J \otimes T^n(V), C),\ n \ge 0$
given by
$$ c \otimes z\ \mapsto\ ( a \otimes x\ \mapsto \ \langle x, z \rangle \, a \rightharpoonup c), $$
amount to a linear homeomorphism, 
$$ \xi : \widehat{\mathcal{A}}(C, W) \overset{\simeq}{\longrightarrow} \operatorname{Hom}_J(\mathcal{H}(J, V), C). $$

(2) Let $f, g \in \operatorname{Hom}_J(\mathcal{H}(J, V), C),\ X \in \mathcal{H}(J, V)$.  The product, the unit,
the counit $\widehat{\varepsilon}$ and the antipode $\widehat{S}$ on $\widehat{\mathcal{A}}(C, W)$ are transferred 
onto $\operatorname{Hom}_J(\mathcal{H}(J, V), C)$ via the $\xi$ above so that
\begin{align*}
fg(X) &= \sum f(X_{(1)})g(X_{(2)}), \\ 
\xi(1)(X) &= \varepsilon (X)1,\\
\widehat{\varepsilon}(f) &= \varepsilon(f(1)),\\
\langle a,\ \widehat{S}(f)(X)\rangle &= \sum \langle S(a_{(1)}),\ f(a_{(2)}S(X)S(a_{(3)})) \rangle, \quad a \in J. 
\end{align*}

(3) Via $\xi$ and $\xi ~ \widehat{\otimes} ~ \xi$,  the coproduct on $\widehat{\mathcal{A}}(C, W)$ 
is translated to 
\begin{align*}
\widehat{\Delta} : \operatorname{Hom}_J(\mathcal{H}(J, V), C) \to 
&\operatorname{Hom}_J(\mathcal{H}(J, V), C) ~ \widehat{\otimes} ~ \operatorname{Hom}_J(\mathcal{H}(J, V), C)\\
&\subset \operatorname{Hom}(\mathcal{H}(J, V)\otimes \mathcal{H}(J, V), C\otimes C)
\end{align*}
that is determined by 
\begin{equation}\label{hatDelta} 
\langle a\otimes b,\ \widehat{\Delta}(f)(X \otimes Y) \rangle 
= \sum \langle ab_{(1)},\ f(S(b_{(2)}) X b_{(3)} Y) \rangle, 
\end{equation}
where $a, b \in J,\ f \in \operatorname{Hom}_J(\mathcal{H}(J, V), C),\ X, Y \in \mathcal{H}(J, V)$. 
\end{prop}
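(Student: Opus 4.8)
The plan is to verify each of the four formulas by a direct unfolding of definitions, the key input being the non-degenerate $\mathbb{N}$-homogeneous Hopf pairing $\langle\ ,\ \rangle : \mathcal{H}(J,V)\times\widehat{\mathcal{A}}(C,W)\to\Bbbk$ of \eqref{pairing-complete}, together with the isomorphism $\xi$ of Part (1). For (1), I would first check that the assignment $c\otimes z\mapsto(a\otimes x\mapsto \langle x,z\rangle\, a\rightharpoonup c)$ does land in $\operatorname{Hom}_J(J\otimes T^n(V),C)$; left $J$-linearity follows because $b\triangleright(a\otimes x)=ba\otimes x$ in the smash product $\mathcal{H}(J,V)=J\ltimes T(V)$ and $(ba)\rightharpoonup c = b\rightharpoonup(a\rightharpoonup c)$. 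That each such map is an isomorphism $C\otimes T^n(W)\xrightarrow{\simeq}\operatorname{Hom}_J(J\otimes T^n(V),C)$ follows since a $J$-linear map out of $J\otimes T^n(V)$ is determined by its restriction to $1\otimes T^n(V)$, which is an element of $\operatorname{Hom}(T^n(V),C)\cong C\otimes T^n(V)^*\cong C\otimes T^n(W)$ by finite-dimensionality of $W$ and the non-degeneracy of the canonical pairing $T^n(V)\times T^n(W)\to\Bbbk$ (Lemma \ref{lem:pair}). Passing to the product over $n$ and carrying along the product topologies gives the homeomorphism $\xi$.

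For (2) and (3), the strategy is uniform: express $\xi$ itself through the pairing. Concretely, $\xi$ is characterized by
\[
\langle a, \xi(c\otimes z)(a'\otimes x)\rangle_{J,C} \;=\; \langle a\otimes x,\ (aa'\text{-twist of }c\otimes z)\rangle
\]
— more precisely $\langle b,\ \xi(\Phi)(X)\rangle_{J,C}=\langle bX,\ \Phi\rangle$ for $\Phi\in\widehat{\mathcal{A}}(C,W)$, $X\in\mathcal{H}(J,V)$, $b\in J$, where $bX$ uses the $J$-action on $\mathcal{H}(J,V)$. Once this identity is in place, each formula in (2) is a transcription of the corresponding structure map of $\widehat{\mathcal{A}}(C,W)$ through the Hopf-pairing axioms \eqref{Hopf-pairing} and \eqref{antipode}: the product formula $fg(X)=\sum f(X_{(1)})g(X_{(2)})$ comes from $\langle X,\ \Phi\Psi\rangle=\sum\langle X_{(1)},\Phi\rangle\langle X_{(2)},\Psi\rangle$ and cocommutativity of $J$ (which lets the twist disentangle); $\xi(1)(X)=\varepsilon(X)1$ is $\langle X,1\rangle=\varepsilon(X)$; $\widehat{\varepsilon}(f)=\varepsilon(f(1))$ is $\langle 1,-\rangle=\varepsilon$ applied on the $\mathcal{H}$-side; and the antipode formula is \eqref{antipode} combined with the explicit antipode of Proposition \ref{prop:mathcalA} and the fact that $\widehat S$ is anti-multiplicative, the conjugation by $a_{(1)},a_{(3)}$ appearing because $f$ is only $J$-linear, not $J$-bilinear, so one must move the $J$-factor across $S$ using $S(a)X S(a')$-type expressions. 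Formula \eqref{hatDelta} in (3) is obtained the same way from the coproduct \eqref{smash-coproduct} of $\mathcal{A}(C,W)$: dualizing $\langle X\otimes Y,\ \Delta(\Phi)\rangle=\langle XY,\ \Phi\rangle$ and untwisting the smash coproduct produces the $S(b_{(2)})Xb_{(3)}Y$ pattern.

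The main obstacle I expect is bookkeeping rather than conceptual: keeping the smash-product action on $\mathcal{H}(J,V)=J\ltimes T(V)$ and the smash-coproduct twist on $\mathcal{A}(C,W)=C\cmdblackltimes T_c(W)$ consistently aligned through $\xi$, and in particular justifying that all the manipulations survive the completion — i.e. that $\xi\,\widehat\otimes\,\xi$ really carries $\widehat{\mathcal{A}}(C,W)\,\widehat\otimes\,\widehat{\mathcal{A}}(C,W)$ onto $\operatorname{Hom}_J(\mathcal{H}(J,V),C)\,\widehat\otimes\,\operatorname{Hom}_J(\mathcal{H}(J,V),C)$ compatibly, so that $\widehat\Delta$ is well-defined with values in the complete tensor product and not merely in $\operatorname{Hom}(\mathcal{H}(J,V)\otimes\mathcal{H}(J,V),C\otimes C)$. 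This is handled by noting that everything is $\mathbb{N}$-graded with finite-dimensional pieces $T^n(W)$ and $T^n(V)$ in each degree, so the pairing restricts to a perfect pairing degree by degree and all completions are just the evident products $\prod_n$; continuity of each structure map then reduces to the finite-degree statement, which is the honest (finite, sign-free on the nose up to the $i$-shuffle signs already recorded in Remark \ref{rem:DeltaT(V)}) computation. The sign subtleties from the super-symmetry are confined to the shuffle (co)products and are already absorbed into Lemma \ref{lem:pair} and Proposition \ref{prop:pair}, so no new sign analysis is needed here.
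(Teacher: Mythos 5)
Your proposal is correct and follows essentially the same route as the paper: both transfer the structure maps of $\widehat{\mathcal{A}}(C,W)$ through the non-degenerate Hopf pairing of Proposition \ref{prop:pair} and conclude via the non-degeneracy of $\langle\ ,\ \rangle : J\times C\to\Bbbk$ (Lemma \ref{lem:Cproper}), the paper doing so by writing $f$ explicitly as $b\otimes x\mapsto\langle x,z\rangle\,b\rightharpoonup c$ and expanding both sides, while you package the same computation through the identity $\langle b,\xi(\Phi)(X)\rangle=\langle bX,\Phi\rangle$. The only slip is the claim that the pairing is ``perfect degree by degree'' ($C\otimes T^n(W)$ need not be finite-dimensional, only non-degenerate), but this does not affect the argument.
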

\begin{proof}
(1) This is easy to see.

(2) We verify the formula for $\widehat{S}$; the rest is verified similarly. We may suppose that $f$, 
restricted to $J \otimes T^n(V)$, is given by 
$$ f(b \otimes x) = \langle x, z \rangle \, b\rightharpoonup c, \quad b \in J,~~x \in T^n(V) $$ 
for some $c \in C$, $z \in T^n(W)$.  
Recall from Proposition \ref{prop:mathcalA} the description of the antipode of ${\mathcal{A}}(C, W)$. 
Then one sees
$$ \widehat{S}(f)(b \otimes x) = \sum \langle x, S(z_{(0)}) \rangle \, b\rightharpoonup S(cz_{(1)}). $$  
Therefore, the desired formula will follow if we see that for every $a \in J$,
\begin{align*} 
&\sum \langle ab, S(cz_{(1)}) \rangle\, \langle x, S(z_{(0)}) \rangle\\ 
&=  
\sum \langle S(a_{(1)}),\ a_{(2)}S(b_{(1)})S(a_{(3)}) \rightharpoonup c \rangle\, \langle S(x) \triangleleft 
S(b_{(2)}) S(a_{(4)}),\ z  \rangle. 
\end{align*}
Here one should recall from Proposition \ref {prop:mathcalH} the description of the antipode of 
$\mathcal{H}(J, V)$. Using the antipodal identity for $J$ and Equation \eqref{J-mod-struc}, 
we compute that the right-hand side equals
$$ \sum \langle S(b_{(1)})S(a_{(1)}), c \rangle\, \langle S(x), z_{(0)}  \rangle\, 
\langle S(b_{(2)})S(a_{(2)}), z_{(1)} \rangle, $$
which is seen to equal the left-hand side by Lemmas \ref{lem:pair}, \ref{lem:Cproper}. 

(3) We may suppose that $f$ is as above. Then the desired formula will follow if we see that 
\begin{align*}
&\sum \langle a, \ g \rightharpoonup c_{(1)} \rangle\ 
\langle x,\ (z_{(1)})_{(0)} \rangle \
\langle b,\ h \rightharpoonup (z_{(1)})_{(1)}c_{(2)} \rangle\
\langle y,\ z_{(2)} \rangle \\
&=\sum \langle ab_{(1)},\ S(b_{(2)})g b_{(3)} h_{(1)} \rightharpoonup c\rangle\, 
\langle (x \triangleleft b_{(4)}h_{(2)})y,\ z \rangle,
\end{align*}
where $a, b \in J$ and $g \otimes x, h \otimes y \in \mathcal{H}(J, V)$. 
We compute that the left-hand side equals
$$ \sum \langle ag,\  c_{(1)} \rangle\ 
\langle x\triangleleft b_{(1)}h_{(1)},\ z_{(1)} \rangle \
\langle b_{(2)}h_{(2)}, c_{(2)} \rangle\
\langle y,\ z_{(2)} \rangle,$$
which is seen to equal the right-hand side since $J$ is cocommutative.  
\end{proof}

\subsection{}\label{subsec:HCP3} 
Let $C$, $J$ be as in the preceding subsection. Thus, $C$ is an affine Hopf algebra, 
and $J = C^{\circ}$. 
Set $G = \operatorname{Sp} C$, the algebraic affine group scheme represented by $C$. By definition
the Lie algebra $P(J)$ of primitives in $J$ is the Lie algebra $\operatorname{Lie} G$ of $G$. 
Recall $C^+ = \operatorname{Ker} \varepsilon$. The quotient vector space $C^+/(C^+)^2$ 
of $C^+$ divided by its square is the cotangent space of $G$ at 1, and is dual to $P(J)$. 

Let $J^1$ denote the irreducible component  
of $J$ containing 1; see Section \ref{subsec:corad}. This consists
of those elements in $C^*$ which annihilate some power $(C^+)^n$ of $C^+$. 
This is called the \emph{hyperalgebra} of $G$, denoted
$\operatorname{hy} G = J^1$.  One sees $P(J) \subset J^1$. Summarizing we have
\begin{equation}\label{Lie} 
P(J^1) = P(J) = \operatorname{Lie} G = (C^+/(C^+)^2)^*.
\end{equation} 

\begin{rem}\label{rem:adjoint-coaction}
Consider the right adjoint coaction 
\begin{equation*}
c \mapsto \sum c_{(2)} \otimes S(c_{(1)})c_{(3)},\ C \to C \otimes C 
\end{equation*}
by $C$ on itself; this corresponds to the right adjoint action by $G$ on itself.
This induces on $C^+/(C^+)^2$ a right $C$-comodule structure, which in turn is transposed
to a left $C$-comodule structure on $P(J)$. This last induces on $P(J)$
structures of right modules over $J$, $J^1$ and $G$. One sees that they
coincide with the familiar adjoint actions. 
\end{rem} 

If $G$ is connected, or equivalently if the prime spectrum $\operatorname{Spec} C$ of $C$ is connected,
we say that $C$ is \emph{connected}.  The following is well-known; see \cite[Proposition 0.3.1(g)]{T-I}. 

\begin{lemma}\label{lem:conn}
Suppose that $C$ is connected. Then the canonical pairing $\langle \hspace{2mm}, \ \rangle : J^1 \times C 
\to \Bbbk$, which is restricted from $\langle \hspace{2mm}, \ \rangle : J \times C \to \Bbbk$, is non-degenerate. 
\end{lemma}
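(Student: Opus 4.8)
\textbf{Proof proposal for Lemma \ref{lem:conn}.}
The plan is to reduce the non-degeneracy of the restricted pairing $\langle\ ,\ \rangle : J^1 \times C \to \Bbbk$ to the statement that $C$ is, as a coalgebra, the directed union of its finite-dimensional subcoalgebras, each of which is ``local'' in a suitable dual sense when $C$ is connected. Concretely, recall from \eqref{Lie} and the discussion preceding Lemma \ref{lem:conn} that $J^1 = \operatorname{hy} G$ consists of exactly those $f \in C^*$ that annihilate some power $(C^+)^n$ of the augmentation ideal. So an element $f \in J^1$ pairs to zero with all of $C$ iff $f = 0$ already as a functional on $C$; that is one half and it is immediate. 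The content is the other direction: I must show that if $c \in C$ satisfies $\langle f, c\rangle = 0$ for every $f$ annihilating some $(C^+)^n$, then $c = 0$. Equivalently, writing $K_n = \big((C^+)^n\big)^{\perp\perp}$ — but more usefully phrased on the coalgebra side — I must show that $\bigcap_n \big(\text{the set of } c \text{ killed by } ((C^+)^n)^{\perp}\big) = 0$, i.e. that $C$ is exhausted by the subcoalgebras dual to the finite-dimensional quotients $C/(C^+)^n \cdot (\text{something})$. The cleanest route: for each cofinite-dimensional ideal $I$ with $\dim C/I < \infty$, the finite-dimensional algebra $C/I$ has Jacobson radical containing (the image of) a power $(C^+)^n$ precisely because $G$ connected forces $C/I$ to be \emph{local}, hence its radical is the unique maximal ideal, which is the image of $C^+$.

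So the key steps, in order, are as follows. First, reduce to the finite-dimensional case: $C = \varinjlim C_\lambda$ over finite-dimensional subcoalgebras, $J = \varinjlim C_\lambda^*$, and the pairing is the limit of the pairings $C_\lambda^* \times C_\lambda \to \Bbbk$, which are perfect; so I need only identify which part of each $C_\lambda^*$ lands in $J^1$. Second, dualize: $C_\lambda^*$ is a finite-dimensional commutative algebra, a quotient of $C^*$ only in a loose sense, but the relevant point is that $C_\lambda = (C/I_\lambda)$ for the cofinite ideal $I_\lambda = C_\lambda^\perp$, and $C_\lambda^*\cong C^*/I_\lambda^\perp$ as algebras; wait — more directly, a subcoalgebra $D\subset C$ of finite dimension has dual algebra $D^*$, a quotient of $C^\circ$, and $D^*$ is commutative. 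Third, invoke connectedness: $C$ connected means $\operatorname{Spec} C$ is connected, hence every finite-dimensional commutative quotient algebra $D^*$ of $C$ — being a finite product of local rings indexed by the (finitely many) prime ideals it sees — is in fact \emph{local}, because $\operatorname{Spec}$ of it is a connected finite discrete space, i.e. a point. (This is exactly where the hypothesis is used, and it is the standard fact that a commutative Hopf algebra over $\Bbbk$ is connected iff it has no nontrivial idempotents iff all its finite-dimensional quotient algebras are local; cf. \cite[Proposition 0.3.1]{T-I}.) Fourth, conclude: for a finite-dimensional local commutative algebra $A = D^*$ with maximal ideal $\mathfrak m$, one has $\mathfrak m = \operatorname{Rad} A$ and $\mathfrak m^n = 0$ for some $n$; transporting back, this says $D = (A)^* $ is annihilated by $((C^+)^n)$ in the pairing, equivalently $D \subset (J^1$ side$)$ pairs perfectly with the local quotient, so every $f$ vanishing on some $(C^+)^n$ still separates the points of $D$. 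Taking the union over all finite-dimensional $D\subset C$ gives non-degeneracy of $J^1 \times C \to \Bbbk$.

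Organizing the fourth step a little more carefully is what the write-up will actually do. Let me restate it in the direction that produces the lemma. Take $0 \ne c \in C$. Choose a finite-dimensional subcoalgebra $D$ with $c \in D$, and let $I = D^\perp \subset C^*$ (a cofinite ideal of $C^*$, indeed of $C^\circ$). Then $C^\circ/I \cong D^*$ is a finite-dimensional commutative algebra; by connectedness it is local, so its maximal ideal $\mathfrak m$ is nilpotent, say $\mathfrak m^n = 0$. Now $\mathfrak m$ is the image of $(C^\circ)^+ \supset C^+{}^\perp$... — the point to nail down is that the preimage of $\mathfrak m$ in $C^\circ$ contains a power of $(C^\circ)^+$ living over $(C^+)^n$, so that $(C^+)^{n}$ pairs trivially with $D$ but the functionals in $C^\circ$ that survive on $D$ are precisely those killing $(C^+)^n$, namely a piece of $J^1$. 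Since $D^* = C^\circ/I$ still separates $c$ from $0$, some such functional $f \in J^1$ has $\langle f, c\rangle \ne 0$. That finishes it.

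\medskip
\textbf{Main obstacle.} The only genuinely delicate point is the bookkeeping in the last step: matching ``$f$ annihilates $(C^+)^n$'' (the description of $J^1$) with ``$f$ lies over the maximal ideal of the local finite-dimensional quotient,'' i.e. checking that the maximal ideal of $C^\circ/D^\perp$ pulls back to something containing the image of $(C^+)^n$, equivalently that the augmentation $\varepsilon$ is, after connectedness, the \emph{only} algebra map $D \to \Bbbk$ up to the identification. Everything else — the direct-limit reduction, the perfectness of the finite pairings, the identification of connectedness with locality of finite quotients — is standard and can be quoted from \cite[Proposition 0.3.1]{T-I} and \cite{Sw}. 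I expect the proof to be short, essentially a citation of \cite[Proposition 0.3.1(g)]{T-I} wrapped around the observation that $J^1\times C$ is the appropriate colimit of perfect pairings.
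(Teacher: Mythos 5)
There is a genuine gap, and it sits exactly at the step you single out as the heart of the argument. Your plan is to show that for every finite-dimensional subcoalgebra $D\subset C$ the dual algebra $D^{*}\cong C^{\circ}/(D^{\perp}\cap C^{\circ})$ is commutative and, by connectedness of $\operatorname{Spec} C$, local with nilpotent maximal ideal. Neither claim is correct. First, $D^{*}$ is the dual algebra of a subcoalgebra of $C$; its commutativity would require $D$ to be \emph{cocommutative}, not $C$ to be commutative (indeed $C^{\circ}$ itself is cocommutative but in general noncommutative, e.g.\ it contains $U(\operatorname{Lie}G)$). Second, and more fatally, connectedness of $\operatorname{Spec}C$ is a statement about the algebra structure of $C$ (no nontrivial idempotents in $C$), whereas localness of $D^{*}$ for all finite-dimensional subcoalgebras $D$ is a statement about the \emph{coalgebra} structure: it says $\operatorname{Corad}C$ is simple, i.e.\ $C$ is irreducible as a coalgebra. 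These are very different. For $C=\Bbbk[t,t^{-1}]=\mathcal{O}(\mathbb{G}_m)$, which is connected, the subcoalgebra $D=\Bbbk\cdot 1\oplus\Bbbk\cdot t$ (spanned by two grouplikes) has $D^{*}\cong\Bbbk\times\Bbbk$, which is neither local nor obtained from a nilpotent maximal ideal; the same happens for any $C$ with more than one character. Your intermediate assertion that ``connected iff all finite-dimensional quotient algebras are local'' is also false even on the algebra side: $\Bbbk[t,t^{-1}]/((t-1)(t-2))\cong\Bbbk\times\Bbbk$ is a finite-dimensional quotient of a connected Hopf algebra. Throughout, the proposal conflates $(C^{+})^{n}$, which lives in $C$, with ideals of $C^{\circ}$ or of $D^{*}$, which is why the ``bookkeeping'' in your last step cannot be completed.

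For comparison: the paper offers no argument at all here; it simply quotes \cite[Proposition~0.3.1(g)]{T-I}, and that citation already \emph{is} the whole content. Unwinding the definitions, since $J^{1}=\bigcup_{n}\bigl((C^{+})^{n}\bigr)^{\perp}$ and each $(C^{+})^{n}$ is a subspace of $C$ with $\bigl(((C^{+})^{n})^{\perp}\bigr)^{\perp}=(C^{+})^{n}$, non-degeneracy on the $C$-side is literally equivalent to $\bigcap_{n}(C^{+})^{n}=0$; and that is precisely the statement of Takeuchi's proposition for connected affine $C$ (its proof uses the Krull intersection theorem in the Noetherian ring $C$ together with the absence of nontrivial idempotents — an argument about the ideal $(C^{+})^{n}$ of the algebra $C$, with no finite-dimensional subcoalgebras in sight). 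Given that, the lemma follows in two lines: if $c\neq0$ then $c\notin(C^{+})^{N}$ for some $N$, so some linear functional vanishing on $(C^{+})^{N}$ — hence lying in $J^{1}$ — is nonzero on $c$; non-degeneracy on the $J^{1}$-side is trivial since $J^{1}\subset C^{*}$. So the correct write-up is ``cite \cite[Proposition~0.3.1(g)]{T-I} and observe the equivalence with $\bigcap_n(C^+)^n=0$''; the machinery of finite-dimensional subcoalgebras and local dual algebras should be discarded, as it proves (and in fact presupposes) a different and generally false statement.
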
 

\begin{rem}\label{rem:Kostant} 
(1) If $\operatorname{char} \Bbbk = 0$, then $J^1 = U(\operatorname{Lie} G)$; see Remark \ref{rem:DHCPzero-char}(2). 

(2) Assume that $\Bbbk$ is algebraically closed. Then by Kostant's Theorem \cite[Theorems 8.1.5]{Sw}, 
$J = \Bbbk G(\Bbbk) \ltimes J^1 $, the smash (or semi-direct) product of $J^1$ by the group algebra 
of the linear algebraic group $G(\Bbbk)$.  
\end{rem}

\begin{rem}\label{rem:PropHopf-struc}
Suppose that we are in the situation of Proposition \ref{prop:Hopf-struc}. We have the dual Harish-Chandra
pair $(J^1, V)$ with the restricted action on $V$ by $J^1$, which gives rise to the Hopf superalgebra
$\mathcal{H}(J^1, V)$; it is a Hopf super-subalgebra of $\mathcal{H}(J, V)$ such that $\mathcal{H}(J, V) =
J \otimes_{J^1} \mathcal{H}(J^1, V)$ as a left $J$-module. This last implies 
$$\operatorname{Hom}_J(\mathcal{H}(J, V), C) = \operatorname{Hom}_{J^1}(\mathcal{H}(J^1, V), C).$$
Therefore, in Proposition \ref{prop:pair} we may replace the former with the latter. Moreover, in the formulas for
$\widehat{S}$, $\widehat{\Delta}$, we may suppose by Lemma \ref{lem:conn} that $a, b \in J^1$, 
if $G$ is connected. We may suppose that $a, b \in G(\Bbbk)$, if $\Bbbk$ is an algebraically closed 
field of characteristic zero, since $C$ is then the 
algebra of all polynomial functions on the linear algebraic group $G(\Bbbk)$. 
In this last situation the resulting formulas are essentially the same as those given by 
Carmeli and Fioresi \cite[Proposition 3.10]{CF}. But, contrary to the method of \cite{CF} or \cite{V}, we
will not depend on the formulas to construct 
affine Hopf superalgebras (or algebraic affine supergroup schemes) 
from Harish-Chandra pairs. Especially, we cannot use the formula \eqref{hatDelta} to define their coproducts.
For we do not see that $\widehat{\Delta}(f)(X\otimes Y)$ has its value in $C \otimes C$, because we do not have such a
characterization as in the situation of \cite{CF} that $C \otimes C$ is the algebra of the polynomial functions 
on $G(\Bbbk) \times G(\Bbbk)$. 
\end{rem} 

\subsection{}\label{subsec:HCP4}
Let $C$ be an affine Hopf algebra, and let $W$ be a finite-dimensional right $C$-comodule. Set
$$ G = \operatorname{Sp} C,\quad J = C^{\circ},\quad V = W^*,$$ 
as in the preceding two subsections. Recall from \eqref{J-mod-struc} that $V$ is naturally a
right $J$-module. 

\begin{definition}\label{def:HCP}
$(C, W)$ is called a \emph{Harish-Chandra pair}, if it is equipped, as its structure, with 
a bilinear map $[ \hspace{2mm}, \ ] : V \times V \to P(J)$
with which $(J, V)$ is a dual Harish-Chandra pair; see Definition \ref{def:DHCP}. 
It is said to be \emph{connected}, if $C$ is connected. 

Suppose that $(C', W')$ is another Harish-Chandra pair with $J' = (C')^{\circ}$, $V' = (W')^*$. 
A morphism $(C, W) \to (C', W')$ of Harish-Chandra pairs is a pair of a Hopf algebra map $f : C \to C'$ and 
a linear map $g : W \to W'$ such that the dual Hopf algebra map $f^{\circ} : J' \to J$ of $f$ and the dual linear map
$g^* : V' \to V$ constitute a morphism $(J', V') \to (J, V)$ of dual Harish-Chandra pairs. 

The Harish-Chandra pairs and their morphisms 
naturally form a category $\mathsf{HCP}$. 
We let $\mathsf{cHCP}$ denote the full subcategory of 
$\mathsf{HCP}$ consisting of all connected Harish-Chandra pairs.

In the situation above, the object $(J, V)$ (resp., the morphism $(f^{\circ}, g^*)$) in $\mathsf{DHCP}$
is said to be \emph{associated with} the object $(C, W)$ (resp., the morphism $(f, g)$) in $\mathsf{HCP}$. 
\end{definition}

\begin{rem}\label{rem:HCP}
(1) In the situation above, Condition (a) of Definition \ref{def:DHCP} is equivalent to the condition that 
$[ \hspace{2mm}, \ ]$, regarded as a linear map $V \otimes V \to P(J)$, is left $C$-colinear; see Remark
\ref{rem:adjoint-coaction}. One sees from Lemma \ref{lem:conn} that if $C$ is connected, then  
the last condition is equivalent to that $[\hspace{2mm}, \ ]$ is left $J^1$-linear. It follows that if $C$ is connected,
then $(C, W)$ is a Harish-Chandra pair if the associated bilinear map makes $(J^1, V)$ into a dual Harish-Chandra
pair.

(2) Suppose $\operatorname{char} \Bbbk = 0$. We see from Proposition \ref{prop:iDHCP} and 
the first half of Part 1 above that $\mathsf{HCP}$ is anti-isomorphic to the category of those pairs $(G, L)$
of an algebraic affine group scheme $G$ and a finite-dimensional Lie superalgebra $L$, given a rational
right $G$-module structure on $L_1$, which satisfy
\begin{itemize}
\item[(i)] $L_0 = \operatorname{Lie} G$, 
\item[(ii)] the right adjoint action on $L_1$
by $L_0$ coincides with the action arising from the given rational right $G$-module structure, and
\item[(iii)] the bracket $[\hspace{2mm}, \ ] : L_1 \times L_1 \to L_0$ on $L$ restricted to $L_1$ is $G$-equivariant,
where $L_0$ is regarded as a rational right $G$-module by the adjoint action. 
\end{itemize}
In \cite{CF} and others, the latter category is defined as the category of (super) Harish-Chandra pairs.
More precisely, they assume in addition that $\Bbbk$ is algebraically closed, to identify rational $G$-modules
with rational modules over the linear algebraic group $G(\Bbbk)$. 
\end{rem}

\begin{prop}\label{prop:connHCP} 
Suppose $\operatorname{char} \Bbbk = 0$. Then 
$\mathsf{cHCP}$ is anti-isomorphic
to the category of the pairs $(G, L)$ of a connected algebraic affine group scheme $G$ and a
finite-dimensional Lie superalgebra $L$ such that $L_0 = \operatorname{Lie} G$, and the adjoint action on $L_1$
by $L_0$ arises (necessarily uniquely) from a rational $G$-module structure.
\end{prop}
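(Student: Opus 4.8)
The plan is to obtain this as a restriction of the anti-isomorphism established in Remark \ref{rem:HCP}(2), together with the observation that condition (iii) there becomes automatic once $G$ is connected. Since $G = \operatorname{Sp} C$ is connected precisely when $C$ is, the anti-isomorphism of Remark \ref{rem:HCP}(2), between $\mathsf{HCP}$ and the category of pairs $(G, L)$ satisfying conditions (i)--(iii) of that remark, restricts to an anti-isomorphism between the full subcategory $\mathsf{cHCP}$ and the full subcategory of those $(G, L)$ with $G$ connected. So it suffices to prove: for $G$ connected and $L$ a finite-dimensional Lie superalgebra with $L_0 = \operatorname{Lie} G$, (a) if the adjoint $L_0$-action on $L_1$ is the differential of a rational right $G$-module structure, then that structure is unique; and (b) given such a structure, the $G$-equivariance of the bracket $[\,,\,] : L_1 \otimes L_1 \to L_0$ (condition (iii)) holds automatically, as does the $G$-equivariance of the odd component of any morphism, so that the two category descriptions agree on objects and on morphisms. (For (b) on morphisms one uses that the odd component $\lambda_1$ of a morphism $(\psi,\lambda)$ is $\operatorname{Lie} G$-equivariant because $\lambda$ is a homomorphism of Lie superalgebras.)

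Both (a) and (b) reduce to one lemma on a connected algebraic affine group scheme $G$ over $\Bbbk$ with $\operatorname{char}\Bbbk = 0$: for any rational right $G$-modules $M, N$, every $\operatorname{Lie} G$-linear map $M \to N$ is $G$-linear; equivalently, a rational $G$-module structure is determined by its differential. I would prove this with results already in hand. Write $J = C^{\circ}$ and $\mathfrak{g} = \operatorname{Lie} G = P(J)$. By Remark \ref{rem:Kostant}(1), $J^1 = U(\mathfrak{g})$, and by Lemma \ref{lem:conn} the canonical pairing $J^1 \times C \to \Bbbk$ is non-degenerate, so the induced map $C \to (J^1)^{*}$ is injective. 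A rational right $G$-module, i.e.\ a right $C$-comodule $\rho : M \to M \otimes C$, is in particular a left $J^1 = U(\mathfrak{g})$-module via $u \cdot m = (\operatorname{id} \otimes u)\rho(m)$; and since $C \hookrightarrow (J^1)^{*}$ gives an embedding of $M \otimes C$ into $M \otimes (J^1)^{*}$ and hence into $\operatorname{Hom}(J^1, M)$ (by $\xi \mapsto (u \mapsto (\operatorname{id}\otimes u)\xi)$), the comodule map $\rho$ is recovered from this $\mathfrak{g}$-action. This is exactly (a). For the general equivariance claim, note that $\phi : M \to N$ is $G$-colinear if and only if $\rho_N \circ \phi = (\phi \otimes \operatorname{id}) \circ \rho_M$; applying $\operatorname{id}_N \otimes u$ for $u \in J^1$ turns this identity, which lives in $N \otimes C \hookrightarrow \operatorname{Hom}(J^1, N)$, into $u \cdot \phi(m) = \phi(u \cdot m)$, i.e.\ $U(\mathfrak{g})$-linearity of $\phi$, which is equivalent to $\mathfrak{g}$-linearity since $U(\mathfrak{g})$ is generated by $\mathfrak{g}$. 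Hence $\mathfrak{g}$-linearity forces $G$-linearity.

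Granting the lemma, (a) is immediate, and (b) follows because the bracket $[\,,\,] : L_1 \otimes L_1 \to L_0$ is $L_0$-equivariant by the super Jacobi identity (with $L_0$ acting diagonally on $L_1 \otimes L_1$ and by the adjoint action on $L_0 = \operatorname{Lie} G$), hence $G$-equivariant by the lemma, and likewise the odd component of a morphism is $G$-equivariant once it is $\operatorname{Lie} G$-equivariant. The step I expect to be the main obstacle is the lemma itself — more precisely, justifying that a rational $G$-module is determined by its associated $U(\operatorname{Lie} G)$-module structure and that $\operatorname{Lie} G$-linear maps between two such are automatically $G$-linear, when $C$ is connected (equivalently, that restriction of rational representations along $U(\operatorname{Lie} G) \hookrightarrow C^{\circ}$ is full and injective on objects) — which is precisely where both connectedness of $G$ and $\operatorname{char}\Bbbk = 0$ are used.
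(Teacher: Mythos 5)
Your proposal is correct and follows essentially the same route as the paper: the paper's proof simply cites Lemma \ref{lem:conn} together with the second half of Remark \ref{rem:HCP}(1), whose content is exactly your key lemma — that for connected $C$ in characteristic zero the non-degenerate pairing $J^1 \times C \to \Bbbk$ (with $J^1 = U(\operatorname{Lie}G)$) makes $C$-colinearity equivalent to $U(\operatorname{Lie}G)$-linearity, so the rational structure on $L_1$ is determined by its differential and condition (iii) becomes automatic. You have merely written out in detail what the paper leaves to those two citations.
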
  
\begin{proof}
This follows from Lemma \ref{lem:conn} and the second half of Remark \ref{rem:HCP}(1). 
\end{proof}

In general we have the following, which follows from definitions and Lemmas \ref{lem:Cproper},
\ref{lem:conn}.

\begin{prop}\label{prop:dualization}
(1) $(C, W) \mapsto (C^{\circ}, W^*)$ defines a contravariant functor,
$$-^{\circ} : \mathsf{HCP} \to \mathsf{DHCP},$$
which is faithful.

(2) $(C, W) \mapsto ((C^{\circ})^1, W^*)$ defines a contravariant functor,
$$(-^{\circ})^1 : \mathsf{HCP} \to \mathsf{iDHCP},$$
which is faithful when it is restricted to $\mathsf{cHCP}$. 
\end{prop}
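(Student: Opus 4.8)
\textbf{Proof plan for Proposition \ref{prop:dualization}.}
The plan is to unwind the definitions and reduce everything to facts already established. For Part (1), recall from Definition \ref{def:HCP} that a Harish-Chandra pair $(C,W)$ comes equipped with a bilinear map $[\hspace{2mm},\ ] : W^*\times W^* \to P(C^\circ)$ making $(C^\circ, W^*)$ a dual Harish-Chandra pair; thus the assignment $(C,W)\mapsto (C^\circ, W^*)$ is well-defined on objects. On morphisms, a morphism $(f,g):(C,W)\to (C',W')$ is \emph{by definition} a pair whose dual $(f^\circ, g^*)$ is a morphism $(C'^\circ, W'^*)\to (C^\circ, W^*)$ in $\mathsf{DHCP}$; so sending $(f,g)$ to $(f^\circ, g^*)$ gives the arrow part. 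Functoriality (contravariance) is immediate from $(f'\circ f)^\circ = f^\circ\circ f'^\circ$ and $(g'\circ g)^* = g^*\circ g'^*$, together with the fact that identities go to identities. Faithfulness is the only point requiring a (tiny) argument: if $(f^\circ, g^*) = (f'^\circ, g'^*)$ then $f^\circ = f'^\circ$ and $g^* = g'^*$; since $C,C'$ are affine, hence proper (Lemma \ref{lem:Cproper}), the canonical map $C\to (C^\circ)^*$ is injective, so $f$ is determined by $f^\circ$ and therefore $f=f'$; and since $W$ is finite-dimensional, $W\cong W^{**}$ canonically, so $g = g'$. Hence $-^\circ$ is faithful.

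For Part (2), compose $-^\circ$ with the functor $J\mapsto (J^1, V)$ sending a dual Harish-Chandra pair $(J,V)$ to the sub-pair on the irreducible component $J^1$ of $J$ containing $1$ (with the restricted $J^1$-action on $V$, which is valued in $P(J^1)=P(J)$ by \eqref{Lie}, and the same bracket). One must check this sub-pair is again a dual Harish-Chandra pair: Conditions (b) and (c) of Definition \ref{def:DHCP} are unchanged, and Condition (a) holds because $J^1$ is a Hopf subalgebra of $J$ stable under all the relevant operations, so the identity $\sum[u\triangleleft a_{(1)}, v\triangleleft a_{(2)}] = \sum S(a_{(1)})[u,v]a_{(2)}$ restricts from $a\in J$ to $a\in J^1$. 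This yields a contravariant functor $(-^\circ)^1:\mathsf{HCP}\to\mathsf{iDHCP}$. For faithfulness on $\mathsf{cHCP}$: a morphism $(f,g)$ in $\mathsf{cHCP}$ is sent to $(f^\circ|_{J'^1}, g^*)$; if this vanishes to zero on another such morphism we again recover $g=g'$ from finite-dimensionality, and we recover $f=f'$ because, by Lemma \ref{lem:conn}, for connected $C$ the canonical pairing $J^1\times C\to\Bbbk$ is non-degenerate, so $C$ embeds into $(J^1)^*$ and $f$ is determined by $f^\circ|_{J^1}$ (note $f^\circ$ restricts to $J'^1\to J^1$ since Hopf algebra maps preserve irreducible components containing $1$).

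The main (and essentially only) obstacle is bookkeeping: confirming that $f^\circ$ genuinely restricts to a map $J'^1\to J^1$ and that $g^*$ is compatible with the restricted module and bracket structures, i.e.\ that $(-^\circ)^1$ really lands in $\mathsf{iDHCP}$ rather than merely $\mathsf{DHCP}$. Both are routine once one observes that any Hopf algebra map carries grouplikes to grouplikes and the coradical into the coradical (so $J^1$ into $J'^1$ in the covariant direction, hence $f^\circ(J'^1)\subset J^1$ in our contravariant setting), and that the bracket and the action are literally the same maps, just restricted. Everything else is the formal verification of contravariant functoriality and the two faithfulness arguments, each of which reduces to one of Lemmas \ref{lem:Cproper} and \ref{lem:conn} plus $W\cong W^{**}$.
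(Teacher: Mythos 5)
Your proposal is correct and follows essentially the same route as the paper, which proves this proposition in one line by appealing to the definitions together with Lemmas \ref{lem:Cproper} and \ref{lem:conn}; your write-up simply fills in the routine details (well-definedness on objects and morphisms, recovery of $f$ from $f^{\circ}$ via properness resp.\ via the non-degeneracy of the $J^1\times C$ pairing in the connected case, recovery of $g$ from $g^*$ via $W\cong W^{**}$, and the fact that $f^{\circ}$ restricts to irreducible components). No gaps.
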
 

\subsection{}\label{subsec:HCP5}
Suppose that $A$ is a super-commutative Hopf superalgebra. Let $I_A = AA_1$ be the super-ideal
of $A$ generated by the odd part $A_1$; this is the smallest super-ideal such that $A/I_A$
is an ordinary algebra. Recall $A^+ = \operatorname{Ker} \varepsilon$, and 
let $A_0^+ = A_0 \cap A^+$. We define $\overline{A}$, $W^A$, as in \cite{M}, by
$$ \overline{A} := A/I_A = A_0/A_1^2,\quad W^A := (A^+/(A^+)^2)_1 = A_1/A_0^+A_1.$$

One sees that $\overline{A} $ is indeed an ordinary quotient Hopf algebra of $A$. Let $G = \operatorname{SSp} A$.
Then $\overline{A}$ represents an affine group scheme,
\begin{equation}\label{Gres} 
G_{\operatorname{res}} = \operatorname{Sp} \overline{A},
\end{equation}
which is the group-valued functor obtained from $G$ by restricting its domain to the category of 
commutative algebras; see
\cite{MZ}. We call $G_{\operatorname{res}}$ the affine group scheme \emph{associated with} $G$. 

One sees that $W^A$ is the odd part of the cotangent space $A^+/(A^+)^2$ of $G$ at 1. It is known that 
$A$ is finitely generated if and only if $\overline{A}$ is finitely generated, $\dim W^A < \infty$ 
and the ideal $I_A$ of $A$ is nilpotent; see \cite[Proposition 4.4]{M}. One sees from \cite[Theorem 4.5]{M}
(see also Proposition \ref{prop:M2} below) that $A$ or $G$ is purely even if and only if $W^A = 0$. 

Let $A \in \mathsf{AHSA}$, and set $H = A^{\circ}$. Then, $H \in \mathsf{CCSHA}$. 
We see that the canonical pairing $\langle \hspace{2mm}, \ \rangle : H \times A \to \Bbbk$ is a Hopf pairing.
Since $\langle \underline{H}, A_1\rangle = 0$, a Hopf pairing $\underline{H} \times \overline{A} \to
\Bbbk$ is induced.
 
Recall from Section \ref{subsec:DHCP1} 
the definitions of $\underline{H}$, $V_H$. One sees from \cite[Proposition 4.3]{M}
$$V_H = (W^A)^*.$$
Therefore, we have the canonical Hopf pairing  
$\langle \hspace{2mm}, \ \rangle : \wedge(V_H) \times \wedge(W^A) \to \Bbbk$, 
as given by \eqref{cano-pairing}.

Choose a totally ordered basis $X$ of $V_H$. Recall from Proposition \ref{prop:M1} that we have the isomorphism
$\phi_X : \underline{H} \otimes \wedge(V_H) \overset{\simeq}{\longrightarrow} H$. 

\begin{prop}\label{prop:M2}
There exists uniquely a counit-preserving, left $\overline{A}$-colinear isomorphism 
$\psi_X : A \overset{\simeq}{\longrightarrow} \overline{A} \otimes \wedge(W^A)$ of superalgebras,
such that 
\begin{equation}\label{phi-psi}
\langle \phi_X(h \otimes u), \ a \rangle = \langle h \otimes u, \ \psi_X(a) \rangle, \quad h \in \underline{H}, u \in
\wedge(V_H), a \in A.
\end{equation}
\end{prop}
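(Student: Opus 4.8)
The plan is to transport the algebra-and-coalgebra structure of $A$ to $\overline{A} \otimes \wedge(W^A)$ by dualizing the already-established isomorphism $\phi_X : \underline{H} \otimes \wedge(V_H) \xrightarrow{\simeq} H$ from Proposition \ref{prop:M1}, using the various non-degenerate pairings at hand. First I would observe that we have three non-degenerate Hopf pairings: the canonical pairing $\langle \hspace{2mm},\ \rangle : \underline{H} \times \overline{A} \to \Bbbk$ (induced on the quotients since $\langle \underline{H}, A_1\rangle = 0$ and $\underline{H}$ is finitely generated's dual coalgebra, so $\overline{A}$ sits inside $\underline{H}^*$), the canonical pairing $\langle \hspace{2mm},\ \rangle : \wedge(V_H) \times \wedge(W^A) \to \Bbbk$ from \eqref{cano-pairing} (non-degenerate by Remark \ref{rem:sign-rule}, using $V_H = (W^A)^*$ with $\dim W^A < \infty$), and hence their tensor product pairing $\langle \hspace{2mm},\ \rangle : \underline{H} \otimes \wedge(V_H) \times \overline{A} \otimes \wedge(W^A) \to \Bbbk$. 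The map $\psi_X$ should be the unique linear map characterized by \eqref{phi-psi}; since all the pairings in sight are non-degenerate (and $\wedge(W^A)$, $\wedge(V_H)$ are graded with finite-dimensional components), $\psi_X$ exists and is a linear isomorphism, being essentially the transpose of $\phi_X$ under these pairings. Uniqueness is immediate from non-degeneracy.

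Next I would verify that $\psi_X$ has the claimed structural properties. That it is counit-preserving follows by pairing \eqref{phi-psi} against $1 \otimes 1$ and using that $\phi_X$ is unit-preserving together with the pairing axiom $\langle 1, a\rangle = \varepsilon(a)$. For the superalgebra-homomorphism property, the key point is that $\phi_X$ need not be an algebra map on the nose, but its transpose's multiplicativity is equivalent to $\phi_X$ being a \emph{coalgebra} map — and that is precisely what Proposition \ref{prop:M1} asserts. Concretely, I would use the Hopf-pairing compatibility \eqref{Hopf-pairing}: for $a, b \in A$ and $h \otimes u \in \underline{H} \otimes \wedge(V_H)$,
\[
\langle h \otimes u,\ \psi_X(ab)\rangle = \langle \phi_X(h\otimes u),\ ab\rangle = \sum \langle \Delta(\phi_X(h\otimes u)),\ a \otimes b\rangle,
\]
and since $\phi_X$ is a super-coalgebra map, $\Delta \circ \phi_X = (\phi_X \otimes \phi_X)\circ \Delta$, so this becomes $\sum \langle (\phi_X\otimes \phi_X)\Delta(h\otimes u),\ a\otimes b\rangle = \sum\langle \Delta(h\otimes u),\ \psi_X(a)\otimes\psi_X(b)\rangle = \langle h\otimes u,\ \psi_X(a)\psi_X(b)\rangle$, where the last equality uses that the coproduct on $\underline{H}\otimes\wedge(V_H)$ is dual to the product on $\overline{A}\otimes\wedge(W^A)$ under the tensor pairing; non-degeneracy then gives $\psi_X(ab) = \psi_X(a)\psi_X(b)$. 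Here one must take care with the super-sign conventions in forming the tensor-product pairing and in the coproduct of a tensor product of super-coalgebras, but Lemma \ref{lem:tensored-pairing} is exactly the bookkeeping tool for this: it guarantees that the super-symmetry is respected so that $(\underline{H}\otimes\wedge(V_H))$ with its coproduct pairs correctly against $(\overline{A}\otimes\wedge(W^A))$ with its product. Similarly, multiplicativity of $\psi_X$ for the unit follows from $\phi_X$ being counit-preserving (which it is, being a map of coalgebras into $H$ composed appropriately — more precisely, one checks $\psi_X(1_A) = 1 \otimes 1$ directly from the characterization, pairing against arbitrary $h \otimes u$ and using $\langle \phi_X(h\otimes u), 1\rangle = \varepsilon(\phi_X(h\otimes u)) = (\varepsilon\otimes\varepsilon)(h\otimes u)$).

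For left $\overline{A}$-colinearity of $\psi_X$: the map $\phi_X$ is left $\underline{H}$-linear by construction, and left $\underline{H}$-linearity of $\phi_X$ dualizes, under the pairings, to left $\overline{A}$-colinearity of $\psi_X$ with respect to the $\overline{A}$-comodule structure on $\overline{A}\otimes\wedge(W^A)$ coming from the left regular coaction on the first tensor factor. This again is a routine transpose computation using \eqref{Hopf-pairing}, once one identifies the left $\underline{H}$-action on $\underline{H}\otimes\wedge(V_H)$ (multiplication on the first factor) as dual to the left regular $\overline{A}$-coaction. I expect the main obstacle to be not any deep step but rather the careful management of the super-signs: one must be scrupulous in how the Koszul sign rule enters the coproduct of $\wedge(V_H)$ (formula \eqref{cano-pairing} already has the sign built into the pairing via $\operatorname{sgn}\sigma$), how it enters the tensor-product coalgebra $\underline{H}\,\underline{\otimes}\,\wedge(V_H)$, and how Lemma \ref{lem:tensored-pairing} reconciles these so that transposing genuinely sends coproduct to product. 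Once those conventions are pinned down consistently, everything reduces to the non-degeneracy of the pairings and the fact — supplied by Proposition \ref{prop:M1} — that $\phi_X$ is simultaneously unit-preserving, left $\underline{H}$-linear, and a morphism of super-coalgebras.
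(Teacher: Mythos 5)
Your reduction of the structural properties (counit-preservation, multiplicativity as the transpose of the coalgebra-map property of $\phi_X$, colinearity as the transpose of $\underline{H}$-linearity) matches in spirit what the paper dismisses as ``one sees easily,'' and your sign bookkeeping via Lemma \ref{lem:tensored-pairing} is the right tool. But there is a genuine gap at the heart of the argument: you obtain both the \emph{existence} and the \emph{bijectivity} of $\psi_X$ from non-degeneracy of the pairings, and neither actually follows. For existence, non-degeneracy of $(\underline{H}\otimes\wedge(V_H))\times(\overline{A}\otimes\wedge(W^A))\to\Bbbk$ only gives an injection $\overline{A}\otimes\wedge(W^A)\hookrightarrow(\underline{H}\otimes\wedge(V_H))^*$; it does not guarantee that the functional $h\otimes u\mapsto\langle\phi_X(h\otimes u),a\rangle$ is represented by an element of the (proper, infinite-dimensional) subspace $\overline{A}\otimes\wedge(W^A)$. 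This is fillable, but only by exhibiting the element explicitly, which is exactly what the paper does: it first transposes the unit-preserving super-coalgebra map $\iota_X:\wedge(V_H)\to H$, $x_1\wedge\dots\wedge x_n\mapsto x_1\cdots x_n$, into a counit-preserving superalgebra map $\pi_X:A\to\wedge(W^A)$ (legitimate because $\dim\wedge(V_H)<\infty$), and then sets $\psi_X(a)=\sum\overline{a}_{(1)}\otimes\pi_X(a_{(2)})$.

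The more serious problem is bijectivity. Since $\phi_X$ is onto $H=A^{\circ}$, injectivity of your $\psi_X$ is \emph{equivalent} to the statement that $\langle H,a\rangle=0$ forces $a=0$, i.e.\ to the non-degeneracy of the canonical pairing $H\times A\to\Bbbk$ (properness of $A$). In this paper that non-degeneracy is Corollary \ref{cor:non-degenerate}, which is \emph{deduced from} Proposition \ref{prop:M2}; unlike the purely even case of Lemma \ref{lem:Cproper}, it is not available beforehand for a super-commutative affine $A$. So ``all the pairings in sight are non-degenerate'' is circular precisely where the content lies, and surjectivity of $\psi_X$ is not addressed at all by the transpose picture. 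The paper avoids this by proving bijectivity of the explicitly defined $\psi_X$ via the argument of \cite[Theorem 4.5]{M} (essentially a filtered/graded comparison, cf.\ Remark \ref{grA}), which is an external input your proposal would need to import or reprove.
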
 
\begin{proof}
Define a unit-preserving super-coalgebra map $\iota_X : \wedge(V_H) \to H$ by
\begin{equation}\label{iota}
\iota_X(x_1 \wedge \dots \wedge x_n) = x_1 \dots x_n, 
\end{equation} 
where $x_i \in X,\ x_1 <\dots < x_n,\ 0 \le n \le \#X$. 
Then we have uniquely a counit-preserving superalgebra map $\pi_X : A \to \wedge(W^A)$ that satisfies
\begin{equation}\label{pi}  
\langle \iota_X(u),\ a \rangle =  \langle u, \ \pi_X(a) \rangle, \quad u \in \wedge(V_H),\ a \in A. 
\end{equation}
Define $\psi_X : A \to \overline{A} \otimes \wedge(W^A)$ by 
\begin{equation}\label{psi}
\psi_X(a) = \sum \overline{a}_{(1)} \otimes \pi_X(a_{(2)}).
\end{equation}
Here and in what follows, $a \mapsto \overline{a}$ denotes the quotient map $A \to \overline{A}$. 
It is shown in the proof of \cite[Theorem 4.5]{M} that $\psi_X$ is an isomorphism. One sees easily that it
has the desired properties. 
\end{proof} 
 
\begin{corollary}\label{cor:non-degenerate}
We have $\underline{H} = \overline{A}^{\circ}$, and  
the canonical pairing $\langle \hspace{2mm}, \ \rangle : H \times A \to \Bbbk$ is non-degenerate. 
\end{corollary}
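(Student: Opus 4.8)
The plan is to read off both assertions from the isomorphism $\psi_X$ of Proposition~\ref{prop:M2} together with the known non-degeneracy of the canonical pairing $\langle\ ,\ \rangle:\wedge(V_H)\times\wedge(W^A)\to\Bbbk$ (Remark~\ref{rem:sign-rule}) and of $\langle\ ,\ \rangle:\underline{H}\times\overline{A}\to\Bbbk$. The point is that the compatibility \eqref{phi-psi} says exactly that, under the linear isomorphisms $\phi_X:\underline{H}\otimes\wedge(V_H)\xrightarrow{\simeq}H$ and $\psi_X:A\xrightarrow{\simeq}\overline{A}\otimes\wedge(W^A)$, the canonical pairing $\langle\ ,\ \rangle:H\times A\to\Bbbk$ is identified with the tensor-product pairing $\langle\ ,\ \rangle:(\underline{H}\otimes\wedge(V_H))\times(\overline{A}\otimes\wedge(W^A))\to\Bbbk$ of the pairing $\underline{H}\times\overline{A}\to\Bbbk$ with the canonical pairing $\wedge(V_H)\times\wedge(W^A)\to\Bbbk$.

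First I would prove $\underline{H}=\overline{A}^{\circ}$. We already know (from the discussion preceding Proposition~\ref{prop:M2}) that $\langle\underline{H},A_1\rangle=0$, so $\langle\ ,\ \rangle:\underline{H}\times\overline{A}\to\Bbbk$ is a well-defined Hopf pairing, giving a Hopf algebra map $\underline{H}\to\overline{A}^{\circ}$; I must check it is bijective. For injectivity: if $h\in\underline{H}$ pairs trivially with all of $\overline{A}$, then by \eqref{phi-psi} $\phi_X(h\otimes 1)$ pairs trivially with all of $A$ (using that $\psi_X$ surjects onto $\overline{A}\otimes 1$ in degree $0$ and that $\langle\underline H,A_1\rangle=0$ kills the higher-degree part), and since $\overline{A}$ is affine it is proper, so $h=0$. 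For surjectivity: given $g\in\overline{A}^{\circ}$, it annihilates a cofinite ideal of $\overline{A}$, hence may be regarded as an element of $\underline{H}$ by dualizing the corresponding finite-dimensional quotient coalgebra — more precisely, $\underline H$ is the largest ordinary subcoalgebra of $H=A^\circ$, and any $g\in\overline A^\circ$ gives a functional on $A$ vanishing on $A_1$ and on a cofinite ideal, hence lies in $A^\circ$ and in fact in $\underline H$ since it is group-like-coalgebra-valued; one then checks it maps to the original $g$. This identification is essentially \cite[Proposition~4.3]{M} combined with the properness of affine Hopf algebras.

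Next I would prove non-degeneracy of $\langle\ ,\ \rangle:H\times A\to\Bbbk$. Having identified this pairing, via $\phi_X$ and $\psi_X$, with the tensor-product pairing of $\underline{H}\times\overline{A}\to\Bbbk$ and $\wedge(V_H)\times\wedge(W^A)\to\Bbbk$, it suffices to note that a tensor product of two non-degenerate pairings of graded vector spaces (with the grading respected, which it is here, both being $\mathbb{N}$-homogeneous) is non-degenerate: an element in the kernel on one side, expanded in a homogeneous basis of one factor with coefficients in the other, forces each coefficient into the kernel of the first pairing. The pairing $\underline H\times\overline A\to\Bbbk$ is non-degenerate because $\underline H=\overline A^\circ$ by the first part and $\overline A$, being affine, is proper; the pairing $\wedge(V_H)\times\wedge(W^A)\to\Bbbk$ is non-degenerate by Remark~\ref{rem:sign-rule} since $\dim W^A<\infty$ (recall $A$ affine forces $\dim W^A<\infty$). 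Hence $\langle\ ,\ \rangle:H\times A\to\Bbbk$ is non-degenerate.

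The only mildly delicate point is making the identification of the pairing via $\phi_X,\psi_X$ fully rigorous, i.e.\ extracting from \eqref{phi-psi} the statement that $\langle\ ,\ \rangle:H\times A$ equals the tensor-product pairing under these isomorphisms; but \eqref{phi-psi} is exactly that statement evaluated on decomposable tensors $h\otimes u$ and arbitrary $a$, and since $\phi_X$ is an isomorphism every element of $H$ is such a $\phi_X(h\otimes u)$-combination, so linearity finishes it. Thus the corollary reduces to bookkeeping, the substantive input being Proposition~\ref{prop:M2} (hence \cite[Theorem~4.5]{M}), properness of affine Hopf algebras, and the non-degeneracy \eqref{cano-pairing}.
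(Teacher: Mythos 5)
Your proposal is correct and takes essentially the same route as the paper, whose entire proof is the remark that the corollary ``follows immediately from Proposition \ref{prop:M2}'': you identify the canonical pairing $H\times A\to\Bbbk$ with the tensor product of the pairings $\underline{H}\times\overline{A}\to\Bbbk$ and $\wedge(V_H)\times\wedge(W^A)\to\Bbbk$ via $\phi_X$, $\psi_X$ and \eqref{phi-psi}, which is exactly the intended argument. One small slip worth noting: properness of $\overline{A}$ is not what gives injectivity of $\underline{H}\to\overline{A}^{\circ}$ (that is automatic, since an element of $\underline{H}$ vanishes on $I_A$ and is therefore determined by its restriction to $\overline{A}=A/I_A$, while $H=A^{\circ}\subset A^*$ by definition); properness of $\overline{A}$ is instead the input for the $A$-side of the non-degeneracy, i.e.\ the injectivity of $A\to H^*$, where you do also invoke it correctly.
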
 
\begin{proof}
This corollary follows immediately from Proposition \ref{prop:M2}.
\end{proof}

\begin{lemma}\label{Wcomod} 
Keep the situation as above.
The right adjoint $\overline{A}$-coaction on $A$ 
$$ a \mapsto \sum a_{(2)} \otimes S(\overline{a}_{(1)})\overline{a}_{(3)},\ A \to A \otimes \overline{A} $$
induces on $W^A$ a right $\overline{A}$-comodule structure. 
\end{lemma}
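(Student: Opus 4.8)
The plan is to show that the adjoint coaction $\rho_{\mathrm{ad}} : A \to A \otimes \overline{A}$, $a \mapsto \sum a_{(2)} \otimes S(\overline{a}_{(1)})\overline{a}_{(3)}$, descends to the subquotient $W^A = A_1/A_0^+A_1$, giving the coaction-map into $W^A \otimes \overline{A}$. First I would record that $\rho_{\mathrm{ad}}$ is a genuine right $\overline{A}$-comodule structure on $A$: this is the standard computation that the right adjoint coaction of a Hopf (super)algebra on itself, composed with the projection $A \to \overline{A}$ on the second tensor factor, satisfies coassociativity and counitality --- here one uses that $\overline{A}$ is a quotient Hopf algebra of $A$ (so $a \mapsto \overline a$ is a bialgebra map) and that $A$ is super-commutative (which is what makes the adjoint coaction an algebra map, though we only need it to be well-defined and coassociative). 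Since $\overline{A}$ is purely even, the parity grading on $A$ is preserved by $\rho_{\mathrm{ad}}$, so $A_1$ is a subcomodule.

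Next I would check that $A_0^+ A_1$ is a subcomodule of $A_1$, so that the quotient $W^A = A_1/A_0^+A_1$ inherits a coaction. Since $\rho_{\mathrm{ad}}$ is an algebra map (by super-commutativity of $A$) and $\overline{A}$-colinear for the $\overline{A}$-coaction, it suffices to see that $A_0^+$ is carried into $(A_0^+) \otimes \overline{A}$: for $a \in A_0^+$, $\rho_{\mathrm{ad}}(a) = \sum a_{(2)} \otimes S(\overline a_{(1)})\overline a_{(3)}$ lies in $A_0 \otimes \overline{A}$ by parity, and applying $\mathrm{id}\otimes\varepsilon$ recovers $\sum a_{(2)}\varepsilon(a_{(1)})\varepsilon(a_{(3)}) = a \in A_0^+$, which combined with the counit axiom forces the first leg into $A_0^+$ modulo the kernel --- more cleanly, $(\varepsilon\otimes\mathrm{id}_{\overline A})$-type arguments together with $\Delta(A_0^+) \subset A_0^+\otimes A_0 + A_0\otimes A_0^+$ show $\rho_{\mathrm{ad}}(A_0^+)\subset A_0^+\otimes\overline A + \Bbbk 1\otimes \overline A^+$, but since $A_0^+$ is an ideal and we then multiply by $A_1$, the resulting first legs land in $A_0^+A_1 + A_1\overline A^+$-type terms; the cleanest route is to note $\rho_{\mathrm{ad}}(xy)=\rho_{\mathrm{ad}}(x)\rho_{\mathrm{ad}}(y)$ and that $A_0^+A_1$ is spanned by such products with $x\in A_0^+$. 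Then the induced map $\bar\rho : W^A \to W^A \otimes \overline{A}$ is automatically coassociative and counital because it is the quotient of a coassociative counital map.

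I expect the only real point requiring care is the well-definedness, i.e.\ that $\rho_{\mathrm{ad}}(A_0^+ A_1) \subset A_0^+ A_1 \otimes \overline A + (\text{terms killed in } W^A\otimes\overline A)$; everything else is formal from the fact that $\overline A$ is a quotient Hopf algebra and the adjoint coaction of any Hopf algebra on itself is a comodule structure. An alternative, arguably cleaner, approach that sidesteps these bookkeeping issues is to transport the statement through the pairing: by Corollary \ref{cor:non-degenerate} and Proposition \ref{prop:M2}, $A \cong \overline A \otimes \wedge(W^A)$ and the right adjoint $\overline A$-coaction on $A$ is dual to the right adjoint $\underline H$-action on $H = A^\circ$; restricting to the odd primitives and using $V_H = (W^A)^*$, the adjoint action of $\underline H$ on $V_H$ (already introduced in Section \ref{subsec:DHCP1}) dualizes to precisely the desired $\overline A$-comodule structure on $W^A$. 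I would likely present the direct argument as the proof, remarking that it is the dual of the module structure on $V_H$ and invoking Remark \ref{rem:adjoint-coaction} as the template.
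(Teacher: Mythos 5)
Your proposal is correct and takes essentially the same route as the paper, whose entire proof is the one-line observation that the adjoint coaction $A \to A \otimes \overline{A}$ is a superalgebra map compatible with the counit: these two facts, together with $\overline{A}$ being purely even, give the sharp containments $\rho(A_0^+)\subset A_0^+\otimes\overline{A}$ and $\rho(A_1)\subset A_1\otimes\overline{A}$ (the first via $(\varepsilon\otimes\mathrm{id})\circ\rho=\varepsilon(\cdot)1$, which kills the $\Bbbk 1\otimes\overline{A}^+$ term), and then multiplicativity yields stability of $A_0^+A_1$ exactly as in your ``cleanest route.'' The only adjustment needed is to drop the weaker intermediate containment $\rho(A_0^+)\subset A_0^+\otimes\overline{A}+\Bbbk 1\otimes\overline{A}^+$ (and the $\mathrm{id}\otimes\varepsilon$ detour), since that version would not suffice after multiplying by $A_1$.
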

\begin{proof}
This is seen since the structure map $A \to A \otimes \overline{A}$ is a superalgebra map which is compatible
with the counit $\varepsilon : A \to \Bbbk$. 
\end{proof}

The induced coaction as well will be called the \emph{adjoint coaction}. 
We will regard $W^A$ as a right $\overline{A}$-comodule by this coaction. 
It is transposed to a left $\overline{A}$-comodule structure on $V_H = (W^A)^*$.

Keep the situation as above. We obtain $(\underline{H}, V_H) \in \mathsf{DHCP}$
from $H = A^{\circ}$; see Proposition \ref{prop:functor-underline}. 
Similarly to Remark \ref{rem:adjoint-coaction}, we see that the right module structure 
by $\underline{H} = \overline{A}^{\circ}$ on $V_H$ which is 
induced from the transposed left $\overline{A}$-comodule structure 
coincides with the right adjoint $\underline{H}$-action. This proves the first half of the following. 

\begin{prop}\label{prop:functor-overline}
$( \overline{A}, W^A )$, given the bilinear map $[ \hspace{2mm}, \ ] : V_H \times V_H \to P(\underline{H})$
associated with $(\underline{H}, V_H) \in \mathsf{DHCP}$, is a Harish-Chandra pair. This construction
is functorial, so that $A \mapsto (\overline{A}, W^A)$ gives a functor $\mathsf{AHSA} \to \mathsf{HCP}$. 
\end{prop}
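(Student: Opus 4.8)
The plan is to verify the two parts of the statement in turn, and for the first part the essential point is simply to transport the dual Harish-Chandra pair structure across the duality already established. We are given $A \in \mathsf{AHSA}$, we set $H = A^{\circ} \in \mathsf{CCHSA}$ by Corollary \ref{cor:dual-Hopf-superalgebra}, and we form $(\underline{H}, V_H) \in \mathsf{DHCP}$ via Proposition \ref{prop:functor-underline}. By Corollary \ref{cor:non-degenerate} we have $\underline{H} = \overline{A}^{\circ}$ and $V_H = (W^A)^*$, so that the bilinear map $[\hspace{2mm},\ ] : V_H \times V_H \to P(\underline{H})$ is exactly a bilinear map $(W^A)^* \times (W^A)^* \to P(\overline{A}^{\circ})$, which is precisely the datum required by Definition \ref{def:HCP}. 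Thus $(\overline{A}, W^A)$, equipped with this map, \emph{is} a Harish-Chandra pair essentially by unwinding definitions: the pair $(J, V) = (\overline{A}^{\circ}, (W^A)^*)$ associated with it is, on the nose, the dual Harish-Chandra pair $(\underline{H}, V_H)$, which satisfies Conditions (a)--(c) of Definition \ref{def:DHCP} because it came from Proposition \ref{prop:functor-underline}. The only thing needing a word is that the right $\overline{A}^{\circ}$-module structure on $(W^A)^*$ coming from the adjoint coaction of $\overline{A}$ on $W^A$ (Lemma \ref{Wcomod}, transposed) coincides with the adjoint $\underline{H}$-action on $V_H$; but this is exactly the identification recorded in the paragraph preceding the statement, which in turn mirrors Remark \ref{rem:adjoint-coaction}.

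Next I would address functoriality. Given a morphism of affine Hopf superalgebras $\theta : A \to A'$, it restricts to an ordinary Hopf algebra map $\overline{A} \to \overline{A'}$ (since $\theta(I_A) \subseteq I_{A'}$, because $\theta$ preserves the odd parts) and it induces a linear map $W^{A'} \to W^A$ on cotangent-space odd parts$\,$? — here one must be careful with variance. The cleaner route is to dualize: $\theta^{\circ} : (A')^{\circ} \to A^{\circ}$ is a morphism in $\mathsf{CCHSA}$, hence by the functoriality already asserted in Proposition \ref{prop:functor-underline} it yields a morphism $(\underline{(A')^{\circ}}, V_{(A')^{\circ}}) \to (\underline{A^{\circ}}, V_{A^{\circ}})$ in $\mathsf{DHCP}$, i.e. a morphism $(\overline{A'}^{\circ}, (W^{A'})^*) \to (\overline{A}^{\circ}, (W^A)^*)$. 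Under the identification $V_H = (W^A)^*$ this is the pair $(f^{\circ}, g^*)$ associated (in the sense of Definition \ref{def:HCP}) with the pair $(f, g)$ where $f = \overline{\theta} : \overline{A} \to \overline{A'}$ and $g : W^A \to W^{A'}$ is the map induced by $\theta$ on odd cotangent parts. Checking that $g^*$ is indeed the odd-primitive-restriction of $\theta^{\circ}$ amounts to the compatibility of $\theta$ with the comodule structures, which follows from $\theta$ being a superalgebra and supercoalgebra map; with this, the assignment $A \mapsto (\overline{A}, W^A)$, $\theta \mapsto (\overline{\theta}, g)$ visibly respects identities and composition, so it is a functor $\mathsf{AHSA} \to \mathsf{HCP}$.

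The main obstacle, such as it is, is not a hard computation but a bookkeeping one: keeping straight the directions of all the arrows, because $-^{\circ}$ is contravariant while the functor $A \mapsto (\overline{A}, W^A)$ is meant to be covariant, and because the Harish-Chandra structure $[\hspace{2mm},\ ]$ lives on the dual side $(C^{\circ}, W^*)$ rather than on $(C, W)$ itself. Once one commits to defining everything on the $(C, W)$-side \emph{by passing to} the associated dual pair $(C^{\circ}, W^*) = (\underline{H}, V_H)$ — which is exactly what Definition \ref{def:HCP} prescribes and what Proposition \ref{prop:dualization} formalizes — the verification collapses to (i) the identifications $\underline{H} = \overline{A}^{\circ}$, $V_H = (W^A)^*$ of Corollary \ref{cor:non-degenerate}, (ii) the matching of the adjoint action and the adjoint coaction, and (iii) the functoriality in Proposition \ref{prop:functor-underline}. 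I would therefore write the proof tersely, citing Corollary \ref{cor:non-degenerate}, Proposition \ref{prop:functor-underline}, and Lemma \ref{Wcomod}, and leaving the routine naturality check to the reader with the phrase ``which is easily seen.''
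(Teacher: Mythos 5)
Your proposal is correct and follows essentially the same route as the paper: both reduce everything to the associated dual pair, invoke the identifications $\underline{H}=\overline{A}^{\circ}$, $V_H=(W^A)^*$ together with the matching of the adjoint action with the transposed adjoint coaction, and obtain functoriality by observing that the duals of $\overline{\theta}$ and of the induced map on $W^A$ coincide with the maps arising from $\theta^{\circ}$ via Proposition \ref{prop:functor-underline}. (Only a citation quibble: $V_H=(W^A)^*$ comes from the discussion in Section \ref{subsec:HCP5} rather than from Corollary \ref{cor:non-degenerate}, which supplies $\underline{H}=\overline{A}^{\circ}$.)
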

\begin{proof}
A morphism $f : A \to B$ in $\mathsf{AHSA}$ induces naturally a Hopf algebra map 
$\overline{f} : \overline{A} \to \overline{B}$ and a linear map $W^f : W^A \to W^B$. 
It follows by the functoriality shown in \cite[Remark 4.8]{M} and in Proposition 
\ref{prop:functor-underline} that the two maps
form a morphism in $\mathsf{DHCP}$, since their duals $\overline{f}^{\circ}$, $(W^f)^*$ coincide with 
the maps which arise from the morphism $f^{\circ} : B^{\circ} \to A^{\circ}$ in
$\mathsf{CCHSA}$. This implies the desired functoriality.   
\end{proof}

\begin{rem}\label{grA}
For later use, recall from \cite{M} that given $A \in \mathsf{AHSA}$, we have the  
$\mathbb{N}$-graded Hopf superalgebra
$$\operatorname{gr} A := \bigoplus_{n=0}^{\infty} I_A^n/I_A^{n+1}.$$
The adjoint $\overline{A}$-coaction on $W^A$ extends uniquely to $\wedge(W^A) \to \wedge(W^A) \otimes
\overline{A}$ with which $\wedge(W^A)$ is a Hopf-algebra object in $\mathsf{SMod}^{\overline{A}}$; see
Lemma \ref{lem:Tc(W)}.
Just as in \eqref{smash-coproduct},   
the smash coproduct constructs an $\mathbb{N}$-graded Hopf superalgebra, 
$\overline{A}\cmdblackltimes \wedge(W^A)$,  with the algebra structure of tensor product.  
By \cite[Proposition 4.9(2)]{M}, there exists uniquely a natural isomorphism
$\psi : \operatorname{gr} A \overset{\simeq}{\longrightarrow} \overline{A}\cmdblackltimes \wedge(W^A)$
such that $\psi(0) : \operatorname{gr} A(0) = \overline{A} \to \overline{A}$ is the identity on $\overline{A}$, and 
$\psi(1) : \operatorname{gr} A(1) = A_1/A_1^3 \to \overline{A} \otimes W^A$ induces, with $A_0/A_0^+ \otimes_{A_0}$
applied, the identity on $W^A$; cf. Proposition \ref{prop:M2}.    
\end{rem} 

\subsection{}\label{subsec:HCP6} 
We wish to construct a quasi-inverse of the functor given by Proposition \ref{prop:functor-overline}. 
Let $(C, W) \in \mathsf{HCP}$, and
set $J = C^{\circ}$, $V = W^*$. Then $(J, V) \in \mathsf{DHCP}$. Recall $\mathcal{H}(J, V)$,
$\widehat{\mathcal{A}}(C, W)$ and their pairing
$\langle \hspace{2mm}, \ \rangle : \mathcal{H}(J, V) \times \widehat{\mathcal{A}}(C, W) \to \Bbbk$; 
see \eqref{mathcalH},  \eqref{hatmathcalA} 
and \eqref{pairing-complete}. 
Recall also the Hopf super-ideal $I(J, V)$ of $\mathcal{H}(J, V)$ which defines $H(J, V)$ by
$H(J, V)= \mathcal{H}(J, V)/I(J, V)$; see \eqref{H(J,V)}. We define
\begin{equation}\label{A(C,W)}
A(C, W) := \{ Z \in \widehat{\mathcal{A}}(C, W) \mid \langle I(J, V),\ Z \rangle = 0 \}. 
\end{equation}
 
\begin{lemma}\label{lem:A(C,W)1} 
Keep the notation as above. 

(1) In the complete topological Hopf superalgebra $\widehat{\mathcal{A}}(C, W)$, $A(C, W)$ is a super-subalgebra, 
and is stable
under the antipode $\widehat{S}$. 

(2) $A(C, W)$ is discrete in $\widehat{\mathcal{A}}(C, W)$, so that we have
$$A(C, W) \otimes A(C, W) = A(C, W) ~ \widehat{\otimes} ~ A(C, W) \subset 
\widehat{\mathcal{A}}(C, W) ~ \widehat{\otimes} ~ \widehat{\mathcal{A}}(C, W).$$
Moreover, the coproduct $\widehat{\Delta}$ on $\widehat{\mathcal{A}}(C, W)$ induces 
$A(C, W) \to A(C, W) \otimes A(C, W)$.

(3) Given the structure induced from $\widehat{\mathcal{A}}(C, W)$ as above, $A(C, W)$ is a super-commutative
Hopf superalgebra. 
\end{lemma}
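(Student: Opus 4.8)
The plan is to verify the three assertions of Lemma~\ref{lem:A(C,W)1} by exploiting the non-degenerate $\mathbb{N}$-homogeneous Hopf pairing $\langle\ ,\ \rangle:\mathcal{H}(J,V)\times\widehat{\mathcal{A}}(C,W)\to\Bbbk$ from \eqref{pairing-complete}, together with the fact that $I(J,V)$ is a \emph{Hopf} super-ideal of $\mathcal{H}(J,V)$ (Theorem~\ref{thm:Takeuchi}(1)), so that $H(J,V)=\mathcal{H}(J,V)/I(J,V)$ is a super-cocommutative Hopf superalgebra and the orthogonal complement $A(C,W)=I(J,V)^{\perp}$ is naturally the ``dual'' object. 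The underlying principle is the standard correspondence between quotient Hopf (super)algebras and Hopf subalgebras under a duality pairing, adapted to the completed/topological setting.

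For Part~(1): since the pairing is a Hopf pairing, for $Z,Z'\in\widehat{\mathcal{A}}(C,W)$ one has $\langle\xi,ZZ'\rangle=\sum\langle\xi_{(1)},Z\rangle\langle\xi_{(2)},Z'\rangle$ for $\xi\in\mathcal{H}(J,V)$. If $\xi\in I(J,V)$, then because $I(J,V)$ is a \emph{coideal}, $\widehat{\Delta}(\xi)\in I(J,V)\otimes\mathcal{H}(J,V)+\mathcal{H}(J,V)\otimes I(J,V)$ (completed appropriately), so each term in $\sum\langle\xi_{(1)},Z\rangle\langle\xi_{(2)},Z'\rangle$ vanishes when $Z,Z'\in A(C,W)$; hence $ZZ'\in A(C,W)$. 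That $1\in A(C,W)$ follows from $\langle\xi,1\rangle=\varepsilon(\xi)$ and $\varepsilon(I(J,V))=0$ (again because $I(J,V)$ is a coideal contained in $\ker\varepsilon$, as its generators are primitive, hence in the augmentation ideal). Stability under $\widehat{S}$ follows from the antipode identity \eqref{antipode}, $\langle\xi,\widehat{S}(Z)\rangle=\langle S(\xi),Z\rangle$, together with the fact that $S(I(J,V))\subseteq I(J,V)$, which holds since $I(J,V)$ is a Hopf super-ideal.

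For Part~(2): the key point is to show $A(C,W)$ is \emph{discrete}, i.e.\ meets all but finitely many degrees trivially in a suitable sense --- more precisely, that $A(C,W)\cap\prod_{n\ge N}C\otimes T^n(W)=0$ for $N$ large, or rather that $A(C,W)$ is contained in a finite product $\bigoplus_{n\le N}C\otimes T^n(W)$ up to the induced topology. Here one uses that $V=W^*$ is finite-dimensional, so $\wedge(V)$ and hence, by Lemma~\ref{lem:J-free}, the relevant ``finite-rank'' part of $H(J,V)$ over $J$ is concentrated in degrees $\le \dim V$; dually, pairing against $\mathcal{H}(J,V)$ forces $Z\in A(C,W)$ to be supported in degrees $\le\dim V$, because in degrees $>\dim V$ the quotient $H(J,V)$ has ``used up'' all the exterior directions and the relations $xy+yx=[x,y]$ collapse high tensor powers. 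Once discreteness is established, $A(C,W)\otimes A(C,W)$ sits inside the complete tensor product as a closed-and-discrete, hence ordinary, subspace, and the fact that $\widehat{\Delta}$ maps $A(C,W)$ into it follows by the dual of the coideal property: for $\xi,\eta\in\mathcal{H}(J,V)$ and $Z\in A(C,W)$, $\langle\xi\otimes\eta,\widehat{\Delta}(Z)\rangle=\langle\xi\eta,Z\rangle$, which vanishes whenever $\xi\in I(J,V)$ or $\eta\in I(J,V)$ since $I(J,V)$ is a (two-sided) ideal; by non-degeneracy this says $\widehat{\Delta}(Z)$ is orthogonal to $I(J,V)\otimes\mathcal{H}(J,V)+\mathcal{H}(J,V)\otimes I(J,V)$, i.e.\ lies in $A(C,W)\otimes A(C,W)$.

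For Part~(3): this is now formal. By Parts~(1) and~(2), $A(C,W)$ is a super-subalgebra of $\widehat{\mathcal{A}}(C,W)$ closed under $\widehat{S}$, with $\widehat{\Delta}$ and $\widehat{\varepsilon}$ restricting to it and landing in $A(C,W)\otimes A(C,W)$ and $\Bbbk$ respectively; the Hopf superalgebra axioms (coassociativity, counitality, antipode condition, compatibility of $\Delta$ with multiplication) are inherited verbatim from those of the complete topological Hopf superalgebra $\widehat{\mathcal{A}}(C,W)$, since they are identities that hold in the ambient object and $A(C,W)\otimes A(C,W)\subseteq\widehat{\mathcal{A}}(C,W)\,\widehat{\otimes}\,\widehat{\mathcal{A}}(C,W)$. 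Super-commutativity is likewise inherited from $\widehat{\mathcal{A}}(C,W)$, which is super-commutative by Proposition~\ref{prop:mathcalA}. I expect the main obstacle to be Part~(2), specifically proving discreteness: one must make precise, via Lemma~\ref{lem:J-free} and the finite-dimensionality of $V$, exactly why the orthogonal complement of $I(J,V)$ cannot have components in arbitrarily high tensor degree --- this is where the structure of $H(J,V)=J\otimes\wedge(V)$ (as a left $J$-module, by Proposition~\ref{prop:M1}) is essential, and it is the one place where a genuine argument, rather than a transfer of identities, is required.
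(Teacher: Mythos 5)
Your proposal follows the paper's own argument in all essentials: Part~(1) from the fact that $I(J,V)$ is a super-coideal stable under the antipode, Part~(2) by combining discreteness of $A(C,W)=I(J,V)^{\perp}$ with an orthogonal-complement computation in $\widehat{\mathcal{A}}(C,W)\,\widehat{\otimes}\,\widehat{\mathcal{A}}(C,W)$ using that $I(J,V)$ is a two-sided ideal, and Part~(3) formally. You also correctly single out the discreteness claim as the one place where a genuine argument is needed, and you cite the right ingredients, namely the finite-dimensionality of $V$ and Lemma~\ref{lem:J-free}.

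One statement in your discreteness discussion is false as written, though: the pairing does \emph{not} force an element $Z\in A(C,W)$ to be ``supported in degrees $\le\dim V$,'' nor is $A(C,W)$ contained in the finite direct sum $\bigoplus_{n\le N}C\otimes T^n(W)$. Indeed, if some bracket $[u,v]\ne 0$, the generator $1\otimes(uv+vu)-[u,v]\otimes 1$ of $I(J,V)$ ties the degree-$2$ component of $Z$ to its degree-$0$ component rather than killing it, and elements of $A(C,W)$ generally have nonzero components in arbitrarily high degree (compare the isomorphism $\psi'_X$ of Lemma~\ref{lem:A(C,W)2}, which is not degree-preserving). The correct and sufficient assertion is the first formulation you wrote: $A(C,W)\cap\prod_{n\ge N}C\otimes T^n(W)=0$ for $N$ large, i.e.\ the projection to low degrees is injective on $A(C,W)$. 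This follows from Lemma~\ref{lem:J-free} exactly as the paper does it: since $H(J,V)$ is generated as a left $J$-module by products of at most $\dim V$ elements of $V$, one has $\mathcal{H}(J,V)=M+I(J,V)$ with $M=\bigoplus_{i<N}J\otimes T^i(V)$; an element of $A(C,W)$ lying in $\prod_{n\ge N}C\otimes T^n(W)$ annihilates $M$ by $\mathbb{N}$-homogeneity and $I(J,V)$ by definition, hence annihilates all of $\mathcal{H}(J,V)$ and vanishes by non-degeneracy (Proposition~\ref{prop:pair}). With that repair your argument coincides with the paper's, including the final step where discreteness is what turns $I^{\perp}\,\widehat{\otimes}\,I^{\perp}$ into the ordinary tensor product $I^{\perp}\otimes I^{\perp}$.
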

\begin{proof}
(1) This follows since $I(J, V)$ is a super-coideal, and is stable under the antipode. 

(2) Since $H(J, V)$ is finitely generated as a left $J$-module, it follows that if we set $M = \bigoplus_{i < n}
J \otimes T^i(V)$ for $n$ large enough, then $\mathcal{H}(J, V) = M + I(J, V)$. If follows that $A(C, W)$
trivially intersects with $\prod_{i \ge n}C\otimes T^n(W)$; this shows that $A(C, W)$ is discrete
since this product is an open neighborhood of 0. 

For simplicity let us write $I$, $\mathcal{H}$, $\widehat{\mathcal{A}}$ for $I(J, V)$, $\mathcal{H}(J, V)$, 
$\widehat{\mathcal{A}}(C,W)$. Consider the pairing on $\mathcal{H} \times \widehat{\mathcal{A}}$ and the induced one
on $\mathcal{H}\otimes \mathcal{H} \times \widehat{\mathcal{A}}~\widehat{\otimes}~\widehat{\mathcal{A}}$. 
Given a subset 
$F$ in $\mathcal{H}$ or in $\mathcal{H}\otimes \mathcal{H}$, let $F^{\bot}$ denote the subset of $\widehat{\mathcal{A}}$
or of $\widehat{\mathcal{A}}~\widehat{\otimes}~\widehat{\mathcal{A}}$ consisting of those elements which annihilate $F$
with respect to the relevant pairing. Then $I^{\bot} = A(C, W)$. 
We see that in $\widehat{\mathcal{A}}~\widehat{\otimes}~\widehat{\mathcal{A}}$,
\begin{align*}
&(\mathcal{H} \otimes I + I \otimes \mathcal{H})^{\bot} 
= (\mathcal{H} \otimes I)^{\bot} \cap (I \otimes \mathcal{H})^{\bot}\\ 
&= \widehat{\mathcal{A}}~\widehat{\otimes}~I^{\bot} \, \cap~I^{\bot} \widehat{\otimes}~\widehat{\mathcal{A}} =
I^{\bot}~\widehat{\otimes}~I^{\bot} = I^{\bot} \otimes I^{\bot}.
\end{align*}
Here the last equality holds true since $I^{\bot} = A(C, W)$ is discrete, as was shown above. 
The desired result follows since $I$ is an ideal, and so
$\widehat{\Delta}( I^{\bot} ) \subset (\mathcal{H} \otimes I + I \otimes \mathcal{H})^{\bot}$.  

(3) This follows easily from the results just proven. 
\end{proof}

Keep the notation as above. For the following two results, set 
$$A = A(C,W),\quad \mathcal{A} = \mathcal{A}(C, W),\quad \widehat{\mathcal{A}} = \widehat{\mathcal{A}}(C, W).$$
Choose a totally ordered basis $X$ of $V$, and
define a unit-preserving super-coalgebra map $\widetilde{\iota}_X : \wedge(V) \to T(V)$, and a counit-preserving
superalgebra map $\widetilde{\pi}_X : T_c(W) \to \wedge(W)$, analogous to $\iota_X$, $\pi_X$ (see  
\eqref{iota}, \eqref{pi}), as follows: 
\begin{align}\label{tildeiotapi}
\widetilde{\iota}_X(x_1 \wedge \dots \wedge x_n \rangle &= 
x_1 \otimes \dots \otimes x_n, \\
\langle \widetilde{\iota}_X(u),\ Z \rangle &=  \langle u,\ \widetilde{\pi}_X(Z) \rangle,
\end{align}
where $x_i \in X,\ x_1 <\dots < x_n,\ 0\le n \le \#X,\ u \in \wedge(V),\ Z \in T_c(W)$. 
Vanishing on $\prod_{n>\#X}T^n(W)$, $\widetilde{\pi}_X$ is continuous, where $\wedge(W)$ is regarded as to
be discrete, and hence it gives rise to the complete tensor product 
$\operatorname{id}_C \widehat{\otimes} ~ \widetilde{\pi}_X : C ~ \widehat{\otimes} ~ T_c(W) \to C 
\otimes \wedge(W)$
of continuous linear maps. 

\begin{lemma}\label{lem:A(C,W)2}
(1) Let 
\begin{equation}\label{psi-prime}
\psi'_X : A \hookrightarrow \widehat{\mathcal{A}} = C ~ \widehat{\otimes} ~ T_c(W) \to C \otimes \wedge(W)
\end{equation}
be the composite of the inclusion with $\operatorname{id}_C \widehat{\otimes} ~ \widetilde{\pi}_X$. 
Then this is a counit-preserving isomorphism of superalgebras, such that
$$ \langle h \otimes \widetilde{\iota}_X(u), \ a \rangle = \langle h \otimes x, \ \psi'_X(a) \rangle, 
\quad h \in J,\ u \in \wedge(V),\ a \in A$$ 

(2) We have $A = A(C, W) \in \mathsf{AHSA}$. 
\end{lemma}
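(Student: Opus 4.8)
The plan is to prove Lemma \ref{lem:A(C,W)2} in two stages, handling Part (1) first since Part (2) will follow from it almost immediately. For Part (1), the key point is that the map $\psi'_X$ is essentially the "$T(V)$-dual" of the coalgebra map $\widetilde{\iota}_X : \wedge(V) \to T(V)$, transported through the pairings already constructed in Proposition \ref{prop:pair} and extended to the completion in \eqref{pairing-complete}. First I would verify the stated adjunction formula $\langle h \otimes \widetilde{\iota}_X(u),\ a\rangle = \langle h \otimes u,\ \psi'_X(a)\rangle$ directly from the definition \eqref{tildeiotapi} of $\widetilde{\pi}_X$ and the definition \eqref{psi-prime} of $\psi'_X$ as $\operatorname{id}_C \widehat{\otimes}\,\widetilde{\pi}_X$ restricted to $A$; this is a routine unwinding using that the pairing $\langle\,,\,\rangle : T(V)\times T_c(W)\to\Bbbk$ of Lemma \ref{lem:pair} is $\mathbb{N}$-homogeneous, so only the $\#X$-truncation of $T_c(W)$ matters, which is precisely where $\widetilde{\pi}_X$ is supported. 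That $\psi'_X$ is a counit-preserving superalgebra map is inherited from $\widetilde{\pi}_X$ being one and $\operatorname{id}_C\widehat{\otimes}(-)$ preserving the algebra structure.

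The substantive claim in Part (1) is that $\psi'_X$ is bijective. Here I would mirror the argument of Proposition \ref{prop:M2}, which is the exact dual statement at the level of $\mathcal{H}$ versus $H$: there, $\phi_X : \underline{H}\otimes\wedge(V_H)\to H$ was an isomorphism via Lemma \ref{lem:J-free}/Proposition \ref{prop:M1}. Concretely, I would exploit that $H(J,V)$ is left $J$-free on the ordered monomials $x_1\cdots x_n$ ($x_i\in X$, $x_1<\cdots<x_n$) by Lemma \ref{lem:J-free}, so that $\mathcal{H}(J,V) = M \oplus (\text{something mapping into }I(J,V))$ with $M = J\cdot\widetilde{\iota}_X(\wedge V)$, and dualize: $A(C,W) = I(J,V)^\perp$ is then, via $\xi$ of Proposition \ref{prop:Hopf-struc} or more directly via the pairing, in bijection with $\operatorname{Hom}_J(M, \Bbbk\text{-type data})$, i.e.\ with $C\otimes\wedge(W)$. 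The adjunction formula identifies this bijection with $\psi'_X$. Alternatively — and perhaps cleaner — I would observe that $\psi'_X$ fits into a commuting square with the known isomorphism $\psi_X$ of Proposition \ref{prop:M2} under the pairing, since $A(C,W)$ is by construction the "restricted dual inside the completion" of $H(J,V)$; then bijectivity of $\psi'_X$ is transported from bijectivity of $\psi_X$. The main obstacle is bookkeeping: one must check that the nondegenerate pairings (Lemmas \ref{lem:pair}, \ref{lem:Cproper}, Proposition \ref{prop:pair}, and Corollary \ref{cor:non-degenerate}) restrict and co-restrict correctly so that $I(J,V)^\perp$ really does match $C\otimes\wedge(W)$ dimension-by-dimension in each $\mathbb{N}$-degree, using that $T^n(V)\times T^n(W)$ is a perfect pairing and $J\times C$ is perfect — together these force $A(C,W)\cap(C\otimes T^n(W))$ to be exactly the image of $C\otimes\wedge^n(W)$.

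For Part (2), once $\psi'_X : A(C,W)\xrightarrow{\sim} C\otimes\wedge(W)$ is a superalgebra isomorphism, the conclusion $A(C,W)\in\mathsf{AHSA}$ is immediate: by Lemma \ref{lem:A(C,W)1}(3), $A(C,W)$ is a super-commutative Hopf superalgebra, and via $\psi'_X$ it is isomorphic as an algebra to $C\otimes\wedge(W)$, which is finitely generated since $C$ is affine (finitely generated) and $\wedge(W)$ is finite-dimensional ($\dim W<\infty$). Hence $A(C,W)$ is finitely generated, so it satisfies Definition \ref{def:affine} and lies in $\mathsf{AHSA}$. I expect this second part to be a one-line deduction requiring no new ideas; all the weight of the lemma is carried by the isomorphism assertion in Part (1), and within that, by the careful transport through the nondegenerate pairings of the left $J$-freeness of $H(J,V)$ established in Lemma \ref{lem:J-free}.
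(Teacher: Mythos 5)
Your proposal is correct and follows essentially the same route as the paper: the paper proves Part (1) by noting that $\xi$ of Proposition \ref{prop:Hopf-struc} carries $A(C,W)=I(J,V)^{\bot}$ isomorphically onto $\operatorname{Hom}_J(H(J,V),C)$ (using the properness of $C$ from Lemma \ref{lem:Cproper}), and then identifies $\psi'_X$ with $\operatorname{Hom}_J(\phi'_X,C)$ where $\phi'_X: J\otimes\wedge(V)\to H(J,V)$ is an isomorphism by the $J$-freeness of Lemma \ref{lem:J-free} — exactly your main argument — with Part (2) following at once from finite generation of $C\otimes\wedge(W)$. (Only your parenthetical degree-by-degree remark is slightly off, since $A(C,W)$ is not graded inside $\widehat{\mathcal{A}}$ and elements of $(\psi'_X)^{-1}(C\otimes\wedge^n(W))$ have components in many degrees; but this does not affect your main line of reasoning.)
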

\begin{proof}
(1) Recall from Proposition \ref{prop:Hopf-struc} the isomorphism 
$\xi : \widehat{\mathcal{A}}(C,W) \overset{\simeq}{\longrightarrow} \operatorname{Hom}_J(\mathcal{H}(J, V), C)$.
By using Lemma \ref{lem:Cproper}, one sees that this $\xi$ induces an isomorphism 
$A(C,W) \overset{\simeq}{\longrightarrow} \operatorname{Hom}_J(H(J, V), C)$; this is a key ingredient borrowed from
Koszul \cite{Koszul}. 
Via these isomorphisms together with the 
one $\operatorname{Hom}_J(J \otimes \wedge(V), C)  \overset{\simeq}{\longrightarrow} C \otimes \wedge(W)$
that arises from the canonical pairing (see \eqref{cano-pairing}), the map $\psi'_X$ is identified with 
$\operatorname{Hom}_J(\phi'_X, C)$, where $\phi'_X$ is the composite
$J \otimes \wedge(V) \to J \otimes T(V) = \mathcal{H}(J, V) \to H(J, V)$ of 
$\operatorname{id}_J \otimes \widetilde{\iota}_X$ with the quotient map. Since $\phi'_X$ is an isomorphism by
Lemma \ref{lem:J-free}, the desired result follows. 

(2) This follows since the isomorphism just obtained shows that $A$ is finitely generated.
\end{proof}

From Remark \ref{grA} and \eqref{grhatA}, recall the construction of  
$$\operatorname{gr} A= \bigoplus_{n \ge 0} I_A^n/I_A^{n+1} \simeq \overline{A} \cmdblackltimes \wedge(W^A), 
\quad \operatorname{gr} \widehat{\mathcal{A}} =
\bigoplus_{n = 0}^{\infty} \widehat{\mathcal{I}}_n/\widehat{\mathcal{I}}_{n+1} = \mathcal{A} 
= C \cmdblackltimes T_c(W) , $$ 
where we set $\widehat{\mathcal{I}}_n = \prod_{i \ge n}C \otimes T^i(W)$ in $\widehat{\mathcal{A}}$. 
One sees that $I_A\subset \widehat{\mathcal{I}}_1$, whence $I_A^n \subset \widehat{\mathcal{I}}_1^n 
\subset \widehat{\mathcal{I}}_n$ for every
$n > 0$. Therefore 
the inclusion $A \hookrightarrow \widehat{\mathcal{A}}$
induces, with $\operatorname{gr}$ applied, a Hopf superalgebra map $\operatorname{gr} A \to \mathcal{A}$ which preserves
the $\mathbb{N}$-grading. 

\begin{prop}\label{prop:A(C,W)} 
(1) The $\mathbb{N}$-graded map just obtained is isomorphic in degrees  0, 1, so that we have a Hopf algebra
isomorphism $ \overline{A} \overset{\simeq}{\longrightarrow} C$, and a $C$-colinear isomorphism  
$W^A \overset{\simeq}{\longrightarrow} W$,  
where the right $\overline{A}$-comodule $W^A$ is regarded as 
a right $C$-comodule along the first isomorphism.

(2) The canonical pairing on $\mathcal{H}(J, V) \times \widehat{\mathcal{A}}(C, W)$ given in \eqref{pairing-complete}
induces a non-degenerate Hopf pairing
\begin{equation}\label{HApair} 
\langle \hspace{2mm}, \ \rangle : H(J, V) \times A(C, W) \to \Bbbk,
\end{equation}
which induces an isomorphism $H(J, V) \overset{\simeq}{\longrightarrow} A(C, W)^{\circ}$ of Hopf superalgebras. 
The Hopf pairing and the induced isomorphism are both natural in $(C, W)$. 

(3) The pair of the isomorphisms obtained in Part 1 gives a natural isomorphism $(\overline{A}, W^A) 
\overset{\simeq}{\longrightarrow} (C, W)$ in $\mathsf{HCP}$.   
\end{prop}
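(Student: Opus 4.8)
The plan is to establish the three parts in order, leaning on the superalgebra isomorphism $\psi'_X$ of Lemma \ref{lem:A(C,W)2}, on its companion identification $A(C,W)\cong\operatorname{Hom}_J(H(J,V),C)$ extracted there, and on the equivalence of Theorem \ref{thm:Takeuchi}. For Part (1), transport the $I_A$-adic filtration of $A=A(C,W)$ through the counit-preserving superalgebra isomorphism $\psi'_X\colon A\to C\otimes\wedge(W)$. Since $\widetilde{\pi}_X$ is the identity in degrees $0$ and $1$, one reads off $\psi'_X(I_A)=C\otimes\wedge^{+}(W)$ and $\psi'_X(I_A^{\,n})=C\otimes\wedge^{\ge n}(W)$, whence $I_A^{\,n}=A\cap\widehat{\mathcal{I}}_n$ for $n=0,1,2$. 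It follows that the $\mathbb{N}$-graded Hopf superalgebra map $\operatorname{gr}A\to\mathcal{A}(C,W)$ obtained from $A\hookrightarrow\widehat{\mathcal{A}}(C,W)$ is bijective in degrees $0$ and $1$. In degree $0$ it is a Hopf algebra map $\overline{A}\to C$, hence a Hopf algebra isomorphism; in degree $1$, comparing $\operatorname{gr}A\cong\overline{A}\cmdblackltimes\wedge(W^A)$ of Remark \ref{grA} with $\mathcal{A}(C,W)=C\cmdblackltimes T_c(W)$, the map $\overline{A}\otimes W^A\to C\otimes W$ is, being a coalgebra map, compatible with the smash coproducts, so after killing the first tensor factor it yields a colinear isomorphism $W^A\to W$ over $\overline{A}\to C$.

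For Part (2), restricting the pairing \eqref{pairing-complete} to $\mathcal{H}(J,V)\times A(C,W)$ annihilates $I(J,V)$ by the very definition of $A(C,W)$, so it descends to a pairing $\langle\ ,\ \rangle\colon H(J,V)\times A(C,W)\to\Bbbk$, which is a Hopf pairing because $I(J,V)$ is a Hopf super-ideal and $A(C,W)$ a Hopf super-subalgebra. Right non-degeneracy follows from $\mathbb{N}$-homogeneity together with the non-degeneracy of $\langle\ ,\ \rangle\colon J\times C\to\Bbbk$ (Lemma \ref{lem:Cproper}) and of the canonical pairings $T^n(V)\times T^n(W)\to\Bbbk$ in each (finite-dimensional) degree. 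For left non-degeneracy, use that $\xi$ restricts to an isomorphism $A(C,W)\to\operatorname{Hom}_J(H(J,V),C)$ with $\langle\bar{x},Z\rangle=\varepsilon_C(\xi(Z)(\bar{x}))$; since $H(J,V)$ is free of finite rank over $J$ by Lemma \ref{lem:J-free} (here $\dim V<\infty$), writing $\bar{x}=\sum a_i m_i$ on an ordered-monomial $J$-basis and choosing $f\in\operatorname{Hom}_J(H(J,V),C)$ with prescribed values $f(m_i)$ gives $\varepsilon_C(f(\bar{x}))=\sum\langle a_i,f(m_i)\rangle$, which vanishes for all $f$ only if every $a_i=0$. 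Since $H(J,V)$ is locally finite as a coalgebra, $x\mapsto\langle x,-\rangle$ is a well-defined Hopf superalgebra map $H(J,V)\to A(C,W)^{\circ}$, injective by left non-degeneracy.

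Surjectivity of this last map is the main obstacle, and I would bootstrap from Part (1). By Part (1) and Corollary \ref{cor:non-degenerate}, $\underline{A(C,W)^{\circ}}=\overline{A(C,W)}^{\circ}\cong C^{\circ}=J$ and $V_{A(C,W)^{\circ}}=(W^{A(C,W)})^{*}\cong W^{*}=V$; one checks, using $\mathbb{N}$-homogeneity and the degree-$1$ identification of Part (1) (concretely, $\langle v,a\rangle=\langle v,(\varepsilon_C\otimes\operatorname{id})(a_1)\rangle$ for $v\in V$), that under these identifications $x\mapsto\langle x,-\rangle$ restricts on $J\subset H(J,V)$ and on $V=V_{H(J,V)}\subset H(J,V)$ to the canonical inclusions $J=\underline{A(C,W)^{\circ}}\hookrightarrow A(C,W)^{\circ}$ and $V_{A(C,W)^{\circ}}\hookrightarrow A(C,W)^{\circ}$. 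On the other hand Theorem \ref{thm:Takeuchi} provides the isomorphism $\alpha\colon H(\underline{A(C,W)^{\circ}},V_{A(C,W)^{\circ}})\to A(C,W)^{\circ}$ defined by exactly these inclusions; composing it with the isomorphism $H(J,V)\cong H(\underline{A(C,W)^{\circ}},V_{A(C,W)^{\circ}})$ induced by the above isomorphism of dual Harish-Chandra pairs yields a Hopf superalgebra isomorphism $H(J,V)\to A(C,W)^{\circ}$ agreeing with $x\mapsto\langle x,-\rangle$ on the algebra generators $J$ and $V$ of $H(J,V)$. Hence the two maps coincide, so $x\mapsto\langle x,-\rangle$ is an isomorphism, the pairing is non-degenerate on both sides, and naturality in $(C,W)$ is immediate since every ingredient is natural.

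Finally, Part (3) is essentially formal: the isomorphisms $\overline{A(C,W)}\to C$ and $W^{A(C,W)}\to W$ of Part (1) form a bijective morphism of the underlying data, and it is a morphism in $\mathsf{HCP}$ because, under $\underline{A(C,W)^{\circ}}=\overline{A(C,W)}^{\circ}$ and $V_{A(C,W)^{\circ}}=(W^{A(C,W)})^{*}$, its associated morphism of dual Harish-Chandra pairs is the image of the Part-(2) isomorphism $H(J,V)\cong A(C,W)^{\circ}$ under the functor $H\mapsto(\underline{H},V_H)$ of Proposition \ref{prop:functor-underline}, which in particular respects the brackets. A bijective morphism in $\mathsf{HCP}$ is an isomorphism, and naturality is inherited from Parts (1) and (2). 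Throughout, the one place demanding care is the bookkeeping of the various identifications $\overline{A(C,W)}\cong C$, $W^{A(C,W)}\cong W$, $\underline{A(C,W)^{\circ}}\cong J$, $V_{A(C,W)^{\circ}}\cong V$ and their mutual compatibility, which is what makes the generator-comparison in the surjectivity step go through.
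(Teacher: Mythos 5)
Your proposal is correct. Parts (1) and (3) follow the paper's own route almost verbatim: the graded comparison of $\operatorname{gr}A$ with $\mathcal{A}(C,W)$ via $\psi'_X$ and $\widetilde{\pi}_X$, and the observation that the duals of the two degree-$\le 1$ isomorphisms extend to the Hopf superalgebra isomorphism of Part (2), which is exactly how the paper sees that the pair is a morphism in $\mathsf{HCP}$. The genuine divergence is in Part (2). The paper gets both the non-degeneracy and the isomorphism $H(J,V)\overset{\simeq}{\longrightarrow}A(C,W)^{\circ}$ in one stroke, by identifying the descended pairing, via $\phi'_X$ and $\psi'_X$, with the tensor product of the pairings $J\times C$ and $\wedge(V)\times\wedge(W)$ and using that this tensor product induces $J\otimes\wedge(V)\overset{\simeq}{\longrightarrow}(C\otimes\wedge(W))^{\circ}$ (here $\dim\wedge(W)<\infty$ is what makes the dual coalgebra split as a tensor product). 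You instead prove left non-degeneracy directly from the $J$-freeness of $H(J,V)$ and the identification $A(C,W)\simeq\operatorname{Hom}_J(H(J,V),C)$, and then obtain surjectivity of $x\mapsto\langle x,-\rangle$ by bootstrapping: Part (1) plus Corollary \ref{cor:non-degenerate} identify $(\underline{A^{\circ}},V_{A^{\circ}})$ with $(J,V)$, Theorem \ref{thm:Takeuchi}(2) supplies the isomorphism $\alpha_{A^{\circ}}$, and a comparison on the algebra generators $J$ and $V$ (legitimate, since both maps are Hopf superalgebra maps out of $H(J,V)$, which is generated by $J$ and $V$, and the bracket compatibility comes for free from the defining relation $uv+vu=[u,v]$) forces the pairing map to agree with $\alpha_{A^{\circ}}$ composed with an isomorphism. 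Both arguments are sound; the paper's is shorter and stays entirely on the commutative side, while yours re-uses the already-proved equivalence $\mathsf{DHCP}\approx\mathsf{CCHSA}$ at the cost of the identification bookkeeping you yourself flag. One small economy you could take: the right non-degeneracy you derive degree by degree is already recorded as Proposition \ref{prop:pair} together with the remark after \eqref{pairing-complete-general}, so it can simply be cited.
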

\begin{proof}
(1) Apply to the trivial $\operatorname{gr}$ construction to $C \otimes \wedge(W)$ with respect to the
descending chain $\bigoplus_{i\ge n}C \otimes \wedge^i(W)$, $n = 0, 1,\dots$, of super-ideals; these 
super-ideals are the 
powers of the kernel of the projection $C \otimes \wedge(W) \to C$. The resulting 
$\operatorname{gr}(C \otimes \wedge(W))$ coincides with the original $C \otimes \wedge(W)$. 
Commuting with the projection onto $C$, $\operatorname{id}_C \widehat{\otimes} ~ \widetilde{\pi}_X$ induces an 
$\mathbb{N}$-graded algebra map, $\operatorname{gr} \widehat{\mathcal{A}} = \mathcal{A} \to 
\operatorname{gr}(C \otimes \wedge(W)) = C \otimes \wedge(W)$, which is seen to be isomorphic in degrees 0, 1.
Since $\operatorname{gr} \psi'_X$ is an isomorphism by Lemma \ref{lem:A(C,W)2}(1), the desired result follows.  

(2) Since the tensor product of the canonical pairings on $J \times C$, $\wedge(V) \times \wedge(W)$ is 
non-degenerate, and induces an isomorphism 
$J \otimes \wedge(V) \overset{\simeq}{\longrightarrow} (C \otimes \wedge(W))^{\circ}$,
the desired result follows from Lemma \ref{lem:A(C,W)2}(1).

(3) Since the two isomorphisms both commute with the natural projections from $A(C, W)$, 
their duals $J = C^{\circ} \overset{\simeq}{\longrightarrow} \overline{A}^{\circ}$, 
$V = W^* \overset{\simeq}{\longrightarrow} (W^A)^*$ 
both extend to the isomorphism $H(J, V) \overset{\simeq}{\longrightarrow} A(C, W)^{\circ}$
obtained in Part 2. 
This shows that they give an isomorphism in $\mathsf{HCP}$.
The naturality is easy to see.  
\end{proof} 

\begin{theorem}\label{thm:equiv}
The assignment $(C, W) \mapsto A(C, W)$ is functorial, so that we have the functor 
$$ A : \mathsf{HCP} \to \mathsf{AHSA},\ (C, W) \mapsto A(C, W).$$ 
This is an equivalence which is a quasi-inverse of the functor $A \mapsto (\overline{A}, W^A)$ given 
by Proposition \ref{prop:functor-overline}. 
\end{theorem}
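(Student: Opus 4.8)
The plan is to check first that $(C,W)\mapsto A(C,W)$ is functorial, and then that the resulting functor is mutually quasi-inverse to the functor $A\mapsto(\overline{A},W^A)$ of Proposition~\ref{prop:functor-overline}. For functoriality, let $(f,g):(C,W)\to(C',W')$ be a morphism in $\mathsf{HCP}$, with associated morphism $(f^{\circ},g^{*}):(J',V')\to(J,V)$ in $\mathsf{DHCP}$. Then $f$ and $g$ induce (functorially) an $\mathbb{N}$-graded Hopf superalgebra map $\mathcal{A}(C,W)\to\mathcal{A}(C',W')$, hence a continuous one $\widehat{\mathcal{A}}(C,W)\to\widehat{\mathcal{A}}(C',W')$, and this is the transpose, with respect to the pairing~\eqref{pairing-complete}, of the Hopf superalgebra map $\mathcal{H}(f^{\circ},g^{*}):\mathcal{H}(J',V')\to\mathcal{H}(J,V)$ induced by $(f^{\circ},g^{*})$. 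Since $\mathcal{H}(f^{\circ},g^{*})$ carries $I(J',V')$ into $I(J,V)$ (this underlies the functoriality of $H$ in Theorem~\ref{thm:Takeuchi}(1)), the transpose map sends $A(C,W)=I(J,V)^{\bot}$ into $A(C',W')=I(J',V')^{\bot}$, yielding a Hopf superalgebra map $A(f,g):A(C,W)\to A(C',W')$; compatibility with composition and identities is clear, so we obtain the functor $A:\mathsf{HCP}\to\mathsf{AHSA}$.

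That the composite $(C,W)\mapsto(\overline{A(C,W)},W^{A(C,W)})$ is naturally isomorphic to $\mathrm{id}_{\mathsf{HCP}}$ is precisely Proposition~\ref{prop:A(C,W)}(3). For the other composite, fix $A\in\mathsf{AHSA}$ and put $H=A^{\circ}\in\mathsf{CCHSA}$; by Corollary~\ref{cor:non-degenerate} we have $\underline{H}=\overline{A}^{\circ}$ and $V_H=(W^A)^{*}$, so with $(C,W):=(\overline{A},W^A)$ one has $J=C^{\circ}=\underline{H}$ and $V=W^{*}=V_H$. Let $\widetilde{\alpha}_H:\mathcal{H}(J,V)\to H$ be the composite of the quotient $\mathcal{H}(J,V)\twoheadrightarrow H(J,V)$ with $\alpha_H:H(J,V)\xrightarrow{\sim}H$ from Theorem~\ref{thm:Takeuchi}(2); it is a surjective Hopf superalgebra map with kernel $I(J,V)$. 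I would define $\beta_A:A(\overline{A},W^A)\to A$ by the requirement
$$\langle\widetilde{\alpha}_H(X),\ \beta_A(Z)\rangle=\langle X,\ Z\rangle\qquad(X\in\mathcal{H}(J,V),\ Z\in A(\overline{A},W^A)),$$
the left pairing being the canonical $H\times A\to\Bbbk$ (non-degenerate by Corollary~\ref{cor:non-degenerate}) and the right one being~\eqref{pairing-complete}; this is consistent since both sides vanish on $I(J,V)=\operatorname{Ker}\widetilde{\alpha}_H$, and it pins down $\beta_A(Z)\in A$ uniquely. Naturality in $A$ is clear, and $\beta_A$ is a morphism of Hopf superalgebras by the usual transpose argument (using that $\widetilde{\alpha}_H$ is an algebra and coalgebra map, that both pairings are Hopf pairings, and non-degeneracy).

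The remaining — and, I expect, only genuinely non-routine — point is that $\beta_A$ exists and is bijective. Here I would argue concretely: choose a totally ordered basis $X$ of $V=V_H$ and note $\phi_X=\alpha_H\circ\phi'_X$, where $\phi_X:\underline{H}\otimes\wedge(V_H)\to H$ is from Proposition~\ref{prop:M1} and $\phi'_X:J\otimes\wedge(V)\to H(J,V)$ is the isomorphism of Lemma~\ref{lem:J-free} (both are left $\underline{H}$-linear and send $1\otimes(x_1\wedge\dots\wedge x_n)$ to $x_1\cdots x_n$). Combining this with the defining relations of the superalgebra isomorphisms $\psi_X:A\xrightarrow{\sim}\overline{A}\otimes\wedge(W^A)$ (Proposition~\ref{prop:M2}) and $\psi'_X:A(\overline{A},W^A)\xrightarrow{\sim}\overline{A}\otimes\wedge(W^A)$ (Lemma~\ref{lem:A(C,W)2}(1)), and with $\mathcal{H}(J,V)=(\mathrm{id}_J\otimes\widetilde{\iota}_X)(J\otimes\wedge(V))+I(J,V)$, one checks that $b:=\psi_X^{-1}(\psi'_X(Z))$ satisfies the defining relation of $\beta_A(Z)$; hence $\beta_A$ exists and equals $\psi_X^{-1}\circ\psi'_X$, a bijection. (Alternatively, bijectivity follows by passing to associated graded objects, via the descriptions of $\operatorname{gr}A$ in Remark~\ref{grA} and of $\operatorname{gr}A(\overline{A},W^A)$ in the discussion preceding Proposition~\ref{prop:A(C,W)}.) Together with the previous paragraphs this provides the two natural isomorphisms exhibiting $A$ and $A\mapsto(\overline{A},W^A)$ as quasi-inverse equivalences.
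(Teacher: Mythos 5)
Your proposal is correct and follows essentially the same route as the paper: the composite on $\mathsf{HCP}$ is handled by Proposition \ref{prop:A(C,W)}(3), and the composite on $\mathsf{AHSA}$ by a natural map $\beta_A$ characterized by adjointness against $\mathcal{H}(J,V)\to H(J,V)\xrightarrow{\ \alpha_H\ }H$ and identified with $\psi_X^{-1}\circ\psi'_X$ using Propositions \ref{prop:M1}, \ref{prop:M2} and Lemmas \ref{lem:J-free}, \ref{lem:A(C,W)2}. The only difference is cosmetic: the paper defines $\beta_A$ explicitly in the direction $A\to A(\overline{A},W^A)$ by $\beta_A(a)=\sum_{n}\sum\overline{a}_{(1)}\otimes\varpi^{(n)}(a_{(2)})$, so that existence is automatic and only the identification of the image remains, whereas you define the inverse map implicitly by the pairing relation and therefore must (and correctly do) supply the existence argument via $\psi_X^{-1}\circ\psi'_X$.
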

\begin{proof}
The functoriality is easy to see. Proposition \ref{prop:A(C,W)}(3) shows that the functor $A$ followed by
$A \mapsto (\overline{A}, W^A)$ is isomorphic to the identity functor on $\mathsf{HCP}$.

To complete the proof let $A\in \mathsf{AHSA}$, and set $(C, W) := (\overline{A}, W^A)$ in $\mathsf{HCP}$.
Let $ \varpi : A \to A_1 \to A_1/A_0^+A_1 = W$ be the composite of the canonical projections. 
For each $n > 0$, define $\varpi^{(n)} : A \to T^n(W)$ by
$$ \varpi^{(n)}(a) = \sum \varpi(a_{(1)}) \otimes \varpi(a_{(2)}) \otimes \dots \otimes \varpi(a_{(n)}). $$
For $n=0$, let $\varpi^{(0)} = \varepsilon$. 
Define a map by
$$ \beta_A : A \to \widehat{\mathcal{A}}(C, W) = C ~ \widehat{\otimes} ~ T_c(W), \ \beta_A(a) = 
\sum_{n=0}^{\infty}\sum \overline{a}_{(1)} \otimes \varpi^{(n)}(a_{(2)}). $$
This is natural in $A$. It suffices to prove that $\beta_A$ gives a Hopf superalgebra isomorphism from $A$ 
onto $A(C, W)$. 

Suppose that $(J, V) \in \mathsf{DHCP}$ is the pair associated with $(C, W)$. Let $H = A^{\circ}$. 
By (the proof of) Proposition \ref{prop:functor-overline} we have $(J, V) = (\underline{H}, V_H)$. 
Let $\gamma : \mathcal{H}(J, V) \to H(J, V) \overset{\simeq}{\longrightarrow} H$ be the composite 
of the quotient map with the natural isomorphism $\alpha_H$ given in \eqref{alpha}; this is a Hopf
superalgebra map. One sees that
$$\langle \gamma(h \otimes u),\ a \rangle = \langle h \otimes u, \ \beta_A(a) \rangle,\quad h \in J, \ 
u \in T(V),\ a \in A.$$
It follows from Corollary \ref{cor:non-degenerate}
that $\beta_A$ is an injective Hopf superalgebra map into $A(C, W)$. 
The image of $\beta_A$ is precisely $A(C, W)$, as desired, since given a totally ordered basis $X$ of $V$, the
composite $A \to A(C, W) \overset{\simeq}{\longrightarrow} C \otimes \wedge(W)$ of 
$\beta_A$ with $\psi'_X$ coincides with the isomorphism $\psi_X$.  
\end{proof} 

\begin{definition}\label{def:connected-super}
An affine Hopf superalgebra $A$ is said to be \emph{connected} if $\overline{A}$, or equivalently
the prime spectrum $\operatorname{Spec} A_0$ of $A_0$, is connected. An algebraic affine supergroup
scheme $G = \operatorname{SSp} A$ is said to be \emph{connected}, if $A$ is connected. 
\end{definition}
 
Obviously, the equivalence obtained above restricts to a category equivalence between $\mathsf{cHCP}$
and the full subcategory of $\mathsf{AHSA}$ consisting of all connected ones.

\section{Duality and short exact sequences}\label{sec:duality}

\subsection{}\label{LieHyper}
Let $G = \operatorname{SSp} A$ be an algebraic affine supergroup scheme. 
Set $H = A^{\circ}$. Then $H \in \mathsf{CCHSA}$. Moreover, the primitives $P(H)$ form a finite-dimensional 
Lie superalgebra, and the irreducible component $H^1$ containing 1 is a hyper-superalgebra; 
see Section \ref{subsec:corad}. 
We write
$$ \operatorname{Lie} G = P(H),\quad \operatorname{hy} G = H^1,$$
and call these the \emph{Lie superalgebra}, and the \emph{hyperalgebra} of $G$, respectively.
One sees that $\operatorname{Lie},\ \operatorname{hy}$ give functors, which will play an important role in the following 
section.

\begin{prop}\label{prop:LieHyper}
Suppose that an algebraic affine supergroup scheme $G$ is represented by $A(C, W)$, where 
$(C, W) \in \mathsf{HCP}$. Let $(J, V) = ((C^{\circ})^1, W^*)$ be the object in $\mathsf{iDHCP}$ 
which is associated, by the functor given in Proposition \ref{prop:dualization}(2), with $(C, W)$. 

(1) $\operatorname{Lie} G$ is naturally isomorphic to the Lie superalgebra constructed on $P(J) \oplus V$
as in Remark \ref{rem:DHCPzero-char}(1).

(2) $\operatorname{hy} G$ is naturally isomorphic to $H(J, V)$. 
\end{prop}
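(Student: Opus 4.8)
The plan is to exploit the non-degenerate Hopf pairing $\langle \hspace{2mm},\ \rangle : H(J',V) \times A(C,W) \to \Bbbk$ of Proposition \ref{prop:A(C,W)}(2), where $J' = C^{\circ}$ and $V = W^*$, together with the induced isomorphism $H(J',V) \overset{\simeq}{\longrightarrow} A(C,W)^{\circ} =: H$ of Hopf superalgebras. Thus, setting $G = \operatorname{SSp} A(C,W)$, we have $\operatorname{Lie} G = P(H) = P(H(J',V))$ and $\operatorname{hy} G = H^1 = H(J',V)^1$, and it remains to identify these two sub(super)objects of $H(J',V)$. Here $J = (J')^1 = (C^{\circ})^1$ is the hyperalgebra of $\operatorname{Sp} C$, and $(J,V)$ is the irreducible dual Harish-Chandra pair associated with $(C,W)$ by Proposition \ref{prop:dualization}(2). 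Note that by Remark \ref{rem:PropHopf-struc}, $\mathcal{H}(J,V)$ is a Hopf super-subalgebra of $\mathcal{H}(J',V)$ with $\mathcal{H}(J',V) = J' \otimes_J \mathcal{H}(J,V)$ as a left $J'$-module; passing to the quotients by the (compatible) Hopf super-ideals $I(J,V)$ and $I(J',V)$, one gets $H(J',V) = J' \otimes_J H(J,V)$ as a left $J'$-module, so $H(J,V)$ is a Hopf super-subalgebra of $H(J',V)$.

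For Part (2), I would argue that $H(J,V)$ is exactly the irreducible component of $H(J',V)$ containing $1$. First, $H(J,V)$ is irreducible: by Lemma \ref{lem:J-free} it is $J$-free on the products $x_1\cdots x_n$ with $x_i$ running over a totally ordered basis $X$ of $V$, hence as a coalgebra $H(J,V) \cong J \otimes \wedge(V)$ (this is $\phi_X$ of Proposition \ref{prop:M1} applied to $H(J,V)$ itself, or directly Lemma \ref{lem:J-free} combined with the shuffle-coproduct structure), and $J = (C^{\circ})^1$ is irreducible by construction while $\wedge(V)$ is irreducible, so the smash-type coalgebra $J \otimes \wedge(V)$ is irreducible; equivalently $\operatorname{Corad} H(J,V) = \Bbbk$ by Lemma \ref{lem:corad}. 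Second, $H(J,V)$ is maximal with this property inside $H(J',V)$: using $H(J',V) = J' \otimes_J H(J,V)$ and $\operatorname{Corad} H(J',V) = \operatorname{Corad}(J') \otimes_J H(J,V)$-type decompositions (via Lemma \ref{lem:corad} and the coalgebra structure of the smash coproduct, together with Kostant's decomposition of the cocommutative coalgebra $J'$ into its irreducible components indexed by the grouplikes), any larger irreducible subcoalgebra would have to lie over the irreducible component $J = (J')^1$ of $J'$, forcing it into $J \otimes_J H(J,V) = H(J,V)$. Since $\operatorname{hy} G = H^1$ is by definition this maximal irreducible subcoalgebra of $H \cong H(J',V)$, we conclude $\operatorname{hy} G \cong H(J,V)$, naturally in $(C,W)$ because all the identifications above are.

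For Part (1), I would compute $P(H(J',V))$. Since $P$ only sees the coalgebra structure and since every primitive lies in the irreducible component containing $1$, we have $P(H(J',V)) = P(H(J,V))$, reducing to computing the primitives of $H(J,V)$. Using the $J$-free basis from Lemma \ref{lem:J-free} and the coalgebra isomorphism $H(J,V) \cong J \otimes \wedge(V)$ (with the smash coproduct \eqref{smash-coproduct1}-type formula), a primitive element decomposes as a sum of a primitive in $J$, which gives $P(J)$, plus a copy of $V = \wedge^1(V)$ sitting in degree $1$; the cross terms vanish because $J$ is purely even. Thus $P(H(J',V)) = P(J) \oplus V$ as super-vector spaces, $P(J)$ even and $V$ odd. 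Finally, the Lie superalgebra bracket on $P(H(J,V))$ inherited from the super-commutator \eqref{super-commutator} must be identified with the bracket of Remark \ref{rem:DHCPzero-char}(1) on $P(J) \oplus V$: on $P(J)$ it is the original bracket of $J$; between $P(J)$ and $V$ it is $[a,v] = -[v,a] = v \triangleleft a$, which holds because relation (i) in Lemma \ref{lem:J-free}, $xa = \sum a_{(1)}(x \triangleleft a_{(2)})$, rearranges for primitive $a$ to $xa - ax = x \triangleleft a$; and within $V$ it is $uv + vu = [u,v]$ (landing in $P(J)$), which is precisely relation (ii)+(iii) of Lemma \ref{lem:J-free}, i.e. exactly how the Lie superalgebra structure on $L_0 \oplus L_1$ in Remark \ref{rem:DHCPzero-char}(1) is built, with $[v,w]$ defined so that $[v,v] = 2v^2$. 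Naturality is routine from the functoriality of $H(-,-)$ and of the dual Harish-Chandra pair construction.

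The main obstacle I anticipate is the ``maximality'' half of Part (2): making precise that the irreducible component of the smash-coproduct coalgebra $J' \otimes_J H(J,V)$ containing $1$ is exactly $J \otimes_J H(J,V) = H(J,V)$. This requires a clean statement about coradicals of smash coproducts $D \cmdblackltimes$-type constructions — essentially that $\operatorname{Corad}(J' \cmdblackltimes T_c\text{-like}) = \operatorname{Corad}(J') \cmdblackltimes (\text{coradical of the }\wedge\text{-part}) = \operatorname{Corad}(J')$, generalizing Lemma \ref{lem:corad} from $\mathbb{Z}_2$ to an arbitrary cocommutative $J'$ acting — or, more elementarily, the fact that in a cocommutative Hopf algebra the irreducible component containing $1$ is a Hopf subalgebra and that $H(J,V)^1 = H(J,V)$ while the grouplikes of $H(J',V)$ are exactly those of $J'$. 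I would handle this by reducing, via the left-$J'$-module decomposition $H(J',V) = \bigoplus_{g} g\,H(J,V)$ over a transversal of grouplikes (using Kostant's theorem as in Remark \ref{rem:Kostant}(2) when $\Bbbk$ is algebraically closed, and a faithfully-flat descent or direct coalgebra-filtration argument in general), to the statement that each $g\,H(J,V)$ is an irreducible component, so that the one containing $1 = 1\cdot 1$ is $H(J,V)$ itself. Everything else — the primitive computation, the bracket identification, and naturality — is routine bookkeeping once the pairing isomorphism of Proposition \ref{prop:A(C,W)}(2) is in hand.
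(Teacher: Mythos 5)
Your proposal is correct and follows essentially the same route as the paper, whose entire proof is to invoke the isomorphism $H(C^{\circ},V)\overset{\simeq}{\longrightarrow}A(C,W)^{\circ}$ of Proposition \ref{prop:A(C,W)}(2) and then simply note that $H(C^{\circ},V)^1=H(J,V)$; everything you write is a (correct) expansion of that one note. The ``maximality'' step you flag as the main obstacle does not need Kostant's decomposition or grouplikes: the coalgebra isomorphism $\phi_X : C^{\circ}\otimes\wedge(V)\overset{\simeq}{\longrightarrow}H(C^{\circ},V)$ of Proposition \ref{prop:M1} together with the connectedness of $\wedge(V)$ gives $\operatorname{Corad}H(C^{\circ},V)=\phi_X(\operatorname{Corad}(C^{\circ})\otimes\Bbbk)$, so the irreducible component containing $1$ is $\phi_X((C^{\circ})^1\otimes\wedge(V))=H(J,V)$.
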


\begin{proof}
This follows immediately from the isomorphism given in Proposition \ref{prop:A(C,W)}(2). For Part 2 note
that $H(C^{\circ}, V)^1 = H(J, V)$. 
\end{proof}

\subsection{}\label{subsec:dual-of-H(J,V)}
Conversely, let us start with an object $(J, V) \in \mathsf{DHCP}$, where we assume 
$\dim V < \infty$. Construct $H(J, V) \in \mathsf{CCHSA}$. 

\begin{prop}\label{prop:dual-of-H(J,V)}
With the notation as above, set $A = H(J, V)^{\circ}$; this is a super-commutative Hopf superalgebra
which is not necessarily finitely generated. 

(1) The restriction maps $A \to J^{\circ}$, $A \to V^*$ induce isomorphisms
$$ \overline{A} \overset{\simeq}{\longrightarrow} J^{\circ},\quad W^A \overset{\simeq}{\longrightarrow} V^*$$
of Hopf algebras, and of super-vector spaces, respectively. 

(2) Choose arbitrarily a totally ordered basis $X$ of $V$, and let 
$\phi_X : J \otimes \wedge(V) \overset{\simeq}{\longrightarrow} H(J, V)$ be the isomorphism as given by
Proposition \ref{prop:M1}.
Then we have uniquely a counit-preserving, left $\overline{A}$-colinear superalgebra isomorphism 
$\psi_X : A \overset{\simeq}{\longrightarrow} \overline{A} \otimes \wedge(W^A)$ that satisfies the same formula as
\eqref{phi-psi}, when we replace the pairings on $\underline{H} \times \overline{A}$, $V_H \times W^A$ in 
\eqref{phi-psi}
with those on $J \times \overline{A}$, $V \times W^A$ which are obtained from the isomorphisms of Part 1 above. 
\end{prop}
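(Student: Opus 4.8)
The plan is to dualize the results of Section~\ref{subsec:HCP5} rather than re-prove everything from scratch. The key observation is that the pair $(A, H(J,V))$ carries a canonical Hopf pairing coming from the duality $A = H(J,V)^\circ$, and we already have, from Proposition~\ref{prop:M1}, the explicit description $\phi_X : J \otimes \wedge(V) \overset{\simeq}{\longrightarrow} H(J,V)$ of $H = H(J,V)$ as a left $J$-module with a $J$-free basis of ordered monomials in $X$. Everything in the statement should be read off by transposing $\phi_X$ through the pairing, in exact analogy with the way $\psi_X$ of Proposition~\ref{prop:M2} was obtained by transposing the isomorphism of Proposition~\ref{prop:M1} in the case $H = A^\circ$.

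\emph{Part (1).} First I would identify $\underline{H(J,V)}$ and $V_{H(J,V)}$. By Part~2 of Theorem~\ref{thm:Takeuchi} (proved in Section~\ref{DHCP4}), the functor $H \mapsto (\underline{H}, V_H)$ is quasi-inverse to $H$, so $\underline{H(J,V)} = J$ and $V_{H(J,V)} = V$ as a dual Harish-Chandra pair. Now apply the already-developed machinery of Section~\ref{subsec:HCP5} to the (not necessarily affine) super-commutative Hopf superalgebra $A = H(J,V)^\circ$: the canonical pairing $H(J,V) \times A \to \Bbbk$ is a Hopf pairing, $\langle \underline{H(J,V)}, A_1\rangle = 0$ induces a Hopf pairing $J = \underline{H(J,V)} \times \overline{A} \to \Bbbk$, and by the cited \cite[Proposition~4.3]{M} one has $V_{H(J,V)} = (W^A)^*$, i.e. $V = (W^A)^*$. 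To promote these pairings to \emph{isomorphisms} $\overline{A} \overset{\simeq}{\longrightarrow} J^\circ$ and $W^A \overset{\simeq}{\longrightarrow} V^*$ I need nondegeneracy on both sides; this is where $\dim V < \infty$ and the explicit basis from Proposition~\ref{prop:M1} enter. Concretely, using $\phi_X$ one sees $H(J,V)$ is, as a coalgebra, the directed union of finite-dimensional subcoalgebras $D \otimes \wedge(V)$ as $D$ ranges over finite-dimensional subcoalgebras of $J$; dualizing, $A = H(J,V)^\circ$ surjects compatibly onto $(D \otimes \wedge(V))^* = D^* \otimes \wedge(V)^* \cong D^* \otimes \wedge(V^*)$, and taking the colimit over $D$ and the quotient by the odd ideal recovers $J^\circ$ on the even side and $V^* \cong \wedge^1(V^*)$ on the $W^A$ side. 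The fact that $J$ is a directed union of finite-dimensional \emph{co}algebras means $J^\circ$ is exactly $\bigcup_D D^*$, so these maps are genuine isomorphisms of Hopf algebras (resp.\ of super-vector spaces). That identification, plus Remark~\ref{rem:adjoint-coaction}, also matches the comodule structures.

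\emph{Part (2).} Given Part~(1), the construction of $\psi_X$ is formally identical to the proof of Proposition~\ref{prop:M2}: define a unit-preserving super-coalgebra map $\iota_X : \wedge(V) \to H(J,V)$ by $\iota_X(x_1 \wedge \dots \wedge x_n) = x_1 \dots x_n$ (the composite of $\wedge(V) \hookrightarrow J \otimes \wedge(V)$ with $\phi_X$, restricted to the $1 \otimes -$ summand); transpose it through the nondegenerate pairing of Part~(1) to get a counit-preserving superalgebra map $\pi_X : A \to \wedge(W^A)$; and set $\psi_X(a) = \sum \overline{a}_{(1)} \otimes \pi_X(a_{(2)})$, which is left $\overline{A}$-colinear by construction and counit-preserving. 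That it is an isomorphism satisfying the displayed compatibility formula \eqref{phi-psi} (with the pairings replaced as indicated) is the transpose statement of the fact that $\phi_X$ is a coalgebra isomorphism with $J$-free basis the ordered monomials — i.e.\ the dual of ``$\phi_X$ is an iso'' is exactly ``$\psi_X$ is an iso''. Uniqueness follows because $\pi_X$ is forced by formula \eqref{pi} (the transpose of \eqref{iota}) once $\iota_X$ is fixed.

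\emph{Main obstacle.} The genuinely delicate point is the nondegeneracy/surjectivity in Part~(1): $A = H(J,V)^\circ$ need \emph{not} be finitely generated, so one cannot invoke Corollary~\ref{cor:non-degenerate} (which assumed $A \in \mathsf{AHSA}$). Instead one has to argue via the filtered-colimit description above, being careful that $H(J,V)$ is a union of finite-dimensional subcoalgebras of the special product form $D \otimes \wedge(V)$ — which is precisely what Proposition~\ref{prop:M1} guarantees, since $\dim V < \infty$ forces $\wedge(V)$ to be finite-dimensional and $\phi_X$ is $J$-linear. Once this structural input is in hand, everything else is a routine transpose of Section~\ref{subsec:HCP5} and Proposition~\ref{prop:M2}.
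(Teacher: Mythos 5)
Your overall architecture (establish an identification of $H(J,V)^{\circ}$ with $J^{\circ}\otimes\wedge(V^*)$, then read off Part~(2) as the transpose of Proposition~\ref{prop:M1}, in the style of Proposition~\ref{prop:M2}) matches the paper's intent, and your Part~(2) is essentially what the paper does implicitly. But the step you yourself flag as the ``genuinely delicate point'' in Part~(1) is where the argument breaks down. The decomposition of $H(J,V)$ as a directed union of finite-dimensional \emph{subcoalgebras} $D\otimes\wedge(V)$ is the wrong duality for computing $(-)^{\circ}$: the functor $R\mapsto R^{\circ}$ is governed by the cofinite-dimensional \emph{ideals} of the algebra $R$, i.e.\ $R^{\circ}=\varinjlim_I (R/I)^*$, whereas the subcoalgebra picture computes $H(J,V)^*=\varprojlim_D (D\otimes\wedge(V))^*$. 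Your restriction maps $A\to (D\otimes\wedge(V))^*$ point the wrong way for a colimit, the asserted surjectivity of these restrictions is unjustified, and passing to ``the colimit over $D$'' would at best recover the full dual $J^*\otimes\wedge(V)^*$, not $J^{\circ}\otimes\wedge(V)^*$. Relatedly, invoking \cite[Proposition~4.3]{M} to get $V=(W^A)^*$ is circular here: that identity was used in the paper for $H=A^{\circ}$ with $A$ given, whereas now $A=H^{\circ}$ is the unknown.

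What is actually needed --- and what the paper supplies --- is a supply of cofinite-dimensional ideals of $H(J,V)$ \emph{as an algebra}. This is nontrivial precisely because the algebra structure of $H(J,V)$ is not the tensor-product algebra on $J\otimes\wedge(V)$: the relations $xa=\sum a_{(1)}(x\triangleleft a_{(2)})$ and $xy=-yx+[x,y]$ mix the factors. The paper's proof uses the multiplicative filtration $F_n=\bigoplus_{i\le n}J\otimes\wedge^i(V)$ of \cite[Proposition~3.9(2)]{M}, with $F_nF_m\subset F_{n+m}$, to show that $I\otimes\wedge(V)$ is a cofinite-dimensional \emph{right} ideal of $H(J,V)$ whenever $I$ is a cofinite-dimensional right ideal of $J$; combined with the observation that $R^{\circ}$ may equivalently be computed using cofinite-dimensional right ideals (each contains a cofinite-dimensional two-sided ideal), this yields $H(J,V)^{\circ}=J^{\circ}\otimes(\wedge(V))^*\simeq J^{\circ}\otimes\wedge(V^*)$, from which both parts of the proposition follow at once. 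To repair your proof you should replace the coalgebra-colimit argument by this ideal-theoretic one; the rest of your outline then goes through.
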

\begin{proof}
Let $\phi_X$ be the isomorphism as in Part 2, and identify $H(J, V)$ with $J \otimes \wedge(V)$ via $\phi_X$.
By \cite[Proposition 3.9(2)]{M}, $H(J, V)$ has the natural filtration $F_n := \bigoplus_{i \le n}J \otimes \wedge^i(V)$, $n = 0, 1, \dots$,
which is compatible with the structure so that in particular, $F_nF_m \subset F_{n+m}$ for all $n, m \ge 0$.
It follows that if $I$ is a right ideal of $J$ (of cofinite dimension), then $I \otimes \wedge(V)$ is a right
ideal of $H(J,V)$ (of cofinite dimension). Note that in general, the dual coalgebra $R^{\circ}$ of an algebra $R$
consists of those elements in $R^*$ which annihilate some cofinite-dimensional right ideal of $R$, since
such a right ideal, say $\mathfrak{a}$, includes a cofinite-dimensional ideal; take the annihilators 
$\operatorname{Ann}(R/\mathfrak{a})$, for example. Then we see that $H(J, V)^{\circ} = J^{\circ} \otimes (\wedge(V))^*
\simeq J^{\circ} \otimes \wedge(V^*)$, which proves the proposition. 
\end{proof}

\begin{rem}\label{rem:dual-of-H(J,V)}
Let $(J, V) \in \mathsf{DHCP}$ with $\dim V < \infty$, as above.  
Set $C := J^{\circ}$, $W := V^*$. Then the right $J$-module structure on $V$ 
gives rise 
naturally to a right $C$-comodule structure on $W$ (see \eqref{J-mod-struc}), from which we can construct, 
as before, a complete topological Hopf superalgebra, $\widehat{\mathcal{A}}(C, W)$, together with a pairing, 
$\langle \hspace{2mm}, \ \rangle : \mathcal{H}(J, V) \times \widehat{\mathcal{A}}(C, W) \to \Bbbk$. If we define
$A(C,W)$ by the same formula as \eqref{A(C,W)}, then it follows, as before, that $A(C,W)$ is a super-commutative
Hopf superalgebra. Note that during the argument in the preceding section, 
what was essentially used from Lemma \ref{lem:Cproper} was not
the non-degeneracy of the pairing $\langle \hspace{2mm}, \ \rangle : J \times C \to \Bbbk$, but rather only
the injectivity of $C \to J^*$, and that this last injectivity holds true in the present situation. Then we see from 
the proof of Theorem \ref{thm:equiv} that $H(J,V)^{\circ}$ is naturally isomorphic to the $A(C,W)$ just defined.  
\end{rem}

\subsection{}\label{subsec:exact-sequence}
Given a map $q : H \to H'$ of Hopf superalgebras, we define 
\begin{align*}
\quad H^{\mathrm{co}q} &:= \{x\in H\mid \sum x_{(1)} \otimes q(x_{(2)}) = x \otimes q(1)\}, \\
\text{resp.,} \ {}^{\mathrm{co}q} H &:= \{x\in H\mid \sum q(x_{(1)}) \otimes x_{(2)} = q(1) \otimes x \}.
\end{align*}
This is the left (resp., right) coideal super-subalgebra of $H$ consisting of all \emph{right} 
(resp., \emph{left}) \emph{coinvariants}
along $q$. If $H$ is super-cocommutative, then ${}^{\mathrm{co}q} H = H^{\mathrm{co}q}$, and this is
a Hopf super-subalgebra of $H$. 

Let $H \in \mathsf{CCHSA}$. Recall that a Hopf super-subalgebra $K$ of $H$
is said to be \emph{normal} \cite[p.295]{M}, if $HK^+ = K^+H$, or equivalently 
if $K$ is stable under the adjoint action, that is,
$$ \sum (-1)^{|x_{(1)}||y|}S(x_{(1)}) y \, x_{(2)} \in K, \quad x \in H, y \in K; $$
see \cite[Theorem 3.10]{M} for the equivalence of the conditions. 
If this is the case, $H/HK^+$ is a quotient Hopf superalgebra of $H$.
We write $H//K$ for $H/HK^+$. 

We say that an injective morphism $K \overset{p}{\to} H$ in $\mathsf{CCHSA}$
is \emph{normal} if the image $p(K)$ of $K$ is normal in $H$. 

\begin{definition}\label{def:exact-seq-cocom}
A sequence $H_1 \overset{p}{\to} H_2 \overset{q}{\to} H_3$ in $\mathsf{CCHSA}$ is said to be
\emph{short exact} if the following equivalent conditions are satisfied:
\begin{itemize}
\item[(a)] $p$ is an injection, and is normal, and $q$ induces an isomorphism
$H_2//p(H_1) \overset{\simeq}{\longrightarrow} H_3$; 
\item[(b)] $q$ is a surjection, and $p$ is an isomorphism $H_1 \overset{\simeq}{\longrightarrow} H_2^{\mathrm{co}q}$
onto $H_2^{\mathrm{co}q}$.  
\end{itemize}
The equivalence above follows from \cite[Theorem 3.10(3)]{M}. 
\end{definition}

\begin{prop}\label{prop:exact-seq-cocom}
(1) The morphism $H(f,g) : H(J_1, V_1) \to H(J_2, V_2)$ in $\mathsf{CCHSA}$ which arises from a morphism
$(f,g) : (J_1,V_1) \to (J_2,V_2)$ in $\mathsf{DHCP}$ is an injection (resp., a surjection) if and only if
$f : J_1 \to J_2$ and $g : V_1 \to V_2$ are both injections (resp., surjections). 

(2) The injective morphism $H(f,g) : H(J_1, V_1) \to H(J_2, V_2)$ in $\mathsf{CCHSA}$ which arises from a 
morphism $(f,g) : (J_1,V_1) \to (J_2,V_2)$ in $\mathsf{DHCP}$, with $f$, $g$ injections, is normal if and only if
(i) $f : J_1 \to J_2$ is normal, (ii) $g(V_1)$ is $J_2$-stable in $V_2$, (iii) $[g(V_1), V_2] \subset f(J_1)$ and 
(iv) $v \triangleleft f(a) - \varepsilon(a)v \in g(V_1)$ for all $a \in J_1$, $ v \in V_2$.  

(3) The sequence $H(J_1,V_1)\to H(J_2, V_2) \to H(J_3, V_3)$ in $\mathsf{CCHSA}$ which arises from a 
sequence $(J_1, V_1) \to (J_2, V_2) \to (J_3, V_3)$ in $\mathsf{DHCP}$ is short exact if and only if
the sequences $J_1 \to J_2 \to J_3$, $V_1 \to V_2 \to V_3$ which constitute the latter are short
exact sequences of cocommutative Hopf algebras, and of vector spaces, respectively.  
\end{prop}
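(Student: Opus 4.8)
The plan is to run everything through the category equivalence $H\colon\mathsf{DHCP}\xrightarrow{\ \approx\ }\mathsf{CCHSA}$ of Theorem~\ref{thm:Takeuchi}, exploiting its two structural underpinnings: the $J$-freeness of Lemma~\ref{lem:J-free} and the coalgebra isomorphism $\phi_X$ of Proposition~\ref{prop:M1}. These give $\underline{H(J,V)}=J$, $V_{H(J,V)}=V$, and, for an inclusion $H(J_1,V_1)\subseteq H(J_2,V_2)$ coming from injective $(f,g)$, the identities $H_1\cap J_2=J_1$, $H_1\cap V_2=V_1$ together with a PBW decomposition $H_1\cong J_1\otimes\wedge(V_1)$ sitting inside $H_2\cong J_2\otimes\wedge(V_2)$.

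For Part~(1): if $f,g$ are injective, choose a totally ordered basis $X_2$ of $V_2$ extending a basis $X_1$ of $g(V_1)$; then $H(f,g)$ carries the $J_1$-free basis of $H_1$ into the $J_2$-free basis of $H_2$, so $H(f,g)$ is injective. If $f,g$ are surjective, the image of $H(f,g)$ is the subalgebra generated by $f(J_1)=J_2$ and $g(V_1)=V_2$, hence all of $H_2$. Conversely, $H(f,g)$ restricts to $f$ on $J_1=\underline{H_1}$ and to $g$ on $V_1=V_{H_1}$, which handles the injective ``only if''; and if $H(f,g)$ is surjective, its image equals $H_2$ but is also isomorphic to $H(f(J_1),g(V_1))$ (the sub--dual-Harish-Chandra-pair cut out by $f(J_1)\subseteq J_2$, $g(V_1)\subseteq V_2$), so Theorem~\ref{thm:Takeuchi}(2) forces $f(J_1)=\underline{H_2}=J_2$ and $g(V_1)=V_{H_2}=V_2$.

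For Part~(2), write $H_i=H(J_i,V_i)$ with $H_1\subseteq H_2$. Since $H_2$ is algebra-generated by $J_2\cup V_2$ and the adjoint action is multiplicative in the acting element, the set of $x\in H_2$ with $\operatorname{ad}_x(H_1)\subseteq H_1$ is a subalgebra; hence $H_1$ is normal iff it is stable under $\operatorname{ad}_a$ for $a\in J_2$ and under $\operatorname{ad}_v$ for $v\in V_2$. As $\operatorname{ad}_a$ satisfies $\operatorname{ad}_a(hh')=\sum\operatorname{ad}_{a_{(1)}}(h)\operatorname{ad}_{a_{(2)}}(h')$ while $\operatorname{ad}_v$ is a super-derivation, it suffices to test these on the generators $J_1,V_1$ of $H_1$. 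One computes $\operatorname{ad}_a(b)=\sum S(a_{(1)})ba_{(2)}$, $\operatorname{ad}_a(v)=v\triangleleft a$, $\operatorname{ad}_v(w)=vw+wv=[v,w]\in P(J_2)$, and $\operatorname{ad}_v(b)=bv-vb=\sum b_{(1)}(\varepsilon(b_{(2)})v-v\triangleleft b_{(2)})$; reading off the memberships via $H_1\cap J_2=J_1$, $H_1\cap V_2=V_1$ and the ``degree-one'' piece $J_1\otimes V_1$ of $H_1$ (and applying $\varepsilon\otimes\operatorname{id}$ in the last case), these four tests turn into exactly (i), (ii), (iii), (iv) respectively.

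For Part~(3): if the two constituent sequences are short exact, then $p$ is injective by Part~(1), and (i)--(iv) hold automatically — (i) by exactness of the $J$-sequence, (ii) because $\ker(g_2\colon V_2\to V_3)$ is a $J_2$-submodule, (iii) because $f_2([v_1,v_2])=[g_2(v_1),g_2(v_2)]=0$ puts the primitive $[v_1,v_2]$ in $J_2^{\mathrm{co}f_2}=f_1(J_1)$, and (iv) because $g_2(v\triangleleft f_1(a)-\varepsilon(a)v)=g_2(v)\triangleleft f_2f_1(a)-\varepsilon(a)g_2(v)=0$, so that element lies in $\ker g_2=g_1(V_1)$ — whence $p$ is normal by Part~(2); $q$ is surjective by Part~(1); and, using (i)--(iv), $(J_2//J_1,\,V_2/V_1)$ is a dual Harish-Chandra pair through which $(f_2,g_2)$ factors, so $\phi_X$ identifies $H_2//p(H_1)\cong H(J_2//J_1,V_2/V_1)\cong H_3$ over $q$, and the $H$-sequence is short exact by Definition~\ref{def:exact-seq-cocom}(a). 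Conversely, if $H_1\xrightarrow{p}H_2\xrightarrow{q}H_3$ is short exact, Part~(1) gives $f_1,g_1$ injective and $f_2,g_2$ surjective, and from $H_2^{\mathrm{co}q}=p(H_1)=H(J_1,V_1)$, applying $\underline{(\,\cdot\,)}$ and $V_{(\,\cdot\,)}$ and noting that an odd primitive of $H_2$ lying in $\ker q=H_2p(H_1)^+$ already lies in $\Bbbk\otimes V_1$, one gets $J_1=J_2^{\mathrm{co}f_2}$ and $V_1=\ker g_2$. The delicate point throughout Part~(3) is exactly this pinning down of $H_2//p(H_1)$ and, dually, of $H_2^{\mathrm{co}q}$ — i.e.\ that the module action, the bracket and the PBW decomposition interlock correctly — which is where the faithful-flatness and structural results on short exact sequences in $\mathsf{CCHSA}$ from \cite{M} do the real work; indeed Part~(3) may be organized most economically as Theorem~\ref{thm:Takeuchi} combined with the relevant result of \cite[Section~3]{M}.
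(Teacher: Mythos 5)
Your proof is correct and follows essentially the same route as the paper's (much terser) argument: Part (1) via the $J$-free/PBW bases of Lemma \ref{lem:J-free} and Proposition \ref{prop:M1}, Part (2) by direct inspection of the adjoint action on the generators of $H(J_1,V_1)$ inside the smash-product construction, and Part (3) by reducing to Part (1) together with the structural results on short exact sequences from \cite[Section 3]{M} — exactly the reduction the paper makes by citing \cite[Theorem 3.13(3)]{M}. The only difference is that you spell out the computations the paper leaves implicit.
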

\begin{proof}
(1) This follows from Proposition \ref{prop:M1}; see also \cite[Remark 3.8]{M}. 

(2) This is seen from the construction of $H(J, V)$. Note that Condition (iv) is equivalent to that 
$f(a)v - vf(a) \in \operatorname{Im} H(f,g)$ for all $a \in J_1$, $ v \in V_2$. 

(3) This follows by Part 1 and \cite[Theorem 3.13(3)]{M}.
 \end{proof}

\subsection{}\label{subsec:exact-seq-comm}
Let $A$ be a super-commutative Hopf superalgebra. For every Hopf super-subalgebra $B \subset A$, 
$AB^+ = B^+A$, and this is a Hopf super-ideal of $A$, so that we have a quotient Hopf superalgebra 
$A/AB^+$ of $A$;  we write $A // B$ for $A/AB^+$.
 
Let $q : A \to B$ be a surjective morphism of super-commutative Hopf superalgebras. This is said to be 
\emph{conormal}
\cite[Definition 5.7]{M}, if ${}^{\mathrm{co}q}A = A^{\mathrm{co}q}$, or equivalently if the kernel 
$\operatorname{Ker} q$ is 
$A$-costable, that is, sent to $\operatorname{Ker} q \otimes A$, under the adjoint coaction $A \to A \otimes A$ given by 
$$a \mapsto  \sum (-1)^{|a_{(1)}||a_{(2)}|} a_{(2)} \otimes S(a_{(1)})a_{(3)}; $$
see \cite[Theorem 5.9]{M} for the equivalence of the conditions. 
We have the closed embedding  $N := \operatorname{SSp} B \hookrightarrow G:=\operatorname{SSp} A$ of supergroup schemes
which corresponds to $q$. One sees that $q$ is conormal if and only if for every super-commutative superalgebra $R$,
the subgroup $N(R)$ of $G(R)$ is normal. 

\begin{definition}\label{def:exact-seq-com}
(1) A sequence $A_1 \overset{p}{\to} A_2 \overset{q}{\to} A_3$ of super-commutative Hopf superalgebras is said to be
\emph{short exact} if the following equivalent conditions are satisfied:
\begin{itemize}
\item[(a)] $p$ is an injection, and $q$ induces an isomorphism
$A_2//p(A_1) \overset{\simeq}{\longrightarrow} A_3$; 
\item[(b)] $q$ is a surjection, and is conormal, and $p$ is an isomorphism $A_1 \overset{\simeq}{\longrightarrow}
A_2^{\mathrm{co}q}$ onto $A_2^{\mathrm{co}q}$.  
\end{itemize} 

(2) A sequence $G_1 \overset{\xi}{\to} G_2 \overset{\eta}{\to} G_3$ of affine supergroup schemes is 
said to be \emph{short exact}, if the following equivalent conditions are satisfied: 
\begin{itemize}
\item[(a)] $\eta$ is an epimorhism of dur sheaves, and $\xi$ gives an isomorphism
$G_1 \overset{\simeq}{\longrightarrow} \operatorname{Ker} \eta$; 
\item[(b)] $\xi$ is a closed embedding onto a closed normal super-subgroup, say $G_1'$, of $G_2$, and 
$\eta$ induces an isomorphism $G_2 \tilde{\tilde{/}} G_1' \overset {\simeq}{\longrightarrow} G_3$,
where $G_2 \tilde{\tilde{/}} G_1'$ denotes the dur sheafification of the $`$naive' functor which associates
to every super-commutative algebra $R$, the quotient group $G_2(R)/G_1'(R)$; 
see \cite[Chap.~III, Sect.~3, 7.2]{DG}, \cite[Sect.~4]{Z1}.  
\end{itemize} 
\end{definition}

This pair of equivalences, and the following proposition as well, follow from 
\cite[Theorem 5.9]{M} and \cite[Proposition 4.2]{Z1}. 

\begin{prop}\label{prop:exact-seq-supergroup}
Every short exact sequence $G_1 \to G_2 \to G_3$ of affine supergroup schemes arises uniquely
from a short exact sequence $\mathcal{O}(G_3) \to \mathcal{O}(G_2) \to \mathcal{O}(G_1)$ of super-commutative
Hopf superalgebras, where $\mathcal{O}(G)$ denotes the super-commutative Hopf superalgebra which represents an
affine supergroup scheme $G$.  
\end{prop}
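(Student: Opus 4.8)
The plan is to deduce the proposition from the anti-isomorphism between the category of affine supergroup schemes and the category of super-commutative Hopf superalgebras recorded in Section~\ref{subsec:supergroup}, under which $G \mapsto \mathcal{O}(G)$ and $A \mapsto \operatorname{SSp} A$ are mutually inverse contravariant functors. Because this is an anti-isomorphism of categories, a two-step diagram $G_1 \overset{\xi}{\longrightarrow} G_2 \overset{\eta}{\longrightarrow} G_3$ of affine supergroup schemes is the image under $\operatorname{SSp}$ of a unique diagram $\mathcal{O}(G_3) \overset{p}{\longrightarrow} \mathcal{O}(G_2) \overset{q}{\longrightarrow} \mathcal{O}(G_1)$ of super-commutative Hopf superalgebras, with $p := \mathcal{O}(\eta)$ and $q := \mathcal{O}(\xi)$; the uniqueness asserted in the proposition then follows formally from the faithfulness of $\mathcal{O}$. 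Hence the whole content is the claim that, under this correspondence, $G_1 \to G_2 \to G_3$ is short exact in the sense of Definition~\ref{def:exact-seq-com}(2) if and only if $\mathcal{O}(G_3) \to \mathcal{O}(G_2) \to \mathcal{O}(G_1)$ is short exact in the sense of Definition~\ref{def:exact-seq-com}(1).

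I would prove this equivalence by matching, clause by clause, condition (a) of Definition~\ref{def:exact-seq-com}(1) with condition (a) of Definition~\ref{def:exact-seq-com}(2), and likewise (b) with (b), using four elementary translations. (i) A morphism of super-commutative Hopf superalgebras is surjective (resp.\ injective) if and only if the dual morphism of affine supergroup schemes is a closed embedding (resp.\ an epimorphism of dur sheaves); these are the standard mono/epi dictionaries, already implicit in the wording of Definition~\ref{def:exact-seq-com}(2). (ii) For the surjection $q : \mathcal{O}(G_2) \to \mathcal{O}(G_1)$ with associated closed embedding $\xi : G_1 \hookrightarrow G_2$, Theorem~5.9 of \cite{M} shows that $q$ is conormal, i.e.\ $\operatorname{Ker} q$ is costable under the adjoint coaction, exactly when $G_1(R)$ is normal in $G_2(R)$ for every super-commutative superalgebra $R$, i.e.\ exactly when $\xi(G_1)=:G_1'$ is a closed normal super-subgroup of $G_2$. (iii) For such a conormal $q$, Proposition~4.2 of \cite{Z1} --- the super-analogue of the Demazure--Gabriel quotient theory of \cite[Chap.~III, Sect.~3]{DG} --- identifies $\operatorname{SSp}\bigl(\mathcal{O}(G_2)^{\mathrm{co}q}\bigr)$ with the dur sheaf $G_2\,\tilde{\tilde{/}}\,G_1'$. (iv) For the injection $p : \mathcal{O}(G_3) \to \mathcal{O}(G_2)$ with associated epimorphism $\eta : G_2 \to G_3$, the same circle of results identifies $\operatorname{SSp}\bigl(\mathcal{O}(G_2)//p(\mathcal{O}(G_3))\bigr)$ with $\operatorname{Ker}\eta$. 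Granting (i)--(iv), the bookkeeping is immediate: in Definition~\ref{def:exact-seq-com}(1)(b) the clauses ``$q$ surjective'', ``$q$ conormal'', ``$p$ an isomorphism onto $\mathcal{O}(G_2)^{\mathrm{co}q}$'' become, respectively, ``$\xi$ a closed embedding'', ``$\xi(G_1)$ a closed normal super-subgroup'', ``$\eta$ induces $G_2\,\tilde{\tilde{/}}\,G_1'\overset{\simeq}{\longrightarrow} G_3$'', which is Definition~\ref{def:exact-seq-com}(2)(b); and in Definition~\ref{def:exact-seq-com}(1)(a) the clauses ``$p$ injective'' and ``$q$ induces $\mathcal{O}(G_2)//p(\mathcal{O}(G_3))\overset{\simeq}{\longrightarrow}\mathcal{O}(G_1)$'' become ``$\eta$ an epimorphism of dur sheaves'' and ``$\xi$ gives $G_1\overset{\simeq}{\longrightarrow}\operatorname{Ker}\eta$'', which is Definition~\ref{def:exact-seq-com}(2)(a).

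The step I expect to be the genuine obstacle, as opposed to formal bookkeeping, is (iii) together with its companion (iv): establishing that the Hopf-superalgebraic coinvariant subalgebra $\mathcal{O}(G_2)^{\mathrm{co}q}$ and the quotient superalgebra $\mathcal{O}(G_2)//p(\mathcal{O}(G_3))$ represent, respectively, the dur-sheaf quotient $G_2\,\tilde{\tilde{/}}\,G_1'$ and the kernel $\operatorname{Ker}\eta$ --- that is, that the faithfully-flat-descent and sheafification arguments underlying \cite[Proposition~4.2]{Z1} carry over to the super setting and are compatible with the conormality criterion of \cite[Theorem~5.9]{M}. Once these are in hand, everything else is purely formal, given the anti-isomorphism of Section~\ref{subsec:supergroup}.
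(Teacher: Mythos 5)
Your argument is correct and follows essentially the same route as the paper, which disposes of this proposition (together with the equivalence of conditions in Definition~\ref{def:exact-seq-com}) in one line by citing exactly the two results you lean on, namely \cite[Theorem~5.9]{M} and \cite[Proposition~4.2]{Z1}. Your clause-by-clause matching of Definition~\ref{def:exact-seq-com}(1) with Definition~\ref{def:exact-seq-com}(2) is just a more explicit write-up of the bookkeeping the paper leaves implicit, and your identification of steps (iii)--(iv) as the genuine content is accurate.
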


\begin{prop}\label{prop:exact-seq-com}
(1) The morphism $A(f,g) : A(C_1, W_1) \to A(C_2, W_2)$ in $\mathsf{AHSA}$ which arises from a morphism
$(f,g) : (C_1,W_1) \to (C_2,W_2)$ in $\mathsf{HCP}$ is an injection (resp., a surjection) if and only if
$f : C_1 \to C_2$ and $g : W_1 \to W_2$ are both injections (resp., surjections). 

(2) Let $A(f,g) : A(C_2, W_2) \to A(C_3, W_3)$ be a surjective morphism in $\mathsf{AHSA}$ which arises from a 
morphism $(f,g) : (C_2,W_2) \to (C_3,W_3)$ in $\mathsf{HCP}$, with $f$, $g$ surjections. We know from 
Proposition \ref{prop:A(C,W)}(2) that the dual injective morphism $A(f,g)^{\circ}$ in $\mathsf{CCHSA}$ is naturally
identified with $H(f^{\circ}, g^*) : H(C_3^{\circ}, W_3^*) \to H(C_2^{\circ}, W_2^*)$ that arises from
the morphism $(f^{\circ}, g^*)$ in $\mathsf{DHCP}$ associated with $(f,g)$.
Then, $A(f,g)$ is conormal if and only if $f : C_2 \to C_3$ is conormal, and $H(f^{\circ}, g^*)$ is normal. 

(3) The sequence $A(C_1,W_1)\to A(C_2, W_2) \to A(C_3, W_3)$ in $\mathsf{AHSA}$ which arises from a 
sequence $(C_1, W_1) \to (C_2, W_2) \to (C_3, W_3)$ in $\mathsf{HCP}$ is short exact if and only if
the sequences $C_1 \to C_2 \to C_3$, $W_1 \to W_2 \to W_3$ which constitute the latter are short
exact sequences of commutative Hopf algebras, and of vector spaces, respectively.  
\end{prop}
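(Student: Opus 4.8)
The plan is to deduce Parts~(1)--(3) from their super\nobreakdash-cocommutative counterparts in Proposition~\ref{prop:exact-seq-cocom} by passing to duals. The two ingredients are the natural isomorphism $A(C,W)^{\circ}\simeq H(C^{\circ},W^{*})$ of Proposition~\ref{prop:A(C,W)}(2), under which applying $(-)^{\circ}$ to the morphism $A(f,g)$ in $\mathsf{AHSA}$ produces exactly the morphism $H(f^{\circ},g^{*})$ in $\mathsf{CCHSA}$ arising from the morphism $(f^{\circ},g^{*})$ of dual Harish\nobreakdash-Chandra pairs, and the contravariant functor $-^{\circ}:\mathsf{HCP}\to\mathsf{DHCP}$ of Proposition~\ref{prop:dualization}(1), which on the base side turns the data $(f,g)$ into $(f^{\circ},g^{*})$. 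Thus the task reduces to a dictionary translating, across $(-)^{\circ}$, the properties of being injective, surjective, and (co)normal.

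I would first record that dictionary. Every affine Hopf superalgebra is proper, since its canonical pairing with its dual is non\nobreakdash-degenerate (Corollary~\ref{cor:non-degenerate}), and every inclusion in $\mathsf{AHSA}$ is faithfully flat (see \cite{M}); consequently $(-)^{\circ}$ sends injective morphisms of $\mathsf{AHSA}$ to surjective ones and surjective morphisms to injective ones, and the same holds for affine commutative Hopf algebras, so that $f$ is injective (resp.\ surjective) if and only if $f^{\circ}$ is surjective (resp.\ injective). For the finite\nobreakdash-dimensional comodules, $g$ is injective (resp.\ surjective) if and only if $g^{*}$ is surjective (resp.\ injective), by ordinary linear duality. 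Using the non\nobreakdash-degenerate Hopf pairing $H(C^{\circ},W^{*})\times A(C,W)\to\Bbbk$, one checks that a subspace of $A(C,W)$ is costable under the adjoint coaction precisely when its annihilator in $H(C^{\circ},W^{*})$ is stable under the adjoint action; applied to $\operatorname{Ker}A(f,g)$, whose annihilator is $\operatorname{Im}H(f^{\circ},g^{*})$, this shows that the surjection $A(f,g)$ is conormal if and only if the injection $H(f^{\circ},g^{*})$ is normal (and similarly on the base, $f$ conormal if and only if $f^{\circ}$ normal). The same pairing also gives the coinvariant/quotient duality $(B/\!/K)^{\circ}\simeq(B^{\circ})^{\mathrm{co}\,r}$, where $r:B^{\circ}\to K^{\circ}$ is dual to an inclusion $K\hookrightarrow B$, and the classical fact that $C_{1}\to C_{2}\to C_{3}$ is short exact as a sequence of affine commutative Hopf algebras if and only if $C_{3}^{\circ}\to C_{2}^{\circ}\to C_{1}^{\circ}$ is short exact as a sequence of cocommutative Hopf algebras. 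All of this is the Harish\nobreakdash-Chandra reformulation of \cite[Sections~3, 5]{M} announced in Sections~\ref{subsec:exact-sequence} and~\ref{subsec:exact-seq-comm}.

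With the dictionary in place the three parts are formal. For~(1): $A(f,g)$ is injective $\iff$ $H(f^{\circ},g^{*})$ is surjective $\iff$ (Proposition~\ref{prop:exact-seq-cocom}(1)) $f^{\circ}$ and $g^{*}$ are both surjective $\iff$ $f$ and $g$ are both injective; interchanging ``injective'' and ``surjective'' everywhere gives the other case. For~(2): since $A(f,g)$ is a surjection, it is conormal $\iff$ $H(f^{\circ},g^{*})$ is normal; by Proposition~\ref{prop:exact-seq-cocom}(2) the latter is conditions (i)--(iv) imposed on $(f^{\circ},g^{*})$, and condition~(i), which reads ``$f^{\circ}$ is normal'', is the same as ``$f$ is conormal'', so ``$A(f,g)$ is conormal'' is equivalent to the conjunction ``$f$ conormal and $H(f^{\circ},g^{*})$ normal'' of the statement, the first conjunct being subsumed by the second. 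For~(3): transporting Definition~\ref{def:exact-seq-com}(a) along $(-)^{\circ}$ by means of the dictionary --- in particular identifying $(A_{2}/\!/p(A_{1}))^{\circ}$ with $(A_{2}^{\circ})^{\mathrm{co}\,p^{\circ}}$ --- the sequence $A(C_{1},W_{1})\to A(C_{2},W_{2})\to A(C_{3},W_{3})$ is short exact in $\mathsf{AHSA}$ (Definition~\ref{def:exact-seq-com}) if and only if the dual sequence $H(C_{3}^{\circ},W_{3}^{*})\to H(C_{2}^{\circ},W_{2}^{*})\to H(C_{1}^{\circ},W_{1}^{*})$ is short exact in $\mathsf{CCHSA}$ (Definition~\ref{def:exact-seq-cocom}); by Proposition~\ref{prop:exact-seq-cocom}(3) this holds if and only if $C_{3}^{\circ}\to C_{2}^{\circ}\to C_{1}^{\circ}$ and $W_{3}^{*}\to W_{2}^{*}\to W_{1}^{*}$ are short exact sequences of cocommutative Hopf algebras and of vector spaces respectively; and by the base duality recorded above and finite\nobreakdash-dimensional linear algebra this is exactly the assertion that $C_{1}\to C_{2}\to C_{3}$ and $W_{1}\to W_{2}\to W_{3}$ are short exact sequences of commutative Hopf algebras and of vector spaces.

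I expect the real work, and the only genuine obstacle, to lie in the dictionary of the second paragraph rather than in the deductions of the third: one must match, through the non\nobreakdash-degenerate pairing, the adjoint coaction on $A(C,W)$ with the adjoint action on $H(C^{\circ},W^{*})$ --- keeping track of the super\nobreakdash-symmetry signs --- so as to obtain ``conormal $\iff$ normal'' together with the coinvariant/quotient duality in the required generality, and one must quote the faithful flatness of inclusions in $\mathsf{AHSA}$. If one wishes to make Part~(1) self\nobreakdash-contained, the forward implications there can be had without duality: by Remark~\ref{grA} the natural isomorphism $\operatorname{gr}A(C,W)\simeq\overline{A(C,W)}\cmdblackltimes\wedge(W^{A(C,W)})\simeq C\cmdblackltimes\wedge(W)$ identifies $\operatorname{gr}A(f,g)$ with $f\cmdblackltimes\wedge(g)$, which is injective (resp.\ surjective) as soon as $f$ and $g$ are, and since the ideal $I_{A(C,W)}$ is nilpotent this property descends to $A(f,g)$ itself.
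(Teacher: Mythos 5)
Your overall plan --- dualize through Proposition \ref{prop:A(C,W)}(2) and quote Proposition \ref{prop:exact-seq-cocom} --- matches the paper only for the `only if' half of Part~(2), where the paper likewise uses that a conormal surjection dualizes to a normal injection and that normality of a closed super-subgroup passes to the associated group schemes. Elsewhere the paper takes a different and shorter route: Part~(1) is read off from the tensor-product decomposition $A\simeq\overline{A}\otimes\wedge(W^A)$ of Proposition \ref{prop:M2} (i.e.\ \cite[Remark 4.8]{M}); Part~(3) is the Harish-Chandra translation of \cite[Theorem 5.13(3)]{M}, which works directly on the super-commutative side with no dualization; and for the `if' half of Part~(2) the paper deliberately avoids the converse duality by constructing the kernel object $(C_1,W_1)=(C_2^{\mathrm{co}f},\operatorname{Ker}g)$ in $\mathsf{HCP}$ (using conditions (ii)--(iv) of Proposition \ref{prop:exact-seq-cocom}(2) to make it a sub-Harish-Chandra pair) and then invoking Part~(3). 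Your direct biconditional for (2)-`if' can be made to work, but it requires the properness of the quotient $A(C_3,W_3)$ (Corollary \ref{cor:non-degenerate}) to get $(K^{\bot})^{\bot}=K$ for $K=\operatorname{Ker}A(f,g)$; you should make that explicit.

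The genuine gap is in the dictionary itself, which you rightly call ``the real work'' but then assert rather than prove. The entries ``$p$ injective in $\mathsf{AHSA}$ (or between affine commutative Hopf algebras) implies $p^{\circ}$ surjective'' and ``$(A_2//p(A_1))^{\circ}\simeq(A_2^{\circ})^{\mathrm{co}\,p^{\circ}}$'' are substantive theorems: they do not follow formally from properness together with the existence of a faithfully flat extension (over a non-closed field the surjectivity of $C_2^{\circ}\to C_1^{\circ}$ already requires a genuine argument), and nothing in the paper supplies them --- the citations of \cite{M} are precisely the substitute for them. Without the first entry, your chain for the `only if' direction of Part~(1) ($A(f,g)$ injective $\Rightarrow$ $H(f^{\circ},g^{*})$ surjective) and your whole treatment of Part~(3) are unsupported; and your self-contained fallback via $\operatorname{gr}$ covers only the `if' implications of Part~(1), since $\operatorname{gr}$ of an injection need not be injective. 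Either prove the surjectivity-of-duals lemma in the required generality, or argue as the paper does on the super-commutative side through Proposition \ref{prop:M2} and \cite[Theorem 5.13(3)]{M}.
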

\begin{proof}
(1) This follows from Proposition \ref{prop:M2}; see also \cite[Remark 4.8]{M}. 

Next, we prove first Part 3, and then Part 2.

(3) This follows by Part 1 and \cite[Theorem 5.13(3)]{M}. 

(2) If a closed super-subgroup $N \subset G$ of a affine super-group scheme $G$ is normal, 
then $N_{\operatorname{res}}$
is normal in $G_{\operatorname{res}}$; see \eqref{Gres}. A conormal surjective morphism $q : A \to B$ of
super-commutative Hopf superalgebras induces a normal injective morphism $q^{\circ} : B^{\circ} \to A^{\circ}$
of super-cocommutative Hopf superalgebras, since the condition that the adjoint $A$-coaction is induced onto 
$B$ along $q$ is dualized to that adjoint $A^{\circ}$-action on $A^{\circ}$ stabilizes $B^{\circ}$ 
through $q^{\circ}$. These together with Proposition \ref{prop:A(C,W)}(2) prove the $`$only if' part. 

To prove the $`$if' part, assume the second condition, and set 
$C_1 = C_2^{\mathrm{co}f}$, $W_1 = \operatorname{Ker} g$. 
Note that $C_1$ is an affine Hopf algebra. 
Conditions (ii), (iv) of Proposition \ref{prop:exact-seq-cocom}(2) imply that $W_1$ is a $C_2$-subcomodule of $W_2$, 
and is a right $C_1$-comodule.  We regard $P(C_2^{\circ}) \supset P(C_3^{\circ})$ via $f^{\circ}$. 
Condition (iii) implies that the structure $[\hspace{2mm} , \ ] : W_2^* \times W_2^*
\to P(C_2^{\circ})$ of $(C_2, W_2)$ induces a bilinear map $W_1^* \times W_1^* \to P(C_2^{\circ})/P(C_3^{\circ})$. Since $0 \to P(C_3^{\circ})
\to P(C_2^{\circ}) \to P(C_1^{\circ})$ is exact, we can compose the induced bilinear map with a natural injection 
$P(C_2^{\circ})/P(C_3^{\circ}) \hookrightarrow P(C_1^{\circ})$ to obtain 
$W_1^* \times W_1^* \to P(C_1^{\circ})$, which is $C_1$-colinear, as is easily seen. We see that $(C_1, W_1)$
together with the thus obtained bilinear map forms a Harish-Chandra pair, and is a sub-object of $(C_2, W_2)$.  
By Part 3, $A(f,g)$ extends to a short exact sequence $A(C_1,W_1) \to A(C_2,W_2) \to A(C_3, W_3)$, and hence is
conormal. 
\end{proof}

\section{Simply connected affine supergroup schemes}\label{sec:simply-conn}

\begin{definition}\label{def:simply-conn}
Given a connected algebraic affine supergroup scheme $G$, 
an \emph{etale supergroup covering} of $G$ is a pair $(F, \eta)$ of a connected algebraic affine supergroup 
scheme $F$ and an epimorphism $\eta : F \to G$ of supergroup sheaves 
such that the kernel $\operatorname{Ker} \eta$ of $\eta$ is purely even, 
and is finite etale. A \emph{simply connected affine supergroup scheme} is a connected algebraic 
affine supergroup scheme that has no non-trivial etale supergroup covering. 
\end{definition}

This directly generalizes the definition in the non-super situation; see \cite{T-I}, for example.

\begin{prop}\label{prop:simply-conn}
A connected algebraic affine supergroup scheme $G$ is simply connected if and only if the associated
affine group scheme $G_{\operatorname{res}}$ (see \eqref{Gres}),
which is necessarily algebraic and connected, is simply connected. 
\end{prop}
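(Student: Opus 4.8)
The plan is to pass through the Harish-Chandra pair $(C, W) := (\overline{A}, W^A)$ attached to $G = \operatorname{SSp} A$ by Proposition \ref{prop:functor-overline}, so that $G_{\operatorname{res}} = \operatorname{Sp} C$ by \eqref{Gres}. Since $C = \overline{A}$ is finitely generated and connected, $G_{\operatorname{res}}$ is algebraic and connected, so it makes sense to ask whether it is simply connected. The main tool will be the correspondence between short exact sequences of affine supergroup schemes, of affine Hopf superalgebras and of Harish-Chandra pairs (Propositions \ref{prop:exact-seq-supergroup}, \ref{prop:exact-seq-com} and Theorem \ref{thm:equiv}), together with its classical purely even analogue.

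For the `only if' direction, let $(F, \eta)$ be an etale supergroup covering of $G$ and put $N = \operatorname{Ker} \eta$, purely even and finite etale. By Proposition \ref{prop:exact-seq-supergroup} we get a short exact sequence $\mathcal{O}(G) \to \mathcal{O}(F) \to \mathcal{O}(N)$ in $\mathsf{AHSA}$, and applying the functor $B \mapsto (\overline{B}, W^B)$, which is a quasi-inverse of $A$ by Theorem \ref{thm:equiv}, together with Proposition \ref{prop:exact-seq-com}(3), both $C \to \overline{\mathcal{O}(F)} \to \overline{\mathcal{O}(N)}$ and $W \to W^{\mathcal{O}(F)} \to W^{\mathcal{O}(N)}$ are short exact. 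As $N$ is purely even, $\overline{\mathcal{O}(N)} = \mathcal{O}(N)$ and $W^{\mathcal{O}(N)} = 0$, so $W \overset{\simeq}{\longrightarrow} W^{\mathcal{O}(F)}$ and $0 \to C \to \overline{\mathcal{O}(F)} \to \mathcal{O}(N) \to 0$ is a short exact sequence of commutative Hopf algebras. Reading this (in the purely even case of Proposition \ref{prop:exact-seq-supergroup}) as a short exact sequence $N \to F_{\operatorname{res}} \to G_{\operatorname{res}}$ of affine group schemes, with $F_{\operatorname{res}} = \operatorname{Sp}\overline{\mathcal{O}(F)}$, and noting that $F_{\operatorname{res}}$ is connected and algebraic because $\overline{\mathcal{O}(F)}$ is, we see that $(F_{\operatorname{res}}, \eta_{\operatorname{res}})$ is an etale covering of $G_{\operatorname{res}}$ with the same kernel $N$. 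Hence if $G_{\operatorname{res}}$ is simply connected, then $N$ is trivial, so $\eta$ is an isomorphism and $G$ is simply connected.

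For the `if' direction, let $(F_0, \eta_0)$ be an etale covering of $G_{\operatorname{res}}$, with $N = \operatorname{Ker} \eta_0$ finite etale, and let $0 \to C \to C' \to C'' \to 0$ be the corresponding short exact sequence of commutative Hopf algebras, so $C' = \mathcal{O}(F_0)$ is finitely generated and connected and $C'' = \mathcal{O}(N)$. First I would upgrade $(C', W)$ to a Harish-Chandra pair: give $W$ the right $C'$-comodule structure obtained from its $C$-comodule structure via $C \hookrightarrow C'$; then the induced right $(C')^{\circ}$-action on $V = W^*$ factors through the Hopf algebra map $\rho : (C')^{\circ} \to C^{\circ}$ dual to $C \hookrightarrow C'$, which, since $F_0 \to G_{\operatorname{res}}$ is etale, restricts to an isomorphism $P((C')^{\circ}) \overset{\simeq}{\longrightarrow} P(C^{\circ})$ on primitives (both being the cotangent space of $G_{\operatorname{res}}$ at $1$, by \eqref{Lie}). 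Transporting the bracket of $(C, W)$ along the inverse of this isomorphism yields $[ \hspace{2mm}, \ ]' : V \times V \to P((C')^{\circ})$. Conditions (b) and (c) of Definition \ref{def:DHCP} for $((C')^{\circ}, V, [ \hspace{2mm}, \ ]')$ are immediate from those for $(C, W)$, while Condition (a) follows by applying the injective map $P(\rho)$ to both sides and using that the Hopf algebra map $\rho$ intertwines the adjoint actions and that (a) holds for $(C, W)$. Thus $(C', W) \in \mathsf{HCP}$, and the inclusion $(C, W) \hookrightarrow (C', W)$ and the projection $(C', W) \to (C'', 0)$ form a short exact sequence in $\mathsf{HCP}$ (its $C$-part is the given sequence and its $W$-part is $0 \to W \to W \to 0$), so by Propositions \ref{prop:exact-seq-com}(3) and \ref{prop:exact-seq-supergroup} we obtain a short exact sequence $\operatorname{Ker} \eta \to F \to G$ of affine supergroup schemes with $F = \operatorname{SSp} A(C', W)$ and $\operatorname{Ker} \eta = \operatorname{SSp} A(C'', 0) = \operatorname{Sp} C'' = N$. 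Here $F$ is algebraic by Lemma \ref{lem:A(C,W)2}(2) and connected because $\overline{A(C', W)} \cong C' = \mathcal{O}(F_0)$ is connected by Proposition \ref{prop:A(C,W)}(1); and $N$ is purely even and finite etale, so $(F, \eta)$ is an etale supergroup covering of $G$ with kernel $N$. Hence if $G$ is simply connected, then $N$ is trivial, so $\eta_0$ is an isomorphism and $G_{\operatorname{res}}$ is simply connected.

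The one non-routine step is the construction in the `if' direction of the Harish-Chandra pair structure on $(C', W)$: producing the bracket $[ \hspace{2mm}, \ ]'$ rests on the isomorphism $P((C')^{\circ}) \cong P(C^{\circ})$ coming from etaleness of the covering, and then one must verify Condition (a) of Definition \ref{def:DHCP}; everything else is a straightforward unwinding of the equivalences and exact-sequence dictionaries established earlier in the paper.
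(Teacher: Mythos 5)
Your argument is correct and follows the same overall architecture as the paper: translate everything into Harish--Chandra pairs, use the exact-sequence dictionary of Propositions \ref{prop:exact-seq-supergroup} and \ref{prop:exact-seq-com}(3) to restrict a covering of $G$ down to one of $G_{\operatorname{res}}$ in one direction, and to lift a covering of $G_{\operatorname{res}}$ up to one of $G$ in the other by extending the pair $(C,W)$ to $(C',W)$. (Your labels ``if''/``only if'' are swapped relative to the statement, but both implications are there.) The one place where you genuinely diverge is the key lifting step. The paper invokes Takeuchi's result \cite[Proposition 1.1]{T-I} that an etale covering induces an isomorphism $\operatorname{hy} F_0 \overset{\simeq}{\longrightarrow} \operatorname{hy} G_{\operatorname{res}}$ of hyperalgebras, and then applies Remark \ref{rem:HCP}(1), which for connected $C'$ reduces Condition (a) of Definition \ref{def:DHCP} to $J^1$-linearity of the bracket; you instead use only the isomorphism on primitives $P((C')^{\circ}) \overset{\simeq}{\longrightarrow} P(C^{\circ})$ and verify Condition (a) directly for all $a \in (C')^{\circ}$ by pushing both sides (which lie in $P((C')^{\circ})$, since the adjoint action preserves primitives) forward along $\rho$ and using injectivity there. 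That verification is sound and formally needs less than the paper's route, and it does not even use connectedness of $C'$. The only thin spot is your parenthetical justification of the primitives isomorphism: ``both being the cotangent space of $G_{\operatorname{res}}$ at $1$'' is really the assertion to be proved (that the tangent map of $F_0 \to G_{\operatorname{res}}$ is bijective), and in positive characteristic surjectivity is not automatic for quotient maps; it holds here because the kernel is finite etale, and the clean reference is exactly the citation the paper supplies, \cite[Proposition 1.1]{T-I}, whose hyperalgebra isomorphism restricts to the primitives isomorphism you need.
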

\begin{proof}
Proposition \ref{prop:exact-seq-com}(3), combined with Proposition \ref{prop:exact-seq-supergroup}, shows that
if $E \to F \to G$ is a short exact sequence of affine supergroup schemes with $E$ purely even, then
it induces the short exact sequence $E \to F_{\operatorname{res}} \to G_{\operatorname{res}}$ 
of affine group schemes. 
This proves the $`$if' part. 

To prove the $`$only if' part, suppose that we are given a short exact sequence of affine group schemes
$E \to F_0 \to G_{\operatorname{res}}$ in which $E$ is finite etale, and $F_0$ is connected algebraic.  
It suffices to construct such a short exact sequence $E \to F \to G$ of affine supergroup schemes
that induces the given short exact sequence. 
By \cite[Proposition 1.1]{T-I}, $F_0 \to G$ induces an isomorphism $\operatorname{hy} F_0
\overset{\simeq}{\longrightarrow} \operatorname{hy} G_{\operatorname{res}}$. 
Suppose that $G$ corresponds to $(C, W) \in \mathsf{cHCP}$. Then we may suppose that the Hopf algebra
$D :=\mathcal{O}(F_0)$ which represents $F_0$ includes $C$ as a Hopf subalgebra, 
so that $W$ may be regarded a right $D$-comodule.
We see from the last isomorphism and Remark \ref{rem:HCP}(1)       
that the structure $[\hspace{2mm} , \ ]$ of $(C, W)$ makes $(D, W)$ into an object
in $\mathsf{cHCP}$ including $(C, W)$ as a sub-object. If $F$ denotes the algebraic affine
supergroup scheme corresponding to $(D, W)$, there arises a desired short exact sequence, 
again by Proposition \ref{prop:exact-seq-com}(3).  
\end{proof}

\begin{theorem}\label{thm:simply-conn-zero}
Suppose that $\Bbbk$ is an algebraically closed field of characteristic zero. Then $G \mapsto \operatorname{Lie} G$
gives rises to a bijection from the set of the isomorphism classes of 
all simply connected affine supergroup schemes $G$ to the set of the isomorphism classes 
of all those finite-dimensional Lie superalgebras $L$ such that (i)~the radical $\operatorname{Rad} L_0$ of $L_0$ is
nilpotent, and (ii)~$\operatorname{ad}^n(\operatorname{Rad} L_0)(L_1) = 0$ for some $n>0$.  
\end{theorem}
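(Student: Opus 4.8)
The plan is to run the statement through the anti-equivalence $\mathsf{HCP}\approx\mathsf{AHSA}$ of Theorem \ref{thm:equiv} and reduce it to the non-super theorem of Hochschild \cite{H}. First I would assemble a chain of bijections of isomorphism classes. By Theorem \ref{thm:equiv} together with the remark following Definition \ref{def:connected-super}, connected algebraic affine supergroup schemes correspond to objects of $\mathsf{cHCP}$; by Proposition \ref{prop:simply-conn} the simply connected ones correspond precisely to those $(C,W)\in\mathsf{cHCP}$ for which $G_{\operatorname{res}}=\operatorname{Sp} C$ is a (necessarily algebraic, connected) simply connected affine group scheme. Since $\operatorname{char}\Bbbk=0$, Proposition \ref{prop:connHCP} identifies $\mathsf{cHCP}$ anti-equivalently with the category of pairs $(G_0,L)$ of a connected algebraic affine group scheme $G_0$ and a finite-dimensional Lie superalgebra $L$ with $L_0=\operatorname{Lie} G_0$ such that the adjoint $L_0$-action on $L_1$ arises from a (necessarily unique) rational $G_0$-module structure; here $G_0=G_{\operatorname{res}}$, $L_1=W^*$, and by Proposition \ref{prop:LieHyper}(1) (using $(C^{\circ})^1=U(L_0)$ in characteristic zero, Remark \ref{rem:Kostant}(1)) one has $L\cong\operatorname{Lie} G$. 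Thus $G\mapsto\operatorname{Lie} G$ sets up a bijection between isomorphism classes of simply connected affine supergroup schemes and isomorphism classes of pairs $(G_0,L)$ with $G_0$ simply connected, $L_0=\operatorname{Lie} G_0$, and $L_1$ integrable over $G_0$.

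Next I would invoke Hochschild's theorem in the non-super case: in characteristic zero, $G_0\mapsto\operatorname{Lie} G_0$ is a bijection from isomorphism classes of connected simply connected algebraic affine group schemes onto isomorphism classes of finite-dimensional Lie algebras with nilpotent radical. Hence a pair $(G_0,L)$ as above is determined up to isomorphism by $L$ alone, subject to (i) $\operatorname{Rad} L_0$ nilpotent (so that the simply connected $G_0$ with $\operatorname{Lie} G_0=L_0$ exists) together with the requirement that the $L_0$-module $L_1$ integrate to that $G_0$. It therefore remains to prove that, granting (i), the $L_0$-module $L_1$ is integrable over $G_0$ if and only if (ii) $\operatorname{ad}^n(\operatorname{Rad} L_0)(L_1)=0$ for some $n>0$.

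For the reverse implication I would pass to the semidirect-product Lie algebra $\widetilde{\mathfrak g}:=L_0\ltimes L_1$ with abelian ideal $L_1$; its radical is $\operatorname{Rad}(L_0)\ltimes L_1$, which is nilpotent precisely because of (i) and (ii), so by Hochschild there is a connected simply connected $\widetilde G$ with $\operatorname{Lie}\widetilde G=\widetilde{\mathfrak g}$; the abelian ideal $L_1$ integrates to a closed normal unipotent, hence vector, subgroup $N\cong L_1$ of $\widetilde G$, and applying the functoriality of Hochschild's correspondence to $\widetilde{\mathfrak g}\twoheadrightarrow L_0$ yields $\widetilde G/N\cong G_0$, so that the conjugation action of $G_0=\widetilde G/N$ on $N\cong L_1$ is a rational $G_0$-module structure with differential the given $L_0$-action. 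For the forward implication, a rational $G_0$-module $L_1$ restricts to a rational module over the solvable radical $R(G_0)$; since $G_0$ is simply connected, $R(G_0)$ contains no nontrivial torus (such a torus would be central in $G_0$ and would produce a nontrivial \'etale covering), so $R(G_0)$ is unipotent and acts on $L_1$ by unipotent operators; as $\operatorname{Rad} L_0=\operatorname{Lie} R(G_0)$ is then a nilpotent Lie algebra consisting of nilpotent operators, Engel's theorem makes it simultaneously strictly upper-triangular on $L_1$, which is exactly (ii). The main obstacle is this last equivalence between integrability and (ii), and within it the identification of $\operatorname{Rad} L_0$ with the Lie algebra of the unipotent radical of $G_0$; the remainder is a formal composition of the established anti-equivalences with Hochschild's theorem, plus the bookkeeping that the resulting bijection is genuinely realized by $G\mapsto\operatorname{Lie} G$.
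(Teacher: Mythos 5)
Your proposal is correct in outline and shares the paper's overall architecture: both reduce, via Theorem \ref{thm:equiv}, Proposition \ref{prop:simply-conn} and the identification of $\mathsf{cHCP}$ with pairs $(G_0,L)$ (Remark \ref{rem:HCP}(1) / Proposition \ref{prop:connHCP}), to Hochschild's non-super theorem, and both then have to match condition (ii) with the integrability of the $L_0$-module $L_1$ over the simply connected $G_0$. Where you genuinely diverge is in how that last equivalence is established. The paper gets it for free from the precise form of Hochschild's criterion: $\mathcal{O}(G_0)=B(L_0)\subset U(L_0)^{\circ}$ is \emph{defined} as the set of elements annihilating some power of $\langle\operatorname{Rad}L_0\rangle$, so a finite-dimensional right $U(L_0)$-module structure on $W^*$ comes from a right $C$-comodule structure on $W$ exactly when its matrix coefficients lie in $B(L_0)$, i.e.\ when $W^*\triangleleft\langle\operatorname{Rad}L_0\rangle^n=0$ for some $n$ --- which is condition (ii) verbatim. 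You instead argue geometrically: reverse direction by integrating the semidirect product $L_0\ltimes L_1$ (whose radical is nilpotent precisely under (i) and (ii)) and cutting out $L_1$ as a normal vector subgroup; forward direction via unipotence of $R(G_0)$ plus Engel. This works, but it is longer and quietly relies on several further standard facts the paper never needs: functoriality of Hochschild's correspondence under the surjection $L_0\ltimes L_1\to L_0$ (to see that the ideal $L_1$ integrates to a \emph{connected} normal subgroup with quotient $G_0$), unipotence of that subgroup, and --- in your forward direction --- the centrality of the maximal torus $T$ of $R(G_0)$, which is not automatic but does follow here because condition (i) forces $R(G_0)$ to be nilpotent, hence $R(G_0)=T\times R_u(G_0)$, after which $T$ is centralized by all of $G_0$ and a nontrivial central torus contradicts simple connectedness. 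If you spell out those three points, your argument is a valid, more group-theoretic alternative; the paper's route buys brevity by exploiting the explicit Hopf-algebraic description $\mathcal{O}(G_0)=B(L_0)$ instead of re-deriving integrability at the level of group schemes.
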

\begin{proof}
In the non-super situation this was proved by Hochschild \cite{H}. According to \cite{H}, 
a connected algebraic affine group scheme $G = \operatorname{Sp} C$ with $L_0 = \operatorname{Lie} G$ 
is simply connected if and only if (a)~the radical of $L_0 =P(C^{\circ})$ is nilpotent, and 
(b)~the canonical injective Hopf algebra map  $C \to U(L_0)^{\circ}$
maps onto the Hopf subalgebra $B(L_0)$ of $U(L_0)^{\circ}$ consisting of those elements
which annihilate some powers of the ideal $\langle \operatorname{Rad} L_0 \rangle$ of $U(L_0)$ generated by 
the radical $\operatorname{Rad} L_0$. Moreover, $C \mapsto P(C^{\circ})$ and $L_0 \mapsto B(L_0)$ give
a bijection, modulo isomorphism, between the set of all those connected affine Hopf algebras $C$ which 
satisfy the conditions (a), (b) and the set of all finite-dimensional Lie algebras $L_0$ with nilpotent radical. 
Suppose that $C$ and $L_0$ correspond to each other. Then, given a finite-dimensional vector space $W$,
all the right $C$-comodule structures on $W$ and all those right $U(L_0)$-module structures on $W^*$ such that
$W^* \triangleleft \langle\operatorname{Rad} L_0 \rangle^n= 0$ for some $n>0$ are 
naturally in one-to-one correspondence. It follows
by Remark \ref{rem:HCP}(1)
that $(C, W) \mapsto P(C^{\circ}) \oplus W^*$ gives a bijection, modulo isomorphism, from the set of
all those object $(C, W)\in \mathsf{cHCP}$ such that $\operatorname{Sp} C$ is simply connected to the set of
all those finite-dimensional Lie superalgebras $L$ which satisfy the conditions (i), (ii) above. 
This, combined with Theorem \ref{thm:equiv} and Proposition \ref{prop:simply-conn}, proves
the desired result. 
\end{proof}

\begin{lemma}\label{lem:proper}
A super-cocommutative Hopf superalgebra $H$ such that $\dim V_H < \infty$ is proper if and only 
if $\underline{H}$ is proper.
Here, recall that an algebra $R$ is said to be proper if the canonical map 
$R \to (R^{\circ})^*$ is an injection. 
\end{lemma}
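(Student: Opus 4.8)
The plan is to reduce the statement to the non-super case by an explicit factorization of the canonical map $H\to(H^\circ)^*$. By Theorem~\ref{thm:Takeuchi}(2) we may assume $H=H(J,V)$ with $J=\underline H$ and $V=V_H$, so that the hypothesis becomes $\dim V<\infty$. Put $A:=H^\circ=H(J,V)^\circ$. Proposition~\ref{prop:dual-of-H(J,V)} supplies isomorphisms $\phi_X:J\otimes\wedge(V)\overset{\simeq}{\longrightarrow}H$ and $\psi_X:A\overset{\simeq}{\longrightarrow}J^\circ\otimes\wedge(V^*)$ (the latter via the identifications $\overline A\cong J^\circ$ and $W^A\cong V^*$ of its Part~1), and the analogue of \eqref{phi-psi} there asserts that under $\phi_X$ and $\psi_X$ the canonical pairing $H\times A\to\Bbbk$ is carried to the tensor product of the canonical pairing $J\times J^\circ\to\Bbbk$ with the canonical pairing $\wedge(V)\times\wedge(V^*)\to\Bbbk$ of \eqref{cano-pairing}. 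The latter is non-degenerate because $\dim V<\infty$ (Remark~\ref{rem:sign-rule}).

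Next I would compute $\ker(\mathrm{can}_H:H\to(H^\circ)^*)=\{h\in H:\langle h,A\rangle=0\}$. Since $\psi_X$ is bijective, transporting along $\phi_X$ identifies this kernel with the left radical of the tensor-product pairing $(J\otimes\wedge(V))\times(J^\circ\otimes\wedge(V^*))\to\Bbbk$. Fixing a basis $\{e_\alpha\}$ of the finite-dimensional space $\wedge(V)$, writing a general element as $\xi=\sum_\alpha \xi_\alpha\otimes e_\alpha$ with $\xi_\alpha\in J$, and pairing against $b\otimes e_\beta^{\vee}$ for a dual basis $\{e_\beta^{\vee}\}$ of $\wedge(V^*)$ (available by non-degeneracy) and $b\in J^\circ$, one sees that $\xi$ lies in that radical if and only if $\langle \xi_\alpha,J^\circ\rangle=0$ for every $\alpha$, i.e.\ if and only if each $\xi_\alpha$ lies in $\ker(\mathrm{can}_J:J\to(J^\circ)^*)$. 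Hence $\ker\mathrm{can}_H=\phi_X\bigl(\ker\mathrm{can}_J\otimes\wedge(V)\bigr)$, and this is zero exactly when $\ker\mathrm{can}_J=0$. As $J=\underline H$, this is precisely the assertion that $H$ is proper if and only if $\underline H$ is proper.

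The bookkeeping with the tensor pairing, and the identification of $\mathrm{can}_H$ with $\mathrm{can}_J\otimes\mathrm{id}$ after dualizing the finite-dimensional factor $\wedge(V^*)$, are routine. The one point that needs care — and the only real obstacle — is to invoke Proposition~\ref{prop:dual-of-H(J,V)} with the correct (canonical) pairings on \emph{both} tensor factors, so that the identification $\wedge(V^*)\cong(\wedge V)^*$ in use is the non-degenerate one of \eqref{cano-pairing}; once that is pinned down, the radical computation and hence the equivalence are immediate. No completion or topological subtleties enter, since $\wedge(V)$ is finite-dimensional.
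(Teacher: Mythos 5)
Your argument is correct and is exactly the route the paper intends: its proof of this lemma is the single sentence ``This is seen from Proposition \ref{prop:dual-of-H(J,V)},'' and what you have written is a careful expansion of that — reducing to $H=H(\underline H,V_H)$ via Theorem \ref{thm:Takeuchi}(2), identifying the evaluation pairing $H\times H^{\circ}\to\Bbbk$ with the tensor product of the pairing $J\times J^{\circ}$ and the non-degenerate pairing $\wedge(V)\times\wedge(V^*)$ of \eqref{cano-pairing}, and computing the left radical. No discrepancy with the paper's approach; the radical computation using a dual basis of the finite-dimensional factor is the right way to make the one-line proof precise.
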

\begin{proof}
This is seen from Proposition \ref{prop:dual-of-H(J,V)}. 
\end{proof}

\begin{definition}\label{def:finite-type}
Generalizing directly the definition \cite[p.258]{T-I} in the non-super situation, we say that a hyper-superalgebra $H$
is \emph{of finite type}, if the Lie superalgebra $P(H)$ is finite-dimensional. 
This is equivalent to the condition that the hyperalgebra
$\underline{H}$ is of finite-type, and $\dim V_H < \infty$, since we have $P(H) = P(\underline{H}) \oplus V_H$.   
\end{definition}

\begin{theorem}\label{thm:simply-conn-p}
Suppose that $\Bbbk$ is a perfect field of positive characteristic $>2$. Then $G \mapsto \operatorname{hy} G$
gives an equivalence from the category of the simply connected affine supergroup schemes to the category
of all those hyper-superalgebras $H$ of finite type such that (i)~$\underline{H}^{\operatorname{ab}}$ 
is finite-dimensional, and (ii)~$H$ is proper.  Here, $\underline{H}^{\operatorname{ab}}$ is the abelianization of $\underline{H}$,
that is, the quotient (indeed, Hopf) algebra of $\underline{H}$ divided by its commutator ideal. 
\end{theorem}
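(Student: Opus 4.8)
The plan is to follow the pattern of the proof of Theorem~\ref{thm:simply-conn-zero}, with Hochschild's characterization \cite{H} replaced by Takeuchi's characterization \cite{T-III} of simply connected affine group schemes in positive characteristic, and to transport everything through the equivalences of Sections~\ref{sec:DHCP}--\ref{sec:HCP}. First I would factor the functor $G\mapsto\operatorname{hy}G$. By Theorem~\ref{thm:equiv} together with Proposition~\ref{prop:simply-conn}, the assignment $G\mapsto(\overline{\mathcal O(G)},W^{\mathcal O(G)})$ is an (anti\nobreakdash-)equivalence from simply connected affine supergroup schemes onto the full subcategory $\mathsf{scHCP}\subset\mathsf{cHCP}$ of those connected Harish-Chandra pairs $(C,W)$ for which $\operatorname{Sp}C$ is simply connected (necessarily algebraic and connected). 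On the other side, Proposition~\ref{prop:LieHyper}(2) and Remark~\ref{rem:restrected-equivalence} identify, for $(C,W)\in\mathsf{scHCP}$, the hyper-superalgebra $\operatorname{hy}(\operatorname{SSp}A(C,W))$ with $H(J,V)$, where $(J,V)=((C^\circ)^1,W^*)\in\mathsf{iDHCP}$, and tell us that $H$ restricts to an equivalence $\mathsf{iDHCP}\approx\mathsf{HySA}$ with quasi-inverse $H\mapsto(\underline H,V_H)$. Thus $G\mapsto\operatorname{hy}G$ is the composite of this equivalence, the contravariant functor $(C,W)\mapsto((C^\circ)^1,W^*)$ of Proposition~\ref{prop:dualization}(2), and the equivalence $H\colon\mathsf{iDHCP}\to\mathsf{HySA}$; so it suffices to describe the image of $\mathsf{scHCP}$ under the middle functor and to show that it lands equivalently on the target category.

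Next I would translate the three conditions. Since $\underline{H(J,V)}=J$ and $V_{H(J,V)}=V$ by Theorem~\ref{thm:Takeuchi}(2): a hyper-superalgebra $H$ is of finite type iff $\underline H$ is a finite-type hyperalgebra and $\dim V_H<\infty$ (Definition~\ref{def:finite-type}); it is proper iff $\underline H$ is proper (Lemma~\ref{lem:proper}, using $\dim V_H<\infty$); and $\underline H^{\operatorname{ab}}$ being finite-dimensional is a condition on $\underline H$ alone. Hence, through $H\mapsto(\underline H,V_H)$, the target category of the theorem corresponds to the full subcategory of $\mathsf{iDHCP}$ consisting of pairs $(J,V)$ with $J$ a proper finite-type hyperalgebra, $J^{\operatorname{ab}}$ finite-dimensional, and $\dim V<\infty$. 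It remains to prove that $(C,W)\mapsto((C^\circ)^1,W^*)$ is an equivalence of $\mathsf{scHCP}$ onto this subcategory; faithfulness is Proposition~\ref{prop:dualization}(2).

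For essential surjectivity and fullness I would separate the Hopf-algebra datum $C$ from the comodule datum $W$. For $C$: Takeuchi's theorem \cite{T-III} provides that $\operatorname{Sp}C\mapsto(C^\circ)^1$ is an (anti\nobreakdash-)equivalence from connected algebraic affine group schemes that are simply connected onto finite-type hyperalgebras $J$ that are proper with $J^{\operatorname{ab}}$ finite-dimensional; this furnishes, for a prescribed such $J$, an essentially unique connected simply connected $\operatorname{Sp}C$ with $(C^\circ)^1\cong J$ and handles the Hopf-algebra parts of morphisms. For $W$: I would use that, when $\operatorname{Sp}C$ is simply connected, the functor $W\mapsto W^*$ from finite-dimensional right $C$-comodules to finite-dimensional right $(C^\circ)^1$-modules is an equivalence --- it is fully faithful because $C$ is connected, so that the pairing $(C^\circ)^1\times C\to\Bbbk$ is non-degenerate (Lemma~\ref{lem:conn}) and $C\hookrightarrow((C^\circ)^1)^{\circ}$, and it is essentially surjective by the same theory of \cite{T-III}, simple connectedness guaranteeing that the matrix coefficients of any finite-dimensional $(C^\circ)^1$-module already lie in $C$. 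Applied to $J=(C^\circ)^1$ this equips $W:=V^*$ with a $C$-comodule structure inducing the given $J$-action on $V=W^*$; and since $C$ is connected and $(J,V)\in\mathsf{iDHCP}$, Remark~\ref{rem:HCP}(1) then shows that the given bracket $V\times V\to P(J)=P((C^\circ)^1)$ makes $(C,W)$ into an object of $\mathsf{scHCP}$. Checking compatibility with morphisms yields the desired equivalence, and composing with the equivalences recalled above completes the proof.

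The substantive step --- the main obstacle --- is the comodule assertion: over a perfect field of positive characteristic, every finite-dimensional module over the hyperalgebra of a simply connected affine group scheme is the dual of a rational comodule. This is precisely the new phenomenon that Takeuchi's theory of tangent coalgebras and hyperalgebras \cite{T-III} is designed to handle, and simple connectedness is indispensable here (for non-simply-connected groups, such as $\mathrm{PGL}_2$ in odd characteristic, there exist finite-dimensional hyperalgebra modules that do not integrate). Everything else is bookkeeping through Theorems~\ref{thm:equiv} and \ref{thm:Takeuchi}, Propositions~\ref{prop:LieHyper}, \ref{prop:dualization} and \ref{prop:simply-conn}, and Lemma~\ref{lem:proper}, in direct parallel with the characteristic-zero case.
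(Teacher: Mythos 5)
Your proposal is correct and follows essentially the same route as the paper: reduce to Takeuchi's characterization of simply connected affine group schemes via $C\overset{\simeq}{\to}((C^{\circ})^1)^{\circ}$, upgrade it to an anti-equivalence of the relevant subcategories of $\mathsf{cHCP}$ and $\mathsf{iDHCP}$ using Remark \ref{rem:HCP}(1), and transport through Theorems \ref{thm:equiv} and \ref{thm:Takeuchi}(2), Proposition \ref{prop:simply-conn} and Lemma \ref{lem:proper}. The only (harmless) difference is emphasis: the comodule step you single out as the main obstacle is, in the paper's formulation, immediate from the isomorphism $C\cong J^{\circ}$ together with the general fact that a finite-dimensional $J$-module dualizes to a $J^{\circ}$-comodule (cf.\ Remark \ref{rem:dual-of-H(J,V)}), so no separate integration argument is needed.
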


\begin{proof}
In the non-super situation this was proved by Takeuchi \cite[Corollary 5.4]{T-III}. 
According to \cite{T-III}, 
a connected algebraic affine group scheme $G = \operatorname{Sp} C$ with $J = \operatorname{hy} G$ 
is simply connected if and only if the canonical Hopf algebra map  $C \to J^{\circ}$ is an isomorphism. 
Moreover, $C \mapsto (C^{\circ})^1$ and $J \mapsto J^{\circ}$ give an anti-equivalence 
between the category of all those connected affine Hopf algebras $C$ such that $\operatorname{Sp} C$ 
is simply connected and the category of all those hyperalgebras $J$ of finite type such that 
$J^{\operatorname{ab}}$ is finite-dimensional, and $J$ is proper.  
We see from Remark \ref{rem:HCP}(1)
that $(C, W) \mapsto ((C^{\circ})^1, W^*)$ and $(J, V) \mapsto (J^{\circ}, V^*)$
give an anti-equivalence between the category of all those objects in $\mathsf{cHCP}$ such that
$\operatorname{Sp} C$ is simply connected and the category of all those objects $(J, V)$ in $\mathsf{iDHCP}$
such that $J$ is of finite-type and proper, and $J^{\operatorname{ab}}$ is finite-dimensional.
This, combined with Theorems \ref{thm:equiv}, \ref{thm:Takeuchi}(2),  Proposition \ref{prop:simply-conn} 
and Lemma \ref{lem:proper}, proves the desired result. In fact, we see from Proposition 
\ref{prop:dual-of-H(J,V)} that
$H \mapsto \operatorname{SSp} H^{\circ}$ gives a quasi-inverse of $G \mapsto \operatorname{hy} G$. 
\end{proof}

\section{Unipotent affine supergroup schemes}\label{sec:unipotent}

\begin{prop}\label{prop:odd-primitive}
Let $(C, W) \in \mathsf{HCP}$, and set $A = A(C, W)$ in $\mathsf{AHSA}$. If there exists a non-zero
element $w \in W$ on which $C$ trivially coacts, that is,  $w \mapsto w \otimes 1$ via the
$C$-comodule structure $W \to W \otimes C$, then $A$ contains a non-zero odd primitive. 
In particular, if $C$ is irreducible and if $W \ne 0$, the conclusion holds true. 
\end{prop}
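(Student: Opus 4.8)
The plan is to exhibit the non-zero odd primitive of $A = A(C,W)$ explicitly, using the concrete model of $A$ as a subspace of $\widehat{\mathcal{A}}(C,W) = C \mathbin{\widehat{\otimes}} T_c(W)$. Recall from \eqref{A(C,W)} that $A$ consists of those $Z \in \widehat{\mathcal{A}}(C,W)$ which annihilate the Hopf super-ideal $I(J,V)$ of $\mathcal{H}(J,V)$, where $J = C^{\circ}$, $V = W^*$. The element $1 \otimes w \in C \otimes T^1(W) \subset \widehat{\mathcal{A}}(C,W)$ sits in degree $1$, hence is odd, and is non-zero since $w \ne 0$; the task is to check (i) that it lies in $A$, and (ii) that it is primitive in $A$.

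For (ii), first I would compute the coproduct of $1 \otimes w$ in $\widehat{\mathcal{A}}(C,W)$ using formula \eqref{smash-coproduct}: writing the $C$-coaction on $w$ as $w \mapsto \sum w_{(0)} \otimes w_{(1)}$ and using $\Delta_{T_c(W)}(w) = 1 \otimes w + w \otimes 1$ together with $\varepsilon$ on the degree-$0$ part, one gets $\widehat{\Delta}(1 \otimes w) = (1\otimes 1)\otimes(1\otimes w) + \sum (1 \otimes w_{(0)}) \otimes (w_{(1)} \otimes 1)$, i.e.\ $(1\otimes w)$ is primitive in $\widehat{\mathcal{A}}(C,W)$ \emph{precisely} when the $C$-coaction on $w$ is trivial, $w \mapsto w \otimes 1$. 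This is exactly the hypothesis; so $1 \otimes w$ is a primitive element of $\widehat{\mathcal{A}}(C,W)$, and by Lemma \ref{lem:A(C,W)1}(2) it will be primitive in $A$ once we know it lies in $A$.

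For (i), I must verify $\langle I(J,V),\ 1 \otimes w \rangle = 0$. The ideal $I(J,V)$ is generated by the even primitives $1 \otimes (uv + vu) - [u,v] \otimes 1$ ($u,v \in V$), and is contained in the augmentation ideal; being a homogeneous ideal whose generators lie in degrees $2$ and $0$, it is spanned by elements of the form $X \cdot r \cdot Y$ with $r$ a generator, and one checks that, with respect to the $\mathbb{N}$-grading on $\mathcal{H}(J,V)$ transported via the non-degenerate $\mathbb{N}$-homogeneous pairing \eqref{pairing-complete}, $I(J,V)$ has zero intersection with $J \otimes T^1(V)$ — indeed $I(J,V) \subseteq \mathcal{H}(J,V)^+ \cdot \mathcal{H}(J,V)^+ $ modulo lower issues, but more simply: the degree-$1$ part of $I(J,V)$ is $J^+ \cdot (1\otimes V) + $ nothing else coming from the generators (which start in degree $0$ or $2$), and $1 \otimes w$ pairs to zero against $J^+ \otimes T^1(V)$ precisely because the $J$-module structure on $V$ dual to the trivial $C$-coaction on $W$ is the trivial one: $\langle v \triangleleft a, w\rangle = \sum \langle v, w_{(0)}\rangle\langle a, w_{(1)}\rangle = \varepsilon(a)\langle v,w\rangle$ by \eqref{J-mod-struc}. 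Thus $1 \otimes w$ annihilates all of $I(J,V)$, so $1 \otimes w \in A(C,W)$, completing the construction.

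For the last sentence: if $C$ is irreducible and $W \ne 0$, then the $C$-comodule $W$ is a (non-zero, finite-dimensional) comodule over an irreducible coalgebra, hence by Lemma \ref{lem:corad} its only simple subcomodules are the trivial one $\Bbbk$, so $W$ has a non-zero element $w$ with $w \mapsto w \otimes 1$; apply the first part. The main obstacle I anticipate is the bookkeeping in (i): making precise why the degree-$1$ component of the ideal $I(J,V)$ pairs trivially with $1 \otimes w$ under the trivial-module hypothesis — but this reduces, via \eqref{J-mod-struc} and $\mathbb{N}$-homogeneity of the pairing, to the computation above that $v \triangleleft a = \varepsilon(a) v$, so it should go through cleanly.
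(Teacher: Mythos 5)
Your argument is correct in outline but takes a genuinely different route from the paper's. The paper works on the dual side: it sets $U = \{v \in V \mid \langle v, w\rangle = 0\}$, uses the hypothesis to check that $(J,U)$ is a sub-object of $(J,V)$ satisfying the normality criteria of Proposition \ref{prop:exact-seq-cocom}(2), so that $H(J,V)//H(J,U) \cong \wedge(\Bbbk x)$, and then identifies the annihilator in $A$ of the Hopf super-ideal $H(J,V)H(J,U)^+$ with $(\wedge(\Bbbk x))^*$ by a basis argument. That yields a whole Hopf super-subalgebra of $A$ isomorphic to $\wedge(\Bbbk x)$ (equivalently, a quotient odd-line supergroup of $G$), from which the odd primitive is read off. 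You instead exhibit the primitive explicitly as $1 \otimes w \in C \otimes T^1(W) \subset \widehat{\mathcal{A}}(C,W)$. Your step (ii) --- the coproduct computation via \eqref{smash-coproduct} showing that $1\otimes w$ is primitive exactly when $C$ coacts trivially on $w$ --- is correct, as is the reduction of step (i) to degree $1$ via $\mathbb{N}$-homogeneity of the pairing. This is more elementary and avoids the exact-sequence machinery, at the cost of getting only the primitive element rather than the quotient $\wedge(\Bbbk x)$.

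Step (i) does, however, need repair as written. Since the generators $r_{u,v} = 1\otimes(uv+vu) - [u,v]\otimes 1$ are inhomogeneous (degrees $2$ and $0$), the degree-$1$ component of $I(J,V)$ is spanned by the elements $X([u,v]\otimes 1)Y$ with $X, Y$ homogeneous of total degree $1$, and this component is \emph{not} contained in $J^+\otimes T^1(V)$: for instance $(1\otimes v')([u,v]\otimes 1) = [u,v]\otimes v' + 1\otimes(v'\triangleleft[u,v])$, whose second summand has first tensorand $1 \notin J^+$. Likewise the $J$-action on $V$ is not trivial; only its pairing against $w$ is, i.e.\ $\langle v\triangleleft a, w\rangle = \varepsilon(a)\langle v,w\rangle$ by \eqref{J-mod-struc}. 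The vanishing you need still holds, by direct computation of the two cases: for $X = a$, $Y = b\otimes v'$ one gets $a[u,v]b\otimes v'$, which pairs with $1\otimes w$ to $\varepsilon(a[u,v]b)\langle v',w\rangle = 0$ since $\varepsilon([u,v])=0$; for $X = a\otimes v'$, $Y = b$ one gets $\sum a([u,v]b)_{(1)}\otimes\bigl(v'\triangleleft([u,v]b)_{(2)}\bigr)$, which pairs to $\varepsilon(a)\,\langle v'\triangleleft([u,v]b), w\rangle = \varepsilon(a)\,\varepsilon([u,v]b)\,\langle v',w\rangle = 0$ by the hypothesis. With these two computations inserted, your proof goes through; the final reduction to the irreducible case via the coradical is also correct.
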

\begin{proof}
Let $(J, V) = (C^{\circ}, W^*)$ be the object in $\mathsf{DHCP}$ associated with $(C, W)$. 
For an element $w \in W$ as above, let $U$ be the subspace
of $V$ consisting of those elements which annihilate $w$. The one sees that $U$ is a $J$-submodule
of $V$, so that we have a sub-object $(J, U)$ of $(J, V)$. One sees that if $a \in J, v \in V$, then
$\langle v \triangleleft a, w \rangle = \varepsilon(a)\, \langle v, w \rangle$, and so 
$v \triangleleft a -\varepsilon(a)v \in U$. 
By Proposition \ref{prop:exact-seq-cocom},
$H(J, U)$ is normal in
$H(J, V)$, and the corresponding quotient Hopf superalgebra $H(J, V)//H(J, U)$ is
isomorphic to $\wedge(\Bbbk x)$, which is generated by a non-zero odd primitive $x$ with $x^2 = 0$. 
Those elements in $A$ which annihilate, with respect to the pairing \eqref{HApair}, the Hopf super-ideal
$I := H(J, V)H(J, U)^+$ form a Hopf super-subalgebra, say $B$, which is naturally embedded into 
$(\wedge(\Bbbk x))^*$.  We claim that this last embedding is an isomorphism; this obviously implies the 
desired result. Choose a totally ordered basis $Y$ of $U$, and add to it a final element, say $z$, to obtain
a totally ordered basis, $X = Y \cup \{ z \}$, of $V$. Let $\phi_X : J \otimes \wedge(V)
\overset{\simeq}{\longrightarrow} H(J, V)$ be the isomorphism as given in Proposition \ref{prop:M1}. 
With \cite[Remark 3.8]{M} applied to $H(J, V) \to H(J, V)/I$, we see that $\phi_X^{-1}(I) =
(J \otimes \wedge(U))^+ \otimes \wedge( \Bbbk z )$; this, together 
with Proposition \ref{prop:A(C,W)}(2) (see its proof), proves the claim.
\end{proof}

Recall from Definition \ref{def:unipotent-reductive}(1) the definition of 
unipotent affine supergroup schemes.

\begin{lemma}\label{lem:unipotency}
Given a short exact sequence $N \to G \to F$ of affine supergroup schemes, $G$ is unipotent if and only if
$N$ and $F$ are both unipotent.
\end{lemma}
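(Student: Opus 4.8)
The plan is to translate the statement into the language of super-cocommutative Hopf superalgebras by dualizing, then exploit the fact that ``unipotent'' means ``irreducible'' (Definition \ref{def:unipotent-reductive}(1)), and that irreducibility is detected on the coradical (Lemma \ref{lem:corad}, Definition \ref{def:irreducible}). By Proposition \ref{prop:exact-seq-supergroup} the given short exact sequence $N \to G \to F$ of affine supergroup schemes corresponds to a short exact sequence $\mathcal{O}(F) \overset{p}{\to} \mathcal{O}(G) \overset{q}{\to} \mathcal{O}(N)$ of super-commutative Hopf superalgebras, and $G$ is unipotent if and only if $\mathcal{O}(G)$ is irreducible, and similarly for $N$, $F$. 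It thus suffices to prove: in a short exact sequence $B \overset{p}{\hookrightarrow} A \overset{q}{\twoheadrightarrow} \overline{A/AB^+}$ of super-commutative Hopf superalgebras (in the sense of Definition \ref{def:exact-seq-com}), $A$ is irreducible if and only if both the Hopf super-subalgebra $p(B)$ and the quotient $A//p(B)$ are irreducible.

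For the ``only if'' direction: if $A$ is irreducible then $\operatorname{Corad} A = \Bbbk$; any Hopf super-subalgebra of $A$, and any quotient Hopf superalgebra of $A$, then has trivial coradical as well, since a simple subcoalgebra of a sub- or quotient coalgebra maps to (or sits inside) a simple subcoalgebra of $A$ — this is the standard coalgebra fact that the coradical of a subcoalgebra is contained in the coradical, and the image of the coradical under a surjection contains the coradical of the image. So $p(B)$ and $A//p(B)$ are irreducible, i.e. $N$ and $F$ are unipotent. For the ``if'' direction, the cleanest route is to use the explicit structure from the preceding sections: writing $A = A(C, W)$, $B = A(C', W')$ and $A//B = A(C'', W'')$ with the data fitting together as in Proposition \ref{prop:exact-seq-com}(3), we get short exact sequences $C' \to C \to C''$ of commutative Hopf algebras and $W' \to W \to W''$ of vector spaces. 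By Proposition \ref{prop:A(C,W)}(1) (or directly, since $\overline{A(C,W)} \simeq C$), the hypothesis that $B = \mathcal{O}(N)$ and $A//B = \mathcal{O}(F)$ are irreducible forces $C'$ and $C''$ to be irreducible, whence $C$ is irreducible because irreducibility is closed under extensions of commutative Hopf algebras (the classical non-super unipotence result, or directly via $\operatorname{Corad}$). One could alternatively argue purely Hopf-theoretically: $A$ has the filtration whose associated graded is $\overline{A} \cmdblackltimes \wedge(W^A)$ (Remark \ref{grA}), and $\operatorname{Corad} A = \operatorname{Corad}(\operatorname{gr} A)$ in this $\mathbb{N}$-graded situation, reducing irreducibility of $A$ to that of $\overline{A} = C$, which is the ordinary (non-super) statement applied to $C' \to C \to C''$.

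The main obstacle is the ``if'' direction, specifically the passage from irreducibility of the sub- and quotient Hopf algebras to irreducibility of the middle one; in the non-super setting this is the statement that an extension of unipotent affine group schemes is unipotent, which I would cite or reprove using the fact that $\operatorname{Corad}(\mathcal{O}(G))$ corresponds to the largest linearly reductive quotient and an extension of trivial-coradical Hopf algebras has trivial coradical (faithful flatness of $\mathcal{O}(G)$ over $\mathcal{O}(N)$, plus $\operatorname{Corad}(\mathcal{O}(G))^{\operatorname{co}q} = \operatorname{Corad}(\mathcal{O}(N)) = \Bbbk$ forces $\operatorname{Corad}(\mathcal{O}(G))$ to inject into $\mathcal{O}(F)$, where it must equal $\operatorname{Corad}(\mathcal{O}(F)) = \Bbbk$). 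Once the ordinary case is in hand, the reduction of the super case to it via $A \leadsto \overline{A}$ (using that $W^A$ contributes only nilpotents, so $\operatorname{Corad} A = \operatorname{Corad}\overline{A}$) is routine, and the ``only if'' direction is immediate from coalgebra generalities. I would write this up in the order: (1) dualize via Proposition \ref{prop:exact-seq-supergroup}; (2) reduce irreducibility of $A$ to that of $\overline{A}$ using Remark \ref{grA}; (3) extract the ordinary short exact sequence $C' \to C \to C''$ from Proposition \ref{prop:exact-seq-com}(3); (4) invoke the non-super result for the ``if'' part and coalgebra functoriality of $\operatorname{Corad}$ for the ``only if'' part.
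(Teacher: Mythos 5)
Your ``only if'' direction is correct and is exactly the paper's argument: sub- and quotient coalgebras of an irreducible coalgebra are irreducible. The genuine gap is in the ``if'' direction. Both routes you offer hinge on the reduction ``$\overline{A}$ irreducible $\Rightarrow$ $A$ irreducible'' (equivalently $\operatorname{Corad} A = \operatorname{Corad}\overline{A}$), which you describe as routine. It is not: that implication is precisely Theorem \ref{thm:unipotency} (Zubkov's theorem), and in the paper it is \emph{deduced from} Lemma \ref{lem:unipotency} together with Proposition \ref{prop:odd-primitive} by an induction on $\dim W^A$; invoking it here is circular relative to the paper's development. Your proposed independent justification, $\operatorname{Corad} A = \operatorname{Corad}(\operatorname{gr} A)$, is unsubstantiated: $A$ is only filtered by the powers of $I_A$, not graded, and a simple subcoalgebra $D$ of $A$ may meet $I_A$ in a nonzero coideal, so that its image in $\overline{A}$ is a quotient coalgebra of $D$ which need not be simple. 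Irreducibility of $\overline{A}$ then only says that this image is pointed irreducible, i.e.\ that $D^*$ contains a local augmented subalgebra, which is no contradiction when $D$ is a matrix coalgebra. The parenthetical faithful-flatness sketch has the same defect, and moreover $\operatorname{Corad}\mathcal{O}(G)$ is merely a subcoalgebra, so the expression $\operatorname{Corad}(\mathcal{O}(G))^{\mathrm{co}q}$ is not meaningful as written. A secondary issue: the lemma is stated for arbitrary affine supergroup schemes, while your route through $A(C,W)$ and Proposition \ref{prop:exact-seq-com}(3) requires everything to be algebraic.

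The paper avoids all of this with a short representation-theoretic argument for the ``if'' part: let $V$ be a simple rational $G$-supermodule; since $N$ is normal in $G$, the invariants $V^N$ form a $G$-super-submodule; unipotence of $N$ forces $V^N \ne 0$, whence $V^N = V$ by simplicity; thus $V$ descends to a simple rational $F$-supermodule, which is trivial by unipotence of $F$. You should replace your ``if'' direction by this (or by some other argument that does not presuppose Theorem \ref{thm:unipotency}).
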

\begin{proof}
The $`$only if' part follows since if the Hopf superalgebra $\mathcal{O}(G)$ which represents $G$ is irreducible, then
$\mathcal{O}(F)$ and $\mathcal{O}(N)$, being its sub- and quotient coalgebras, are irreducible. 

To prove the $`$if' part, let $V$ be a simple rational $G$-supermodule. The $N$-invariants $V^N$ in $V$ form
a $G$-super-submodule, since $N \triangleleft G$.  
If $N$ is unipotent, then $V^N$ is non-zero, and coincides with $V$ by the simplicity. Therefore, $V$ may
be regarded as a simple rational $F$-supermodule, and is trivial as an $F$- and $G$-supermodule if $F$ is unipotent. 
\end{proof}

The following theorem is due to Alexandr Zubkov, who informed privately the author of his proof.
Here we give a simpler proof. 

\begin{theorem}[A.~Zubkov]\label{thm:unipotency}
An affine supergroup scheme $G$ is unipotent if and only if the associated affine group scheme 
$G_{\operatorname{res}}$ is unipotent.
\end{theorem}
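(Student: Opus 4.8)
The plan is to deduce both directions from the equivalence $\mathsf{HCP}\approx\mathsf{AHSA}$ of Theorem \ref{thm:equiv} together with the exact-sequence dictionary of Section \ref{subsec:exact-seq-comm}. Write $A=\mathcal{O}(G)$; by Definition \ref{def:unipotent-reductive}(1), $G$ is unipotent iff $A$ is irreducible, and $G_{\operatorname{res}}$ is unipotent iff $\overline{A}$ is irreducible. The ``only if'' direction is immediate: $\overline{A}=A/I_A$ is a quotient Hopf superalgebra, hence a quotient coalgebra, of $A$, and a quotient coalgebra of an irreducible coalgebra is irreducible.

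For the converse I would first reduce to $G$ algebraic (both conditions are detected on the affine sub-Hopf-superalgebras $A_\alpha$ whose directed union is $A$, since the coradical commutes with directed unions and $\overline{A}=\varinjlim\overline{A_\alpha}$ for a cofinal family of $A_\alpha$). So assume $G$ algebraic, corresponding to $(C,W)\in\mathsf{HCP}$ with $C=\overline{A}$, $W=W^A$, and assume $C$ irreducible; I will prove $A=A(C,W)$ irreducible by induction on $\dim W$. If $\dim W=0$ then $A(C,W)=C$ and we are done. If $\dim W\ge 1$, then since $C$ is irreducible the finite-dimensional $C$-comodule $W$ has a nonzero coinvariant, so we may fix a line $W_1=\Bbbk w_0\subseteq W$ on which $C$ coacts trivially. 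Then $(\Bbbk,W_1)\to(C,W)\to(C,W/W_1)$ is a short exact sequence in $\mathsf{HCP}$: the underlying sequences $\Bbbk\to C\to C$ and $W_1\to W\to W/W_1$ are short exact, the sub-object $(\Bbbk,W_1)$ necessarily carries the zero bracket (its target $P(\Bbbk)$ vanishes, and brackets in $(C,W)$ land in primitives, which the counit of $C^\circ$ kills), and triviality of the $C$-coaction on $W_1$ is exactly what makes the inclusion a morphism of Harish-Chandra pairs. By Proposition \ref{prop:exact-seq-com}(3) and Proposition \ref{prop:exact-seq-supergroup} this produces a short exact sequence of affine supergroup schemes $\operatorname{SSp}A(C,W/W_1)\to G\to\operatorname{SSp}A(\Bbbk,W_1)$, where $A(\Bbbk,W_1)\cong\wedge(W_1)$ is irreducible, so the third term is unipotent; the first term has $\dim(W/W_1)<\dim W$, and its associated group scheme is again $\operatorname{Sp}C=G_{\operatorname{res}}$, hence it is unipotent by the induction hypothesis. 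Lemma \ref{lem:unipotency} then forces $G$ to be unipotent, completing the induction.

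The one point needing care is the claim that $(\Bbbk,W_1)\to(C,W)\to(C,W/W_1)$ genuinely lies in $\mathsf{HCP}$: this dictates that $W_1$ be chosen $C$-trivial, which is possible precisely because $C$ is irreducible, and the remaining verifications (that $A(\Bbbk,W_1)=\wedge(W_1)$, that $\overline{A(C,W/W_1)}\cong C$, etc.) are routine from Section \ref{sec:HCP}. I expect this exact-sequence route to be substantially shorter than a direct treatment. By contrast, the naive attempt --- deducing irreducibility of $A$ from irreducibility of $\operatorname{gr}A\cong\overline{A}\cmdblackltimes\wedge(W^A)$ --- is not automatic, since for a general decreasing Hopf filtration irreducibility need not pass from the associated graded back to the filtered algebra (already $\mathcal{O}(\mu_p)$, filtered by the powers of its augmentation ideal, has associated graded $\mathcal{O}(\alpha_p)$, which is irreducible); it is the square-zero, exterior nature of $\wedge(W^A)$, used here through the short exact sequence, that makes the statement true.
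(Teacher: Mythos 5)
Your proof is correct, and its overall architecture is the same as the paper's: the ``only if'' direction via quotient coalgebras, reduction to the algebraic case through the directed union $A=\bigcup_\alpha A_\alpha$, induction on $\dim W^A$, peeling off an exterior algebra on one odd generator by means of a short exact sequence, and Lemma \ref{lem:unipotency}. The one substantive difference is how that short exact sequence is produced. The paper first establishes Proposition \ref{prop:odd-primitive} --- the existence of a non-zero odd primitive $x\in A$, proved by a duality argument exhibiting $H(J,U)$ as a normal Hopf super-subalgebra of $H(J,V)$ with quotient $\wedge(\Bbbk x)$ --- and then forms $B=A/(x)$, invoking Proposition \ref{prop:exact-seq-com}(3) only to read off $\overline{B}=\overline{A}$ and $\dim W^B=\dim W^A-1$. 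You instead remain on the Harish-Chandra-pair side throughout: the sequence $(\Bbbk,W_1)\to(C,W)\to(C,W/W_1)$, built from a $C$-coinvariant line $W_1\subseteq W$ (available precisely because $C$ is irreducible), is transported to $\mathsf{AHSA}$ by the same Proposition \ref{prop:exact-seq-com}(3); your checks that the bracket on $(\Bbbk,W_1)$ is forced to vanish and that $(C,W/W_1)$ inherits a Harish-Chandra structure are exactly the ones needed. This bypasses Proposition \ref{prop:odd-primitive} entirely and is somewhat shorter, and nothing is really lost: the injectivity of $A(\Bbbk,W_1)\to A(C,W)$ supplied by Proposition \ref{prop:exact-seq-com}(1) already produces the odd primitive that the paper constructs by hand. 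Your closing caveat about $\operatorname{gr}A$ is also well taken --- irreducibility does not in general descend along the $I_A$-adic filtration, which is why some form of the exact-sequence argument is unavoidable.
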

\begin{proof}
Suppose $G = \operatorname{SSp} A$. Recall $G_{\operatorname{res}} = \operatorname{Sp} \overline{A}$.
Then the $`$only if' part follows since if $A$ is irreducible, then
so is the quotient coalgebra $\overline{A}$.

To prove the $`$if' part,  recall that $A$ is a union $\bigcup_{\alpha}A_{\alpha}$ of affine Hopf super-subalgebras
$A_{\alpha}$. Since $\overline{A} = \bigcup_{\alpha}\overline{A}_{\alpha}$ by 
\cite[Proposition 4.6(3)]{M}, we have only to prove, and will prove by induction on $\dim W^A$, that
an affine Hopf superalgebra $A$ is irreducible, assuming that $\overline{A}$ is irreducible.  
We may suppose that $A$ is not purely even. 
Then by Proposition \ref{prop:odd-primitive}, $A$ contains a non-zero odd primitive,
say $x$. Set $B = A/(x)$. Then we have a short exact sequence, $\wedge(\Bbbk x) \to A \to B$, of
Hopf superalgebras, whence $\overline{A} = \overline{B}$, $\dim W^A - 1 = \dim W^B$
by Proposition \ref{prop:exact-seq-com}(3). The induction hypothesis shows 
that $B$ is irreducible.  This, together with the previous lemma
applied to $\operatorname{SSp} B \to G \to \operatorname{SSp}(\wedge(\Bbbk x))$, proves the desired result. 
\end{proof}

\section{Linearly reductive affine supergroup schemes\\ in positive characteristic}\label{sec:Nagata}

\begin{lemma}\label{lem:irredHopf}
An irreducible Hopf superalgebra $H$ is purely even if the Lie superalgebra
$P(H)$ of primitives in $H$ is purely even.
\end{lemma}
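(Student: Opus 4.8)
The plan is to argue purely at the level of the coradical filtration of $H$, viewed as an ordinary coalgebra; the $\mathbb{Z}_2$-grading will enter only through parity. Since $H$ is irreducible, $\operatorname{Corad} H = \Bbbk 1$, so $H$ is a connected coalgebra; write $\Bbbk 1 = C_0 \subseteq C_1 \subseteq \dots$ for its coradical filtration, which is exhaustive, $H = \bigcup_{n \ge 0} C_n$. By Section~\ref{subsec:corad}, $C_0$ is a super-subcoalgebra, and since the parity automorphism of $H$ is a coalgebra automorphism (the coproduct and counit being parity-preserving), the recursion $C_{n+1} = \Delta^{-1}(C_0 \otimes H + H \otimes C_n)$ shows inductively that each $C_n$ is a $\mathbb{Z}_2$-graded subcoalgebra, so that $C_n = (C_n \cap H_0) \oplus (C_n \cap H_1)$.

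It thus suffices to prove that $C_n \cap H_1 = 0$ for every $n$, and I would do this by induction on $n$. The one external ingredient is the standard fact (see \cite[Sect.~9.1]{Sw}) that the coradical filtration is a coalgebra filtration, $\Delta(C_n) \subseteq \sum_{i=0}^{n} C_i \otimes C_{n-i}$. A routine counit computation — split each $C_j$ as $\Bbbk 1 \oplus C_j^+$ with $C_j^+ = C_j \cap \operatorname{Ker}\varepsilon$, write $\Delta(x)$ accordingly, and use $(\varepsilon \otimes \operatorname{id})\Delta = \operatorname{id} = (\operatorname{id} \otimes \varepsilon)\Delta$ together with $\varepsilon(x) = 0$ — refines this to
\[
\overline{\Delta}(x) := \Delta(x) - 1 \otimes x - x \otimes 1 \ \in\ \sum_{i=1}^{n-1} C_i^+ \otimes C_{n-i}^+
\qquad\bigl(x \in C_n,\ \varepsilon(x) = 0\bigr),
\]
where the sum is empty, so that $\overline{\Delta}(x) = 0$ and $x$ is primitive, when $n \le 1$.

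For the induction: $C_0 \cap H_1 = \Bbbk 1 \cap H_1 = 0$. Suppose $C_i \subseteq H_0$ for all $i < n$, and let $x \in C_n \cap H_1$. Since $\varepsilon$ kills $H_1$ (being a parity-preserving map into the purely even $\Bbbk$), $\varepsilon(x) = 0$, so the displayed inclusion applies; every factor $C_i^+$, $C_{n-i}^+$ occurring there has index strictly below $n$, hence lies in $H_0$ by the induction hypothesis, whence $\overline{\Delta}(x) \in H_0 \otimes H_0 = (H \otimes H)_0$. On the other hand $x$ is odd and $\Delta$ preserves parity, so $\overline{\Delta}(x) \in (H \otimes H)_1$; therefore $\overline{\Delta}(x) = 0$ and $x \in P(H)$. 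Since $P(H)$ is purely even by hypothesis, $x \in H_0 \cap H_1 = 0$. This closes the induction, and then $H_1 = \bigcup_n (C_n \cap H_1) = 0$, i.e.\ $H$ is purely even. I do not expect a real obstacle here; the only mildly delicate point is the counit refinement of the coalgebra-filtration inclusion, which is nonetheless entirely routine, and the argument uses none of the Harish-Chandra machinery.
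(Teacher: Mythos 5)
Your proof is correct. The refined coproduct formula $\overline{\Delta}(x) \in \sum_{i=1}^{n-1} C_i^+ \otimes C_{n-i}^+$ for $x \in C_n \cap \operatorname{Ker}\varepsilon$ in a connected coalgebra is indeed standard and your counit derivation of it is sound; the parity argument ($\overline{\Delta}(x)$ lands in $(H\otimes H)_0 \cap (H\otimes H)_1 = 0$, forcing $x$ primitive, hence zero) then closes the induction cleanly. The route is genuinely different in its mechanics from the paper's, though both are inductions along the coradical filtration. The paper first passes to the associated graded coalgebra $H^{\operatorname{gr}}$ of that filtration (invoking \cite[Theorem 9.2.2]{Sw} to see it is again an irreducible Hopf superalgebra), reduces to the strictly graded case $H(0)=\Bbbk$, $H(1)=P(H)$, and then quotes \cite[Lemma 11.2.1]{Sw} to embed $H(n)$ into $P(H)^{\otimes n}$, from which pure evenness of each graded piece is immediate. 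You avoid both the passage to $\operatorname{gr}$ and the injectivity lemma, replacing them with an elementary hands-on induction; what you lose is the one-line conclusion that the cited lemma provides, and what you gain is self-containedness plus a slightly more general statement -- your argument never uses the multiplication, so it applies verbatim to any irreducible super-coalgebra with distinguished grouplike whose primitives are purely even. One small presentational point: your claim that each $C_n$ is a super-subcoalgebra is most quickly justified, as you indicate, by noting that the parity automorphism is a coalgebra automorphism preserving $\operatorname{Corad} H$ and hence the whole filtration; the paper instead appeals to the wedge-product construction of \cite[Sect.~9.1]{Sw}, but the two justifications are interchangeable here.
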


\begin{proof}
Suppose that $H$ is a Hopf superalgebra. 
One sees from the wedge product construction given in \cite[Sect.~9.1]{Sw} that
the subcoalgebras which appear in the coradical filtration of $H$ are super-subcoalgebras.
If $H$ is irreducible, then we see as proving \cite[Theorem 9.2.2]{Sw} that 
the associated graded coalgebra $H^{\operatorname{gr}}$ is an irreducible Hopf superalgebra. 
Therefore, by replacing 
$H$ with $H^{\operatorname{gr}}$, we may suppose that
$H$ is \emph{strictly graded} in the sense of \cite[Definition on Page 232]{Sw} 
that $H(0) = \Bbbk$, $H(1) = P(H)$. By \cite[Lemma 11.2.1]{Sw}, this implies that for every $n>1$, 
the $\mathbb{Z}_2$-graded linear map  
$$H(n) \to P(H)^{\otimes n}$$
obtained by composing 
the $n-1$ times iterated coproduct $\Delta_{n-1}|_{H(n)} : H(n) \to H^{\otimes n}$ restricted to $H(n)$, 
with the projection $H^{\otimes n} \to H(1)^{\otimes n}= P(H)^{\otimes n}$, is an injection. Therefore, if
$P(H)$ is even, then
every $H(n)$, and hence the whole $H$ are purely even. 
\end{proof}

The lemma just proven enables us to extend \cite[Theorem 0.1]{M2} to the super situation, as follows.

\begin{corollary}\label{cor:finiteNagata}
For an irreducible Hopf superalgebra $H$ of finite dimension in positive characteristic $p >2$,
the following are equivalent:
\begin{itemize}
\item[(i)] $H$ is semisimple as an algebra;
\item[(ii)] $H$ is purely even, and commutative semisimple as an algebra;
\item[(iii)] the base extension $H \otimes \overline{\Bbbk}$ to the algebraic closure $\overline{\Bbbk}$ of
$\Bbbk$ is isomorphic to the function algebra $\overline{\Bbbk}^G$ of some finite $p$-group $G$; 
\item[(iv)] the Lie superalgebra $P(H)$ is purely even, and is a torus in the sense that 
$P(H)$ is abelian as a Lie algebra, and every element in $P(H)$ generates a semisimple subalgebra
in $H$. 
\end{itemize}
\end{corollary}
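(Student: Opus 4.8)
The plan is to derive the corollary from its non-super counterpart \cite[Theorem 0.1]{M2} with the help of Lemma \ref{lem:irredHopf}. The key observation is that each of the four conditions forces the irreducible Hopf superalgebra $H$ to be purely even; granting that, $H$ is an ordinary Hopf algebra, the clauses ``$H$ is purely even'' in (ii) and ``$P(H)$ is purely even'' in (iv) become automatic, and the equivalence (i)$\Leftrightarrow$(ii)$\Leftrightarrow$(iii)$\Leftrightarrow$(iv) is exactly \cite[Theorem 0.1]{M2}. So I would organize the proof as: (a) show each of (i)--(iv) implies $H$ purely even; (b) quote \cite[Theorem 0.1]{M2}. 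Step (a) is trivial for (ii) and for (iii) (for the latter, $H_1\otimes\overline{\Bbbk}=(H\otimes\overline{\Bbbk})_1=(\overline{\Bbbk}^G)_1=0$, so $H_1=0$), and it holds for (iv) by Lemma \ref{lem:irredHopf}, since (iv) asserts that $P(H)$ is purely even. Hence the only point needing work is that (i) implies $H$ purely even.

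By Lemma \ref{lem:irredHopf} this reduces to showing that a finite-dimensional irreducible Hopf superalgebra that is semisimple as an algebra admits no non-zero odd primitive. I would argue by contradiction. Suppose $0\ne x\in P(H)_1$, and let $B:=\Bbbk[x]\subseteq H$ be the sub-Hopf-superalgebra it generates; it is finite-dimensional, commutative and cocommutative, with purely even Hopf sub-superalgebra $B_0=\Bbbk[x^2]$, and $x^2\in P(B_0)$ with $\varepsilon(x^2)=\varepsilon(x)^2=0$. On one hand, a Hopf sub-superalgebra of a semisimple Hopf superalgebra is again semisimple (a standard fact, deducible from the Nichols--Zoeller freeness theorem together with the theory of integrals), so $B$ is semisimple. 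On the other hand, $B$ is free of rank $2$ over $B_0$ (again by the freeness theorem, using $B=B_0\oplus xB_0$), so $x$ is a non-zero-divisor over $B_0$; writing the minimal polynomial of $x$ over $\Bbbk$ as $m(t)=m_e(t^2)+t\,m_o(t^2)$ (the decomposition into even- and odd-degree parts) and evaluating at $x$, the even and odd parts must vanish separately, giving $m_e(x^2)=0$ and $x\,m_o(x^2)=0$, hence $m_o(x^2)=0$; thus the minimal polynomial $f$ of $x^2$ divides both $m_e$ and $m_o$, so $f(t^2)\mid m(t)$, while $m(t)\mid f(t^2)$ since $f(x^2)=0$, whence $m(t)=c\,f(t^2)$. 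Since $\varepsilon(x^2)=0$ forces $f(0)=0$, we get $t^2\mid f(t^2)=m(t)$; so $m$ is not squarefree and the commutative algebra $B=\Bbbk[t]/(m(t))$ is not reduced, hence not semisimple --- the desired contradiction. This establishes (i)$\Rightarrow$($H$ purely even), and the corollary follows from \cite[Theorem 0.1]{M2}.

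The step I expect to be the main obstacle is making the freeness inputs precise in the super setting: that $H$ is free over the Hopf sub-superalgebra $B$ (needed for ``a Hopf sub-superalgebra of a semisimple Hopf superalgebra is semisimple'' --- note that faithful flatness alone does not give this, one also needs an integral/Maschke argument), and that $B$ is free of rank $2$ over its even part $B_0$, equivalently that $x$ is not a zero-divisor over $B_0$. The super analogue of the Nichols--Zoeller theorem should be available in the literature, or can be obtained by bosonization; failing that, one can verify both statements by hand for the very explicit algebras occurring here and transfer semisimplicity from $H$ to $B$ via the Larson--Sweedler theory of integrals. The remaining bookkeeping with minimal polynomials is elementary.
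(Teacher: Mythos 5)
Your proposal is correct, and its overall architecture coincides with the paper's: both reduce the corollary to \cite[Theorem 0.1]{M2} via Lemma \ref{lem:irredHopf}, so that the only substantive point is that a finite-dimensional semisimple irreducible Hopf superalgebra admits no nonzero odd primitive, and both prove this by contradiction by looking at the Hopf super-subalgebra $K=\Bbbk[x]$ generated by an odd primitive $x$ and invoking, via bosonization, the inheritance of semisimplicity by Hopf super-subalgebras. You diverge only in how the final contradiction is extracted. The paper observes that $x^2$ is an even primitive of $K$ and passes to the quotient Hopf superalgebra $K/(x^2)\simeq\wedge(\Bbbk x)$, which is visibly non-semisimple; since semisimplicity also passes to quotient Hopf superalgebras (the paper states the sub- and quotient cases together as one ``general fact''), this ends the proof with no further computation. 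You instead show that $K$ itself is non-semisimple via the minimal-polynomial argument, which is valid but forces you to establish that $K$ is free of rank $2$ over $K_0=\Bbbk[x^2]$, i.e.\ to import a super Nichols--Zoeller freeness statement --- exactly the point you flag as the main obstacle. It does go through: freeness of $K$ over $K_0$ of some rank $r$, combined with $K=K_0\oplus xK_0$ (so $r\le 2$) and $x\ne 0$ (so $r\ne 1$), forces $r=2$ and the injectivity of $b\mapsto xb$ on $K_0$, after which your computation $m(t)=c\,f(t^2)$ with $t^2\mid m$ is correct. The paper's quotient trick buys you out of that extra freeness input, at the cost of using the (easier) quotient half of the semisimplicity-inheritance fact; your route uses only the subalgebra half but needs the freeness. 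A last minor remark: in step (a) you read condition (iii) as an isomorphism of superalgebras with $\overline{\Bbbk}^G$ purely even; even under the weaker reading as a mere algebra isomorphism nothing is lost, since then (iii) implies (i) directly (semisimplicity descends along field extensions) and one may route through (i) instead.
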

\begin{proof}
Obviously, (ii) $\Rightarrow$ (i). It is easy to see 
(ii) $\Leftrightarrow$ (iii). 
We see (iii) $\Rightarrow$ (iv), (iv) $\Rightarrow$ (ii), from the same implications of
\cite[Theorem 0.1]{M2}, combined with Lemma \ref{lem:irredHopf}.
 
To see (i) $\Rightarrow$ (ii), it suffices, by Lemma \ref{lem:irredHopf},
to prove that if $H$ is semisimple, it cannot contain a non-zero odd primitive. 
To the contrary, suppose that
$H$ contains such a primitive, say $x$. Let $K$ be the Hopf super-subalgebra of $H$ generated by
$x$. Then, $x^2$ is an even primitive in $K$, and the quotient Hopf superalgebra $K/(x^2)$
divided by the Hopf super-ideal generated by $x^2$ is isomorphic to $\wedge(\Bbbk x)$.

As a general fact we have that given a finite-dimensional semisimple Hopf superalgebra, every Hopf super-subalgebra, as well as every quotient Hopf superalgebra, is semisimple. 
This follows, for example, from 
\eqref{Rad}, and from the corresponding result in the non-super situation (see \cite[Corollary 3.2.3]{Mon}),
with the bosonization technique \cite[Sect.~10]{MZ} applied. 
But, this general fact implies that the Hopf superalgebra $\wedge(\Bbbk x)$ above is semisimple,
which is absurd.  
\end{proof}

Recall from Definition \ref{def:unipotent-reductive}(2) the definition of 
linearly reductive affine supergroup schemes.

\begin{lemma}\label{lem:linearly-reductive}
A closed normal super-subgroup of a linearly reductive affine supergroup scheme is 
linearly reductive. 
\end{lemma}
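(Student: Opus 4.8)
The plan is to reduce the statement to the corresponding, already-known fact about affine group schemes by passing through the functor $G \mapsto G_{\operatorname{res}}$. Let $N \hookrightarrow G$ be a closed normal super-subgroup with $G$ linearly reductive, and write $A = \mathcal{O}(G)$, $B = \mathcal{O}(N)$, so that the closed embedding corresponds to a surjection $q : A \to B$. By Definition \ref{def:unipotent-reductive}(2), $G$ being linearly reductive means $A$ is cosemisimple, i.e.\ $A = \operatorname{Corad} A$. First I would recall that a quotient coalgebra of a cosemisimple coalgebra is cosemisimple: every simple subcoalgebra of $B$ is the image of a simple subcoalgebra of $A$, and since $A = \operatorname{Corad} A$ is a sum of simple subcoalgebras, so is $B$. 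Hence $B$ is cosemisimple, i.e.\ $N$ is linearly reductive. In fact this argument uses neither normality nor finiteness, so the statement is really just ``a closed super-subgroup of a linearly reductive affine supergroup scheme is linearly reductive''; the hypotheses in the lemma are presumably there because of how the lemma is to be applied, and the proof can simply ignore them.

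Let me spell out the cosemisimplicity transfer a little more carefully, since that is the only real content. Because $A$ is a directed union of finite-dimensional subcoalgebras and all constructions here are compatible with such unions, one may pass to the dual-algebra picture on finite-dimensional pieces: if $C$ is a cosemisimple coalgebra and $D$ a quotient coalgebra, then $D^*$ is a subalgebra of $C^*$, and $C^*$ being semisimple (as $C = \operatorname{Corad} C$) forces $D^*$, being a subalgebra with the induced structure, to\ldots{} wait---subalgebras of semisimple algebras need not be semisimple. So that dualization is \emph{not} the right route. Instead I would argue directly on coalgebras: the surjection $q : A \to B$ sends $\operatorname{Corad} A$ onto a subcoalgebra of $B$ which contains $\operatorname{Corad} B$ (indeed $q(\operatorname{Corad} A) \supseteq \operatorname{Corad}(q(A)) = \operatorname{Corad} B$ by the standard fact that coradicals map into coradicals under coalgebra surjections), while $q(\operatorname{Corad} A) = q(A) = B$ since $A = \operatorname{Corad} A$; hence $B = \operatorname{Corad} B$, i.e.\ $B$ is cosemisimple. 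Equivalently, by Lemma \ref{lem:corad} this says every simple $B$-super-comodule is trivial, which is exactly linear reductivity of $N$.

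The main---and essentially only---obstacle here is the temptation to dualize to algebras, where the naive statement (subalgebra of a semisimple algebra is semisimple) is false; the correct move is to keep everything on the coalgebra side, where ``quotient of cosemisimple is cosemisimple'' \emph{is} true and elementary. Once that is recognized, the proof is a one-line application of the definition together with the coalgebra fact that surjective coalgebra maps carry coradicals onto (sub)coradicals, and the super-refinement is handled by Lemma \ref{lem:corad}. I would therefore write the proof as: ``Let $G = \operatorname{SSp} A$ be linearly reductive and $N = \operatorname{SSp} B$ a closed super-subgroup, corresponding to a surjection $q : A \to B$ of super-commutative Hopf superalgebras. Since $A = \operatorname{Corad} A$ and a surjective coalgebra map carries the coradical onto a subcoalgebra containing the coradical of the target, $B = q(A) = q(\operatorname{Corad} A) \supseteq \operatorname{Corad} B$, so $B = \operatorname{Corad} B$; by Lemma \ref{lem:corad} (equivalently Definition \ref{def:unipotent-reductive}(2)) $N$ is linearly reductive.''
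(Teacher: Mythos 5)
There is a genuine gap here, and in fact the route you chose cannot work at all: the strengthened statement you reduce to --- that an arbitrary closed super-subgroup of a linearly reductive affine supergroup scheme is linearly reductive, normality being irrelevant --- is false already for ordinary group schemes. Over $\mathbb{C}$ the unipotent subgroup $\mathbb{G}_a \subset \mathrm{SL}_2$ corresponds to a surjection of commutative Hopf algebras $\mathcal{O}(\mathrm{SL}_2) \to \mathcal{O}(\mathbb{G}_a)=\Bbbk[t]$ with $\mathcal{O}(\mathrm{SL}_2)$ cosemisimple and $\Bbbk[t]$ ($t$ primitive) irreducible, hence far from cosemisimple. So the normality hypothesis is essential and any proof must use it. The precise point where your argument breaks is the assertion that a quotient coalgebra of a cosemisimple coalgebra is cosemisimple. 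The coradical fact you invoke is true but points the wrong way: for a surjective coalgebra map $q : A \to B$ one has $\operatorname{Corad} B \subseteq q(\operatorname{Corad} A)$, so from $A = \operatorname{Corad} A$ you only obtain $B = q(\operatorname{Corad} A) \supseteq \operatorname{Corad} B$, which holds for every coalgebra and yields nothing; concluding $B = \operatorname{Corad} B$ from it is a non sequitur. What you would need is the reverse inclusion $q(\operatorname{Corad} A)\subseteq \operatorname{Corad} B$, i.e.\ that the image of a simple subcoalgebra is cosemisimple --- and dually this is exactly the false statement that a unital subalgebra of a finite-dimensional (semi)simple algebra is semisimple (upper triangular matrices inside $M_2(\Bbbk)$), the very trap you flagged and then re-entered on the coalgebra side.

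The paper's proof uses normality in an essential way through \cite[Theorem 5.9(2)]{M}: for a \emph{conormal} quotient $q : A \to D$ of a super-commutative Hopf superalgebra, $A$ is injective as a left and right $D$-comodule. Granting that, the argument is short: cosemisimplicity of a Hopf superalgebra is equivalent to injectivity of the purely even trivial comodule $\Bbbk$; since $A$ is cosemisimple, $\Bbbk$ is a direct summand of $A$ as an $A$-comodule, hence also as a $D$-comodule via $q$, hence is injective as a $D$-comodule, so $D$ is cosemisimple. That injectivity (a faithful-coflatness statement) is precisely what fails for $\mathcal{O}(\mathrm{SL}_2) \to \mathcal{O}(\mathbb{G}_a)$, which is not conormal. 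To repair your proof you would have to import some such input tied to conormality; a purely coalgebra-theoretic manipulation of the surjection $A \to B$ cannot succeed.
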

\begin{proof}
Just as in the non-super situation, it is known a Hopf superalgebra, say $A$, is cosemisimple
if and only if the purely even, trivial $A$-comodule $\Bbbk$ is injective. By \cite[Theorem 5.9(2)]{M},
given a conormal quotient $A \to D$ of a super-commutative Hopf superalgebra $A$, $A$ is injective
as a left and right $D$-comodule. It follows that if $A$ is cosemisimple, then $D$ is, too. This
is translated into the statement of the corollary. 
\end{proof}

Weissauer \cite[Theorem 6]{W} determined the form of
linearly reductive algebraic affine supergroup schemes over an algebraically closed field 
of characteristic zero. According to his result, those supergroup schemes are rather restricted.
In positive characteristic they are even more so, as is seen from the following.  

\begin{theorem}\label{thm:Nagata}
Assume $\operatorname{char} \Bbbk > 2$, and let $G = \operatorname{SSp} A$ be a linearly reductive affine
supergroup scheme. Then $G$ is necessarily purely even. It follows by Nagata's Theorem
 (see \cite[Chap.~IV, Sect.~3, 3.6]{DG}) that if $G$ is algebraic and connected in addition, then it is of multiplicative 
type in the sense that  
the Hopf algebra $A$ is spanned by grouplikes after the base extension to the algebraic closure
$\overline{\Bbbk}$ of $\Bbbk$; in particular, $G$ is abelian. 
\end{theorem}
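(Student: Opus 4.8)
The plan is to reduce everything to the statement that $A$ is purely even, i.e. that $W^A = 0$, since the rest (Nagata's Theorem applied to the connected algebraic case, and commutativity of $A$ in that case) is entirely classical once $G$ has been seen to be an ordinary affine group scheme that is linearly reductive. So the real content is: \emph{a linearly reductive affine supergroup scheme has $W^A = 0$}. By Definition \ref{def:unipotent-reductive}(2), $G$ being linearly reductive means $A = \operatorname{Corad} A$, i.e. $A$ is cosemisimple; dually this says that the irreducible component $A^1$ of $A$ containing $1$ is trivial, $A^1 = \Bbbk$. First I would observe that it is enough to treat the algebraic case: $A = \bigcup_\alpha A_\alpha$ is a directed union of affine Hopf super-subalgebras (as in the proof of Theorem \ref{thm:unipotency}), each $A_\alpha$ is cosemisimple because a subcoalgebra of a cosemisimple coalgebra is cosemisimple, and $W^{A_\alpha} = 0$ for all $\alpha$ forces $W^A = 0$ since $W^A = A_1/A_0^+A_1$ and $A_1 = \bigcup_\alpha (A_\alpha)_1$, $A_0^+ = \bigcup_\alpha (A_\alpha)_0^+$. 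So assume $A \in \mathsf{AHSA}$.

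Next I would argue by contradiction: suppose $W^A \neq 0$. The idea is to produce inside $A$ a non-zero odd primitive, contradicting cosemisimplicity. For this I would like to pass to a ``trivial'' situation via the Harish-Chandra correspondence. Write $(C, W) = (\overline{A}, W^A) \in \mathsf{HCP}$, so $A \cong A(C, W)$ by Theorem \ref{thm:equiv}. Here $C = \overline{A}$ is again cosemisimple (a quotient coalgebra of the cosemisimple $A$), so $\operatorname{Sp} C = G_{\operatorname{res}}$ is a linearly reductive affine group scheme. The difficulty is that the $C$-comodule $W$ need not have a non-zero fixed vector — in general a linearly reductive group has plenty of non-trivial comodules — so Proposition \ref{prop:odd-primitive} does not apply directly. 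The way around this: I would instead use that $W^A$, together with the restricted bracket, gives an \emph{odd abelian ideal-like} piece. Concretely, consider the largest $C$-subcomodule $W_0 \subseteq W$ on which the bracket $[\ ,\ ]\colon W^*\times W^* \to P(C^\circ)$ is compatible with passing to a sub-Harish-Chandra pair with trivial coaction; alternatively, and more cleanly, use Corollary \ref{cor:finiteNagata}: $\operatorname{gr} A \cong \overline{A} \cmdblackltimes \wedge(W^A)$ (Remark \ref{grA}), and if $A$ is cosemisimple then so is $\operatorname{gr} A$ (the coradical filtration argument, as in Lemma \ref{lem:irredHopf}, applies to the super case to show $A^{\operatorname{gr}}$ is cosemisimple when $A$ is). Now $\operatorname{gr} A = C \cmdblackltimes \wedge(W^A)$ contains $\wedge(W^A)$-flavoured pieces; more precisely, its coradical is all of $\operatorname{gr} A$, and the odd part of its cotangent space is still $W^A$, so the odd primitives of $\operatorname{gr} A$ are non-zero whenever $W^A\neq 0$ (indeed $\operatorname{gr} A$ in degree $1$ over $C$-coinvariants is exactly $W^A$). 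But a cosemisimple Hopf superalgebra has no non-zero odd primitives: an odd primitive $x$ generates a Hopf super-subalgebra $K$ with $K/(x^2) \cong \wedge(\Bbbk x)$, and by the super-version of ``every quotient Hopf superalgebra of a cosemisimple one is cosemisimple'' (via \eqref{Rad} and bosonization \cite[Sect.~10]{MZ}, exactly as in the proof of Corollary \ref{cor:finiteNagata}) we would get that $\wedge(\Bbbk x)$ is cosemisimple, which is absurd since $\wedge(\Bbbk x)^* \cong \wedge(\Bbbk x)$ is a non-cosemisimple (local, irreducible) Hopf superalgebra. This contradiction shows $W^A = 0$.

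Having established $W^A = 0$, the Hopf superalgebra $A = A_0 = \overline{A} = C$ is just a commutative Hopf algebra, and $G = \operatorname{Sp} C = G_{\operatorname{res}}$ is a linearly reductive affine \emph{group} scheme. If moreover $G$ is algebraic and connected, then $C$ is a finitely generated connected commutative cosemisimple Hopf algebra over $\Bbbk$ with $\operatorname{char}\Bbbk > 2$, and Nagata's Theorem (\cite[Chap.~IV, Sect.~3, 3.6]{DG}) gives that $G$ is of multiplicative type, hence diagonalizable after base change to $\overline\Bbbk$, so $A\otimes\overline\Bbbk$ is spanned by grouplikes and $G$ is abelian. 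The main obstacle, as flagged, is the middle step: making precise the reduction that extracts a non-zero odd primitive from the mere non-vanishing of $W^A$. The cleanest route I expect to work is through $\operatorname{gr} A \cong \overline A\cmdblackltimes \wedge(W^A)$: one needs (a) that cosemisimplicity of $A$ passes to $\operatorname{gr} A$ — this requires checking, in the super setting, that the coradical filtration of a Hopf superalgebra consists of super-subcoalgebras and that $A^{\operatorname{gr}}$ inherits cosemisimplicity, which is Lemma \ref{lem:irredHopf}'s method run in reverse and is routine — and (b) that $\overline A\cmdblackltimes\wedge(W^A)$ with $W^A\neq 0$ visibly contains a non-zero odd primitive, namely any non-zero element of $1\otimes W^A$ after checking it is primitive for the smash coproduct, which follows from formula \eqref{smash-coproduct} since elements of $W^A\subseteq T^1(W^A)$ have $\Delta$-image $1\otimes w + w\otimes 1$ twisted only by the $\overline A$-coaction, and that coaction on $W^A$ is the adjoint one — one then passes to the further quotient killing $\overline A^+$ to land in $\wedge(W^A)$ and gets an honest non-zero odd primitive in a cosemisimple Hopf superalgebra, contradiction.
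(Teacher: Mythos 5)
Your reduction to the affine case, your use of Lemma \ref{lem:linearly-reductive}-type inheritance of cosemisimplicity by quotients, and your observation that a cosemisimple Hopf superalgebra cannot admit a nonzero odd primitive (via the quotient $K/(x^2)\cong\wedge(\Bbbk x)$, exactly as in the proof of Corollary \ref{cor:finiteNagata}) are all sound. But the step on which your entire argument rests --- claim (a), that cosemisimplicity of $A$ passes to $\operatorname{gr}A=\bigoplus_n I_A^n/I_A^{n+1}\cong\overline{A}\cmdblackltimes\wedge(W^A)$ --- is a genuine gap, and in fact the claim is false as stated. You justify it by appeal to ``the coradical filtration argument, as in Lemma \ref{lem:irredHopf}, run in reverse,'' but this conflates two different filtrations: Lemma \ref{lem:irredHopf} concerns the (ascending) coradical filtration, whose associated graded for a cosemisimple $A$ is just $A$ itself in degree $0$, whereas the $\operatorname{gr}A$ of Remark \ref{grA} is the associated graded of the (descending) $I_A$-adic filtration, and there is no general mechanism forcing its coradical to be large. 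Indeed, over an algebraically closed field of characteristic zero, Weissauer's theorem provides linearly reductive algebraic supergroup schemes that are not purely even (e.g.\ $\mathrm{OSp}(1|2)$); for such $G=\operatorname{SSp}A$ one has $W^A\neq 0$, so $\operatorname{Corad}(\operatorname{gr}A)\subset\overline{A}\otimes\Bbbk\subsetneq\operatorname{gr}A$ and $\operatorname{gr}A$ is not cosemisimple even though $A$ is. Since your argument never uses $\operatorname{char}\Bbbk>2$ in this key step, it would prove the (false) characteristic-zero statement as well; the claim can therefore only be rescued by an input that is essentially equivalent to the theorem itself.

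The paper's proof shows where positive characteristic must enter: after base extension to $\overline{\Bbbk}$ it passes to the Frobenius kernel $G_1$, i.e.\ the short exact sequence $G_1\to G\to G^{(1)}$ with $G^{(1)}$ purely even, so that by Proposition \ref{prop:exact-seq-com}(3) it suffices to treat $G_1$. The representing Hopf superalgebra $A_1$ is finite-dimensional and local (by affinity of $A$), hence $A_1^*$ is a finite-dimensional \emph{irreducible} Hopf superalgebra; cosemisimplicity of $A_1$ (inherited via Lemma \ref{lem:linearly-reductive}) makes $A_1^*$ semisimple, and Corollary \ref{cor:finiteNagata} --- the finite-dimensional semisimplicity criterion, which is genuinely a positive-characteristic result --- forces $A_1^*$, hence $G_1$, to be purely even. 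If you want to keep the spirit of your approach, you would need to replace claim (a) by this Frobenius-kernel reduction, or by some other device that injects $\operatorname{char}\Bbbk=p>2$ into the argument.
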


\begin{proof}
We prove only the first assertion, which implies immediately the second. 
Let $p = \operatorname{char} \Bbbk \, (>2)$.  We may suppose that $A$ is affine, since it is directed union
of affine Hopf super-subalgebras, which are necessarily cosemisimple. 
We may also suppose by base extension that $\Bbbk$ is algebraically closed. 
Then the Frobenius morphism gives rise to a short exact sequence 
\begin{equation*}
G_1 \to G \to G^{(1)}
\end{equation*}
of affine supergroup schemes. Since $G^{(1)}$, represented by 
$ A^{(1)} := \{a^{p} \mid a \in A\} $, is purely even, it suffices, in virtue of Proposition 
\ref{prop:exact-seq-com}(3),
to show that $G_1$ is purely even. 
Note that $G_1$ is represented by the Hopf superalgebra $A_1 := A/I$,
where $I$ is the Hopf super-ideal of $A$ generated by all $a^{p}$, where $a \in A_0^+$, and so that 
an isomorphism $A \overset{\simeq}{\longrightarrow} \overline{A}\otimes \wedge(W^A)$ 
such as given in Proposition \ref{prop:M2} induces a counit-preserving superalgebra isomorphism 
$A_1 \overset{\simeq}{\longrightarrow} \overline{A}/\overline{I} \otimes \wedge(W^A)$, 
where $\overline{I}$ is the ideal of $\overline{A}$ generated by all $a^{p}$, where $a \in \overline{A}^+$.
The affinity of $A$ implies that $A_1$ is finite-dimensional and local, having $A_1^+$
as a unique maximal ideal. Hence $A_1^*$ is irreducible. 
By Lemma \ref{lem:linearly-reductive}, $A_1$ is cosemisimple, whence $A_1^*$
is semisimple. 
Therefore we can apply Corollary \ref{cor:finiteNagata} to $A_1^*$ to conclude that $G_1$ is purely even, as desired.
\end{proof}

\section*{Acknowledgments}
The work was supported by
Grant-in-Aid for Scientific Research (C) 23540039, Japan Society of the Promotion of Science. 
The author thanks Alexandr Zubkov for his valuable comments on an earlier
version of this paper. 
The author also  thanks the referees for their very helpful comments and useful suggestions.

\end{document}